\documentclass{amsart}[10pt]

\theoremstyle{plain}
\newtheorem{theorem}{Theorem}[section]

\newtheorem{corollary}[theorem]{Corollary}

\newtheorem{lemma}[theorem]{Lemma}

\newtheorem{proposition}[theorem]{Proposition}

\theoremstyle{definition}
\newtheorem{example}[theorem]{Example}

\newtheorem{conjecture}[theorem]{Conjecture}

\newtheorem{remark}[theorem]{Remark}

\newtheorem{notation}[theorem]{Notation}

\newtheorem{definition}[theorem]{Definition}

\usepackage[T1]{fontenc}
\usepackage[utf8]{inputenc}
\usepackage{geometry}
\usepackage{graphicx}
\usepackage{scalerel}
\usepackage{amssymb}
\usepackage{amsfonts}
\usepackage{color}
\usepackage{mathtools}
\usepackage{tikz} 
\usepackage{ytableau}
\usepackage{pgfplots}
\usepackage{hyperref}
\usepackage{xcolor}
\usepackage{tikz-cd}
\usepackage{soul}
\usepackage{epsfig}
\usepackage{hyperref}
\usepackage[makeroom]{cancel}
\usepackage{mathtools}
\usepackage{ytableau}
\usepackage{tikz-qtree}
\usepackage{comment} 
\usepackage{comments}

\newcommand{\Hilb}{\mathrm{Hilb}}

\newcommand{\Sym}{\mathrm{Sym}}

\newcommand{\HHH}{\mathrm{HHH}}

\newcommand*\circled[1]{\tikz[baseline=(char.base)]{
\node[shape=circle,draw,inner sep=0.8pt] (char) {#1};}}

\tikzset{
    cell/.style={
        anchor=south west,
        draw,
        minimum size=1cm,
    },
}

\title{Hilbert scheme of points on non-reduced nodal curves}
\author{Yuze Luan}
\begin{document}

\maketitle

\tableofcontents

\section{Introduction}
\subsection{The ORS conjecture}
Given singular curve $C \subset \mathbb{C}^2$ with a planar singularity point at the origin $(0,0)$, one can associate an algebraic link $L$ to it by considering the intersection of $C$ with a small $3$-sphere $S^3$ in $\mathbb{C}^2 \cong \mathbb{R}^4$ centered at $(0,0)$. 

For the singular algebraic curve $C$, we can define a moduli space of ideals in the coordinate ring of $C$ that provides information about the singularity,  \textbf{the Hilbert scheme of points on $\mathbf{C}$} supported at $(0,0)$, denoted by $\Hilb^n(C,0)$.

On the other hand, with tools from topology, for the algebraic link $L$, one can define \textbf{the (colored) Khovanov-Rozansky homology of the link $\mathbf{L}$}, denoted by $\HHH(L)$, which is a link invariant.

A beautiful conjecture by Oblomkov, Rasmussen and Shende \cite{ORS} relates the two seemingly unrelated objects from algebraic geometry and topology, stating that the homology of Hilbert schemes of points on such singular curve $C$ is the Khovanov-Rozansky knot homology of the corresponding links $L$.

\begin{conjecture}
\label{conjORS}
$$\HHH^{a=0}(L)=\bigoplus_{k,n=0}^{\infty}\mathrm{H}^k(\Hilb^n(C,0)).$$
\end{conjecture}

\subsection{Verified cases of the ORS conjecture.\\}\label{sec1.2}

%By a remarkable result of Maulik \cite{Mau}, the generating function for the Euler characteristics of the Hilbert schemes matches the HOMFLY-PT polynomial, which is the de-categorified version of the ORS conjecture.

The ORS conjecture has been verified for some reduced singular curves, and some non-reduced smooth curves. For a non-reduced curve, we are referring to the smooth or singular properties of its underlying reduced curve. 

When the curve $C$ is non-reduced, with components $C_i$ each of multiplicity 
$k_i$, the intersection of $S^3$ with the non-reduced curve $C$ is a link $L$, where each link component of $L$ is colored by a row of Young diagram of $k_i$ boxes, see \cite{HM} for the precise definition. When the curve is reduced, the coloring is trivial for the link homology, as defined in \cite{GKS}.

\begin{enumerate}
\item When $p,q$ are coprime for the curve $x^p-y^q=0$, the link $L$ is a trivially colored torus knot. The Khovanov-Rozansky homology is computed recursively by Hogancamp and Mellit \cite{HM}, and $\Hilb^k (C, 0)$ has an explicit paving by affine cells as computed in \cite{ORS}, which yields an explicit formula for the Poincar\'e polynomial of the homology. Both sides of the Conjecture \ref{conjORS} were computed and compared.

\item In the case when the curve $C$ is $\{y^k=0\}$, non-reduced and smooth,  the homology of the Hilbert scheme is given in \cite{Bej}. There is an explicit Bialynicki-Birula cell decomposition of $\Hilb^n(\{y^k=0\},0)$, so the homology can be computed, and compared to the $k$-row-colored knot homology of the unknot, computed in \cite{HM}. See more details in Example \ref{egy^k}.

\end{enumerate}

A remarkable result by Maulik \cite{Mau} is the decategorified form of the ORS conjecture, that the generating function for the Euler characteristics of the Hilbert schemes matches the HOMFLY-PT polynomial. Maulik's setup is for reduced planar curves, with multiplicity data encoded combinatorially via partitions assigned to the components, while we study the geometry of punctual Hilbert schemes on non-reduced curves directly. 

In general, the conjecture is wide open. We provide in this paper the first verification of the ORS conjecture for a non-reduced and singular curve.

\subsection{Affine paving of the Hilbert scheme}
The key to computing the left hand side of the conjecture, the homology of the Hilbert scheme, usually boils down to finding an affine paving of the Hilbert scheme.  We list a few known affine pavings, and in general the problem of finding affine pavings of the Hilbert scheme is wide open.

\begin{enumerate}
\item An affine paving of the Hilbert scheme of points on the affine plane $\mathbb{C}^2$ was found by Ellingsrud and Strømmer, see \cite{ES}.

\item As mentioned in \ref{sec1.2}, the Hilbert scheme of points on the irreducible and singular curve, $x^v-y^v=0$ for $p,q$ coprime, has an affine paving, and its homology was computed in many sources other than the aforementioned \cite{ORS}, for example, Lusztig and Smelt \cite{LS}.

\item The Hilbert scheme of points at a generic curve singularity with Puiseaux exponents $(nd,md,md+1)$ has an affine paving found by Gorsky, Mazin, Oblomkov, see \cite{GMO}.
\end{enumerate}

\subsection{Methods of this paper}

Classical tools for finding the affine pavings of the Hilbert scheme include the Bialynicki-Birula cell decomposition, which applies nicely only to smooth spaces, and stratification by semigroups obtained from orders of functions in the coordinate ring of the curve, but this theory is only applicable for reduced curves. 
The problems of non-reduced curves remains open to be explored. A key difficulty lies in the lack of effective tools to control the structure of ideals in the Hilbert scheme on a non-reduced curve.

In this paper, we introduce a new method to study punctual Hilbert schemes of non-reduced plane curve singularities. The idea is to construct filtrations of ideals, such that they slice the ideals into layers, compatible with the non-reduced structure of the curve.
And together we define a non-reduced valuation that allows us to systematically track the leading terms of the generators of each layer and relationships between them. 

Our approach leads to affine pavings and torus-type stratifications of the punctual Hilbert schemes on a non-reduced nodal curve with nodal singularity at $(0,0)$, $x^uy^v=0$ for positive integers $u,v$. Finally, the degree of the freedom of the generators are computed inductively and presented as certain statistics of a new type of partition we define, called the weak diagonal partition. 

Although used in the specific setting of $x^u y^v = 0$, the filtration and valuation techniques are expected to apply more broadly to other non-reduced singular curves.

\subsection{Results of this paper}

\subsubsection{Affine paving of $\Hilb^n(\{xy^v=0\},0)$ and the ORS conjecture.\\}

When the curve is $xy^v=0$, the corresponding link is the Hopf link where one strand is colored by a row of $v$ boxes, and the other strand is trivially colored. We have the following theorem that classifies all affine strata and computes the Poincar\'e polynomials of the Hilbert scheme $\Hilb^n(\{xy^v=0\},0)$.

Let $P$ denote the Poincar\'e polynomial of the knot homology, and $\chi$ denote the Poincar\'e polynomial of the Hilbert scheme, $Q$ record the number of points, and $T$ record the homological degree.

\begin{theorem}
The Hilbert scheme $\Hilb^{n}(\{xy^v=0\},0)$ has an affine stratification, and each stratum is indexed by a partition $(i_0, ..., i_{N})$ such that $i_k \leq 1$ for all $k \geq v$. The dimension of the stratum is the number of boxes from the $1$st row to the $v$th row, i.e. $\sum_{k=1}^{v}i_k$.

The Poincar\'e polynomial of the Hilbert scheme $\Hilb^{n}(\{xy^v=0\},0)$ is $$\prod_{i=1}^{v}\frac{1}{1-Q^{i}T^{2(i-1)}}(1+\frac{Q^{v+1}T^{2v}}{1-Q}).$$

\end{theorem}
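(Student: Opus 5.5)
The plan is to stratify $\Hilb^n(\{xy^v=0\},0)$ by the leading-term data of an ideal with respect to the filtration by powers of $y$. After that, we show that each stratum is an affine space of the claimed dimension and assemble the generating function combinatorially.

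\textbf{Setup.} Put $R=\mathbb{C}[[x,y]]/(xy^v)$. It has topological basis $\{x^ay^b : b<v\}\cup\{y^b : b\ge v\}$. For an ideal $I\subset R$ of colength $n$, define the layers
$$L_k(I)=\{g\in\mathbb{C}[[x]] : y^kg+(\text{terms in }y^{k+1}R)\in I\}.$$
For $k<v$ the layer $L_k(I)$ is an ideal of $\mathbb{C}[[x]]$, so $L_k(I)=(x^{i_k})$. For $k\ge v$, $x$ acts by zero on $y^kR/y^{k+1}R\cong\mathbb{C}$, so the layer is either everything or $0$, which we record as $i_k\in\{0,1\}$.

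Multiplication by $y$ maps layer $k$ into layer $k+1$, so $i_0\ge i_1\ge\cdots$. Since the graded pieces have total dimension $\sum_k i_k=n$, this gives a partition $(i_0,\dots,i_N)$ of $n$ with $i_k\le1$ for $k\ge v$. Conversely, every such partition arises from the monomial ideal spanned by the monomials outside the diagram, so the strata $S_\lambda$ are nonempty and cover the Hilbert scheme. This is the non-reduced valuation: the valuation of $f$ is the pair (lowest $y$-layer, $x$-order within that layer).

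\textbf{Normal form on a stratum.} Fix $\lambda$. Pick for each $k$ with a "new" generator (i.e.\ $i_k<i_{k-1}$, with $i_{-1}=\infty$) an element
$$f_k=y^kx^{i_k}+\sum_{j>k}\ \sum_{a} c_{j,a}\,y^jx^a .$$
The corrections range only over basis monomials $y^jx^a$ lying in the diagram, i.e.\ $a<i_j$ (or $j\ge v$, $a=0$, $i_j=1$). Reducing by the other generators makes this normal form unique.

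The main step is to show two things. First, the coefficients $c_{j,a}$ are subject only to linear (triangular) relations, coming from requiring $y\cdot f_k$ and $x\cdot f_k$ to reduce into $I$ compatibly with the prescribed layers. Second, the free coefficients number exactly $\sum_{k=1}^{v}i_k$.

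Heuristically, $x f_k$ kills every correction in layers $j\ge v$, while $y f_k$ shifts the corrections up one layer. The consistency conditions then say that the corrections of the generator in layer $k$ are determined by those in layer $k-1$, except for one new free parameter per box in rows $1,\dots,v$. The box in row $j$ and column $a$ corresponds to the coefficient of $y^jx^{a}$ in the generator of the column's starting row. Boxes in row $0$ carry no freedom, since nothing lies below them. Boxes in rows $>v$ carry no freedom either: they are forced by $xy^v=0$, because $y^{j}$ with $j>v$ is hit only via $y$-multiplication.

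I would prove the count by induction on the number of layers: pass from $I$ to $(I:y)$ or to $I\cap yR$ and track how many new free coefficients appear. The resulting map from $S_\lambda$ to affine space is an isomorphism onto $\mathbb{A}^{d}$, with inverse given by the normal form.

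\textbf{Main obstacle.} I expect the hardest step to be checking that the triangular system is genuinely free. One must show that no nonlinear relations arise among the $c_{j,a}$. One must also handle the boundary layer $k=v$ carefully: there $x$ acting by zero changes which corrections survive. I would first verify the cases $v=1$ (the ordinary node) and $v=2$ by hand. I would then order the unknowns by the valuation and show that each closure condition solves for one "dependent" coefficient linearly in terms of earlier ones.

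\textbf{Generating function.} Given the paving, odd cohomology vanishes and the Poincaré polynomial is $\sum_\lambda Q^{|\lambda|}T^{2\dim S_\lambda}$. We read $\lambda$ by columns. The first column has arbitrary length $\ell\ge1$, or the partition is empty, and every other column has length $\le v$. A column of length $\ell\le v$ contributes $Q^\ell T^{2(\ell-1)}$, since its box in row $0$ does not count. Treating all columns of length $\le v$ as a multiset gives $\prod_{i=1}^v(1-Q^iT^{2(i-1)})^{-1}$. The possible unique column of length $\ell\ge v+1$ contributes $Q^\ell T^{2v}$, and summing over $\ell$ gives the factor $1+\frac{Q^{v+1}T^{2v}}{1-Q}$.
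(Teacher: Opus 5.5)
Your overall architecture matches the paper's. The strata come from the leading-term data of the filtration by powers of $y$ (the paper's $W^k=I\cap(y^k)$), and your column-by-column generating function is exactly the paper's computation.

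The gap is the central step: that each stratum is an affine space of dimension $\sum_{k=1}^{v}i_k$. You defer it, and the mechanism you sketch for it is wrong in two concrete ways.

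\emph{The relations are not linear.} Your plan to show that ``no nonlinear relations arise'' cannot succeed. Take $v=3$, $\lambda=(2,1,1,1)$, with $f_0=x^2+a_1y+a_2y^2+a_3y^3$ and $f_1=xy+b_2y^2+b_3y^3$. From $yf_1\in I$ and $y^4\in I$ you get $xy^2\equiv -b_2y^3$ and $xy^3\equiv 0$ modulo $I$. Hence $xf_1-yf_0\equiv -a_1y^2-(a_2+b_2^2)y^3$, which forces $a_1=0$ and $a_2=-b_2^2$. The stratum is still $\mathbb{A}^3$, with coordinates $a_3,b_2,b_3$. But that holds only because the system is triangular in the polynomial sense: each dependent coefficient appears linearly and equals a polynomial in earlier free ones. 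This is what the paper's Proposition~\ref{familyofideals} asserts.

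\emph{Your box-to-coefficient dictionary is wrong.} Take $v=2$, $\lambda=(1,1,1,1)$, $f_0=x+c_1y+c_2y^2+c_3y^3$. In $\mathbb{C}[\![x,y]\!]/(xy^2)$ one has $y^2f_0\equiv c_1y^3$ modulo $y^4$. So the relation kills $c_1$, a row-$1$ box in your dictionary, and leaves $c_3$ free, a row-$3$ box above row $v$. The relation $xy^v=0$ does not act on coefficients in rows $>v$; multiplying by $y^v$ pushes the \emph{low} coefficients up into the diagram. The total $2=i_1+i_2$ is right, but not for your reason, so the heuristic cannot be turned into the count.

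The paper closes this step layer by layer. It assigns to the normal-form generator $F^k$ of $W^k$ as many free parameters as there are boxes in row $k+1$. The condition $xy^v\in I$ forces $F^k=xy^k$ for all $k\ge v$. That removes one parameter for each of the rows $v+1,\dots,N$ and leaves the parameters of $F^0,\dots,F^{v-1}$ free.

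A cleaner repair of your own proof is the idea you mention in passing, $I\mapsto(I:y)$, made into an induction on $v$. The steps are:
\begin{itemize}
\item In $R_v=\mathbb{C}[\![x,y]\!]/(xy^v)$ one has $(0:y)=(xy^{v-1})$. So $J=(I:y)$ is a point of $\Hilb^{n-i_0}(\{xy^{v-1}=0\},0)$ with partition $(i_1,i_2,\dots)$, and $I\cap yR_v=yJ$.
\item Hence $I=yJ+(x^{i_0}+yh)$, where $h$ is well defined modulo $J$ and subject only to $yh\equiv -x^{i_0}\pmod J$.
\item This equation is solvable precisely because $i_0\ge i_1$. Its solutions form a torsor under $\ker(y)$ on $R_{v-1}/J$, which has dimension $\dim R_{v-1}/(J+(y))=i_1$.
\item So the stratum of $I$ maps onto the stratum of $J$ with fibres $\mathbb{A}^{i_1}$. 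Induction from $v=0$, where all strata are points, gives dimension $\sum_{k=1}^{v}i_k$.
\end{itemize}
You would still need to trivialize this affine bundle, for instance via explicit normal-form coordinates.
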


The Poincar\'e polynomial of the Hopf link, where one strand is colored by a row of $v$ boxes and the other strand is trivially colored, is computed by Mellit, Hogancamp \cite{HM} and Conners \cite{Con}. So the colored ORS conjecture is verified by direct comparison of the Poincar\'e polynomials.

\begin{corollary}[Colored ORS conjecture for the Hopf link]
The Poincar\'e polynomial of the Hilbert scheme of $n$ points on $\{xy^v=0\}$ is the Poincar\'e polynomial of the colored homology of the Hopf link, where one strand is colored by a row of $v$ boxes, and the other strand is trivially colored, up to a factor $\prod_{i=1}^{v-1}T^{i}$ and the change of variables $T \rightarrow (QT^2)^{-1}$.

\begin{align*}
&\sum_{n=0}^{\infty}\chi(\Hilb^n(\{xy^v=0\},0))Q^n=P(T(2,2),(\Sym^{(v)}, \Sym^{(1)})) .
\end{align*}
 
\end{corollary}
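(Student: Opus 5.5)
The corollary should follow by comparing two explicit generating functions. The Hilbert scheme side is supplied by the preceding theorem, and the knot side by the computation of Hogancamp--Mellit \cite{HM} and Conners \cite{Con}.

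\textbf{The Hilbert scheme side.} By the theorem, $\Hilb^n(\{xy^v=0\},0)$ has an affine stratification. The cohomology is therefore concentrated in even degrees, and each stratum of dimension $d$ contributes $T^{2d}$, with the odd cohomology vanishing. The generating function $\sum_n \chi(\Hilb^n)Q^n$ is then the sum over partitions $(i_0,\dots,i_N)$ with $i_k\le 1$ for $k\ge v$, weighted by $Q^{|\lambda|}T^{2\sum_{k=1}^v i_k}$.

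I will double-check the closed product form by reorganizing this sum by columns. Any such partition is a partition with at most $v$ rows (rows $0,\dots,v-1$), optionally followed by a tail of rows of length $1$.

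\begin{itemize}
\item A column of height $i\le v$ contributes $Q^iT^{2(i-1)}$, since it meets rows $1,\dots,i-1$. This gives the factor $\prod_{i=1}^v (1-Q^iT^{2(i-1)})^{-1}$.
\item If the first column has height $v+m$ with $m\ge1$, it meets rows $1,\dots,v$. This contributes $Q^{v+m}T^{2v}$, and summing over $m$ gives the factor $1+Q^{v+1}T^{2v}/(1-Q)$.
\end{itemize}

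This reproduces the stated formula and fixes conventions: $T$ tracks half the cohomological degree doubled, and $Q$ tracks $n$.

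\textbf{The knot side.} Next I take the Poincar\'e polynomial $P(T(2,2),(\Sym^{(v)},\Sym^{(1)}))$ at $a=0$ from \cite{HM,Con}. The colored Hopf link homology there is given by a recursion, or a closed form, in variables $q,t$. I expect it to be a product of the colored unknot factor $\prod_{i=1}^{v}(1-q^{i}t^{i-1})^{-1}$, in suitable normalization, times a correction term accounting for the linking with the uncolored strand.

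I then apply the change of variables $T\to(QT^2)^{-1}$ and the normalization factor $\prod_{i=1}^{v-1}T^{i}$ from the statement. The factor $\prod_{i=1}^{v-1}T^i$ should exactly cancel the grading shift coming from the framing and the $v$-row coloring, which shows up as an overall monomial in the unknot factor. After this, I check term by term that the product factors agree, using Example \ref{egy^k} for $y^v=0$ as the consistency check on the unknot part. It then remains to check that the correction term becomes $1+Q^{v+1}T^{2v}/(1-Q)$.

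\textbf{Main obstacle.} The main difficulty is the bookkeeping of gradings and normalizations between \cite{HM} and \cite{Con}: the $a,q,t$ conventions, the framing shift, and the reduced versus unreduced normalization.

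To handle this, I would first verify the case $v=1$, where the curve is the ordinary node $xy=0$. There both sides are classically known to be $\frac{1}{1-Q}\bigl(1+\frac{Q^2T^2}{1-Q}\bigr)$. With the conventions pinned down there, I would match the general-$v$ formula.

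If the closed form in \cite{HM} is only recursive, I would instead verify that the Hilbert-side product satisfies the same recursion in $v$. The ratio of consecutive $v$-terms is $(1-Q^vT^{2(v-1)})^{-1}$ times the ratio of the correction factors, so this check is a finite identity.
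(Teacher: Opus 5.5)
Your Hilbert-side count is correct. It is exactly the column decomposition the paper uses in proving the preceding theorem: the tallest column of height $v+m$ contributes $Q^{v+m}T^{2v}$, and every other column of height $i$ contributes $Q^iT^{2(i-1)}$. Your overall strategy of comparing two explicit generating functions is also the paper's.

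\textbf{The gap is on the knot side: you never evaluate $P(T(2,2),(\Sym^{(v)},\Sym^{(1)}))$.} You say you ``expect'' an unknot factor times a correction term, and that ``it remains to check'' the correction term is $1+Q^{v+1}T^{2v}/(1-Q)$. That leaves undone the only step of the corollary not already contained in the theorem.

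Your fallback plans do not fill this.
\begin{itemize}
\item The recursion of Theorem \ref{krhomology} runs on pairs of binary words $(t,w)$, not on $v$. Applying rule (3) to $(1^v0,1^v0)$ produces $(1^{v+1},1^{v+1})$ and $(01^v,01^v)$, never the $(v-1)$-case. So there is no ``same recursion in $v$'' for the Hilbert-side product to satisfy.
\item Calibrating conventions at $v=1$ cannot detect the normalization $\prod_{i=1}^{v-1}T^i$, which is the empty product there.
\end{itemize}

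The missing computation is short.
\begin{itemize}
\item With $m_1=m_2=d=2$, the number of trailing zeros in Theorem \ref{krhomology} is $1\cdot(2-1)+v\cdot(1-1)=1$. So $P=\prod_{i=1}^{v}(1-QT^{1-i})^{-1}\,\mathbf{p}(1^v0,1^v0)$.
\item Rule (3) with $l=v$ gives $\mathbf{p}(1^v0,1^v0)=T^{-v}\mathbf{p}(1^{v+1},1^{v+1})+QT^{-v}\mathbf{p}(01^v,01^v)$.
\item At $a=0$, rule (2) strips trailing $1$'s one at a time. This gives $\mathbf{p}(1^{v+1},1^{v+1})=\prod_{l=0}^{v}T^{l}$ and $\mathbf{p}(01^v,01^v)=\prod_{l=0}^{v-1}T^{l}\cdot\mathbf{p}(0,0)$.
\item Rule (3) with empty words gives $\mathbf{p}(0,0)=\mathbf{p}(1,1)+Q\,\mathbf{p}(0,0)$, so $\mathbf{p}(0,0)=1/(1-Q)$.
\item Hence $\mathbf{p}(1^v0,1^v0)=\prod_{i=1}^{v-1}T^{i}\cdot(1+QT^{-v}/(1-Q))$.
\end{itemize}

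So the normalization monomial comes from the $T^{l}$ factors in rule (2), not from a grading shift in the unknot factor. Discard it and substitute $T\mapsto (QT^2)^{-1}$. This sends $QT^{1-i}\mapsto Q^{i}T^{2(i-1)}$ and $QT^{-v}\mapsto Q^{v+1}T^{2v}$, which matches your Hilbert-side product term by term.

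This is precisely the paper's argument. The knot side is the corollary in Section 2 computing the Hopf link polynomial, and the proof of the statement is this substitution.
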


\subsubsection{Affine paving of $\Hilb^{n}(\{x^2y^v=0\},0)$.\\}
When the curve is $x^2y^v=0$, the corresponding link is the Hopf link where one strand is colored by a row of $v$ boxes, and the other strand is colored by a row of $2$ boxes. We have the following theorem. 

\begin{theorem}
The Hilbert scheme $\Hilb^{n}(\{x^2y^v=0\},0)$ has an affine stratification, and each stratum is indexed by a partition $(i_0, ..., i_{N})$ such that $i_k \leq 2$ for all $k \geq v$. Denote the index of $i_k$ as $M$ where $i_k=1$ for $k=M,...,N$ and $i_k > 1$ otherwise, if $M$ exists. The dimension of the stratum is the number of boxes from the $1$st row to the $v$th row, plus the number of boxes between the $\lfloor \frac{M+N}{2}\rfloor+1$th row to the $N$th row, $$\sum_{k=1}^{v}i_k + \sum_{k=\lfloor \frac{M+N}{2}\rfloor+1}^{N}i_k.$$ If $M$ does not exist, then the dimension is only the first summand $\sum_{k=1}^{v}i_k$.

The Poincar\'e polynomial of the Hilbert scheme, $\Hilb^{n}(\{x^2y^v=0\},0)$ is 
\begin{align*}
&\sum_{n=0}^{\infty}\chi(\Hilb^n(\{x^2y^v=0\},0))Q^n = \\ 
&\frac{(1-Q)(1-Q^2T^2)+(1-Q)(1+QT^2)Q^{v+1}T^{2v} + Q^{2(v+1)}T^{4v}}{(1-Q)(1-Q^2T^2)\prod_{i=1}^{v}(1-Q^{i}T^{2(i-1)})}.
\end{align*}
\end{theorem}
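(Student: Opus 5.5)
The plan is to cut $\Hilb^{n}(\{x^2y^v=0\},0)$ into strata indexed by monomial ideals, show each stratum is an affine space of the stated dimension, and then sum over strata to get the Poincar\'e polynomial. Throughout, $R=\mathbb{C}[[x,y]]/(x^2y^v)$. The monomials $x^ay^k$ that survive in $R$ are exactly those with $k<v$ (any $a$), or $k\ge v$ and $a\le 1$. Hence a monomial ideal of colength $n$ is recorded by row lengths $i_k=\#\{a : x^ay^k\notin I\}$. These form a partition $(i_0,\dots,i_N)$ with $i_k\le 2$ for $k\ge v$, which is the indexing set in the statement.

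\textbf{Step 1: assigning a monomial ideal to each ideal.} I would slice an ideal $I\subset R$ of finite colength into layers using the filtration by powers of $x$:
\[
I_j=\bigl(I\cap x^jR+x^{j+1}R\bigr)/x^{j+1}R .
\]
Each layer $I_j$ is an ideal in a quotient of $\mathbb{C}[[y]]$, truncated by $y^v$ once $j\ge 2$, so it is generated by a single power of $y$. This gives a ``non-reduced valuation'' that records the leading $y$-order of each layer. The map $I\mapsto (\text{layer orders})$ sends $I$ to a monomial ideal, hence to a partition of the above type, and the fibres of this map are the strata.

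\textbf{Step 2: normal form for generators.} In each stratum I would pick generators $f_j=x^jy^{m_j}+(\text{correction terms})$ whose correction terms are supported on monomials not in the monomial ideal. I would then reduce these corrections using the relations $x\cdot f_j\in I$ and $y\cdot f_j\in I$, together with $x^2y^v=0$, and show that the surviving free coefficients satisfy no nonlinear constraint. This makes the stratum an affine space.

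\textbf{Step 3: counting free parameters.} The count splits into two parts.
\begin{enumerate}
\item For the rows $k\le v$, the computation should be the same as for the plane and for $xy^v$ in the previous theorem. It gives $\sum_{k=1}^v i_k$.
\item For the tail rows $k=M,\dots,N$ with $i_k=1$, the curve looks locally like the doubled line $x^2=0$. There the generator $xy^{M}+\dots$ has corrections of the form $c\,y^{\ell}$ which are constrained by multiplication by $x$, since $x\cdot y^\ell$ must lie back in $I$.
\end{enumerate}
I expect the second part to be the main obstacle. The claim is that only the corrections in the upper half, rows $\lfloor\frac{M+N}{2}\rfloor+1,\dots,N$, remain free, contributing $\sum_{k=\lfloor\frac{M+N}{2}\rfloor+1}^{N} i_k$. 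My first attempt would be to check the consistency condition that $x$ applied to the correction of the $x$-layer generator lands in $I$, and to show that it kills exactly half of the coefficients. The half arises because a correction $c\,y^\ell$ paired with $x$ produces a term of order $\ell+(\ell-M)$ that must vanish modulo $y^{N+1}$. I also need to show that the remaining relations are triangular with respect to the valuation, so that they stay linear; this part I am least sure of.

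\textbf{Step 4: generating function.} Since the strata are affine, the Poincar\'e polynomial is $\sum T^{2\dim}$ over all strata. I would sum in two pieces.
\begin{enumerate}
\item The rows $0,\dots,v$ with unrestricted lengths, counted column by column, give $\prod_{i=1}^v (1-Q^iT^{2(i-1)})^{-1}$, as in the $xy^v$ case.
\item The tail consists of rows of length $2$ followed by rows of length $1$. Summing over the number of $2$-rows gives the factor $(1-Q^2T^2)^{-1}$ and the cross term. Summing over the length $N-M+1$ of the $1$-tail, weighted by $T^{2\lceil (N-M+1)/2\rceil}$, gives the $(1-Q)^{-1}$ and $(1+QT^2)$ factors.
\end{enumerate}
Finally I would put these pieces over the common denominator $(1-Q)(1-Q^2T^2)\prod_{i=1}^{v}(1-Q^{i}T^{2(i-1)})$ and compare with the numerator in the statement.
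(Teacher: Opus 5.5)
Your Step 1 uses the transposed filtration, and for the strata it produces, the dimension formula you are proving is false. Slicing by powers of $x$ records, for each $x$-degree $j$, the lowest $y$-order occurring there. So your strata are the loci of ideals with a fixed initial ideal for the order ``lowest power of $x$ first''.

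The statement, and the paper's proof, use the opposite order. That is the vertical filtration $W^k=I\cap(y^k)$, whose layers are ideals of $\mathbb{C}[\![x]\!]$ (of $\mathbb{C}[\![x]\!]/(x^2)$ for $k\ge v$) generated by $x^{i_k}$, with generators $F^k=y^kx^{i_k}+y^{k+1}t(x,y)$. The two stratifications have the same index set but different strata.

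This already fails for $n=2$, where the Hilbert scheme is the $\mathbb{P}^1$ of ideals $(ax+by)+(x,y)^2$.
\begin{itemize}
\item In your stratification, the partition $(2)$ (boxes $1,x$) has stratum $\{(y+cx)+(x,y)^2\}\cong\mathbb{A}^1$, and the partition $(1,1)$ (boxes $1,y$) has stratum the single point $(x,y^2)$.
\item The statement requires the stratum of $(2)$ to be a point.
\item The vertical filtration gives exactly the reverse: the point $(x^2,y)$ for $(2)$ and $\{(x+cy)+(x,y)^2\}$ for $(1,1)$.
\end{itemize}

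Your Step 3 also silently switches filtrations. A generator $xy^M+\sum_\ell c\,y^\ell$ whose corrections have \emph{lower} $x$-degree is the vertical normal form. In your $x$-adic filtration that element has leading term $c\,y^\ell$, not $xy^M$. You must replace Step 1 by the $y$-adic filtration; Steps 2--3 then become the paper's argument.

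Even after that fix, two further points need correcting.

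First, the constraints are not triangular and linear. Take the generator $xy^M+\sum_{i=M+1}^{N}\sigma_iy^i$; its $y^{k-M}$-multiples generate the higher layers. Multiplying it by $x$ and reducing each $xy^i$ with the higher generators gives $x^2y^M-\sum_{i,j}\sigma_i\sigma_{j-i+M}y^j$. So the condition is the quadratic system $\sum_{k_1+k_2=K}\sigma_{k_1}\sigma_{k_2}=0$, that is, $s(t)^2\equiv 0 \pmod{t^{N-M+1}}$ for $s(t)=\sum_{p\ge1}\sigma_{M+p}t^p$. Only its reduced locus, $\sigma_{M+1}=\dots=\sigma_{\lfloor (M+N)/2\rfloor}=0$, is an affine space. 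The paper solves the equations one after another, starting from $\sigma_{M+1}^2=0$.

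Second, the generating function has to be split according to $i_v\in\{0,1,2\}$, the type $[0],[1],[2]$ of $W^v$. With $P=\prod_{i=1}^v(1-Q^iT^{2(i-1)})$, the three cases give the three numerator terms: $1/P$, $Q^{v+1}T^{2v}(1+QT^2)/\bigl((1-Q^2T^2)P\bigr)$, and $Q^{2(v+1)}T^{4v}/\bigl((1-Q)(1-Q^2T^2)P\bigr)$.

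Your bookkeeping does not match the formula you are proving:
\begin{itemize}
\item Rows $0,\dots,v$ allow columns of height $v+1$, which $\prod_{i=1}^v$ does not cover. These are the forced prefactors $Q^{v+1}T^{2v}$ and $Q^{2(v+1)}T^{4v}$.
\item Rows of length $2$ above row $v$ carry no dimension, so they give $1/(1-Q^2)$, not $1/(1-Q^2T^2)$.
\item The $1$-tail carries $\lceil (N-M)/2\rceil$, not $\lceil (N-M+1)/2\rceil$. The factor $1/(1-Q^2T^2)$ comes from that tail.
\end{itemize}
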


Because the Poincar\'e polynomial for the Hopf link is not known in this case, we cannot yet verify the ORS conjecture. 

\subsubsection{Torus-like Stratification for $u=v, v-1$ or $v-2$.\\}

When $u=v, v-1$ or $v-2$ for the curve $x^uy^v=0$, we give a stratification of $\Hilb^n(\{x^uy^v=0\},0)$ for any arbitrary positive integers $u,v$. Each stratum is indexed by combinatorial data, see Definition \ref{wddp} as a weak diagonal partition, that records the types of the generators, and degrees of freedom of the generators. We prove the following theorem.

\begin{theorem}
When $u=v,v-1$ or $v-2$, each stratum of $\Hilb^n(\{x^uy^v=0\},0)$ is isomorphic to $(\mathbb{A}-pt)^{m_1} \times \mathbb{A}^{m_2}$, an algebraic torus times an affine space, where $m_1,m_2 \geq 0$ are some non-negative integers recording the degrees of freedom of the generators of the ideals in each stratum. 
\end{theorem}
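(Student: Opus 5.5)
We would prove the theorem by making the layer filtration and the non-reduced valuation from the introduction explicit. Every ideal $I\subset R=\mathbb{C}[[x,y]]/(x^uy^v)$ of finite colength is sliced into layers. For $0\le j<u$ we set
$$I_j=\{f\in \mathbb{C}[[y]] : x^j f+ (\text{terms of higher $x$-degree}) \in I\},$$
and we define the analogous layers in the $y$-direction. Since multiplication by $x$ and by $y$ preserves $I$, these layers form chains of ideals $I_0\subseteq I_1\subseteq\cdots$ in $\mathbb{C}[[y]]$, and likewise in $\mathbb{C}[[x]]$. Each layer is therefore principal, generated by a power $y^{a_j}$ (respectively $x^{b_k}$).

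The leading data $(a_j)$ and $(b_k)$, together with the information of which generators are "mixed" (have nonzero contributions on both branches near the node), is exactly what a weak diagonal partition (Definition \ref{wddp}) records. The strata are then the loci of ideals with fixed leading data. The first step is to check that this data is constant on locally closed subsets and that the total number of boxes equals the colength $n$. This gives a finite decomposition of $\Hilb^n(\{x^uy^v=0\},0)$.

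Next we would choose normal forms for generators. For each layer we pick a generator of the shape
$$g = x^jy^{a_j} + \sum c_{\alpha\beta}\,x^\alpha y^\beta,$$
where the sum runs over monomials that are not leading terms of other generators, i.e.\ boxes outside the diagram, reduced modulo $x^uy^v$. Using the valuation, the tail can be reduced until it is unique. The coefficients $c_{\alpha\beta}$ are then coordinates on the stratum, subject to the closure conditions: every $S$-pair or syzygy $x\cdot g$, $y\cdot g$ must reduce to zero modulo the other generators.

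The key structural claim is that these conditions are of only two kinds.

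1. Some conditions determine a coefficient as a polynomial in previously chosen ones, in triangular form with respect to the valuation order. These are eliminated.
2. The remaining conditions force one specific coefficient to be nonzero. This happens precisely for generators whose leading term sits on the diagonal where the two branches meet: such a generator must genuinely involve both $x^\alpha$ and $y^\beta$ parts, since otherwise the leading data would jump to a different weak diagonal partition.

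After the elimination, each stratum is then $\{c : c_{i}\neq 0 \text{ for } i\in S_1\}$, which is $(\mathbb{A}-pt)^{m_1}\times\mathbb{A}^{m_2}$ with $m_1=|S_1|$ and $m_2$ the number of free tail coefficients.

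The main obstacle is establishing this triangularity, namely that no genuinely nonlinear relation (such as $c_1c_2=c_3c_4$ between free parameters) survives. This is where the hypothesis $u\in\{v,v-1,v-2\}$ should enter. When $|u-v|\le 2$, the $x$-layers and $y$-layers are nearly balanced, so a mixed generator can interact with at most one or two neighbouring mixed generators before its product with $x$ or $y$ falls into $(x^uy^v)$. We would proceed by induction on the number of layers, peeling off the top layer: multiplying the ideal by $y$, or taking the quotient by the top layer, relates a stratum for $(u,v)$ to one for $(u-1,v)$ or $(u,v-1)$ and stays within the three allowed cases. We would show the fibre of this map is an affine space or a punctured line times an affine space. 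The degree-of-freedom count from the inductive step then yields $m_1$ and $m_2$ as the diagonal and off-diagonal box statistics of the weak diagonal partition.

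The first thing to verify is the case $u=v$ with two mixed generators in adjacent layers, since that is the configuration where a quadratic relation could appear.
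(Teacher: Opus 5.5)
There is a genuine gap. In fact there are two: the decomposition you stratify by is not the paper's, and the step that carries the whole theorem is left unproved.

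\textbf{The decomposition is not the weak diagonal partition.} Your layers $I_j$ and their $y$-analogues are, up to swapping the variables, the paper's Vertical data. They are the initial ideals of $I$ for two lexicographic-type local orders. A weak diagonal partition is built from something else: the diagonal filtration $J^k=I\cap(x^ky^k)$ and the valuations $v_{(k,k)}$.

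The two kinds of data genuinely differ. Take the stratum indexed by $[\circled{2},(2,2)],[\circled{2},(1,1)]$ in $\Hilb^4(\mathbb{C}^2,0)$. It is
$$\{(x^2+\alpha xy,\ y^2+\beta xy)+(x,y)^3 : (\alpha,\beta)\in\mathbb{A}^2\}.$$
Your $x$-data on this stratum is not constant:
\begin{itemize}
\item for $\alpha\neq0$ it is $(a_0,a_1,a_2,a_3)=(2,1,1,0)$, with initial ideal $(y^2,xy,x^3)$;
\item for $\alpha=0$ it is $(2,2,0)$, with initial ideal $(x^2,y^2)$.
\end{itemize}
The $y$-data behaves symmetrically in $\beta$. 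So your leading data cuts a single stratum into four pieces. The claimed identification with weak diagonal partitions is false. A complete argument along your lines would therefore describe a different, finer decomposition, and its $m_1,m_2$ would not be the diagonal-partition statistics in the statement.

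\textbf{The key step is missing, and the hypothesis is misplaced.} The triangularity claim (no surviving nonlinear relations) is the entire content of the theorem, and you give no argument for it.

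Your proposed reason for the hypothesis $u\in\{v,v-1,v-2\}$ cannot be right. You suggest that products of a mixed generator with $x$ or $y$ quickly fall into $(x^uy^v)$. But if $u=v$ and $2v\ge n$, every colength-$n$ ideal supported at the origin contains $(x,y)^n\ni x^vy^v$. Then $\Hilb^n(\{x^vy^v=0\},0)=\Hilb^n(\mathbb{C}^2,0)$, and the curve imposes nothing at all.

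In the paper, the torus-times-affine structure of every diagonal stratum comes from a layer-by-layer analysis:
\begin{itemize}
\item the three generator types of Lemma \ref{lemmagen};
\item the valuation lattices of Proposition \ref{valuation};
\item the transition rules of Propositions \ref{propij} and \ref{prop=};
\item the count in Proposition \ref{familyofideals}.
\end{itemize}
Each new layer contributes $\mathbb{A}^{\Delta_k}$, $\mathbb{A}^{\Delta_k+1}$, or $\mathbb{A}^{\Delta_k}\times(\mathbb{P}^1-2pts)$. The punctured line is the Head ratio $a:b$ of a type $\circled{1}$ generator, which is frozen exactly in the $\circled{1}_E\circled{1}$ case. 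Your intuition that ``mixed'' generators produce the torus factors is correct, but it needs this machinery behind it.

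The hypothesis on $u$ is then used only to show that $x^uy^v\in I$ restricts the shape of the innermost one or two layers. For example, when $u=v-1$ the layer $J^{v-1}$ is either trivial or $[\circled{2},(w,1)]$. Hence each stratum of $\Hilb^n(\{x^vy^v=0\},0)$ lies entirely inside or entirely outside $\Hilb^n(\{x^uy^v=0\},0)$. For smaller $u$ the same condition imposes equations on coefficients already chosen, as in the quadratic relations $\sum\sigma_{k_1}\sigma_{k_2}=0$ in the $u=2$ computation.

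Your peeling induction $(u,v)\to(u-1,v)$ or $(u,v-1)$ is not defined precisely enough to replace this analysis. It would also need the fibres to assemble into trivial products, not merely to be fibrewise isomorphic to $\mathbb{A}^m$ or $(\mathbb{A}-pt)\times\mathbb{A}^m$.
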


The degrees of freedom $m_1, m_2$ can be read off directly from the weak diagonal partition indexing the strata. See Theorem \ref{thmdiag} and Proposition \ref{familyofideals} for detailed combinatorial descriptions.

\subsubsection{Virtual Poincar\'e polynomial of the Hilbert scheme $\Hilb^n(\{x^uy^v=0\},0)$ for $u=v,v-1, v-2$.\\}

Following the stratification, we count the number of strata of each isomorphism type, and give formulas of the virtual Poincar\'e polynomials of $\Hilb^n(\{x^uy^v=0\},0)$ for $u=v,v-1,v-2$.

Let $\chi$ denote the virtual Poincar\'e polynomial of an algebraic variety (see \ref{vpp} for the definition). Let $Q$ be the variable recording the number of points of the Hilbert scheme, and $T$ recording the homological degree, we have the following theorem. If we write $\mathbb{L}$ instead of $T^2$, we obtain a motivic version: a formula for the Hilbert-zeta function .

\begin{theorem}

The virtual Poincar\'e polynomial of the Hilbert scheme, $\Hilb^n(\{x^vy^v=0\},0)$ is 

\begin{align*}
\sum_{n=0}^{\infty}\chi(\Hilb^n(\{x^vy^v=0\},0))Q^n = \sum_{k=1}^v \frac{F(k)}{\prod_{i=1}^{k}(1 - Q^iT^{2(i-1)})^2},
\end{align*}
where $F(k)$ is the generating function of the number of all the  strata whose indexing partitions are minimal weak diagonal partitions with exactly $k$ layers, given as follows
$$
F(k)=\begin{cases}
Q^2T^2-Q+1, & \text{if } k=1, \\

\mathbf{1}^T \cdot V_k, & \text{if } k =2,...,v,
\end{cases}
$$  

where $\mathbf{1} =\begin{bmatrix}1 & 1 \end{bmatrix}$ is a row vector of ones, and $\mathbf{1}^T \cdot V_k$ is the sum of the two entries of the vector $V_k$.

The vectors $V_k$ are given as $$V_k= \prod_{s=2}^{k} M_s \cdot V_1.$$

The initial vector $V_1$ is $$V_1=
\begin{bmatrix}
Q^{2k}T^{4(k-1)}(T^2-1) \\ Q^kT^{2(k-1)}
\end{bmatrix}.$$

And the matrix $M_s$ for $s=2, ..., k$ is 
$$M_s =
\begin{bmatrix}
Q^{2(k-s+1)}T^{4(k-s)}(T^2-1)+(1-Q^{k-s+1}T^{2(k-s)})^2 & Q^{k-s+1}T^{2(k-s)}(T^2-1)  \\ 
Q^{k-s+1}T^{2(k-s)} & Q^{2(k-s+1)}T^{4(k-s)}T^2
\end{bmatrix}.$$ 

\end{theorem}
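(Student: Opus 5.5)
The plan is to derive the formula from the stratification theorem for $u=v$ (each stratum is $(\mathbb{A}-pt)^{m_1}\times\mathbb{A}^{m_2}$), together with Theorem \ref{thmdiag} and Proposition \ref{familyofideals}, which read off $m_1,m_2$ from the indexing weak diagonal partition. The virtual Poincar\'e polynomial is additive over locally closed strata and multiplicative, and $\chi(\mathbb{A}-pt)=T^2-1$, $\chi(\mathbb{A})=T^2$. So
\[\sum_n \chi(\Hilb^n(\{x^vy^v=0\},0))Q^n=\sum_{\lambda} Q^{|\lambda|}(T^2-1)^{m_1(\lambda)}T^{2m_2(\lambda)},\]
with the sum over all weak diagonal partitions $\lambda$ indexing strata. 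The work is to organize this sum.

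First, sort the weak diagonal partitions by their number of layers $k\in\{1,\dots,v\}$. Each such partition should decompose as a \emph{minimal} weak diagonal partition with $k$ layers plus free enlargements. I expect these enlargements to be adding, independently in each of the two directions ($x$-side and $y$-side), columns of heights $1,\dots,k$. Each added column of height $i$ should contribute $Q^i$ points and $i-1$ affine degrees of freedom, so that $m_2$ increases by $i-1$. This is the analogue of the factor $\prod_{i=1}^v(1-Q^iT^{2(i-1)})^{-1}$ in the $xy^v$ case, now appearing squared because of the symmetry $u=v$. I would check this from Proposition \ref{familyofideals}: enlarging a layer adds generators whose new free coefficients are unconstrained, and it does not change the torus parameters. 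Granting this, the sum over $k$-layer partitions factors as $F(k)/\prod_{i=1}^k(1-Q^iT^{2(i-1)})^2$, where $F(k)$ is the weighted count of minimal ones.

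Second, compute $F(k)$ by a transfer-matrix recursion that builds a minimal $k$-layer partition one layer at a time, from the outermost layer $s=1$ inward to $s=k$. At each layer there are two states, recording whether the diagonal corner is \emph{split}, meaning a genuine torus parameter with factor $(T^2-1)$, or \emph{unsplit}, meaning a purely affine or zero-dimensional contribution.
\begin{itemize}
\item The initial vector $V_1$ lists the weights of the first layer in the two states. Its size is governed by layer index $k$, giving $Q^{2k}T^{4(k-1)}(T^2-1)$ and $Q^kT^{2(k-1)}$.
\item The matrix $M_s$ records the transition weights to layer $s$, whose size is $k-s+1$.
\item The diagonal entry $(1-Q^{j}T^{2(j-1)})^2$ in $M_s$, with $j=k-s+1$, is a correction from inclusion–exclusion. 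Columns of height $j$ were already counted in the denominators, so configurations in which layer $s$ differs from layer $s-1$ only by such columns must not be counted again. That is why this entry appears as an alternating sum rather than as a positive count.
\end{itemize}
Summing the two final states gives $\mathbf{1}^T V_k$. The case $k=1$ is checked directly. The minimal one-layer strata are the empty ideal data $1$, the split diagonal $Q^2(T^2-1)$, and the unsplit one. Correcting for overcount by $-Q+Q^2$ then gives $Q^2T^2-Q+1$.

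The main obstacle is the second step: proving that the minimal partitions and their $(m_1,m_2)$ statistics really decompose layer by layer with transitions depending only on a two-state memory. In particular, one must show that the freedom of the generators in layer $s$ depends on layer $s-1$ only through whether its corner is split. I would first verify this by explicitly enumerating the strata for $v=2$ using Theorem \ref{thmdiag}, and compare with the formula to fix the states. Then I would prove the transition rule in general by induction on $s$, using the filtration-by-layers description of the generators.
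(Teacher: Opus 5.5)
Your framework is the paper's. You sum $Q^n(T^2-1)^{m_1}T^{2m_2}$ over the strata of Theorem~\ref{thmdiag}. You shrink each $k$-layer weak diagonal partition to a minimal one and recover the rest by adding two ordinary partitions with at most $k$ rows on the arms; this gives the squared denominator, since $0$th-row boxes add to $n$ but not to $m_2$. You compensate a $\circled{1}_E\circled{1}$ transition by $(1-Q^{k-s+1}T^{2(k-s)})^2$ because those two rows must stay equal.

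The gap is that the step you call the main obstacle, producing $V_1$ and $M_s$, is the actual content of the theorem, and fitting states to $v=2$ data does not prove it. The missing idea is an explicit box count, organized from the innermost row outward. By Proposition~\ref{prop=}, in a minimal partition each row exceeds the next inner row by a forced amount $c$, with shaded boxes counting as halves:
\begin{itemize}
\item $c=2$ for $\circled{1}_U\circled{1}$ and for $\circled{2}\circled{2}$;
\item $c=0$ for $\circled{1}_E\circled{1}$;
\item $c=1$ in both mixed cases: an outer $\circled{2}$ fills the two half boxes of an inner $\circled{1}$, and an outer $\circled{1}$ adds two half boxes to an inner $\circled{2}$.
\end{itemize}
Since the outer rows are minimal too, this increment reappears in each of the $k-s+1$ rows from the current row to the $0$th. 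So the step weighs $Q^{c(k-s+1)}T^{2c(k-s)}$, times $T^2-1$ for a new independent $\circled{1}$ head and $T^2$ for a $\circled{2}\circled{2}$ pair. These are exactly the entries of $M_s$. The same count for the innermost row ($c=2$ with a torus factor, or $c=1$) gives $V_1$.

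The provisional set-up you describe would not reproduce this, for three reasons.
\begin{itemize}
\item You attach the size-$k$ vector $V_1$ to the outermost layer and go inward. But the factor $T^{2(k-1)}$ in $V_1$ cannot come from outermost-row boxes, which carry no dimension. $V_1$ is the innermost row, whose boxes are replicated in all $k$ rows; the outermost row is the last factor $M_k$, of size $1$.
\item Your two states must be the row types $\circled{1}/\circled{2}$, not the presence of a torus parameter. A $\circled{1}$ row reached by an $E$ transition has no factor $T^2-1$. Yet a following $\circled{2}$ costs only $Q^{k-s+1}T^{2(k-s)}$ (filling the half boxes), not the $\circled{2}\circled{2}$ weight. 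A split/unsplit bookkeeping would therefore misplace the $E$-term.
\item In the $k=1$ check there is no overcount. The extra $1$ is the zero-layer stratum $I=(1)$, and writing $1=(1-Q)^2/(1-Q)^2$ gives $F(1)=(1-Q)^2+Q^2(T^2-1)+Q=Q^2T^2-Q+1$. Your correction $-Q+Q^2$ yields $1+Q^2T^2$ instead.
\end{itemize}
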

\begin{theorem}
The virtual Poincar\'e polynomial of $\Hilb^n(\{x^{v-1}y^v=0\},0)$ is 
\begin{align*}
\sum_{n=0}^{\infty}\chi(\Hilb^n(\{x^{v-1}y^v=0\},0))Q^n =\sum_{k=1}^v \frac{F_{v-1}(k)}{\prod_{i=1}^{k}(1 - Q^iT^{2(i-1)})^2}.
\end{align*}

The virtual Poincar\'e polynomial of $\Hilb^n(\{x^{v-2}y^v=0\},0)$ is \begin{align*}
\sum_{n=0}^{\infty}\chi(\Hilb^n(\{x^{v-2}y^v=0\},0))Q^n =\sum_{k=1}^v \frac{F_{v-2}(k)}{\prod_{i=1}^{k}(1 - Q^iT^{2(i-1)})^2}.
\end{align*}

The polynomials $F_{v-1}(k)$, $F_{v-2}(k)$ are given as
$$
F_{v-1}(k)=\begin{cases}
F(k)
 & \text{if } k \leq v-1, \\

\mathbf{1}^T \cdot (\prod_{s=2}^{k} M_s \cdot W_1),  & \text{if } k = v,
\end{cases}
$$

$$
F_{v-2}(k)=\begin{cases}
F(k)
 & \text{if } k \leq v-2, \\

\mathbf{1}^T \cdot (\prod_{s=3}^{k} M_s \cdot D_1),  & \text{if } k = v-1,\\

\mathbf{1}^T \cdot (\prod_{s=2}^{k} M_s \cdot D_2),  & \text{if } k = v.
\end{cases}
$$

And the vectors $$W_1=
\begin{bmatrix}
0 \\ Q^kT^{k-1}(1-Q^kT^{2(k-1)})
\end{bmatrix}.$$

$$D_1=
\begin{bmatrix}
Q^{2(k-1)}T^{4(k-2)}(T^2-1)(1-Q^{k-1}T^{2(k-2)}) \\
(Q^{k-1}T^{2(k-2)}+Q^{2(k-1)}T^{4(k-2)})(1-Q^{k-1}T^{2(k-2)}) 
\end{bmatrix}, $$

$$D_2=
\begin{bmatrix}
Q^{k+2(k-1)}T^{2(k-1+2(k-2))}(T^2-1)(1-Q^kT^{2(k-1)})(1-Q^{k-1}T^{2(k-2)}) \\ Q^{k+2(k-1)}T^{2(k-1+2(k-2))}(1-Q^kT^{2(k-1)})(1-Q^{k-1}T^{2(k-2)})
\end{bmatrix}.$$

\end{theorem}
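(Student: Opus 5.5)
The plan is to run the same stratification-and-counting argument that gives the formula for $x^vy^v=0$, and to isolate exactly where lowering $u$ from $v$ to $v-1$ or $v-2$ changes the combinatorics. By the torus-like stratification theorem, each stratum of $\Hilb^n(\{x^uy^v=0\},0)$ with $u\in\{v-1,v-2\}$ is isomorphic to $(\mathbb{A}-pt)^{m_1}\times\mathbb{A}^{m_2}$, with $m_1,m_2$ read off from its indexing weak diagonal partition (Theorem \ref{thmdiag}, Proposition \ref{familyofideals}). The virtual Poincar\'e polynomial is additive over locally closed stratifications and multiplicative on products, with $\chi(\mathbb{A})=T^2$ and $\chi(\mathbb{A}-pt)=T^2-1$. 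Hence
\[
\sum_n \chi(\Hilb^n)Q^n=\sum_{\lambda} Q^{|\lambda|}(T^2-1)^{m_1(\lambda)}T^{2m_2(\lambda)},
\]
where $\lambda$ runs over the admissible weak diagonal partitions.

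I would group the partitions by their number of layers $k\le v$ and factor out the free parts. Every admissible partition with $k$ layers should be obtained from a unique minimal one by adding full columns in each of the two arms. In the $x^vy^v$ computation these additions in row $i\le k$ contribute the factor $(1-Q^iT^{2(i-1)})^{-2}$, one for each branch. This gives the denominator $\prod_{i=1}^{k}(1-Q^iT^{2(i-1)})^2$, and the remaining numerator $F_u(k)$ is the generating function of the minimal partitions.

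The key observation is that the exponent $u$ only constrains layers indexed near $\min(u,v)$. For $k\le u$, the minimal $k$-layer partitions and their statistics $(m_1,m_2)$ are therefore literally the same as for $u=v$, so $F_u(k)=F(k)$. I would verify this by checking that the relations $x^uy^v=0$ do not interact with generators of the first $k\le u$ layers, which should follow from the filtration and valuation setup.

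For the new layers I would reuse the transfer-matrix recursion that defines $V_k=\prod M_s V_1$. In that recursion one builds a minimal partition layer by layer, and the two coordinates of the state vector record whether the current layer's generator carries a torus parameter (the $T^2-1$ weight) or is of affine type. Only the initial data changes at the top layers.

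\textbf{Case $u=v-1$, $k=v$.} The top layer cannot be of torus type, because the $x$-branch has thickness $v-1$. This kills the first entry, giving $0$, and multiplies the second entry by $(1-Q^kT^{2(k-1)})$, since one of the two squared denominator factors in row $k$ is not actually present. Together these give $W_1$.

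\textbf{Case $u=v-2$.} Two top layers are affected. For $k=v-1$ the recursion starts at $s=3$ with $D_1$. For $k=v$ it starts with $D_2$, which encodes the forced shapes of the two top layers together with the missing denominator factors $(1-Q^kT^{2(k-1)})(1-Q^{k-1}T^{2(k-2)})$.

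The main obstacle is this boundary analysis: enumerating exactly the minimal weak diagonal partitions whose top one or two layers exceed the $x$-thickness, and computing their $(m_1,m_2)$ and weights to produce $W_1,D_1,D_2$. I would do it by explicitly classifying the possible generator types in those layers using Proposition \ref{familyofideals}. As a check, I would compare the $v=2$, $u=1$ case against the $xy^v$ formula proved earlier, since that case must reproduce it.
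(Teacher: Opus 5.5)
Your overall strategy is the paper's. You keep the diagonal strata and the transfer matrices $M_s$, observe that $F_u(k)=F(k)$ for $k\le u$ (since $x^ky^k\in I$ with $k\le u$ already gives $x^uy^v\in I$), and change only the initial vectors for the top layers. For $u=v-1$ your reasoning is essentially the paper's: $x^{v-1}y^v\in J^{v-1}$ has $v_{(v-1,v-1)}$-value $(\infty,1)$, which forces $J^{v-1}=[\circled{2},(w,1)]$ with the rigid generator $x^{v-1}y^v$. That gives the zero first entry and the factor $(1-Q^kT^{2(k-1)})$ in $W_1$. One small correction: the state coordinates record type $\circled{1}$ versus type $\circled{2}$, not ``torus versus affine'', because a $\circled{1}_E\circled{1}$ row is type $\circled{1}$ with no torus factor. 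Your proposed check against the $xy^v$ formula at $v=2$ is worth doing. It returns $(1-Q+Q^3T^4)/((1-Q)^2(1-Q^2T^2))$ only if the second entry of $W_1$ is $Q^kT^{2(k-1)}(1-Q^kT^{2(k-1)})$, as in the appendix code, not the printed $Q^kT^{k-1}(\cdots)$.

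The gap is the case $u=v-2$. There you only restate $D_1,D_2$, while the paper's proof consists exactly of the case analysis you defer, and one case breaks the recipe you propose. Since $x^{v-2}y^v\in J^{v-2}$ has value $(\infty,2)$, layer $v-2$ must be $[\circled{2},(l,1)]$, $[\circled{2},(l,2)]$ or $[\circled{1},(l,1)]$. One then splits on whether $x^{v-1}y^{v-1}\in I$.

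\begin{itemize}
\item If $x^{v-1}y^{v-1}\in I$, the three options, each with the vertical arm of layer $v-2$ frozen (factor $1-Q^{k-1}T^{2(k-2)}$), give the entries of $D_1$, a $(v-1)$-layer contribution.
\item If $x^{v-1}y^{v-1}\notin I$, layer $v-1$ is $[\circled{2},(w,1)]$. Layer $v-2$ is then either $[\circled{1},(l,1)]$ or $[\circled{2},(l,2)]$.
\begin{itemize}
\item For $[\circled{1},(l,1)]$, requiring $y\cdot E\in I$ strengthens the inequality to $l\ge w+1$. This is a pure shape condition.
\item For $[\circled{2},(l,2)]$ with $l\ge w+1$, the constraint is not a shape condition. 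Here $E_2=x^{v-2}y^v+c\,x^{v-2+w}y^{v-1}$, where $c$ is a free coordinate of the corresponding stratum of $\Hilb^n(\{x^vy^v=0\},0)$, and $x^{v-2}y^v\in I$ forces $c=0$.
\end{itemize}
\end{itemize}

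So in the last configuration the stratum is only partly contained in $\Hilb^n(\{x^{v-2}y^v=0\},0)$, and $m_2$ is one less than what you would read off Theorem \ref{thmdiag}. This is why the second entry of $D_2$ lacks the extra $T^2$ of a $\circled{2}\circled{2}$ transition. Encoding only ``forced shapes plus missing denominator factors'' would give the wrong $D_2$.

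With this analysis in place, the analogous check at $v=3$ against the $xy^3$ formula does work (through $Q^7$, where the $D_2$ term first appears). It requires the conventions of the appendix code: $D_1$ and the $M_s$ evaluated at $k=v$, and the product with $D_2$ starting at $s=3$.
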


The Appendix section contains a Macaulay2 code that computes all of the virtual Poincar\'e polynomials for $u=v,v-1,v-2$ and any arbitrary $v$ for $u, v \geq 1$.

\subsubsection{New stratification of the punctual Hilbert scheme of points on $\mathbb{C}^2$.\\}

When $u=v$ are both large enough such that $2v \geq n$, $\Hilb^n(\{x^vy^v=0\},0)$ is isomorphic to the punctual Hilbert scheme of $n$ points on the affine plane $\mathbb{C}^2$, $\Hilb^n(\mathbb{C}^2,0)$. So we also obtain a new stratification of $\Hilb^n(\mathbb{C}^2,0)$, not coming from the classical Bialynicki-Birula stratification, along with a new formula for its virtual Poincar\'e polynomial. 

\begin{theorem}

The Hilbert scheme $\Hilb^n(\mathbb{C}^2,0)$ has a stratification where each stratum is indexed by a weak diagonal partition of $n$ boxes, denoted as $\Sigma_{([\circled{s}_k, (i_k,j_k)])}$ where $ 0 \leq k \leq \lceil \frac{n}{2} \rceil -1$, and each stratum is an algebraic torus times an affine space,

$$\Sigma_{([\circled{s}_k, (i_k,j_k)])} \cong (\mathbb{A}-pt)^{m_1} \times \mathbb{A}^{m_2},$$ for some non-negative integers $m_1,m_2$. 

The Hilbert scheme $\Hilb^n(\mathbb{C}^2,0)$ has the following virtual Poincar\'e polynomial
\begin{align*}
&\sum_{n=0}^{\infty}\chi(\Hilb^n(\mathbb{C}^2,0))Q^n  = \sum_{k=1}^\infty \frac{F(k)}{\prod_{i=1}^{k}(1 - Q^iT^{2(i-1)})^2}.
\end{align*}

\end{theorem}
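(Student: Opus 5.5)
The plan is to reduce the statement to the case $u=v$ of the earlier theorems by identifying the two punctual Hilbert schemes, and then pass to the limit $v\to\infty$ in the generating function.

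\textbf{Step 1: identification of the Hilbert schemes.} Let $I\subset\mathbb{C}[[x,y]]$ be an ideal of colength $n$ supported at the origin. Then $\mathfrak m^n\subset I$, where $\mathfrak m=(x,y)$. Indeed, the chain $\mathfrak m^k+I$ stabilizes after at most $n$ steps because the colength is $n$, and by Nakayama the stable value equals $I$. If $2v\geq n$, then $x^vy^v\in\mathfrak m^{2v}\subset\mathfrak m^n\subset I$. Hence every point of $\Hilb^n(\mathbb{C}^2,0)$ lies on the closed subscheme $\{x^vy^v=0\}$, and conversely $\Hilb^n(\{x^vy^v=0\},0)$ is the closed subscheme of ideals containing $x^vy^v$. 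To make this an isomorphism of schemes rather than only a bijection of points, I would note that the containment $\mathfrak m^n\subset I$ holds in families over any base: the universal quotient is a rank $n$ module on which $\mathfrak m$ acts nilpotently of order at most $n$. This gives the isomorphism $\Hilb^n(\{x^vy^v=0\},0)\cong\Hilb^n(\mathbb{C}^2,0)$ for $v\geq n/2$.

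\textbf{Step 2: transport of the stratification.} Transport the stratification of Theorem \ref{thmdiag} and Proposition \ref{familyofideals} for $u=v$ along this isomorphism. Each stratum is then $(\mathbb{A}-pt)^{m_1}\times\mathbb{A}^{m_2}$ and is indexed by a weak diagonal partition $([\circled{s}_k,(i_k,j_k)])$ with $n$ boxes. The layer index $k$ satisfies $0\le k\le\lceil n/2\rceil-1$, because each layer of the filtration contributes at least two boxes (one on each branch) apart from possibly the last. I also need to check that the combinatorial constraint $v$ imposes on weak diagonal partitions is vacuous once $2v\geq n$. Concretely, the number of layers is at most $v$, and each layer has width constraints governed by $u=v$. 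Both are automatic for $n\le 2v$ boxes, so the strata are indexed by all weak diagonal partitions of $n$.

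\textbf{Step 3: the generating function.} Fix $n$ and choose any $v\geq n/2$. The coefficient of $Q^n$ in $\sum_m\chi(\Hilb^m(\mathbb{C}^2,0))Q^m$ equals that of $\sum_{k=1}^v F(k)/\prod_{i=1}^k(1-Q^iT^{2(i-1)})^2$, by the $u=v$ virtual Poincar\'e theorem. It therefore suffices to show two things.

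First, the summand for index $k$ has $Q$-adic order growing with $k$, roughly $\ge k$ or $\ge 2k-1$. This makes the infinite sum converge formally, and the terms with $k>v$ do not affect the coefficient of $Q^n$.

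Second, $F(k)$ and the matrices $M_s$, $V_1$ must be interpreted so that they do not depend on $v$. In the displayed formula they depend only on $k$ and $s$, and I would verify this against the derivation.

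For the first point, $V_1$ has $Q$-order $k$, and each $M_s$ contributes order at least $\min(0,\dots)$. The lower-right entry contributes order $2(k-s+1)$, while the other entries contribute order $0$ via the $1$ in $(1-\cdots)^2$. A crude bound is therefore order$(F(k))\ge k$, which suffices since only $k\le n$ then matter. Truncation then gives equality of all coefficients.

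The main obstacle is Step 2. Specifically, I must confirm that the strata and degree counts $m_1,m_2$ for $u=v$ are genuinely independent of $v$ once $2v\ge n$, so that no layer is cut off by the equation $x^vy^v$. I would check this by examining where $v$ enters the case analysis in Proposition \ref{familyofideals}. It should enter only through the condition that generators in layer $k\ge v$ be killed, which cannot occur within $n\le 2v$ boxes.
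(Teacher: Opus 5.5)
Your proposal is correct and follows the same route as the paper. You identify $\Hilb^n(\mathbb{C}^2,0)$ with $\Hilb^n(\{x^vy^v=0\},0)$ once $2v\ge n$, transport the $u=v$ stratification of Theorem~\ref{thmdiag}, and sum the $u=v$ generating function over all numbers of rows $k$.

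The genuine difference is how you prove the identification. The paper argues combinatorially: every layer of a weak diagonal partition except the innermost carries at least two boxes, so a colength-$n$ ideal with $n\le 2v$ has at most $v$ rows and hence contains $x^vy^v$. You instead use the standard fact $\mathfrak m^n\subset I$ for colength $n$, via Nakayama on the chain $\mathfrak m^k+I$, which gives $x^vy^v\in\mathfrak m^{2v}\subset I$ directly. Your version is more elementary and works in families. It also does not lean on the diagonal filtration, which the paper sets up (Lemma~\ref{lemmagen}, base case) only for ideals already known to contain $x^vy^v$. The paper's version yields the bound $k\le\lceil n/2\rceil-1$ on the layer index in the same stroke; you recover that bound separately in Step 2 by the same box count.

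Your Step 3 is more careful than the paper's one-line proof, but it has one slip. With only $v\ge n/2$, the crude bound $\operatorname{ord}_Q\ge k$ does not exclude contributions to the coefficient of $Q^n$ from the summands with $v<k\le n$. There are two easy fixes:
\begin{enumerate}
\item Choose $v\ge n$. This is legitimate, since the identification holds for every $v\ge n/2$.
\item Use the sharper bound $\operatorname{ord}_Q\ge 2k-1$. It holds because, for $k\ge 2$, the $k$-th summand is the generating function of strata with exactly $k$ rows, and each such partition has at least $2k-1$ boxes.
\end{enumerate}
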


As a corollary, we have the following combinatorial identity, which specializes to the classical Durfee square formula for generating function of integer partitions (\ref{durfeeclassical}) by setting $T=1$. 

\begin{corollary}
We have the combinatorial identity coming from the virtual Poincar\'e polynomial of $\Hilb^n(\mathbb{C}^2,0)$. 

\begin{align*}
\sum_{k=1}^\infty \frac{F(k)}{\prod_{i=1}^{k}(1 - Q^iT^{2(i-1)})^2}= \frac{1}{\prod_{i=1}^{\infty}(1 - Q^iT^{2(i-1)})}.
\end{align*}
\end{corollary}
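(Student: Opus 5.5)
The identity will follow by computing the virtual Poincaré polynomial of $\Hilb^n(\mathbb{C}^2,0)$ in two independent ways and comparing.

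The left-hand side is supplied by the preceding theorem. It is the generating function of $\chi(\Hilb^n(\mathbb{C}^2,0))$ obtained by summing, over the weak diagonal partition strata $\Sigma_{([\circled{s}_k,(i_k,j_k)])}$, the virtual Poincaré polynomials $(T^2-1)^{m_1}T^{2m_2}$. Since the virtual Poincaré polynomial is additive on stratifications and multiplicative on products, this sum equals $\sum_n \chi(\Hilb^n(\mathbb{C}^2,0))Q^n$.

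For the right-hand side I will use the classical Ellingsrud–Strømme (equivalently Bialynicki-Birula) affine paving of $\Hilb^n(\mathbb{C}^2,0)$ cited in the introduction.

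1. The cells are indexed by partitions $\lambda$ of $n$, via the torus-fixed monomial ideals. Each cell is an affine space.
2. For a suitable generic one-parameter subgroup, the cell attached to $\lambda$ has dimension $n-\ell(\lambda)$, where $\ell(\lambda)$ is the number of parts (or number of columns, depending on the chosen weight). This is the standard result of Ellingsrud–Strømme, and it agrees with the Betti numbers of Göttsche's formula for the punctual Hilbert scheme.
3. Since the space is paved by affine cells, $\chi$ is the Poincaré polynomial, namely $\sum_{\lambda\vdash n}T^{2(n-\ell(\lambda))}$.
4. Sum over $n$ by grouping the parts of $\lambda$ by size. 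A part of size $i$ contributes $Q^i$ to the point count and $T^{2(i-1)}$ to the dimension. This gives $\prod_{i\ge1}(1-Q^iT^{2(i-1)})^{-1}$.

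As a consistency check, this product is also the $v\to\infty$ limit of the prefactor $\prod_{i=1}^{v}(1-Q^iT^{2(i-1)})^{-1}$ in the $xy^v$ theorem.

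Equating the two computations of the same generating function gives the identity. The $T=1$ specialization recovers the Durfee square formula, because setting $T=1$ in $F(k)$ should give $Q^{k^2}$, counting the minimal weak diagonal partitions with $k$ layers.

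The only real obstacle is making sure the two sides match degree by degree: the stabilization hypothesis $2v\ge n$ and the truncation $k\le\lceil n/2\rceil$ must not drop any strata. For fixed $n$, only strata with $k\le\lceil n/2\rceil$ contribute to the coefficient of $Q^n$ on the left. The terms with larger $k$ have $Q$-adic order exceeding $n$, since $F(k)$ is divisible by $Q^{k}$ at least, and in fact by roughly $Q^{2k-1}$. So the infinite sum over $k$ converges $Q$-adically, and its $Q^n$ coefficient is exactly the finite stratification count. Verifying this lowest-order claim from the explicit form $V_k=\prod M_s V_1$ is the step I would check carefully: $V_1$ carries $Q^{k}$ or $Q^{2k}$, and each $M_s$ has entries of nonnegative $Q$-order.
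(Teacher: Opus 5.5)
Your proposal is correct and follows the paper's route. The paper states the corollary without a separate proof: it is the preceding theorem's stratification formula set equal to the classical Ellingsrud--Str{\o}mme generating function $\prod_{i\ge1}(1-Q^iT^{2(i-1)})^{-1}$ for $\Hilb^n(\mathbb{C}^2,0)$, which is also the product used for the sanity check in the appendix.

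Your added $Q$-adic convergence check is sound for $k\ge2$: the $Q$-order of $F(k)$ is at least $2k-1$, which is the minimal box count of a $k$-row weak diagonal partition. The one caveat is $k=1$, where $F(1)$ has constant term $1$ (it also counts the empty partition) and specializes at $T=1$ to $Q^2-Q+1$ rather than $Q^{1}$.
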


\subsubsection{Predictions on the colored knot homology}
Because the colored knot homology is only known for $u=1$ and $v$ arbitrary, we are only able to verify the colored ORS conjecture in very limited cases. But our results provides predictions of the colored knot homology of the $(\Sym^{(u)}, \Sym^{(v)})$-colored knot homology of the Hopf link, when $u=2,v-2,v-1$ or $v$, for any arbitrary $v$ such that $u, v \geq 1$.

\textbf{Acknowledgement}
The author thanks Joachim Jelisiejew, Eugene Gorsky, Luke Conners, Joshua Turner, Alexei Oblomkov, Laurent Evain, Anton Mellit, Michel Granger, and Mircea Mustață for helpful discussions.

\section{Khovanov-Rozansky knot homology of an algebraic link}

The Khovanov-Rozansky link homology is a link invariant and the reader should see \cite{GKS} for the introduction, the definitions and properties, and see \cite{HM},\cite{Con} for the definition of the row-colored Khovanov-Rozansky homology.

In this paper, we only introduce the necessary setup and the recursive formulas for computing the row-colored Khovanov-Rozansky link homology of a torus link.

We restate the theorems that computes the $\Sym^{(v)}$ and $ \Sym^{(1)}$ colored link homology of the Hopf link $T(2,2)$, given by Hogancamp, Mellit \cite{HM} and Conners \cite{Con}. 

\begin{theorem}\cite{HM}\cite{Con}
\label{krhomology}
Let $t \in \{0, 1\}^{m+l}$ and $w \in \{0, 1\}^{n+l}$ be binary sequences with $|t|= |w|= l.$ Here $|t| = t_1 +...+ t_{m+l}$ is the number of $1$'s. Let $\mathbf{p}(t, w) \in \mathbb{N}[Q, T, T^{-1}, a, (1-Q)^{-1}]$ denote the unique family of polynomials, indexed by such pairs $(t,w)$ of binary sequences, satisfying
\begin{enumerate}

\item $\mathbf{p}(\emptyset,0)= (\frac{1+a}{1-Q})^n$ and $\mathbf{p}(0, \emptyset)=(\frac{1+a}{1-Q})^m$. 
\item $\mathbf{p}(t1, w1) = (T^l + a)\mathbf{p}(t, w),$ where $|t| = |w| = l.$
\item $\mathbf{p}(t0, w0) = T^{-l}\mathbf{p}(1t, 1w) + QT^{-l}\mathbf{p}(0t, 0w),$ where $|t| = |w| = l.$
\item $\mathbf{p}(t0, w1) = \mathbf{p}(t, 1w).$
\item $\mathbf{p}(t1, w0) = \mathbf{p}(1t, w).$
\end{enumerate}

The Poincar\'e polynomial of the $\Sym^{(v)}$ and $\Sym^{(1)}$ colored link homology of the torus link $T(m_1,m_2)$, denoted by $P(T(m_1,m_2),(\Sym^{(v)}, \Sym^{(1)}))$, is the following, where $d $ is the greatest common divisor of $m_1, m_2$.
\begin{align*}
&P(T(m_1,m_2),(\Sym^{v}, \Sym^{1}))\\
&=\prod_{i=1}^{v} \frac{1}{1-QT^{1-i}}(\mathbf{p}(1^{v} 0^{(m_1d^{(-1)} (d - 1) + v (m_1d^{(-1)} - 1))},  1^{q} 0^{(m_2d^{(-1)} (d - 1) + v (m_2d^{(-1)} - 1)})).\\    
\end{align*}
\end{theorem}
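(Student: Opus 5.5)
This statement is imported from \cite{HM} and \cite{Con}, and within this paper I would simply invoke it. If I had to justify it, the argument splits into two independent parts: a purely combinatorial claim that the relations (1)--(5) determine a unique family $\mathbf{p}(t,w)$, and a topological claim that the torus-link invariant with $\Sym^{(v)}$ and $\Sym^{(1)}$ colorings is the stated specialization of that family. I would do them in that order.

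\textbf{Uniqueness of the family.} I would argue by induction on a complexity that decreases under every rule. A natural candidate is the pair consisting of the total length $m+n+2l$ (lexicographically first), together with the number of $0$'s lying to the right of the last $1$.
\begin{itemize}
\item Rules (2), (4) and (5) strictly shorten the total length.
\item Rule (3) preserves the length but moves a trailing $0$ to the front.
\item Repeated use of (3) must therefore eventually expose a trailing $1$ on some side, so that (2), (4) or (5) applies, or else exhaust one of the sequences, reaching the base cases (1).
\end{itemize}
The delicate point is that the second term of (3), $\mathbf{p}(0t,0w)$, has the same length and the same number of $1$'s. So I would check that the process of rotating $0$'s cannot cycle; I expect this to require the power series convergence in $Q$ and cannot be settled by the induction alone. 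The coefficient $Q$ gives $Q$-adic convergence: in $\mathbb{N}[Q,T^{\pm1},a,(1-Q)^{-1}]$ the equation has the form $\mathbf{p}=A+Q\,\mathbf{p}'$ with a cyclic orbit, and it can be solved uniquely by geometric series. This is where the factor $(1-Q)^{-1}$ in the coefficient ring comes from.

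\textbf{Identification with link homology.} For the second part I would follow Hogancamp--Mellit.
\begin{itemize}
\item Represent $T(m_1,m_2)$ as the closure of a braid on $m_1$ strands, and interpret $\mathbf{p}(t,w)$ as the Poincar\'e series of the closure of a Soergel-bimodule complex. In this complex the $1$'s mark strands carrying Hogancamp's categorified projector, i.e.\ the infinite full twist, and the $0$'s mark ordinary strands.
\item Check that each rule is a skein-type relation for these complexes.
\begin{itemize}
\item Rule (2) is Markov absorption of a projector strand, which contributes $T^l+a$.
\item Rule (3) is the cone decomposition of a crossing adjacent to the projector.
\item Rules (4) and (5) are conjugation (sliding) moves.
\item Rule (1) is the unlink normalization.
\end{itemize}
\item Pass to $\Sym^{(v)}$ coloring as in \cite{Con}: cable a component by $v$ parallel strands and insert the symmetrizing projector, realized as a limit of twists. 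This gives the prefactor $\prod_{i=1}^{v}(1-QT^{1-i})^{-1}$ from the colored unknot and the block $1^{v}$ of initial $1$'s.
\item The lengths of the $0$-blocks are then obtained by counting the crossings of the $d$-component torus link after cabling.
\end{itemize}

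\textbf{Main obstacle.} I expect the hardest step to be this last bookkeeping. One has to verify that the colored Hopf link $T(2,2)$ produces exactly the binary sequences with exponents $m_id^{-1}(d-1)+v(m_id^{-1}-1)$, including the correct grading shifts. I would check this against the uncolored case $v=1$ of \cite{HM} first, and then compare with the recursion in \cite{Con}.
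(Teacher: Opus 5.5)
Your proposal matches the paper: the theorem is quoted from \cite{HM} and \cite{Con} without proof and is only applied in the specialization $m_1=m_2=d=2$, $a=0$, reading $1^{q}$ in the second argument as $1^{v}$, as in the paper's own evaluation $\mathbf{p}(1^v0,1^v0)$; so invoking it is exactly what the paper does. Your supplementary sketch is fine in outline, with two small corrections: the only self-referential instance of rule (3) is the all-zero case $l=0$, where $\mathbf{p}(0^{m+1},0^{n+1})=(1-Q)^{-1}\mathbf{p}(10^m,10^n)$, while for $l\geq 1$ your trailing-zero count strictly drops in both terms, so no $Q$-adic limit is needed; and the $0$-block lengths count strands of the cabled link (the first sequence has total length $(v+d-1)m_1/d$), not crossings.
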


Now using the recursion of $\mathbf{p}$ above, we compute the Poincar\'e polynomial of the Hopf link (setting $a=0$ in the recursion).
\begin{corollary}
\label{Poincar\'ehofp}
When $m_1=m_2=d=2$, up to a factor $\prod_{i=1}^{v-1}T^{i}$, the Poincar\'e polynomial of the homology of the Hopf link is 
\begin{align*}
&P(T(2,2),(\Sym^{v}, \Sym^{1}),a=0)=(1+\frac{QT^{-v}}{1-Q})\prod_{i=1}^{v} \frac{1}{1-QT^{1-i}}. 
\end{align*}
\end{corollary}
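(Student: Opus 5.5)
The plan is to specialize the formula of Theorem \ref{krhomology} to $m_1=m_2=d=2$ and then evaluate $\mathbf{p}$ directly with the recursion, setting $a=0$ throughout. Since $m_1d^{-1}=m_2d^{-1}=1$, each exponent of $0$ in the theorem becomes $1\cdot(2-1)+v(1-1)=1$. So the quantity to compute is
$$P(T(2,2),(\Sym^{v},\Sym^{1}),a=0)=\prod_{i=1}^{v}\frac{1}{1-QT^{1-i}}\,\mathbf{p}(1^v0,\,1^v0).$$
I read the second argument $1^q0^{\cdots}$ in the theorem as $1^v0^{\cdots}$, as the constraint $|t|=|w|$ forces. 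It then only remains to show $\mathbf{p}(1^v0,1^v0)=T^{v(v-1)/2}\bigl(1+\frac{QT^{-v}}{1-Q}\bigr)$, since $T^{v(v-1)/2}=\prod_{i=1}^{v-1}T^i$ is exactly the stated normalizing factor.

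\textbf{Step 1 (peel the trailing zero).} Apply rule (3) with $t=w=1^v$, so $l=v$:
$$\mathbf{p}(1^v0,1^v0)=T^{-v}\,\mathbf{p}(1^{v+1},1^{v+1})+QT^{-v}\,\mathbf{p}(01^v,01^v).$$

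\textbf{Step 2 (strip trailing ones).} Rule (2) at $a=0$ reads $\mathbf{p}(t1,w1)=T^{|t|}\mathbf{p}(t,w)$. Iterating it gives:
\begin{itemize}
\item $\mathbf{p}(1^{v+1},1^{v+1})=T^{v}T^{v-1}\cdots T^{0}\,\mathbf{p}(\emptyset,\emptyset)=T^{v(v+1)/2}$, using $\mathbf{p}(\emptyset,\emptyset)=1$ from rule (1) with $m=n=0$;
\item $\mathbf{p}(01^v,01^v)=T^{v(v-1)/2}\,\mathbf{p}(0,0)$.
\end{itemize}

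\textbf{Step 3 (base value $\mathbf{p}(0,0)$).} Apply rule (3) with $t=w=\emptyset$, so $l=0$. This gives $\mathbf{p}(0,0)=\mathbf{p}(1,1)+Q\,\mathbf{p}(0,0)$. Rule (2) gives $\mathbf{p}(1,1)=T^0\mathbf{p}(\emptyset,\emptyset)=1$. Solving yields $\mathbf{p}(0,0)=\frac{1}{1-Q}$, which agrees with rule (1) at $a=0$.

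\textbf{Step 4 (combine).} Substituting Steps 2 and 3 into Step 1 gives
$$T^{-v}T^{v(v+1)/2}+QT^{-v}\frac{T^{v(v-1)/2}}{1-Q}=T^{v(v-1)/2}\Bigl(1+\frac{QT^{-v}}{1-Q}\Bigr).$$
Multiplying by $\prod_{i=1}^v(1-QT^{1-i})^{-1}$ and dropping the factor $\prod_{i=1}^{v-1}T^i$ gives the claimed formula.

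The main obstacle is the bookkeeping rather than any real difficulty. One point is the value of $l$ at each application of the rules: it must be the number of $1$'s in the prefix before the last letter. The other is justifying the base cases at $a=0$, namely $\mathbf{p}(\emptyset,\emptyset)=1$ and the self-referential relation for $\mathbf{p}(0,0)$. The uniqueness clause of Theorem \ref{krhomology} justifies solving that relation, and the answer agrees with rule (1).
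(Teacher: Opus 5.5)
Your proposal is correct and is the paper's own computation. Both reduce to $\mathbf{p}(1^v0,1^v0)$, apply rule (3) once, strip the trailing ones with rule (2), and use $\mathbf{p}(0,0)=\frac{1}{1-Q}$ to get $T^{v(v-1)/2}\bigl(1+\frac{QT^{-v}}{1-Q}\bigr)$. The one small difference is that you derive $\mathbf{p}(0,0)$ from the self-referential instance of rule (3) and state $\mathbf{p}(\emptyset,\emptyset)=1$ explicitly, whereas the paper simply takes the value of $\mathbf{p}(0,0)$ as given.
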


\begin{proof}
Plugging in $m_1=m_2=d=2$ to Theorem \ref{krhomology},
\begin{align*}
&P(T(2,2),(\Sym^{(v)}, \Sym^{(1)}),a=0)\\
&=\prod_{i=1}^{v} \frac{1}{1-QT^{1-i}} \mathbf{p}(1^v0^{(1 \times (2-1)+v\times (1-1))},1^v10^{(1 \times (2-1)+v\times (1-1))})\\
&=\prod_{i=1}^{v} \frac{1}{1-QT^{1-i}} \mathbf{p}(1^v0,1^v0).
\end{align*}

Using the recursion of $\mathbf{p}$, setting $a=0$,
\begin{align*}
&\mathbf{p}(1^v0,1^v0)\\
&=T^{-v}\mathbf{p}(1^{v+1},1^{v+1})+QT^{-v}\mathbf{p}(01^v, 01^v)\\
&=\prod_{i=1}^{v-1}T^{i}+(QT^{-v}\prod_{i=1}^{v-1}T^{i})\mathbf{p}(0, 0)\\
&=(\prod_{i=1}^{v-1}T^{i})(1+\frac{QT^{-v}}{1-Q}).
\end{align*}

Up to the factor $\prod_{i=1}^{v-1}T^{i}$, the Poincar\'e polynomial is equal to 
\begin{align*}
&P(T(2,2),(\Sym^v,\Sym^1),a=0)\\
&=(1+\frac{QT^{-v}}{1-Q})\prod_{i=1}^{v} \frac{1}{1-QT^{1-i}}. 
\end{align*}
\end{proof}

\section{Hilbert scheme of points and virtual Poincar\'e polynomial}

Let $g(x,y) \in \mathbb{C}[x,y]$ be a polynomial of two variables, and let $C$ denote the curve defined by the polynomial $C:=\{g(x,y)=0\}$.

\begin{definition}
The punctual Hilbert scheme of $n$ points on the curve $C$, denoted by $\Hilb^n(C,0)$, is the moduli space of ideals $I \in \mathbb{C}[\![x,y]\!]$ such that $\frac{\mathbb{C}[\![x,y]\!]}{I}$ has dimension $n$ as a vector space over $\mathbb{C}$, the vanishing locus of the ideal $V(I)$ is a point $(0,0)$, and that $g(x,y)$ is an element in the ideal $I$.
\end{definition}

\subsection{Reduced nodal singularity}
The homology of the Hilbert scheme of $n$ points on a reduced nodal curve $\{xy=0\}$ was computed in \cite{GKS} Example 6.4. The condition that the monomial $xy$ is a generator of all ideals $I \in \Hilb^n(\{xy=0\},0)$ is very strong and ruled out  almost all possibilities of the generators of the ideal $I$ except for a few, and the classification of all types of ideals follows immediately. We restate the result.

\begin{example}\cite{GKS}
The Hilbert scheme $\Hilb^n(\{xy=0\},0)$ has two types of ideals.

\begin{enumerate}
\item $(\alpha x^i+ \beta y^j)$, where $i,j >0$ and $i+j=n$.

\item $(x^i, y^j)$, where $i+j=n+1$.
\end{enumerate}

By taking limits as $\alpha$ or $\beta$ go to $0$, we obtain the second type of ideal from the first type. So, $$\{(\alpha x^i+\beta y^j),(x^{i+1},y^j),(x^i,y^{j+1})|\alpha,\beta \neq 0 , i+j=n\} = \mathbb{P}^1,$$ and we get that for $n>1$, $\Hilb^n(\{xy=0\}, 0)$ is a chain of $n-1$ projective lines $\mathbb{P}^1$, where each $\mathbb{P}^1$ corresponds to a way of writing n as a sum of two positive integers. The remaining cases, $\Hilb^0(\{xy=0\}, 0)$ and $\Hilb^1(\{xy=0\}, 0)$ are just points.

Let $Q$ be the variable recording the number of points of the Hilbert scheme, and $T$ recording the homological degree, we have the following theorem. The Poincar\'e polynomial therefore is 
\begin{align*}
&\sum_{n=0}^{\infty}\chi(\Hilb^n(\{xy=0\},0))Q^n = \\
& 1+Q+Q^2(1+T^2)+Q^3(1+2T^2)+Q^4(1+3T^2)+Q^5(1+4T^2)+...  \\
& = \frac{1-Q+Q^2T^2}{(1-Q)^2}.
\end{align*}

\end{example}

\subsection{Non-reduced smooth curve}

The homology (also, Hilbert-zeta function) of the Hilbert scheme of $n$ points on a \textbf{non-reduced smooth} curve, say for example $\{y^k=0\}$, was computed in \cite{Bej}. The proof strategy develops from the classical Bialynicki-Birula cell decomposition of the punctual Hilbert scheme of points on the plane. We restate the result.

\begin{example}
\label{egy^k}
\cite{Bej}
The Hilbert scheme $\Hilb^n(\{y^k=0\},0)$ has a cell decomposition, where each cell is a Bialynicki-Birula cell of $\Hilb^n(\mathbb{C}^2,0)$, following from the torus action on the ideals in the Hilbert scheme. Each cell is indexed by a partition $\lambda$ of $n$ such that the parts of $\lambda \leq k$. The dimension of each cell is equal to $n-l(\lambda)$, where $l(\lambda)$ is the biggest part of $\lambda$.

The Poincar\'e polynomial is 
$$\sum_{n=0}^{\infty}\chi(\Hilb^n(\{y^k=0\},0))Q^n =  \prod_{i=1}^k \limits \frac{1}{1-Q^iT^{i-1}}.$$
\end{example}

In this paper, we consider the \textbf{non-reduced and nodal} curve $C=\{x^uy^v=0\}$, such that $u, v \in \mathbb{Z}, u,v \geq 1$. Assume without loss of generality that $u \leq v$.

\subsection{Virtual Poincar\'e polynomial}
We define the virtual Poincar\'e polynomial of the Hilbert scheme $\Hilb^n(\{x^uy^v=0\},0)$ as the virtual Poincar\'e polynomial of its underlying reduced variety, which we restate as follows.

\begin{definition}\cite{BP} \label{vpp}
For every algebraic variety $X$ (possibly singular, or reducible), its virtual Poincar\'e polynomial $\chi(X)$ is uniquely determined by the following properties:
\begin{enumerate}
\item (additivity) $\chi(X) = \chi(Y) + \chi(X-Y)$ for every closed subvariety $Y$ .
\item If $X$ is smooth and complete, then $\chi(X) = \sum_m \dim (\mathrm{H}^m(X)) T^m$ is the usual Poincar\'e
polynomial.
\end{enumerate}
Then $\chi(X) = \chi(Y)\chi(F)$ for every fibration $F \rightarrow X \rightarrow Y$ which is locally trivial for the Zariski topology. 
\end{definition}
\begin{remark}
%The virtual Poincar\'e polynomial of an affine space $\mathbb{A}^n$ is 
%$$\chi(\mathbb{A}^n)(T)= \sum_m \dim H^m(\mathbb{A}^n) T^m = T^0=1.$$

The virtual Poincar\'e polynomial of an algebraic torus $(\mathbb{P}^1-2pt)^n$ is 
\begin{align*}
&\chi((\mathbb{P}^1-2pt)^n)= (\chi(\mathbb{P}^1)-\chi(2pt))^n\\
&= (\sum_m \dim (\mathrm{H}^m(\mathbb{P}^1)) T^m-\sum_m \dim (\mathrm{H}^m(2pt)) T^m)^n\\
&= ((T^2+1)-2)^n=(T^2-1)^n.   
\end{align*}
The virtual Poincar\'e polynomial of an affine space $\mathbb{A}^n$ is
\begin{align*}
&\chi(\mathbb{A}^n)=(\chi(\mathbb{A}))^n=(\chi(\mathbb{P}^1-pt))^n =(\dim(\mathrm{H}^m(\mathbb{P}^1-pt))T^m)^n\\
&=(T^2+1-1)^n =T^{2n}.
\end{align*}

\end{remark}

\section{Two filtrations of an ideal in the Hilbert scheme}

Assume without loss of generality that $u \leq v$ throughout the paper.

For an ideal $I \in \Hilb^n(\{x^uy^v=0\},0)$, our strategy is not to find all the generators of the ideal $I$ simultaneously, but to construct a filtration of ideals of $I$ compatible with the non-reduced structure of the curve $\{x^uy^v=0\}$, such that at each layer of the filtration, we add in one or two generators of $I$, and inductively find all generators of $I$.

Depending on the values of $u,v$, we naturally require two different filtrations.

\medskip

\subsection{Vertical Filtration}

To study $\Hilb^n(\{x^uy^v=0\},0)$, where 
$u$ is small, we define a filtration named Vertical.

For any $I \in \Hilb^n(\{x^uy^v=0\},0)$, there exist an integer $N$ sufficiently large such that the monomial $y^N \in I$. We consider the following filtration on the power series ring $\mathbb{C}[\![x,y]\!]$ of length $N+1$: 
$$ (y^{N}) \subset ... \subset (y^v) \subset (y^{v-1}) \subset ... \subset (y^2) \subset (y) \subset (1)=\mathbb{C}[\![x,y]\!].$$ 

\begin{definition}
\label{filtration_V}
For any ideal $I \subset \mathbb{C}[\![x,y]\!]$, we take the intersections with the filtration above, $W^{k} :=I \cap (y^k)$ for all $k=0,...,N$. Therefore $$W^{N} \subset W^{N-1} \subset ... \subset W^{2} \subset W^{1} \subset W^{0} = I$$ is a filtration of ideals on $I$, named Vertical. 
\end{definition}

\subsection{Diagonal Filtration.}

When $u$ is close to $v$, for example $u=v,v-1,v-2$, the non-reduced structure of the curve $\{x^uy^v=0\}$ naturally gives the following filtration of length $v+1$ of the power series ring $\mathbb{C}[\![x,y]\!]$: 
$$(x^vy^v) \subset (x^{v-1}y^{v-1}) \subset ... \subset (x^2y^2) \subset (xy) \subset (1)=\mathbb{C}[\![x,y]\!].$$ 

\begin{definition}
\label{filtration_D}
For any ideal $I \subset \mathbb{C}[\![x,y]\!]$, we take the intersections with the filtration above, $J^{k} :=I \cap (x^ky^k)$ for all $k=0,...,v$. Therefore $$J^{v} \subset J^{v-1} \subset ... \subset J^{2} \subset J^{1} \subset J^{0} = I$$ is a filtration of ideals on $I$, named Diagonal.
\end{definition}

\subsection{Two Filtrations of vector spaces}

The key part of the definition of an ideal $I$ in the Hilbert scheme is that quotient of the ideal $I$ by the coordinate ring is a vector space over $\mathbb{C}$ of dimension $n$, $\dim(\frac{\mathbb{C}[\![x,y]\!]}{I})=n$. Because the ideal $I$ has the Vertical and Diagonal filtrations, the vector space $\frac{\mathbb{C}[\![x,y]\!]}{I}$ naturally also has filtrations.

\begin{definition}
For an ideal $I \in \Hilb^{n}(\{x^uy^v=0\},0)$, $k=0,...,v$, define the vector space $\Gamma^{k}$ over $\mathbb{C}$ (which has a diagonal flavor): \begin{align*}
\Gamma^{k} := \frac{(x^ky^k)}{(x^ky^k) \cap I} = \frac{(x^ky^k)}{J^{k}}.   
\end{align*}

For $k=0,...,N$, define the vector space $\Theta^{k}$ over $\mathbb{C}$ (which has a vertical flavor): \begin{align*}
\Theta^{k} := \frac{(y^k)}{(y^k) \cap I} = \frac{(y^k)}{W^{k}}.   
\end{align*}

Consider the two filtrations of vector spaces, 

$$(0)=\Gamma^{v} \subset \Gamma^{v-1} \subset ... \subset \Gamma^{k} ... \subset  \Gamma^{1} \subset \Gamma^{0}=\frac{\mathbb{C}[\![x,y]\!]}{I}.$$

$$(0)=\Theta^{N} \subset \Theta^{N-1} \subset ... \subset \Theta^{k} ... \subset  \Theta^{1} \subset \Theta^{0}=\frac{\mathbb{C}[\![x,y]\!]}{I}.$$

\end{definition}

\begin{lemma}
\label{lemvectorfilt}
For an ideal $I \in \Hilb^{n}(\{x^uy^v=0\},0)$, we have 

\begin{align*}
\sum_{k=0}^{v-1} \dim (\frac{\Gamma^{k}}{\Gamma^{k+1}})=n.
\end{align*}

And 
\begin{align*}
\sum_{k=0}^{N-1} \dim (\frac{\Theta^{k}}{\Theta^{k+1}})=n.
\end{align*}
\end{lemma}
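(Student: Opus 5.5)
The argument is a telescoping sum of dimensions along each finite flag of subspaces of $\frac{\mathbb{C}[\![x,y]\!]}{I}$. First I will check that $\Gamma^k$ and $\Theta^k$ really are nested subspaces of $\Gamma^0=\Theta^0=\frac{\mathbb{C}[\![x,y]\!]}{I}$. The map $(x^ky^k)\to \frac{\mathbb{C}[\![x,y]\!]}{I}$ has kernel $(x^ky^k)\cap I=J^k$, so $\Gamma^k$ is identified with the image $\frac{(x^ky^k)+I}{I}$. The inclusions $(x^{k+1}y^{k+1})\subset(x^ky^k)$ then give $\Gamma^{k+1}\subset\Gamma^k$. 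The same reasoning, with $(y^k)$ in place of $(x^ky^k)$, gives $\Theta^{k+1}\subset\Theta^k$. Every $\Gamma^k$ and $\Theta^k$ is finite dimensional, bounded by $n$, so the quotients $\Gamma^k/\Gamma^{k+1}$ and $\Theta^k/\Theta^{k+1}$ have well-defined dimensions $\dim\Gamma^k-\dim\Gamma^{k+1}$ and $\dim\Theta^k-\dim\Theta^{k+1}$.

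Summing these differences telescopes:
\begin{align*}
\sum_{k=0}^{v-1}\dim\frac{\Gamma^k}{\Gamma^{k+1}}=\dim\Gamma^0-\dim\Gamma^v,
\qquad
\sum_{k=0}^{N-1}\dim\frac{\Theta^k}{\Theta^{k+1}}=\dim\Theta^0-\dim\Theta^N.
\end{align*}
Since $\dim\Gamma^0=\dim\Theta^0=n$ by the definition of $\Hilb^n$, it remains to show that $\Gamma^v=0$ and $\Theta^N=0$.

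These endpoint facts are the only step using something specific about $I$, and they are where I expect the main care is needed. For $\Gamma^v$, the definition of $\Hilb^n(\{x^uy^v=0\},0)$ puts $x^uy^v$ in $I$. Because $u\le v$ (the standing assumption), $x^vy^v=x^{v-u}\cdot x^uy^v\in I$. Hence $(x^vy^v)\subset I$, so $J^v=(x^vy^v)$ and $\Gamma^v=0$. For $\Theta^N$, $N$ was chosen so that $y^N\in I$; such an $N$ exists because $I$ has finite colength and is supported at the origin, so $\mathfrak m^n\subset I$ and one may take $N\ge n$. Then $(y^N)\subset I$, giving $W^N=(y^N)$ and $\Theta^N=0$.

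Both sums therefore equal $n$.
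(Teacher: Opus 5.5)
Your proof is correct and is essentially the paper's argument. The paper writes $\Gamma^0$ and $\Theta^0$ as direct sums of the successive quotients together with $\Gamma^v=0$ and $\Theta^N=0$, which is exactly your telescoping of dimensions; you additionally justify the nesting via $\Gamma^k\cong((x^ky^k)+I)/I$ and the vanishing of the endpoints (using $u\le v$ and $\mathfrak{m}^n\subset I$), which the paper only asserts.
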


\begin{proof}
This follows immediately from the vector space decomposition 
\begin{align*}
\Gamma^{0}=\bigoplus_{t=0}^{v-1}\frac{\Gamma^{k}}{\Gamma^{k+1}} \oplus \Gamma^{v},
\end{align*} 
and
\begin{align*}
\Theta^{0}=\bigoplus_{t=0}^{N-1}\frac{\Theta^{k}}{\Theta^{k+1}} \oplus \Theta^{N}.
\end{align*}
And that both $\Gamma^{v}=0$ and $\Theta^{N}=0$ have dimension $0$, and $\Gamma^{0}= \Theta^{0}=\frac{\mathbb{C}[\![x,y]\!]}{I}$ has dimension $n$ as a vector space over $\mathbb{C}$.
\end{proof}

\begin{corollary}
\label{cordim}
All vector spaces $\Gamma^{k}$ and $\Theta^k$ has finite dimension,  $\dim(\Gamma^{k}) \leq n$ for all $0 \leq k \leq v-1$, and $\dim(\Theta^{k}) \leq n$ for all $0 \leq k \leq N-1$.
\end{corollary}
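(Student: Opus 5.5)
The plan is to realize each $\Gamma^k$ and $\Theta^k$ as a subspace of $\Gamma^0=\Theta^0=\frac{\mathbb{C}[\![x,y]\!]}{I}$. That ambient space has dimension $n$ by the definition of $\Hilb^n(\{x^uy^v=0\},0)$, so the bound $\dim \le n$ will follow from monotonicity of dimension under injections.

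First, for each $k$ I will consider the natural map
$$\iota_k \colon \Gamma^k=\frac{(x^ky^k)}{(x^ky^k)\cap I}\longrightarrow \frac{\mathbb{C}[\![x,y]\!]}{I},$$
induced by the inclusion $(x^ky^k)\subset \mathbb{C}[\![x,y]\!]$. It is well defined because $(x^ky^k)\cap I\subset I$. Its kernel is $\bigl((x^ky^k)\cap I\bigr)/\bigl((x^ky^k)\cap I\bigr)=0$, so $\iota_k$ is injective; this is just the second isomorphism theorem, which gives $\Gamma^k\cong \bigl((x^ky^k)+I\bigr)/I$. The same argument, with $(y^k)$ in place of $(x^ky^k)$, embeds $\Theta^k$ into $\frac{\mathbb{C}[\![x,y]\!]}{I}$.

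Second, I conclude that $\dim\Gamma^k\le \dim \frac{\mathbb{C}[\![x,y]\!]}{I}=n$ for $0\le k\le v-1$, and likewise $\dim\Theta^k\le n$ for $0\le k\le N-1$. In particular all of these spaces are finite dimensional. This is also consistent with Lemma \ref{lemvectorfilt}: the partial sums $\sum_{t\ge k}\dim(\Gamma^t/\Gamma^{t+1})$ compute $\dim\Gamma^k$ and are bounded by the full sum, which equals $n$.

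There is no real obstacle here. The only care needed is to make sure the filtration $\Gamma^{k+1}\subset\Gamma^k$ is compatible with these embeddings, so that the inclusions used implicitly in Lemma \ref{lemvectorfilt} are genuine subspace inclusions. This holds because $(x^{k+1}y^{k+1})\subset(x^ky^k)$ and $(y^{k+1})\subset(y^k)$, and both images land in the same quotient $\frac{\mathbb{C}[\![x,y]\!]}{I}$.
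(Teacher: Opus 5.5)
Your proposal is correct and follows the paper's proof. The paper simply observes that $\Gamma^k$ and $\Theta^k$ are subspaces of $\Gamma^0=\Theta^0=\mathbb{C}[\![x,y]\!]/I$, which has dimension $n$; your only addition is to make that embedding explicit via $\Gamma^k\cong\bigl((x^ky^k)+I\bigr)/I$ (and likewise $\Theta^k\cong\bigl((y^k)+I\bigr)/I$), which the paper leaves implicit.
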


\begin{proof}
Because $\Gamma^{k}$ is a vector subspace of $\Gamma^{0}$, it has dimension less than equal to the dimension $\dim(\Gamma^{0})=n$. The same proof applies to $\Theta^{k}$.
\end{proof}

\section{Generators}

\subsection{Vertical filtration}

Construct the ideal $I$ inductively from the inner layers to the outer layers in the following sequence: $$W^{N-1}, W^{N-2}, ..., W^{1}, W^{0}.$$ 
To do so, we first find the generators of $W^{N-1}$, and then add generators to obtain the ideal $W^{N-2}$, and inductively add generators to obtain the ideal $I=W^{0}$.

\begin{lemma}
\label{gen_v}
When $0 \leq k \leq N-1$, the generator we add to construct $W^{k}$ on top of $W^{k+1}$ is: 
\begin{align*}
F=y^k \cdot x^i + y^{k+1} \cdot t(x,y), 
\end{align*}
for some integer $i \geq 0$, and power series $t(x,y) \in \mathbb{C}[\![x,y]\!]$.
\medskip
\end{lemma}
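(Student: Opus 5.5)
The idea is to look at the ``leading $y$-layer'' of $W^k$ modulo $W^{k+1}$. Every element of $W^k = I\cap (y^k)$ can be written as $y^k g(x,y)$. Write
$$g(x,y) = g_0(x) + y\,h(x,y),$$
where $g_0(x) = g(x,0)$. The map
$$\phi_k : W^k \to \mathbb{C}[\![x]\!], \qquad y^k g \mapsto g_0 = g(x,0),$$
is well defined, because $y$ is a nonzerodivisor, so $g$ is determined by $y^k g$. Its kernel is exactly $W^k\cap (y^{k+1}) = W^{k+1}$. So $W^k/W^{k+1}$ embeds into $\mathbb{C}[\![x]\!]$, and the image $\phi_k(W^k)$ is a $\mathbb{C}[\![x]\!]$-submodule: multiplication by $x$ commutes with $\phi_k$, since $W^k$ is an ideal. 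Hence $\phi_k(W^k)$ is an ideal of $\mathbb{C}[\![x]\!]$.

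Next I would show that this image is nonzero. The element $y^N \in I$ lies in $W^k$ for every $k \le N$, and $y^N = y^k\cdot y^{N-k}$. That alone gives only a zero leading part when $k<N$. Instead I would use the finiteness from Corollary \ref{cordim}. Since $\dim \mathbb{C}[\![x,y]\!]/I = n$ and $V(I)=\{0\}$, some power $x^m$ lies in $I$. Then $x^m y^k \in I\cap (y^k) = W^k$, and $\phi_k(x^m y^k) = x^m \neq 0$. So $\phi_k(W^k)$ is a nonzero ideal of the DVR $\mathbb{C}[\![x]\!]$, and therefore equals $(x^i)$ for a unique $i\ge 0$.

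Now pick $G\in W^k$ with $\phi_k(G) = u(x)x^i$, where $u$ is a unit in $\mathbb{C}[\![x]\!]$. Replacing $G$ by $u^{-1}G$, which still lies in $W^k$, gives $\phi_k(G)=x^i$. Thus $G = y^k x^i + y^{k+1}t(x,y)$ for some $t$. This $G$ generates $W^k$ over $W^{k+1}$, i.e.\ $W^k = W^{k+1} + (F)$ with $F=G$. Indeed, for any $H\in W^k$ we have $\phi_k(H)=a(x)x^i$, so $H - a(x)F \in \ker\phi_k = W^{k+1}$. This gives exactly the claimed form of $F$.

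The main point needing care is the nonvanishing of $\phi_k(W^k)$, i.e.\ the existence of a pure power $x^m\in I$. This comes from $I$ being $\mathfrak m$-primary (the support is the origin) together with finite colength. The remaining subtlety is normalizing the unit, which is harmless.
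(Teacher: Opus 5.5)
Your proof is correct and follows essentially the same route as the paper: both look at the coefficient series of $y^k$ in elements of $W^k$, take the minimal $x$-order $i$, normalize by a unit, and reduce any other element modulo $W^{k+1}$ by subtracting a multiple of $F$. Your version packages this as the ideal $\phi_k(W^k)=(x^i)\subset\mathbb{C}[\![x]\!]$ and gets nonvanishing from $x^m\in I$ instead of the paper's appeal to $\dim\Theta^k<\infty$, which makes the argument cleaner and fully rigorous (in particular it makes explicit that $i$ is minimized over all of $W^k$, not over a single chosen generator).
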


\begin{proof}
Assume as part of the induction hypothesis that we know the generators of $W^{k+1}$.

For a generator $f(x,y)$ of $W^{k}$, we can write $f(x,y)$ as
\begin{align*}
f(x,y)=y^k\cdot (a_0+ a_1x+a_{2}x^{2}+...)+y^{k+1}\cdot t(x,y),
\end{align*}
for some power series $t(x,y) \in \mathbb{C}[\![x,y]\!]$.

\textbf{(Existence.)}
Because $\Theta^{k}$ has finite dimension (Corollary \ref{cordim}), we must have that $a_i \neq 0$ for some integer $i \geq 0$. Otherwise, the following infinite sequence of monomials $\{y^k ,  xy^k, \ x^2y^k, ...\}$ are linearly independent and we have a contradiction. Consider the smallest $i$ such that $a_i \neq 0$, then we can rewrite the power series $f(x,y)$ as 
\begin{align*}
f(x,y)=y^k\cdot (a_ix^i+a_{i+1}x^{i+1}+...)+y^{k+1}\cdot t(x,y)\\
= y^k\cdot x^i(a_i+a_{i+1}x^{1}+...)+y^{k+1}\cdot t(x,y)
\end{align*} where $a_i+a_{i+1}x^{1}+...$ is a unit, denoted by $U$, in $\mathbb{C}[\![x,y]\!]$.

Compute 
\begin{align*}
U^{-1} \cdot f(x,y)=y^k\cdot x^i+y^{k+1}\cdot U^{-1} \cdot t(x,y).
\end{align*}

Because $U^{-1} \cdot t(x,y)$ is also a power series in $\mathbb{C}[\![x,y]\!]$, we obtain a generator of the form $F$.

\textbf{(Uniqueness.)}
Assume we have more than one generators of the form $F$, denoted by
\begin{align*}
F_1(x,y)= y^k\cdot x^{i_1}+y^{k+1}\cdot t_1(x,y).\\
F_2(x,y)= y^k\cdot x^{i_2}+y^{k+1}\cdot  t_2(x,y).
\end{align*}

Assume without loss of generality that $i_1 \leq i_2$, then compute 
\begin{flalign*}
&F_2(x,y)-x^{i_2-i_1}F_1(x,y)\\
&=y^{k+1}(t_2(x,y)-x^{i_2-i_1}t_1(x,y)),
\end{flalign*}
which is a power series in $W^{k+1}$. This means that $F_2(x,y)$ can be generated from $F_1(x,y)$ and the generators of $W^{k+1}$, so it is redundant. In summary, we only need one generator $F$ to construct the layer $W^{k}$.
\end{proof}

\subsection{Diagonal filtration.}

Construct the ideal $I$ inductively from the inner layers to the outer layers in the following sequence: $$J^{v-1}, J^{v-2}, ..., J^{1}, J^{0}.$$ 
To do so, we first find the generators of $J^{v-1}$, and then add generators to obtain the ideal $J^{v-2}$, and inductively add generators to obtain the ideal $I=J^{0}$.

\begin{lemma}
\label{lemmagen}
Let $I \in \Hilb^n(\{x^uy^v=0\},0)$.

\textbf{Base case:}
The ideal $J^{v}=(x^uy^v)\cap I$ is principal, generated as $(x^uy^v)$.

\textbf{Induction:} Assume knowing the generators of $J^{k+1}$, we classify the generators we add to construct $J^{k}$.

When $0 \leq k \leq v-1$, the generator(s) we add to construct $J^{k}$ from $J^{k+1}$ are one of the three cases.
\begin{enumerate}

\item (A monomial)$$x^ky^k,$$

\item (A single generator)
$$E=x^ky^k\cdot(ax^i+by^j)+x^{k+1}y^{k+1}\cdot h(x,y),$$

\item (A pair of generators)
\begin{align*}
&E_1=x^ky^k\cdot x^i+x^{k+1}y^{k+1}\cdot h_1(x,y),\\
&E_2=x^ky^k\cdot y^j+x^{k+1}y^{k+1}\cdot h_2(x,y),   
\end{align*}

\end{enumerate}
for some $a, b \in \mathbb{C}$, $a, b \neq 0$;
integers $i,j \geq 1$;
and power series 

$h(x,y), h_1(x,y), h_2(x,y) \in \mathbb{C}[\![x,y]\!]$.

\end{lemma}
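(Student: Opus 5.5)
The plan is to reduce the induction step to classifying ideals of finite codimension in the local ring of the reduced node, $R:=\mathbb{C}[\![x,y]\!]/(xy)$, and then to lift generators back to $J^k$.

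\textbf{Base case.} Since $u\le v$, we have $x^vy^v\in(x^uy^v)\subset I$. Hence $J^{v}=I\cap(x^vy^v)=(x^vy^v)$ is principal, generated by the monomial $x^vy^v$, which is divisible by $x^uy^v$. I read the base case of the statement in this sense.

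\textbf{Induction step.} Fix $0\le k\le v-1$. Every power series $g$ can be written uniquely as
\begin{align*}
g=g(0,0)+p(x)+q(y)+xy\cdot h(x,y),
\end{align*}
where $p$ and $q$ have no constant term. So $(x^ky^k)/(x^{k+1}y^{k+1})\cong R$ as $\mathbb{C}[\![x,y]\!]$-modules, via $x^ky^k g\mapsto \bar g$. Define
\begin{align*}
L_k:=\{\bar g\in R \;:\; x^ky^kg\in J^k+(x^{k+1}y^{k+1})\}.
\end{align*}
This is an ideal of $R$, because $J^k$ is an ideal. Moreover $\Gamma^k/\Gamma^{k+1}\cong (x^ky^k)/(J^k+(x^{k+1}y^{k+1}))\cong R/L_k$. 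By Corollary \ref{cordim} and Lemma \ref{lemvectorfilt}, $R/L_k$ is finite dimensional, so $L_k$ has finite codimension in $R$.

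\textbf{Classifying $L_k$.} I would prove directly, in the spirit of the example for $\{xy=0\}$ from \cite{GKS}, that a finite-codimension ideal of $R$ is one of the following:
\begin{enumerate}
\item $R$ itself;
\item $(x^i+by^j)$ with $b\neq0$ and $i,j\ge1$;
\item $(x^i,y^j)$ with $i,j\ge1$.
\end{enumerate}
Finite codimension forces $L_k$ to contain some $x^{i'}$ and some $y^{j'}$ modulo the other branch. Let $i$ be the minimal $x$-order and $j$ the minimal $y$-order of the pure parts of elements of $L_k$.

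If some element has a nonzero constant term, it is a unit, giving case 1. Otherwise every element has the form $x^iu_1(x)+y^ju_2(y)$ with $u_1,u_2$ power series. Suppose some element has both $u_1(0)\neq0$ and $u_2(0)\neq0$. Multiplying it by the unit $u_1(x)^{-1}$ and using $xy=0$ in $R$, we get an element $x^i+y^jw(y)$ with $w(0)\ne0$. Multiplying this by $y$ and by $x$ shows that $y^{j+1}$ and $x^{i+1}$ lie in the ideal it generates. So it generates $L_k$, and it can be rewritten as $x^i+by^j$ with $b=w(0)$, giving case 2. Otherwise $L_k$ contains the pure powers $x^i$ and $y^j$, giving case 3. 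The main obstacle is carrying out this normalization carefully, in particular keeping track of the tails and verifying that the resulting list exhausts all finite-codimension ideals.

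\textbf{Lifting.} In each case, choose preimages in $J^k$ of the generators of $L_k$. These have exactly the shapes $x^ky^k$, $E$, and $E_1,E_2$ of the statement, with the higher-order terms absorbed into $x^{k+1}y^{k+1}h$. In case 1 the lift has the form $x^ky^k\cdot(\text{unit})+x^{k+1}y^{k+1}h$, and as in Lemma \ref{gen_v} we normalize by the unit. Since $x^ky^k(1+xyh')=x^ky^k\cdot(\text{unit})$, this shows the monomial $x^ky^k$ itself lies in $J^k$.

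\textbf{Generation.} Finally, these lifts together with $J^{k+1}$ generate $J^k$. Take $f\in J^k$. Its class lies in $L_k$, so subtracting a combination of the lifts leaves an element of $J^k\cap(x^{k+1}y^{k+1})=J^{k+1}$. This uniqueness-type step mirrors the uniqueness part of the proof of Lemma \ref{gen_v}.
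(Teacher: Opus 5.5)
Your reduction is sound. The identification $(x^ky^k)/(x^{k+1}y^{k+1})\cong R=\mathbb{C}[\![x,y]\!]/(xy)$, the ideal $L_k$ with $R/L_k\cong\Gamma^k/\Gamma^{k+1}$, the lifting to the shapes $x^ky^k$, $E$, $E_1,E_2$, and the generation step via $J^k\cap(x^{k+1}y^{k+1})=J^{k+1}$ are all correct. The gap is in the classification of $L_k$. Your dichotomy between ``some element has $u_1(0)\neq0$ and $u_2(0)\neq0$'' and ``otherwise'' does not separate case 2 from case 3. The first alternative \emph{always} holds in a proper ideal of finite colength: if $f_1$ attains the minimal $x$-order and $f_2$ the minimal $y$-order, then one of $f_1$, $f_2$, $f_1+f_2$ has both leading coefficients nonzero. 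So the ``otherwise'' branch is vacuous, and your argument would put every such ideal in case 2. The step that fails is ``so it generates $L_k$''. Multiplying $g=x^i+y^jw(y)$ by $x$ and $y$ only shows $(x^{i+1},y^{j+1})\subset(g)$. An arbitrary element of $L_k$ is congruent modulo this ideal to some $\alpha x^i+\beta y^j$, and that lies in $(g)$ only when $(\alpha,\beta)$ is proportional to $(1,w(0))$. Concretely, take $I=(x,y)$, which lies in $\Hilb^1(\{x^uy^v=0\},0)$, and $k=0$. Then $L_0=(x,y)\ni x+y$, but $(x+y)$ has colength $2$ in $R$ while $(x,y)$ has colength $1$. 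The correct answer here is the pair $E_1=x$, $E_2=y$.

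The missing idea is to look at the whole space of leading coefficients rather than at a single element. The map $L_k\to\mathbb{C}^2$ sending $x^iu_1(x)+y^ju_2(y)$ to $(u_1(0),u_2(0))$ is $\mathbb{C}$-linear, and by minimality of $i$ and $j$ both coordinate projections of its image $V$ are nonzero. If $\dim V=1$, then $V$ is spanned by $(1,b)$ with $b\neq0$, and your computation does give $L_k=(x^i+by^j)$. If $\dim V=2$, take $f_1,f_2\in L_k$ with leading pairs $(1,0)$ and $(0,1)$. Then $xf_1$ and $yf_2$ are unit multiples of $x^{i+1}$ and $y^{j+1}$, so these lie in $L_k$; subtracting them gives $x^i,y^j\in L_k$, hence $L_k=(x^i,y^j)$. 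This is what the paper's proof achieves by comparing two normalized generators and eliminating to obtain $x^ky^k\cdot x^{i}+\cdots$ and $x^ky^k\cdot y^{j}+\cdots$. Alternatively, you could simply quote the classification of finite-colength ideals of $\mathbb{C}[\![x,y]\!]/(xy)$ from the \cite{GKS} example restated in the paper. With this repair, your module-theoretic route is a cleaner organization of the paper's argument; in particular, the generation (``uniqueness'') step becomes immediate.
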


\begin{proof}
By definition of the Hilbert scheme, an ideal $I \in \Hilb^n(\{x^uy^v=0\},0)$ implies that $x^uy^v \in I$, and recall that we always assume that $v \geq u$, so $x^vy^v \in I$. So $I \cap (x^vy^v)=(x^vy^v)$. This proves the base case. Now we show the induction steps.

\begin{enumerate}

\item
Assume that $0 \leq k \leq v-1$.

\textbf{(Existence.)}
For a generator $f(x,y) \in J^{k}$, we can write it as 
\begin{align*}
f(x,y)=x^ky^k\cdot (c+a_1x+a_2x^2+...+b_1y+b_2y^2+...)+x^{k+1}y^{k+1}\cdot h(x,y), 
\end{align*}
for some $h(x,y) \in \mathbb{C}[\![x,y]\!]$.

When $c \neq 0$, $c+a_1x+a_2x^2+...+b_1y+b_2y^2+...$ is a unit in $\mathbb{C}[\![x,y]\!]$, denoted by $U$. So \begin{align*}
&U^{-1} \cdot f(x,y) = x^ky^k+x^{k+1}y^{k+1} \cdot U^{-1} \cdot h(x,y)\\
&=x^ky^k(1+xy\cdot U^{-1} \cdot h(x,y)).
\end{align*}
Because $1+xy\cdot U^{-1} \cdot h(x,y)$ is a unit, denoted by $U'$, in $\mathbb{C}[\![x,y]\!]$, we multiply its inverse $U'^{-1}$ to $U^{-1} \cdot f(x,y)$  to obtain $x^ky^k$. So $x^ky^k$ is a generator of $J^{k}$. This proves the case of (1).

When $c=0$, there must be some integers $i,j \geq 1$ such that $a_i, b_j \neq 0$. Assume for contradiction that there exists no such $b_j$, i.e. all $b_j=0$. Then $$l:=\{ x^ky^k\cdot y, \ x^ky^k\cdot y^2, \ x^ky^k\cdot y^3, ...\}$$ is an infinite sequence of linearly independent monomials in $\Gamma^{k}=\frac{(x^ky^k)}{J^{k}}$. This contradicts with Corollary \ref{cordim}.

Because there are $a_i, b_j \neq 0$, write 
\begin{align*}
& \ \ \ \ \ \ \ f(x,y)=x^ky^k(a_ix^i+a_{i+1}x^{i+1}+...
+b_jy^j+b_{j+1}y^{j+1}+...)+x^{k+1}y^{k+1}h(x,y)\\
& \ \ \ \ \ \ \ =x^ky^k(a_ix^i(1+\frac{a_{i+1}}{a_i}x^1+...)
+b_jy^j(1+\frac{b_{j+1}}{b_j}y^{1}+...))+x^{k+1}y^{k+1}h(x,y)
\end{align*}

Denote the units $$U_1:=a_i+\frac{a_{i+1}}{a_i}x^1+...,$$  $$U_2:=b_j+\frac{b_{j+1}}{b_j}y^{1}+...,$$ then $U_1^{-1}$ is a power series in the variable $x$ with constant term $1$, and $U_2^{-1}$ is a power series in the variable $y$ with constant term $1$.
 
Compute 
\begin{align*}
&U_1^{-1} \cdot U_2^{-1} \cdot f(x,y)\\
&=x^ky^k(U_2^{-1}a_ix^i
+U_1^{-1}b_jy^j)+U_1^{-1}U_2^{-1}x^{k+1}y^{k+1}h(x,y).
\end{align*}
Because $U_2^{-1}a_ix^i$ is equal to $a_ix^i$ plus some terms divisible by $xy$, and $U_1^{-1}b_jy^j$ is equal to $b_jy^j$ plus some terms divisible by $xy$, we can collect all the terms in $U_2^{-1}a_ix^ix^ky^k,\  U_1^{-1}b_jy^jx^ky^k,\   U_1^{-1} \cdot U_2^{-1} \cdot h(x,y)x^{k+1}y^{k+1}$ that are divisible by $x^{k+1}y^{k+1}$ as $h'(x,y)$ and  write 
\begin{align*}
E := U_1^{-1} \cdot U_2^{-1} \cdot f(x,y)=x^ky^k(a_ix^i
+b_jy^j)+x^{k+1}y^{k+1}h'(x,y)
\end{align*}
for some power series $h'(x,y)$. We obtain the generator of type (2).

Suppose there are more than one generators \begin{align*}
&E_1 :=x^ky^k(a_{i_1}x^{i_1}
+b_{j_1}y^{j_1})+x^{k+1}y^{k+1}h_1(x,y)\\
&E_2 :=x^ky^k(a_{i_2}x^{i_2}
+b_{j_2}y^{j_2})+x^{k+1}y^{k+1}h_2(x,y)
\end{align*}
If $i_1 < i_2$ and $j_1 < j_2$, then $E_2-\frac{a_{i_2}}{a_{i_1}}x^{i_2-i_1}E_1 -\frac{b_{j_2}}{b_{j_1}}y^{j_2-j_1}E_1$ is divisible by $x^{k+1}y^{k+1}$ and therefore in the ideal $J^{k+1}$, so only $E_1$ is needed as the additional generator for $J^{k}$ and $E_2$ is redundant. The case when $i_1 > i_2$ and $j_1 > j_2$ is exactly the same.

So assume now that $i_1 \leq i_2$ and $j_1 \geq j_2$. Compute 
\begin{align*}
&E_1-\frac{b_{j_1}}{b_{j_2}}y^{j_1-j_2}E_2=a_{i_1}x^ky^kx^{i_1}+x^{k+1}y^{k+1}h_1'(x,y),\\
&E_2-\frac{a_{i_2}}{a_{i_1}}x^{i_2-i_1}E_1=b_{j_2}x^ky^ky^{j_2}+x^{k+1}y^{k+1}h_2'(x,y),
\end{align*}
for some power series $h_1'(x,y),h_2'(x,y)$.
So we have two generator of type (3). 

\textbf{(Uniqueness.)}
If the generator is type (1), $x^ky^k$, then every element in $J^{k}$ is generated by it, so any other generator is redundant.

Otherwise, suppose there are two or more candidates of generators,
\begin{align*}
&E_1 :=x^ky^k(a_{i_1}x^{i_1}
+b_{j_1}y^{j_1})+x^{k+1}y^{k+1}h_1(x,y)\\
&E_2 :=x^ky^k(a_{i_2}x^{i_2}
+b_{j_2}y^{j_2})+x^{k+1}y^{k+1}h_2(x,y)\\
&E_3 :=x^ky^k(a_{i_3}x^{i_3}
+b_{j_3}y^{j_3})+x^{k+1}y^{k+1}h_3(x,y)\\
...
\end{align*}
following the computation and algorithm above, it's only necessary to pick either one or two generators $E_{m_1}, E_{m_2}$ out of those $E_1, E_2, E_3,...$ such that $i_{m_1} \leq i_l$ and $j_{m_2} \leq j_l$ for all $l$. When $m_1=m_2$, there is one unique generator as type (2), and when $m_1 \neq m_2$, there is a unique pair of generators as type (3).
\end{enumerate}
\end{proof}

\section{Non-reduced valuation}

Before diving deeper to the conditions on the generators, we define a valuation called the non-reduced valuation, modified from the intersection valuation of coordinate ring of reduced curves on an integral domain (see for example Pol's thesis \cite{Pol}), motivated from the filtration of an ideal $I \in \Hilb^n(C,0)$. %The non-reduced valuation can be defined on the coordinate ring of any curve $C=\{g(x,y)=\prod_{k=1}^{r}(g_k(x,y))^{m_k} \}$ with $r$ components $$g_1(x,y),...,g_k(x,y),..., g_r(x,y),$$ such that each component $g_k(x,y)$ has multiplicity $m_k$. But because we only discuss the curve $x^uy^v=0$ in this paper, we only make the definition and computation specifically for this curve to avoid the heavy notations.

\subsection{Definition and properties}

\begin{definition}
For a pair of non-negative integers $(u,l)$, and any power series $f(x,y) \in \mathbb{C}[\![x,y]\!]$ such that $f(x,y)$ is divisible by $x^uy^l$, define a function $v_{(u,l)}$ that maps a power series to a point in $(\mathbb{N} \cup \{\infty\})^2$

$$v_{(u,l)}(f):= ([\frac{f}{x^uy^l},y], [\frac{f}{x^uy^l},x]),$$ 
where $[ \phi(x,y) , x]$ (or $[\phi(x,y), y]$) denotes %the intersection multiplicity of the power series $\phi(x,y)$ with the curve component in $x$ (or $y$) at the point $(0,0)$. 
%Algebraically, $[\phi(x,y), x]$ is 
the dimension of the ring $\frac{\mathbb{C}[\![x,y]\!]}{(\phi(x,y),x)}$ as a vector space over the field $\mathbb{C}$, (and similarly $\frac{\mathbb{C}[\![x,y]\!]}{(\phi(x,y),y)}$ for $[\phi(x,y), y]$.)
\end{definition}

\begin{example}
%Let the curve be $x^5y^{11}=0$.
\begin{enumerate}
\item For the polynomial $x^i+y^j \in \mathbb{C}[\![x,y]\!]$, its valuation is $v_{(0,0)}(x^i+y^j) = ([x^i+y^j,y], [x^i+y^j,x])= (i, j)$.
There are no other non-reduced valuation. 

\item For the polynomial $x^2y$, some of its valuations are $v_{(0,0)}(x^2y)=([x^2y,y], [x^2y,x])=(\infty, \infty)$,
$v_{(1,1)}(x^2y)=([x,y], [x,x])=(1, \infty)$, and $v_{(2,1)}(x^2y)=([1,y], [1,x])=(0, 0)$.
\item The valuation $v_{(0,0)}(x^4y^4)=(\infty, \infty)$.

\item For the polynomial $4x^5y^7+10x^8y^5+x^6y^7$, one of its valuations is $$v_{(5,5)}(4x^5y^7+10x^8y^5+x^6y^7)=(3,2).$$
\item For the polynomial $4x^5y^{10}+x^6y^9$,
one of its valuations is $$v_{(5,6)}(4x^5y^{10}+x^6y^9)=(\infty,4).$$
Another one is 
$$v_{(5,9)}(4x^5y^{10}+x^6y^9)=(1,1).$$
\end{enumerate}

\end{example}

%\begin{remark}
%\textcolor{red}{check this}
%For a power series $f(x,y) \in J^{k}$, its non-reduced valuation $v_{(k)}(f(x,y))$ has finite integers in all of its entries if and only if $f(x,y) \notin J^{k+1}$. 
%\end{remark}

Intuitively, the non-reduced valuation $v_{(u,l)}$ of a power series $f(x,y)$ takes out a factor $x^uy^l$ from $f(x,y)$ and then take the intersection valuation with the underlying reduced curve components $x$ and $y$ of $x^uy^v=0$. The non-reduced valuation $v_{(0,0)}$ is an extension of the usual intersection valuation from an integral domain $\frac{\mathbb{C}[\![x,y]\!]}{(xy)}$ to the entire coordinate ring of a non-reduced curve $\frac{\mathbb{C}[\![x,y]\!]}{(x^uy^v)}$. But having only $v_{(0,0)}$ is not enough, as can be seen from Example (2)(3) above, that $v_{(0,0)}$ does not distinguish elements like $x^2y$ or $x^4y^4$, or any other elements that are sent to $(\infty, \infty)$. We would need the other non-reduced valuations to distinguish these elements in the ring that is not an integral domain.  

\begin{lemma}
\label{lem00}
For any power series $\phi_1(x,y), \phi_2(x,y) \in \mathbb{C}[\![x,y]\!]$, the non-reduced valuation $v_{(0,0)}$ satisfy that  
\begin{align}
&v_{(0,0)}(\phi_1(x,y))+v_{(0,0)}(\phi_2(x,y))=v_{(0,0)}(\phi_1(x,y)\phi_2(x,y)),\\
&v_{(0,0)}(\phi_1(x,y)+\phi_2(x,y)) \geq \min(v_{(0,0)}(\phi_1(x,y)), v_{(0,0)}(\phi_2(x,y))),
\end{align}
with cancellation law for (1) if the summand are finite, and with equality for (2) if $v_{(0,0)}(\phi_1(x,y)) \neq v_{(0,0)}(\phi_2(x,y))$.
\end{lemma}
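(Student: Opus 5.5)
The plan is to reduce both statements to one-variable valuations. Since $v_{(0,0)}(\phi)=([\phi,y],[\phi,x])$, and $[\phi,y]=\dim \mathbb{C}[\![x,y]\!]/(\phi,y)=\dim \mathbb{C}[\![x]\!]/(\phi(x,0))$, the first coordinate is $\operatorname{ord}_x \phi(x,0)$, the order of vanishing in $x$ of the restriction of $\phi$ to the line $y=0$. The value is $\infty$ exactly when $\phi(x,0)\equiv 0$. Symmetrically, the second coordinate is $\operatorname{ord}_y \phi(0,y)$. So I would first record this identification, using that $\mathbb{C}[\![x]\!]/(g)$ has dimension $\operatorname{ord}(g)$ for $g\neq 0$ (write $g=x^m\cdot$unit), and is infinite-dimensional for $g=0$. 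I would also fix the conventions: the order on $(\mathbb{N}\cup\{\infty\})^2$ is componentwise, $\min$ is taken componentwise, and $\infty+a=\infty$.

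For (1), restriction to $y=0$ is a ring homomorphism $\mathbb{C}[\![x,y]\!]\to\mathbb{C}[\![x]\!]$, so $(\phi_1\phi_2)(x,0)=\phi_1(x,0)\phi_2(x,0)$. Since $\mathbb{C}[\![x]\!]$ is a domain and $\operatorname{ord}_x$ is additive on it (with the $\infty$ convention when a factor is zero), the first coordinates add. The same argument with $x=0$ handles the second coordinate. The cancellation law is then immediate: if $v_{(0,0)}(\phi_1)+v_{(0,0)}(\phi_2)=v_{(0,0)}(\phi_1)+v_{(0,0)}(\phi_3)$ and the common summand $v_{(0,0)}(\phi_1)$ has finite entries, we can subtract it in $\mathbb{N}^2$ coordinatewise.

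For (2), I would again work coordinatewise. We have $(\phi_1+\phi_2)(x,0)=\phi_1(x,0)+\phi_2(x,0)$, and the ultrametric inequality $\operatorname{ord}(g_1+g_2)\ge\min(\operatorname{ord} g_1,\operatorname{ord} g_2)$ holds in $\mathbb{C}[\![x]\!]$ because no monomial of degree below both orders can appear in the sum. Equality holds when the orders differ, since then the lowest-degree term of the lower-order series cannot cancel.

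The one delicate point, which I expect to be the main obstacle, is interpreting ``equality if $v_{(0,0)}(\phi_1)\neq v_{(0,0)}(\phi_2)$'' for vector values. The inequality of pairs does not guarantee that both coordinates differ. For example, $\phi_1=x+y$ and $\phi_2=-x+y^2$ have valuations $(1,1)$ and $(1,2)$, but their sum has valuation $(\infty,1)\neq(1,1)=\min$. So I would prove and state the equality coordinatewise: the $x$-coordinate (resp. $y$-coordinate) of $v_{(0,0)}(\phi_1+\phi_2)$ equals the minimum whenever the corresponding coordinates of $v_{(0,0)}(\phi_1)$ and $v_{(0,0)}(\phi_2)$ differ. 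In particular, full equality of pairs holds when both coordinates differ. I would add a remark that this is the precise meaning of the equality clause, since the paper's later uses presumably compare leading terms one curve component at a time.
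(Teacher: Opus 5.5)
Your proposal is correct and follows the paper's proof: both identify the two coordinates of $v_{(0,0)}(\phi)$ with the one-variable orders of $\phi(x,0)$ and $\phi(0,y)$, derive additivity (with the $\infty$ convention) from restriction being a ring map into a domain, and prove the ultrametric inequality one coordinate at a time. Your coordinatewise reading of the equality clause is what the paper's argument actually establishes, since its proof only claims equality when the orders differ in both coordinates. Your example $\phi_1=x+y$, $\phi_2=-x+y^2$ correctly shows that the clause as literally stated, requiring only $v_{(0,0)}(\phi_1)\neq v_{(0,0)}(\phi_2)$, is too strong.
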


\begin{proof}
When the summand are finite, these follow immediately from properties of the intersection multiplicity. We still write carefully the proof and check the subtlety when some summand are $\infty$. Recall that addition is extended to $\mathbb{N} \cup \{\infty\}$ by $k+\infty = \infty$ for any $k \in \mathbb{N} \cup \{\infty\}.$

Denote the image of $\phi_1(x,y)$ under the map of modulo $x$ as $\Tilde{\phi_1}_y$ (intuitively, killing the variable $x$ and leaving the monomials only with variable $y)$, the image of $\phi_1(x,y)$ under the map of modulo $y$ as $\Tilde{\phi_1}_x$; and denote the order of $\Tilde{\phi_1}_x$ as $p_{1,x}$  (intuitively, the lowest degree of the monomials only in the variable $x$), denote the order of $\Tilde{\phi_1}_y$ as $p_{1,y}$. Similarly define $\Tilde{\phi_2}_x$, $\Tilde{\phi_2}_y, p_{2,x}, p_{2,y}$ for the images and orders obtained from $\phi_2$; define $\Tilde{\phi_1\phi_2}_x$, $\Tilde{\phi_1\phi_2}_y, p_{12,x}, p_{12,y}$ for the images and orders obtained from  $\phi_1\phi_2$. All of the orders can be $\infty$.

Compute
\begin{align*}
&v_{(0,0)}(\phi_1(x,y))+v_{(0,0)}(\phi_2(x,y))\\
&=(\dim(\frac{\mathbb{C}[\![x,y]\!]}{(\phi_1,y)}),\dim(\frac{\mathbb{C}[\![x,y]\!]}{(\phi_1,x)}))+(\dim(\frac{\mathbb{C}[\![x,y]\!]}{(\phi_2,y)}),\dim(\frac{\mathbb{C}[\![x,y]\!]}{(\phi_2,x)}))\\
&=(\dim(\frac{\mathbb{C}[\![x]\!]}{(\Tilde{\phi_1}_x)}),\dim(\frac{\mathbb{C}[\![y]\!]}{(\Tilde{\phi_1}_y)}))+(\dim(\frac{\mathbb{C}[\![x]\!]}{(\Tilde{\phi_2}_x)}),\dim(\frac{\mathbb{C}[\![y]\!]}{(\Tilde{\phi_2}_y)}))\\
&=(p_{1,x}, p_{1,y})+ (p_{2,x}, p_{2,y})\\
&=(p_{1,x}+p_{2,x}, p_{1,y}+p_{2,y}).\\
\end{align*}
Compute 
\begin{align*}
&v_{(0,0)}(\phi_1(x,y)\phi_2(x,y))\\
&=(\dim(\frac{\mathbb{C}[\![x,y]\!]}{(\phi_1\phi_2,y)}),\dim(\frac{\mathbb{C}[\![x,y]\!]}{(\phi_1\phi_2,x)}))\\
&=(\dim(\frac{\mathbb{C}[\![x]\!]}{(\Tilde{\phi_1\phi_2}_x)}),\dim(\frac{\mathbb{C}[\![y]\!]}{(\Tilde{\phi_1\phi_2}_y)}))\\
&=(p_{12,x}, p_{12,y}).
\end{align*}

We want to show $p_{12,x}=p_{1,x} +p_{2,x}$. This is an easy check when all the summand are finite, following from multiplying $\phi_{1x}$ and $\phi_{2x}$ in the ring $\mathbb{C}[\![x]\!]$. When some or all of the summand are infinite, say for example, $p_{1,x} = \infty$, which implies $\Tilde{\phi_{1x}}=0$, so $\Tilde{\phi_{1}\phi_{2x}}=0$ and $p_{12,x} = \infty$. So we check that $\infty + p_{2,x}=\infty$ and indeed $p_{12,x}=p_{1,x} +p_{2,x}$. Same holds for $p_{12,y}=p_{1,y} +p_{2,y}$. This proves (1).

For (2), we compute that 
\begin{align*}
&\min(v_{(0,0)}(\phi_1(x,y)), v_{(0,0)}(\phi_2(x,y)))
=(\min(p_{1,x},p_{2,x}),\min(p_{1,y},p_{2,y})).
\end{align*}
Also compute
\begin{align*}
&v_{(0,0)}(\phi_1(x,y)+\phi_2(x,y))=\dim(\frac{\mathbb{C}[\![x,y]\!]}{(\phi_1(x,y)+\phi_2(x,y)}))\\
&=(\dim(\frac{\mathbb{C}[\![x]\!]}{(\Tilde{\phi_{1x}}+\Tilde{\phi_{2x}})},\dim(\frac{\mathbb{C}[\![y]\!]}{(\Tilde{\phi_{1y}}+\Tilde{\phi_{2y})}}))\\
&=(\min(p_{1,x},p_{2,x}),\min(p_{1,y},p_{2,y})),
\end{align*}
and the last equality in the above equations holds when $p_{1,x} \neq p_{2,x}$ and $p_{1,y} \neq p_{2,y}$. (When $p_{1,x} = p_{2,x}$ or $p_{1,y} = p_{2,y}$, cancellation may happen and we may not have equality.)
\end{proof}

\begin{lemma}
\label{properties}
Let $(k,t)$ and $(u,l)$ be pairs of non-negative integers. The non-reduced valuations $v_{(k,t)}$ and $v_{(u,l)}$ satisfy properties similar to the standard valuation. For some power series $f(x,y), g(x,y) \in \mathbb{C}[\![x,y]\!]$ that are divisible by the appropriate monomials $x^ky^k, x^uy^l$, we have
\begin{enumerate}

\item $$v_{(k,t)}(f(x,y))+v_{(u,l)}(g(x,y))=v_{(k+u,k+l)}(f(x,y)g(x,y)),$$ where we have cancellation law when all summands are finite.

\item $$v_{(k,t)}(f(x,y)+g(x,y)) \geq \min(v_{(k,t)}(f(x,y)), v_{(k,t)}(g(x,y)))$$
$$ \text{ with equality if } v_{(k,t)}(f(x,y)) \neq v_{(k,t)}(g(x,y)).$$

%\item $v_{k}(f(x,y))$ valuates to $(\infty, \infty)$, if and only if $f(x,y)=0$.
\end{enumerate}

\end{lemma}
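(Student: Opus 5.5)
The plan is to reduce both statements to Lemma \ref{lem00} by factoring out the monomial. I read the indices in (1) as $v_{(k,t)}(f)+v_{(u,l)}(g)=v_{(k+u,\,t+l)}(fg)$, with $f$ divisible by $x^ky^t$ and $g$ divisible by $x^uy^l$; the printed ``$x^ky^k$'' and ``$(k+u,k+l)$'' look like typos for this. Inequalities and minima in $(\mathbb{N}\cup\{\infty\})^2$ are taken componentwise, as in the proof of Lemma \ref{lem00}.

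\textbf{Step 1 (the quotient is well defined).} Since $\mathbb{C}[\![x,y]\!]$ is an integral domain, write $f=x^ky^t\phi_1$ and $g=x^uy^l\phi_2$. The cofactors $\phi_1,\phi_2\in\mathbb{C}[\![x,y]\!]$ are then uniquely determined, namely $\phi_1=\frac{f}{x^ky^t}$ and $\phi_2=\frac{g}{x^uy^l}$. By definition of the non-reduced valuation,
\[
v_{(k,t)}(f)=v_{(0,0)}(\phi_1), \qquad v_{(u,l)}(g)=v_{(0,0)}(\phi_2).
\]

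\textbf{Step 2 (multiplicativity).} The product is $fg=x^{k+u}y^{t+l}\phi_1\phi_2$, so it is divisible by $x^{k+u}y^{t+l}$ with cofactor $\phi_1\phi_2$. Hence
\[
v_{(k+u,t+l)}(fg)=v_{(0,0)}(\phi_1\phi_2)=v_{(0,0)}(\phi_1)+v_{(0,0)}(\phi_2)
\]
by Lemma \ref{lem00}(1), which gives (1). The cancellation law also transfers directly: if all summands are finite, cancellation for $v_{(0,0)}$ in Lemma \ref{lem00} yields cancellation for $v_{(k,t)}$.

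\textbf{Step 3 (ultrametric inequality).} In (2), both $f$ and $g$ are divisible by $x^ky^t$, say $f=x^ky^t\phi_1$ and $g=x^ky^t\phi_2$. Then $f+g=x^ky^t(\phi_1+\phi_2)$, so $v_{(k,t)}(f+g)=v_{(0,0)}(\phi_1+\phi_2)$. Lemma \ref{lem00}(2) now gives the inequality at once.

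\textbf{Main obstacle: the equality case.} The only delicate point is the equality clause in (2). The proof of Lemma \ref{lem00} gets equality coordinatewise: the $x$-coordinate is an equality when $p_{1,x}\neq p_{2,x}$, and likewise for $y$. So the statement should really be read coordinatewise. If the two valuations differ in only one coordinate, equality holds in that coordinate, while in the other coordinate only $\ge$ is guaranteed. I would state this precisely by arguing separately in $\mathbb{C}[\![x]\!]$ and $\mathbb{C}[\![y]\!]$ after reducing modulo $y$ and modulo $x$. In each of these rings the order of a sum of two series with distinct orders is the minimum of the orders. The convention $\infty+a=\infty$ handles the cases where a reduction vanishes.
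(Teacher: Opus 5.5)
Your proposal is correct and is essentially the paper's own proof: factor out the monomial and apply Lemma \ref{lem00} to the cofactors, once for the product and once for the sum.

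Your two corrections are also warranted. First, reading the indices as $x^ky^t$ and $(k+u,t+l)$ fixes typos in the statement. Second, the equality clause in (2) needs the coordinatewise qualification: the paper never proves that clause separately, and read literally for pairs it fails. For example, $x+y$ and $-x+y^2$ have distinct valuations $(1,1)\neq(1,2)$, yet their sum $y+y^2$ has valuation $(\infty,1)\neq(1,1)$.
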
 

\begin{proof}
Using Lemma \ref{lem00}, compute
\begin{enumerate}
\item \begin{align*}
&v_{(k,t)}(f(x,y))+v_{(u,l)}(g(x,y))=v_{(0,0)}(\frac{f(x,y)}{x^ky^k})+v_{(0,0)}(\frac{g(x,y)}{x^uy^l})\\
&= v_{(0,0)}(\frac{f(x,y)}{x^ky^k}\cdot \frac{g(x,y)}{x^uy^l}))=v_{(0,0)}(\frac{f(x,y)g(x,y)}{x^{k+u}y^{k+l}}) = v_{(k+u,k+l)}(f(x,y)g(x,y)).  
\end{align*}

\item  $v_{(k,t)}(f(x,y)+g(x,y))= v_{(0,0)}(\frac{f(x,y)+g(x,y)}{x^ky^k})= v_{(0,0)}(\frac{f(x,y)}{x^ky^k}+ \frac{g(x,y)}{x^ky^k}) \geq \min(v_{(0,0)}(\frac{f(x,y)}{x^ky^k}), v_{(0,0)}(\frac{g(x,y)}{x^ky^k})) = \min(v_{(k,t)}(f(x,y), v_{(k,t)}(g(x,y)))).$

%\item The element $0$ obviously valuations to $(\infty, \infty)$ for all possible $v_k$. Conversely, assume for contradiction that $f(x,y)$ is some non-zero power series, then there must exist a maximum power of $xy$, say $x^sy^s$ such that $x^sy^s$ divides $f(x,y)$. The remainder after the division, denoted by $f'(x,y)$ where $f(x,y)= x^sy^sf'(x,y)$, is also non-zero, and not divisible by $xy$, so $f'(x,y)$ must have either a constant term, or a term $x^i$, or a term $y^j$ for some $i,j$. So $v_s(f(x,y))=v(f'(x,y))$ is not $(\infty, \infty)$ for this $s$, contradiction. 

\end{enumerate}
\end{proof}

\subsection{Valuation of the Vertical filtration}

\begin{definition}
Take integers $0 \leq k \leq N-1$, and the Vertical filtration $(W^{k})$ of an ideal $I \in \Hilb^n(\{x^uy^v=0\},0)$, then the Vertical valuation $v_{(0,k)}$ is defined for all power series in $W^{k}$. Denote the image as $v_{(0,k)}(W^{k})$, which is a lattice in $(\mathbb{N} \cup \{\infty\})^2$.
\end{definition}

Recall we showed in Lemma \ref{gen_v} that the generator $F$ of $W^k$ for $0 \leq k \leq N-1$ is 
$$F^k=y^k\cdot x^{i_k}+ y^{k+1}\cdot t(x,y),$$ for some nonnegative integer $i_k$, and power series $t(x,y)$.

\begin{lemma}
\label{lemik}
Assume $0 \leq k \leq N-1$, consider the Vertical valuation $v_{(0,k)}(W^k)$. All the points $(a,b)$ in $v_{(0,k)}(W^k)$ satisfy that $a \geq i_k$.
\end{lemma}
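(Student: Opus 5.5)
The plan is to show directly that every element of $W^k$ has $x$-adic order at least $i_k$ in its $y^k$-coefficient. Here the $y^k$-coefficient of $f$ is the power series $\phi(x)$ obtained by writing $f = y^k\phi(x) + y^{k+1}(\cdots)$, and $i_k$ is the smallest such order, realized by the generator $F^k$ of Lemma \ref{gen_v}.

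First, unwind the valuation. For $f \in W^k \subset (y^k)$, write $f = y^k g$ with $g = \phi(x) + y\,s(x,y)$. The first coordinate of $v_{(0,k)}(f)$ is $[g,y] = \dim \mathbb{C}[\![x,y]\!]/(g,y) = \dim \mathbb{C}[\![x]\!]/(\phi(x))$, which is $\operatorname{ord}_x \phi$ (or $\infty$ if $\phi=0$). So it suffices to show $\operatorname{ord}_x\phi \geq i_k$ for every $f\in W^k$.

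Second, return to how $i_k$ was produced in the existence part of the proof of Lemma \ref{gen_v}. The choice of $i$ there should be made as the minimum, over all $f \in W^k$, of the order of the $y^k$-coefficient. This minimum exists because, by Corollary \ref{cordim}, some element has finite order. With this choice the lemma is immediate: if some $f\in W^k$ had $\operatorname{ord}_x\phi = a < i_k$, then, as in that proof, dividing by the unit would produce a generator $y^kx^a + y^{k+1}t'$. Then $F^k$ would be redundant modulo $W^{k+1}$, by the uniqueness computation $F^k - x^{i_k-a}(y^kx^a+y^{k+1}t')\in W^{k+1}$, contradicting the choice of $F^k$ as the generator added at layer $k$.

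Alternatively, one can argue directly from generation, which is the safer route. The ideal $W^k$ is generated by $F^k$ together with $W^{k+1}$. This holds because the uniqueness argument shows every element of the form $y^kx^{i}+\cdots$ reduces to $W^{k+1}$ modulo $F^k$, provided $i\ge i_k$. So any $f\in W^k$ can be written as $f = r\,F^k + w$ with $r\in\mathbb{C}[\![x,y]\!]$ and $w\in W^{k+1}\subset (y^{k+1})$. Reducing modulo $y^{k+1}$ gives $\phi(x) = r(x,0)\,x^{i_k}$, so $\operatorname{ord}_x \phi \ge i_k$. Equivalently, by Lemma \ref{properties}(1),(2), $v_{(0,k)}(rF^k)\ge v_{(0,0)}(r)+(i_k,\ast)$, and $w$ contributes first coordinate $\infty$. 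The main obstacle is precisely justifying that $W^k = (F^k)+W^{k+1}$ with $i_k$ minimal, i.e.\ that no element of $W^k$ has a smaller leading $x$-exponent. This is exactly the minimality built into the construction of Lemma \ref{gen_v}, and I would make it explicit by taking $i_k$ to be the minimum order there. The remaining valuation bookkeeping is routine.
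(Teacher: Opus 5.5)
Your proposal is correct, and your second (``safer'') route is essentially the paper's proof. The paper also writes every element of $W^k$ as $F^k\cdot g+w$ with $w\in W^{k+1}\subset (y^{k+1})$, and then reads off via Lemma \ref{properties} that the first coordinate of $v_{(0,k)}$ is at least that of $v_{(0,0)}(g)+v_{(0,0)}(x^{i_k})$, hence $\geq i_k$; this is your reduction modulo $y^{k+1}$, and your explicit requirement that $i_k$ be the minimal $x$-order over all of $W^k$ (so that $W^k=(F^k)+W^{k+1}$) is exactly what the paper takes for granted from the uniqueness part of Lemma \ref{gen_v}.
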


\begin{comment}

and the row where the $y$ coordinate is equal to $\infty$ is the following lattice, where $i_k \geq 0$ is the degree of $x$ as in the generator $F^k$.

\begin{center}
\begin{tikzpicture}
\clip (-0.7,-0.2) rectangle (10cm,1cm); 

\foreach \y in {0}
    {
\node[draw,circle,inner sep=1pt,fill, label={\tiny($i_k$,$\infty$)}] at (1.8*0,\y) {};
        }
        
\foreach \x in {1,...,2}
    {
        \foreach \y in {0}
        {
        \node[draw,circle,inner sep=1pt,fill, label={\tiny($i_k$+\x ,$\infty$ )}] at (1.8*\x,\y) {};
        }
    }
     
%\foreach \x in {0,...,2}
    %{\node[draw,circle,inner sep=1pt,fill,label={...}] at (1.8*\x,1*3) {};}

\foreach \y in {0}
    {\node[draw,circle,inner sep=1pt,fill,label={...}] at (1.8*3,\y) {};}
\end{tikzpicture}
    
\end{center}
\end{comment}

\begin{proof}

%Take the generator $F^k$ and its product with all monomials of $x,y$, namely $F^k \cdot x^{p}$ for all $p \geq 0$, and they valuate (by Lemma \ref{properties}) to 

Because $F^k$ is the unique generator of $W_k$ on top of $W_{k+1}$, any power series in $W_k$ but not in $W_{k+1}$ is a product of $F^k$ with some other series $g \in \mathbb{C}[\![x,y]\!]$. So any element in $W^k$ is $F^k \cdot g + w$ for some elements $w$ in $W^{k+1}$. %And $w$ valuates to $v_{(0,k)}(w)=v_{(0,0)} \frac{w}{y^k}$. 

Because $w \in W^{k+1}$ is divisible by $y^{k+1}$, $\frac{w}{y^k}= y \cdot l$ for some $l \in \mathbb{C}[\![x,y]\!]$. Now compute using Lemma \ref{properties}
%$v_{(0,0)} \frac{w}{y^k}=(l,\infty)$ for some $l \in (\mathbb{N} \cup \{\infty\})$.
\begin{align*}
&v_{(0,k)}(F^k \cdot g +w)\\
&=v_{(0,k)}((x^{i_k}y^{k}+y^{k+1} \cdot t(x,y)) \cdot g +w)\\
&=v_{(0,0)}(x^{i_k}\cdot g+y \cdot t(x,y) \cdot g+y \cdot l).
\end{align*}
Because $v_{(0,0)}(y \cdot t(x,y) \cdot g +y \cdot l)$ valuates to a pair whose first entry is $\infty$, by Lemma \ref{properties}
The first entry of $v_{(0,0)}(g \cdot x^{i_k}+y \cdot t(x,y)\cdot g +y \cdot l)$ must be the first entry of $v_{(0,0)}(g \cdot x^{i_k})= v_{(0,0)}(g)+ v_{(0,0)}(x^{i_k})$, which must be greater than or equal to $i_k$.
\end{proof}

\begin{notation}
\label{notik}
When the generator $F^k$ of $W_k$ has degree $i_k$, denote that $W_k$ has valuation type $[i_k]$.
\end{notation}

\subsection{Valuation of the Diagonal filtration}

\begin{definition}
Take a pair of integers $(k ,k)$, $0 \leq k \leq v-1$, and the Diagonal filtration $(J^{k})$ of an ideal $I \in \Hilb^n(\{x^uy^v=0\},0)$, then the Diagonal valuation $v_{(k,k)}$ is defined for all power series in $J^{k}$. Denote the image as $v_{(k,k)}(J^{k})$, which is a lattice in $(\mathbb{N} \cup \{\infty\})^2$.
\end{definition}

\begin{lemma}
\label{leminfty}
For any element $f(x,y)$ in $J^{k}$, the valuation $v_{s_{k'}}(f)=(\infty, \infty)$ for any $k' < k$.
\end{lemma}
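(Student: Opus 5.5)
The notation $v_{s_{k'}}$ should be read as the diagonal valuation $v_{(k',k')}$. Lemma \ref{leminfty} then says that for $f\in J^k$ and $k'<k$, we have $v_{(k',k')}(f)=(\infty,\infty)$. The plan is to unwind the definition and reduce to a direct computation with $v_{(0,0)}$.

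First, since $f\in J^{k}=I\cap(x^ky^k)$, we can write $f=x^ky^k\cdot g$ for some $g\in\mathbb{C}[\![x,y]\!]$. Because $k'<k$, $f$ is divisible by $x^{k'}y^{k'}$, so $v_{(k',k')}(f)$ is defined. By the definition of the non-reduced valuation,
\begin{align*}
v_{(k',k')}(f)=v_{(0,0)}\Big(\frac{f}{x^{k'}y^{k'}}\Big)=v_{(0,0)}\big(x^{k-k'}y^{k-k'}g\big).
\end{align*}

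Next, set $m=k-k'\geq 1$. The first entry of this valuation is $\dim \mathbb{C}[\![x,y]\!]/(x^my^mg,\,y)$. Since $y$ divides $x^my^mg$, the ideal $(x^my^mg,y)$ equals $(y)$. The quotient is therefore $\mathbb{C}[\![x]\!]$, which is infinite dimensional. Symmetrically, the second entry is $\dim \mathbb{C}[\![x,y]\!]/(x)=\dim\mathbb{C}[\![y]\!]=\infty$.

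Equivalently, in the language of the proof of Lemma \ref{lem00}, the images of $x^my^mg$ modulo $x$ and modulo $y$ are both $0$, so both orders are $\infty$. One could also argue through Lemma \ref{properties}(1): $v_{(0,0)}(x^my^m)=(\infty,\infty)$, and adding anything to $\infty$ gives $\infty$. However, that route needs care about whether $v_{(0,0)}(g)$ might also be infinite, so the direct quotient computation is cleaner.

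There is no real obstacle here. The only point needing care is interpreting the notation $v_{s_{k'}}$ as $v_{(k',k')}$ and confirming that the valuation is defined on $J^k$, which holds because $x^{k'}y^{k'}$ divides $x^ky^k$. The strict inequality $k'<k$ is essential: it guarantees that after dividing out $x^{k'}y^{k'}$ at least one factor of $xy$ remains, and that remaining factor is what kills both quotients.
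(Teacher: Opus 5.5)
Your proof is correct and follows the paper's argument: both divide out $x^{k'}y^{k'}$, note that the quotient is still divisible by $x^{k-k'}y^{k-k'}$ with $k-k'\geq 1$, and conclude that $\mathbb{C}[\![x,y]\!]/(\cdot\,,x)$ and $\mathbb{C}[\![x,y]\!]/(\cdot\,,y)$ are both infinite-dimensional. Your explicit identification $(x^my^mg,\,y)=(y)$ only spells out a step the paper leaves implicit.
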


\begin{proof}
$v_{s_{k'}}(f)=v_{(0,0)}(\frac{f}{x^{k'}y^{k'}})$. Because $f \in J^{k}$, $f$ is divisible by $x^ky^k$, and $\frac{f}{x^{k'}y^{k'}}$ is divisible by $x^{k-k'}y^{k-k'}$, so the dimension of $\frac{\mathbb{C}[\![x,y]\!]}{(\frac{f}{x^{k'}y^{k'}}, x)}$ and $\frac{\mathbb{C}[\![x,y]\!]}{(\frac{f}{x^{k'}y^{k'}}, y)}$ are both $\infty$.
\end{proof}

Now we introduce some notations and compute the non-reduced valuation explicitly for each $J^{k}$.

\begin{notation}
Assume $0 \leq k \leq v-1$. If there is one single additional generator of $J^{k}$,  
\begin{align*}
&E=x^ky^k\cdot(ax^{i_k}+by^{j_k})+x^{k+1}y^{k+1}\cdot h(x,y),\\
\end{align*}
then denote the lattice $v_{k}(J^{k})$ as $v_{k}(J^{k}):= [\circled{1}, (i_k,j_k)]$.

If there is a pair of additional generators of $J^{k}$,
\begin{align*}
&E_1=x^ky^k\cdot x^{i_k}+x^{k+1}y^{k+1}\cdot h_1(x,y),\\
&E_2=x^ky^k\cdot y^{j_k}+x^{k+1}y^{k+1}\cdot h_2(x,y),
\end{align*}
then denote the lattice $v_{k}(J^{k})$ as $v_{k}(J^{k}):= [\circled{2},(i_k,j_k)]$
\end{notation}

\begin{proposition}
\label{valuation}
Assume $0 \leq k \leq v-1$. 

The non-reduced valuation $[\circled{1}, (i_k,j_k)]$ is equal to the following lattice.
\begin{center}

\begin{tikzpicture}
\clip (-0.7,-0.2) rectangle (10cm,4.5cm); 
\foreach \x in {1,...,3}
    {
        \foreach \y in {1,...,3}
        {
        \node[draw,circle,inner sep=1pt,fill, label={\tiny($i_k$+\x ,$j_k$+\y )}] at (1.8*\x,\y) {};
        }
    }
\node[draw,circle,inner sep=1pt,fill,label={\tiny ($i_k$,$j_k$)}] at (0,0) {};
     
\foreach \x in {1,...,3}
    {\node[draw,circle,inner sep=1pt,fill,label={...}] at (1.8*\x,1*4) {};}

\foreach \y in {1,...,4}
    {\node[draw,circle,inner sep=1pt,fill,label={...}] at (1.8*4,\y) {};}
\end{tikzpicture}
    
\end{center}

The non-reduced valuation $[\circled{2},(i_k,j_k)]$ is equal to the following lattice.
\begin{center}
 
\begin{tikzpicture}
\clip (-0.7,-0.2) rectangle (10cm,5cm); 
\foreach \x in {1,...,3}
    {
        \foreach \y in {1,...,3}
        {
        \node[draw,circle,inner sep=1pt,fill, label={\tiny($i_k$+\x ,$j_k$+\y )}] at (1.8*\x,\y) {};
        }
    }
\node[draw,circle,inner sep=1pt,fill,label={\tiny ($i_k$,$j_k$)}] at (0,0) {};
\foreach \x in {1,...,3}
    {\node[draw,circle,inner sep=1pt,fill,label={\tiny ($i_k$+\x , $j_k$)}] at (1.8*\x,0) {};}
\foreach \y in {1,...,3}
    {\node[draw,circle,inner sep=1pt,fill,label={\tiny ($i_k$, $j_k$ + \y )}] at (0,\y) {};}
     
\foreach \x in {0,...,3}
    {\node[draw,circle,inner sep=1pt,fill,label={...}] at (1.8*\x,1*4) {};}

\foreach \y in {0,...,4}
    {\node[draw,circle,inner sep=1pt,fill,label={...}] at (1.8*4,\y) {};}
\end{tikzpicture} 
\end{center}

\end{proposition}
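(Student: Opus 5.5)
Every element of $J^{k}$ can be written as $f=E\cdot g+w$ in case \circled{1}, or $f=E_1 g_1+E_2 g_2+w$ in case \circled{2}, with $g,g_1,g_2\in\mathbb{C}[\![x,y]\!]$ and $w\in J^{k+1}$. This is the same reduction used in the proof of Lemma \ref{lemik}, and it rests on Lemma \ref{lemmagen}, which says that only $E$ (respectively $E_1,E_2$) is added to pass from $J^{k+1}$ to $J^{k}$. Dividing by $x^ky^k$ gives
\[ \frac{f}{x^ky^k} = (ax^{i_k}+by^{j_k})g + xy\cdot(\text{power series}), \]
or, in case \circled{2}, $x^{i_k}g_1+y^{j_k}g_2+xy\cdot(\cdot)$. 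The term $w$ contributes only multiples of $xy$. By Lemma \ref{lem00}, a multiple of $xy$ has $v_{(0,0)}=(\infty,\infty)$, so by the equality case of the ultrametric inequality it does not affect either entry when the other summand has a finite entry. If both relevant entries are infinite, the corresponding entry is $\infty$ as well. So the whole computation reduces to $v_{(0,0)}$ of the leading part, which sees only $g(x,0)$, $g(0,y)$ (or $g_1(x,0)$, $g_2(0,y)$).

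\textbf{Case \circled{1}.} Write $p=\operatorname{ord} g(x,0)$ and $q=\operatorname{ord} g(0,y)$. The multiplicativity in Lemma \ref{lem00} gives $v_{(k,k)}(f)=(i_k+p,\,j_k+q)$. If $g$ is a unit then $p=q=0$. If $g$ is not a unit then $p,q\ge 1$, and they may be infinite. Conversely, taking $g=1$ realizes $(i_k,j_k)$, and taking $g=x^p+y^q$ with $p,q\ge1$ realizes $(i_k+p,j_k+q)$. To get the infinite points, such as $(i_k+p,\infty)$, take $g=x^p$. Hence the image is $\{(i_k,j_k)\}\cup\{(i_k+p,j_k+q):p,q\ge1\}$, including the $\infty$ boundary points drawn in the picture.

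\textbf{Case \circled{2}.} The first entry is $i_k+\operatorname{ord} g_1(x,0)$ and the second is $j_k+\operatorname{ord} g_2(0,y)$, and these two orders can be chosen independently. So every point $(i_k+p,j_k+q)$ with $p,q\ge0$ occurs, realized by $g_1=x^p$, $g_2=y^q$. This gives the full quadrant, again including the $\infty$ points.

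The main obstacle is the bookkeeping around infinite entries and the exclusion of the points $(i_k+p,j_k)$ with $p\ge1$ and $(i_k,j_k+q)$ with $q\ge1$ in case \circled{1}. The point to argue carefully is that a single $g$ controls both entries simultaneously: one entry sits on the boundary exactly when $g$ has a nonzero constant term, and then so does the other. I would also check that the $h$-terms of $E$ cannot lower any entry, which holds because $E-x^ky^k(ax^{i_k}+by^{j_k})$ is divisible by $x^{k+1}y^{k+1}$.
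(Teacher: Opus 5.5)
Your proposal is correct and follows essentially the paper's route. Like the paper, you write each element of $J^k$ as $Eg+w$ (respectively $E_1g_1+E_2g_2+w$) with $w\in J^{k+1}$, use multiplicativity of $v_{(0,0)}$, and observe that in case \circled{1} one entry is on the boundary exactly when $g$ is a unit, in which case both are; this excludes the boundary points, and explicit elements realize the remaining ones. Your realizing choices ($g=1$, and $g=x^p+y^q$ with $p,q\ge 1$) are in fact slightly more careful than the paper's $(x^l+y^t)E$ with $l,t\ge 0$, since for instance $1+y^t$ is a unit and does not produce the point $(i_k,j_k+t)$.
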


\begin{proof}
\begin{enumerate}
\item When the additional generator of $J^{k}$ is $E$, then $(x^l+y^t)E$, where integers $l,t \geq 0$, valuates to all points in the lattice: by Lemma \ref{properties},
\begin{align*}
%&v_{k}(E)=(i_k, j_k).\\
&v_{k}((x^l+y^t)E)=v_{(0,0)}(x^l+y^t)+v_{k}(E)\\
&=(l,t)+(i_k, j_k)=(i_k+l, j_k+t).
\end{align*}

Next we show that there are no other points in the valuation $v_{k}(J^{k})$.

Recall that by the inductive construction of $J^{k}$, all generators of $J^{k}$ are either elements of $J^{k+1}$ or $E$. From Lemma \ref{leminfty}, every element $u \in J^{k+1}$ valuates to $(\infty, \infty)$ by all non-reduced valuation $v_{k'}$ for all $k' \leq t$.

Suppose that there exist some elements $e,f \in \mathbb{C}[\![x,y]\!]$, $u \in J^{k+1}$, and the generator of $J^{k}$ is $e \cdot E+u$.

By Lemma \ref{properties}, 
\begin{align*}
&v_{k}(e \cdot E+u)=\min(v_{(0,0)}(e)+v_{k}(E), v_{k}(u))\\
&=\min(v_{(0,0)}(e)+v_{k}(E), (\infty, \infty))\\
&=v_{(0,0)}(e)+v_{k}(E).
\end{align*} 

Because $v_{(0,0)}(e) \geq 0$, we immediately have that $v_{k}(e \cdot E+u) \geq v_{k}(E)$, so there are no points $(m_1, m_2) \in [\circled{1},(i_k, j_k)]$ such that $m_1 < i_k$ or $m_2 < j_k$.

Suppose for contradiction that $v_{k}(e \cdot E + u)= (i_k, j_k+1)$, then 
because $v_{k}(E)=(i_k, j_k)$, by the cancellation law of Lemma \ref{properties}, $v_{(0,0)}(e)=(0,1)$.

So $e$ must satisfy that $[e(x,y),x]= 0$ and that $[e(x,y),y]= 1$. The intersection multiplicity $[e(x,y),x]= 0$ implies that the vector space $\frac{\mathbb{C}[x,y]}{(e(x,y),x)}$ has dimension $0$, which implies that $1$ is in the ideal $(e(x,y),x)$, so we must have some $h(x,y)$ and $g(x,y)$, some positive integer $b$ such that $h(x,y)e(x,y)+g(x,y)x^b$ is a unit. All units in $C[\![x,y]\!]$ are power series with a non-zero constant term, so $e$ must have a non-zero constant term. So $e$ is a unit.

However, the intersection multiplicity of $e(x,y)$ must satisfy $[e(x,y), y] = 1$, which contradicts that $e$ is a unit. So there are no such $e(x,y)$ that valuates to $(0,1)$. 

With similar proof, if the additional generator of $J^{k}$ is $E$, there are no elements of $J^{k}$ that valuates to $(i_k, j_k+l)$ or $(i_k+l, j_k)$ for any integer $l \geq 1$. This completes the proof of (1).

\item When the additional generators of $J^{k}$ are $E_1,E_2$, then $x^lE_1+y^tE_2$, where integers $l,t \geq 0$, valuates to all points in the lattice.
\begin{align*}
&v_{k}(x^lE_1+y^tE_2)= \min (v_{(0,0)}(x^l)+v_{k}(E_1), v_{(0,0)}(y^t)+v_{k}(E_2))\\
&=\min ((l,\infty)+(i_k, \infty), (\infty, t) +(\infty, j_k))\\
&=\min ((l+i_k, \infty), (\infty, t+j_k))\\
&=(l+i_k,t+j_k).
\end{align*}
This proves the existence of all points in $[\circled{2},(i_k,j_k)]$.

Now we prove that there are no other points in $[\circled{2},(i_k,j_k)]$. For any element in $J^{k}$, it is generated as $e_1 \cdot E_1 + e_2 \cdot E_2 + u$, for some $e_1, e_2 \in \mathbb{C}[\![x,y]\!]$, and $u \in J^{k+1}$.
\begin{align*}
&v_{k}(e_1 \cdot E_1 + e_2 \cdot E_2 + u)=\\
&=\min(v_{(0,0)}(e_1)+v_{k}(E_1), v_{(0,0)}(e_2)+v_{k}(E_2), (\infty, \infty))\\
&\geq (i_k, j_k).
\end{align*}
This completes the proof of (2).
\end{enumerate}

\end{proof}

\section{Conditions on the generators}

Let $I$ be an ideal in $\Hilb^n(\{x^uy^v=0\},0)$. When finding the  generators added to construct each ideal $W^k$ (or $J^{k}$) from the previous ideal $W^{k+1}$ (or $J^{k+1}$) in the filtration of $I$, we emphasize on the principle that the generators of the previous layers, once chosen, are considered to be completely fixed in the inductive process, and new generator added should not bring in any extra constrains or rules on the generators in the previous layers. We introduced the generators of each layer in Lemma \ref{lemmagen}, now we discuss the conditions on the new generators added in the inductive process.

%Construct the filtration of $I \in \Hilb^n(\{x^uy^v=0\},0)$ and use the notations as before. 

\subsection{Vertical filtration}

\begin{lemma}
\label{condigenbig}
When $0 \leq k \leq N-1$, the generator $F$ of $W^{k}$ and the generator $F'$ of $W^{k+1}$ satisfy that $i \geq i'$, where \begin{align*}
&F=y^k \cdot x^i + y^{k+1} \cdot t(x,y),\\
&F'=y^{k+1} \cdot x^{i'} + y^{k+2} \cdot k'(x,y).
\end{align*}
 
\end{lemma}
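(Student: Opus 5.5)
The plan is to multiply $F$ by $y$ and compare with $W^{k+1}$ through the Vertical valuation $v_{(0,k+1)}$, then apply Lemma \ref{lemik}.

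First, since $F\in W^k\subset I$ and $I$ is an ideal, the product $yF\in I$. It is also divisible by $y^{k+1}$, so $yF\in I\cap(y^{k+1})=W^{k+1}$. Explicitly,
\begin{align*}
yF=y^{k+1}x^{i}+y^{k+2}\,t(x,y).
\end{align*}

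Next, I compute its valuation. Dividing by $y^{k+1}$ gives $x^{i}+y\,t(x,y)$. Reducing modulo $y$ leaves $x^i$, so the first entry of $v_{(0,k+1)}(yF)$ is exactly $i$. Equivalently, by Lemma \ref{properties},
\begin{align*}
v_{(0,k+1)}(yF)=v_{(0,0)}(y)+v_{(0,k)}(F)=(\infty,1)+(i,\ast),
\end{align*}
whose first coordinate is $i$. Thus $v_{(0,k+1)}(W^{k+1})$ contains a point with first coordinate $i$.

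Finally, Lemma \ref{lemik} applied at level $k+1$ says that every point $(a,b)$ of $v_{(0,k+1)}(W^{k+1})$ has $a\ge i'$, where $i'$ is the exponent in the generator $F'$ of $W^{k+1}$. Applying this to $yF$ gives $i\ge i'$, as required.

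A possible obstacle is the edge case $k=N-1$. Here $W^{N}=(y^N)\cap I$ contains $y^N$, so its generator has $i'=0$ and the inequality is trivial. More generally, if $i'$ is defined as the smallest exponent among elements of $W^{k+1}$ as in Lemma \ref{gen_v}, Lemma \ref{lemik} applies directly.

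A more elementary alternative, which avoids valuations, is the following. Write $yF=g\,F'+w$ with $w\in W^{k+2}$, which is possible because $F'$ generates $W^{k+1}$ over $W^{k+2}$. Reduce both sides modulo $y^{k+2}$ and divide by $y^{k+1}$. The left side gives $x^{i}$ and the right side gives $g(x,0)\,x^{i'}$, so $x^{i'}$ divides $x^i$ and hence $i\ge i'$.

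I would present this second argument as a check on the valuation argument.
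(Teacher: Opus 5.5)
Your proof is correct. Your main route differs from the paper's mostly in packaging. The paper argues by contradiction: if $i<i'$, then $yF\in W^{k+1}$ ``cannot be generated by $F'$ together with the generators of'' $W^{k+2}$ (the paper writes $J^{s_{k+2}}$ there, a typo). That is exactly your elementary alternative. Your version makes the paper's terse claim precise: writing $yF=gF'+w$, dividing by $y^{k+1}$ and setting $y=0$ gives $x^i=g(x,0)\,x^{i'}$.

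Your primary argument instead goes through Lemma \ref{lemik}. The first coordinate of $v_{(0,k+1)}(yF)$ is $i$, and every point of $v_{(0,k+1)}(W^{k+1})$ has first coordinate at least $i'$. This is not circular: Lemma \ref{lemik} precedes the statement and uses only Lemma \ref{gen_v} and Lemma \ref{properties}. It turns the inequality into a one-line corollary of the valuation picture. However, the proof of Lemma \ref{lemik} is itself the same computation (write an element as $F'g+w$ and read off the $x$-order modulo $y$). So both routes rest on the same fact, $W^{k+1}=(F')+W^{k+2}$. Your separate treatment of $k=N-1$, where $W^N=(y^N)$ and $i'=0$, covers a case the paper passes over; Lemma \ref{lemik} is only stated for levels up to $N-1$.

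One slip needs fixing: the ``equivalently'' line is wrong. Since $v_{(0,0)}(y)=(\infty,1)$, the sum $(\infty,1)+(i,\ast)$ has first coordinate $\infty$, not $i$. Also, Lemma \ref{properties}(1) gives $v_{(0,0)}(y)+v_{(0,k)}(F)=v_{(0,k)}(yF)$, not $v_{(0,k+1)}(yF)$. The correct decomposition is
\begin{align*}
v_{(0,k+1)}(yF)=v_{(0,1)}(y)+v_{(0,k)}(F)=(0,0)+(i,\ast).
\end{align*}
Your direct computation (divide by $y^{k+1}$, then reduce modulo $y$) is right. The argument stands once this line is corrected or deleted.
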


\begin{proof}
Suppose for the purpose of contradiction that $i < i'$. Then \begin{align*}
y \cdot F =  y^{k+1} \cdot x^{i} + y^{k+2} \cdot t(x,y) 
\end{align*}
is an element in $W^{k+1}$ that cannot be generated by $F'$ together with the generators of $J^{s_{k+2}}$. Contradiction.

\end{proof}

\subsection{Diagonal filtration}

Now we study the conditions for $0 \leq k \leq v-1$, but first fix some notations.

\begin{notation}
Fix the following notation for the generators.

For $J^{k}$:
\begin{align*}
&E=x^ky^k\cdot(ax^{i_k}+by^{j_k})+x^{k+1}y^{k+1}\cdot h(x,y)\\
&E_1=x^ky^k\cdot x^{i_k}+x^{k+1}y^{k+1}\cdot h_1(x,y)\\
&E_2=x^ky^k\cdot y^{j_k}+x^{k+1}y^{k+1}\cdot h_2(x,y)
\end{align*}
And for $J^{k+1}$:
\begin{align*}
&E'=x^{k+1}y^{k+1}\cdot(a'x^{i_{k+1}}+b'y^{j_{k+1}})+x^{k+2}y^{k+2}\cdot h'(x,y).\\
&E_1'=x^{k+1}y^{k+1}\cdot x^{i_{k+1}}+x^{k+2}y^{k+2}\cdot h_1'(x,y)\\
&E_2'=x^{k+1}y^{k+1}\cdot y^{j_{k+1}}+x^{k+2}y^{k+2}\cdot h_2'(x,y) 
\end{align*}
\end{notation}

\begin{proposition}
\label{propij}
When $0 \leq k \leq v-1$, the following holds for two consecutive ideals $J^{k}$ and $J^{k+1}$ in the filtration.
\begin{enumerate}
\item When we have $[\circled{1}, (i_k,j_k)], [\circled{1},(i_{k+1},j_{k+1})]$, then 
\begin{align*}
&\text{either }(i_k,j_k)=(i_{k+1},j_{k+1}),\\
& \text{or } (i_k,j_k) \geq (i_{k+1}+1,j_{k+1}+1).
\end{align*}

\item When we have $[\circled{2},(i_k,j_k)], [\circled{2},(i_{k+1},j_{k+1})]$, then $$(i_k,j_k) \geq (i_{k+1}+1,j_{k+1}+1).$$ 
\item When we have $[\circled{1}, (i_k,j_k)], [\circled{2},(i_{k+1},j_{k+1})]$, then $$(i_k,j_k) \geq (i_{k+1},j_{k+1}).$$ 
\item When we have $[\circled{2},(i_k,j_k)], [\circled{1},(i_{k+1},j_{k+1})]$, then $$(i_k,j_k) \geq (i_{k+1}+1,j_{k+1}+1).$$ 
\end{enumerate}

\end{proposition}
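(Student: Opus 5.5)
The plan is to compare $xy$ times the generator(s) of $J^{k}$ with the lattice $v_{k+1}(J^{k+1})$ described in Proposition \ref{valuation}. Since $J^{k}\subset I$, the element $xy\cdot E$ (or $xy\cdot E_1$, $xy\cdot E_2$) lies in $I\cap(x^{k+1}y^{k+1})=J^{k+1}$. By Lemma \ref{properties}(1), $v_{k+1}(xy\cdot E)=v_{k}(E)=(i_k,j_k)$, and similarly $v_{k+1}(xyE_1)=(i_k,\infty)$ and $v_{k+1}(xyE_2)=(\infty,j_k)$. All of these values must be points of $v_{k+1}(J^{k+1})$, which has the shape $[\circled{1},(i_{k+1},j_{k+1})]$ or $[\circled{2},(i_{k+1},j_{k+1})]$. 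Reading off membership then gives the inequalities.

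\textbf{Case (3).} The lattice $[\circled{2},(i_{k+1},j_{k+1})]$ is $\{(a,b):a\ge i_{k+1},\, b\ge j_{k+1}\}$. Membership of $(i_k,j_k)$ gives $(i_k,j_k)\ge(i_{k+1},j_{k+1})$ directly.

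\textbf{Case (1).} The lattice $[\circled{1},(i_{k+1},j_{k+1})]$ consists of the corner $(i_{k+1},j_{k+1})$ together with $\{a\ge i_{k+1}+1,\ b\ge j_{k+1}+1\}$, including the points at $\infty$. So $(i_k,j_k)$ is either the corner or dominates $(i_{k+1}+1,j_{k+1}+1)$, which is the dichotomy claimed.

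\textbf{Cases (2) and (4).} Here the lower layer has a pair $E_1,E_2$, and $xyE_1$ and $xyE_2$ have valuations $(i_k,\infty)$ and $(\infty,j_k)$. In case (4) the lattice $[\circled{1},\cdot]$ contains points with an infinite coordinate only in the region $a\ge i_{k+1}+1$ (respectively $b\ge j_{k+1}+1$). Hence $i_k\ge i_{k+1}+1$ and $j_k\ge j_{k+1}+1$.

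In case (2) the lattice $[\circled{2},\cdot]$ contains $(i_{k+1},\infty)$, so membership alone only gives $i_k\ge i_{k+1}$, and the strict inequality needs a further argument. Suppose $i_k=i_{k+1}$. Then $xyE_1-E_1'$ lies in $J^{k+1}$ and is divisible by $x^{k+2}y^{k+2}$ modulo the $y$-pure part, because the leading terms $x^{k+1}y^{k+1}x^{i_k}$ cancel. I would then argue that $E_1$ is redundant in the minimal generation of $J^{k}$ over $J^{k+1}$, or that it contradicts the uniqueness and minimality in Lemma \ref{lemmagen}. More concretely, I would try to show that $x^ky^kx^{i_k}+\dots$ is then expressible through $J^{k+1}$ and the lower-order structure. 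The cleanest route is a dimension count: $\Gamma^{k}/\Gamma^{k+1}$ must be nonzero, and the equality would force $E_1$ to act like an element of $J^{k+1}$ divided by $xy$.

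\textbf{Main obstacle.} I expect this strictness in case (2) to be the hard step, and possibly its analogue if case (1) needs refinement. My first attempt would be: assuming $i_k=i_{k+1}$, show that $E_1'/(xy)$ is a well-defined power series lying in $I$ (using $E_1'\equiv xyE_1$ modulo higher terms and an induction on the $x$-adic order). Then $E_1'\in xy\cdot J^{k}$ could be used to rechoose generators, contradicting the normal form of the generators of $J^{k+1}$ as "new" generators not coming from $J^{k}$.
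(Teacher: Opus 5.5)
\textbf{Cases (1), (3), (4) are fine; case (2) has a genuine gap.} For (1), (3) and (4), reading off membership of $v_{k+1}(xy\cdot E)=(i_k,j_k)$, $v_{k+1}(xy\cdot E_1)=(i_k,\infty)$ and $v_{k+1}(xy\cdot E_2)=(\infty,j_k)$ in the lattices of Proposition \ref{valuation} gives exactly the claimed inequalities. For (1) this is even a little more direct than the paper. The paper first gets $(i_k,j_k)\ge(i_{k+1},j_{k+1})$ and then uses $aE'-a'xy\cdot E$ to rule out equality in only one coordinate.

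You correctly see that $xyE_1, xyE_2$ only give $(i_k,j_k)\ge(i_{k+1},j_{k+1})$ in case (2). Your proposed repair, however, yields no contradiction. Nothing in the construction prevents the new generators of $J^{k+1}$ from lying in $xy\cdot J^{k}+J^{k+2}$. In the $\circled{1}_E\circled{1}$ situation of case (1) this is exactly what happens, since $E'-\tfrac{a'}{a}\,xy\cdot E\in J^{k+2}$. So showing $E_1'\in xy\cdot J^k+J^{k+2}$ (which is immediate when $i_k=i_{k+1}$) proves nothing. The dimension-count remark is not an argument either.

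\textbf{The missing idea: multiply by $y$ or $x$ alone, not by $xy$.} Since $i_k,j_k\ge 1$, already
$y\cdot E_1=x^{k+1}y^{k+1}\bigl(x^{i_k-1}+y\,h_1(x,y)\bigr)$ and $x\cdot E_2=x^{k+1}y^{k+1}\bigl(y^{j_k-1}+x\,h_2(x,y)\bigr)$
are divisible by $x^{k+1}y^{k+1}$ and lie in $I$, hence in $J^{k+1}$. Their valuations are $v_{k+1}(y E_1)=(i_k-1,m)$ and $v_{k+1}(xE_2)=(m',j_k-1)$ for some $m,m'\ge 1$.

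Every point of $[\circled{2},(i_{k+1},j_{k+1})]$ is coordinatewise $\ge (i_{k+1},j_{k+1})$. This forces $i_k-1\ge i_{k+1}$ and $j_k-1\ge j_{k+1}$, which is (2). This is the paper's argument.

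It also gives (4) at once. A point with first coordinate $i_k-1$ in $[\circled{1},(i_{k+1},j_{k+1})]$ is either the corner or has first coordinate $\ge i_{k+1}+1$, so again $i_k\ge i_{k+1}+1$. The boundary case $i_k=i_{k+1}+1$ is what produces the condition on $h_1$ in Proposition \ref{prop=}(4).
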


\begin{proof}
For the generators $E, E_1, E_2$ in $J^{k}$, the following elements are in $J^{k+1}$
\begin{align*}
&xy \cdot E=x^{k+1}y^{k+1}\cdot(ax^{i_k}+by^{j_k})+x^{k+2}y^{k+2}\cdot h(x,y),\\
&y \cdot E_1=x^{k+1}y^{k+1}\cdot x^{i_k-1}+x^{k+1}y^{k+1}\cdot y \cdot h_1(x,y),\\
&x \cdot E_2=x^{k+1}y^{k+1}\cdot y^{j_k-1}+x^{k+1}y^{k+1}\cdot x \cdot h_2(x,y),
\end{align*}
and their valuations are 
\begin{align*}
&v_{k+1}(xy \cdot E)=(i_k, j_k). \\
&v_{k+1}(y \cdot E_1+x \cdot E_2) \leq (i_k-1, j_k-1).
\end{align*}
To not contradict with the valuations $[\circled{1},(i_{k+1},j_{k+1})]$
and $[\circled{2},(i_{k+1},j_{k+1})]$ as computed in Lemma \ref{valuation}, or in other words to not add new points in the valuation of $J^{k+1}$, we immediately prove (2),(3) and (4). And for (1), we have $(i_k,j_k) \geq (i_{k+1},j_{k+1})$.

%To finish the proof for (1), assume the additional generators for $J^{k}$ and $J^{k+1}$ are denoted as follows
%\begin{align*}
%&E=x^ky^k\cdot(ax^{i_k}+by^{j_k})+x^{k+1}y^{k+1}\cdot h(x,y)\\
%&E'=x^{k+1}y^{k+1}\cdot(a'x^{i_{k+1}}+b'y^{j_{k+1}})+x^{k+2}y^{k+2}\cdot h'(x,y).
%\end{align*}
Suppose that $i_k=i_{k+1}$, then compute
\begin{align*}
&aE'-a'xy \cdot E\\
&=(x^{k+1}y^{k+1}\cdot(aa'x^{i_k}+ab'y^{j_{k+1}})+x^{k+2}y^{k+2}\cdot ah'(x,y))\\
&-(x^{k+1}y^{k+1}\cdot(aa'x^{i_k}+a'by^{j_k})+x^{k+2}y^{k+2}\cdot a'h(x,y))\\
&=x^{k+1}y^{k+1}(ab'y^{j_{k+1}}-a'by^{j_k})+x^{k+2}y^{k+2}\cdot (ah'(x,y)-a'h(x,y)).
\end{align*}
Unless both $j_k=j_{k+1}$ and $ab'=a'b$, we have the following contradiction. 

Assume either $j_k \neq j_{k+1}$ or $ab' \neq a'b$, then together with the generator $E'$, $x^l$ for some positive integer $l$, $J^{k+1}$ has the following points in the valuation 
\begin{align*}
&v_{k+1}(x^lE'+(aE'-a'xy \cdot E))=\\
&=\min(v_{k+1}(x^lE'),v_{k+1}(aE'-a'xy \cdot E)\\
&=\min( (i_k+l,\infty), (\infty, j_k))\\
&=(i_k+l, j_k),
\end{align*}
which is a contradiction to $[\circled{1},(i_{k+1},j_{k+1})]$.

So when $i_k=i_{k+1}$, we must also have $j_k=j_{k+1}$, and that $\frac{a}{b}=\frac{a'}{b'}$.
\end{proof}

In the next proposition, we carefully discuss the boundary cases in Proposition \ref{propij}, to show that equality can be achieved.

\begin{proposition}

\label{prop=}
\begin{enumerate}
\item Assume we have $[\circled{1}, (i_k,j_k)], [\circled{1},(i_{k+1},j_{k+1})]$.

When $(i_k,j_k)=(i_{k+1},j_{k+1}),$
then $$\frac{a}{b}=\frac{a'}{b'}.$$

When 
$(i_k,j_k) = (i_{k+1}+1,j_{k+1}+1)$, there are no extra conditions on the generators.

\item When $[\circled{2},(i_k,j_k)], [\circled{2},(i_{k+1},j_{k+1})]$, and $(i_k,j_k) =(i_{k+1}+1,j_{k+1}+1),$ there are no extra conditions on the generators.

\item When $[\circled{1}, (i_k,j_k)], [\circled{2},(i_{k+1},j_{k+1})]$, and $(i_k,j_k) = (i_{k+1},j_{k+1}),$ there are no extra conditions on the generators.
\item When $[\circled{2},(i_k,j_k)], [\circled{1},(i_{k+1},j_{k+1})]$, and $i_k = i_{k+1}+1$, then 
$$b'y^{j_{k+1}}-a'yh_1(x,y)=0 \text{ modulo } xy;$$
or $j_k=j_{k+1}+1$, then $$a'x^{i_{k+1}}-b'xh_2(x,y)=0 \text{ modulo } xy.$$

\end{enumerate}
\end{proposition}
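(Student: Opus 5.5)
The plan is to handle each of the four boundary cases with the same mechanism as in the proof of Proposition \ref{propij}. We multiply the generators of $J^{k}$ by $xy$, $y$ or $x$ so that they land in $J^{k+1}$. We then require that these products, and their combinations with the generators of $J^{k+1}$, add no new points to the lattice $v_{k+1}(J^{k+1})$ computed in Proposition \ref{valuation}. Since generators of earlier layers are fixed, any such constraint can only be imposed on the new generators of $J^{k}$. For each case we (a) write the relevant product explicitly, (b) reduce it against $E'$ or $E_1',E_2'$ to cancel the leading term, and (c) read off the valuation of the remainder. Either the remainder is automatically in the span of $J^{k+1}$, giving no condition, or membership forces a vanishing condition.

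For case (1) with $(i_k,j_k)=(i_{k+1},j_{k+1})$, the computation of $aE'-a'xyE$ in the proof of Proposition \ref{propij} already shows that $ab'\neq a'b$ produces the forbidden valuation $(i_k+l,j_k)$. So $a/b=a'/b'$, and the remainder is then divisible by $x^{k+2}y^{k+2}$, hence lies in $J^{k+2}\subset J^{k+1}$. When $(i_k,j_k)=(i_{k+1}+1,j_{k+1}+1)$, the product $xyE$ has valuation $(i_{k+1}+1,j_{k+1}+1)$, which is an interior point of $[\circled{1},(i_{k+1},j_{k+1})]$. The plan there is to write $xyE=(c_1x+c_2y)E'+r$, with the constants chosen to match the leading terms. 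A mismatch in the ratio can be absorbed using $x^{l}E'$ and $y^{t}E'$ with a unit correction. The remainder $r$ then has valuation strictly inside the lattice and, inductively, lies in $J^{k+1}$, so no condition arises. Cases (2) and (3) are handled the same way. In case (2), $yE_1$ and $xE_2$ have valuations $(i_{k+1},\infty)$ and $(\infty,j_{k+1})$, which match $E_1',E_2'$ exactly. Subtracting, we get $yE_1-E_1'$ and $xE_2-E_2'$, both divisible by $x^{k+1}y^{k+1}$ with $v_{k+1}$ entries that are inside the lattice, so they reduce and give no condition. In case (3), $xyE$ has valuation $(i_{k+1},j_{k+1})$, which is the corner of $[\circled{2}]$ and so equals $aE_1'+bE_2'$ up to higher terms. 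Again nothing new is forced.

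Case (4) is where a genuine condition appears, and I expect it to be the main obstacle. Take $i_k=i_{k+1}+1$. Then
\[
yE_1=x^{k+1}y^{k+1}\bigl(x^{i_{k+1}}+y\,h_1(x,y)\bigr).
\]
Reducing against $E'$, we get $b'yE_1-b'\cdot(\text{part})$, or more cleanly $a'yE_1-E'$ up to scaling. This equals $x^{k+1}y^{k+1}\bigl(a'yh_1-b'y^{j_{k+1}}\bigr)$ plus terms divisible by $x^{k+2}y^{k+2}$. If $a'yh_1-b'y^{j_{k+1}}$ is nonzero modulo $xy$, its pure-$y$ part gives a valuation $(\infty,m)$. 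Adding $x^{l}E'$ then yields the point $(i_{k+1}+l,m)$, which is not in $[\circled{1},(i_{k+1},j_{k+1})]$ when $m$ and $l$ fall on a boundary row. This is the same contradiction argument as in Proposition \ref{propij}, and it forces the stated congruence. The symmetric computation with $xE_2$ handles $j_k=j_{k+1}+1$. The delicate point is checking that the only obstruction is the pure-$y$ (respectively pure-$x$) part of the remainder. The cross terms divisible by $xy$ must be absorbable into $J^{k+2}$ or into multiples of $E'$, and I would verify this by the unit-rescaling trick from Lemma \ref{lemmagen}.
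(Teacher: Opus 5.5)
Your route is the paper's own. In cases (1)--(3) you write the shifted generator ($xyE$, or $yE_1$ and $xE_2$) as a combination of $E'$ or $E_1',E_2'$ plus a remainder in $(x^{k+2}y^{k+2})$. In case (4) you form $E'-a'yE_1$ and add $x^lE'$ to produce a point outside $[\circled{1},(i_{k+1},j_{k+1})]$. Cases (1) and (3) are fine at level $k+1$, because multiplying by $xy$ pushes the whole Tail of $E$ into $(x^{k+2}y^{k+2})$. There is also no ratio mismatch to absorb in case (1): $c_1=a/a'$ and $c_2=b/b'$ are independent, and $xyE-(c_1x+c_2y)E'$ is already divisible by $x^{k+2}y^{k+2}$.

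Case (2), however, is wrong as written.
\begin{itemize}
\item There the shift is only by $y$, and $yE_1=x^{k+1}y^{k+1}\bigl(x^{i_{k+1}}+y\,h_1(x,y)\bigr)$. So $v_{k+1}(yE_1)=\bigl(i_{k+1},\mathrm{ord}_y(y\,h_1(0,y))\bigr)$, not $(i_{k+1},\infty)$. The pure-$y$ part of $yh_1$ is exactly the term you isolate in case (4).
\item Hence $yE_1-E_1'$ is not automatically in $(x^{k+2}y^{k+2})$, and requiring it to lie in $J^{k+1}$ does constrain the Tail. The coefficients of $y^0,\dots,y^{j_{k+1}-2}$ in $h_1(0,y)$ must vanish. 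The coefficient $c$ of $y^{j_{k+1}-1}$ stays free, because $yE_1-E_1'-cE_2'$ is then divisible by $x^{k+2}y^{k+2}$. The same holds symmetrically for $xE_2$.
\item This is the generic Inside/Diff constraint counted in Proposition \ref{familyofideals}. So ``no extra conditions'' in (2) has to mean nothing on the Head and nothing beyond that generic constraint. What separates (2) from (4) is that in (4) this last coefficient is forced to equal $b'/a'$.
\end{itemize}
The paper only asserts the membership in (2), which silently presupposes the same constraint; your stated reason for it is false.

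In case (4), the delicate point is not the cross terms. They lie in $(x^{k+2}y^{k+2})$ and are invisible to $v_{k+1}$. The real issue is the bound $m\le j_{k+1}$ hidden in your phrase ``boundary row''. After reducing $h_1$ by $E'$ and its multiples (the elimination that defines the Tail part), $h_1(0,y)$ has degree at most $j_{k+1}-1$. So a nonzero $b'y^{j_{k+1}}-a'yh_1(0,y)$ has order $m\le j_{k+1}$, and $(i_{k+1}+l,m)$ really is outside the lattice. Without that normalization you only get order at least $j_{k+1}+1$, which is weaker than the stated congruence modulo $xy$. The paper uses the same normalization implicitly when it writes $\min(j_{k+1},m)$.
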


\begin{proof}
\begin{enumerate}
\item We already proved case (1), that $(i_k,j_k)=(i_{k+1},j_{k+1}),$
implies $\frac{a}{b}=\frac{a'}{b'}.$ Now when $(i_k,j_k)=(i_{k+1}+1,j_{k+1}+1)$, $xy \cdot E$ is in the ideal generated by $E'$ and $J^{s_{k+2}}$ for all $a,b,a',b',h(x,y),h'(x,y)$, so there are no extra conditions.

\item Because $y \cdot E_1$ and $x \cdot E_2$ are in the ideal generated by $E_1', E_2'$ and $J^{s_{k+2}}$, so  there are no extra conditions.

\item Because $xy \cdot E$ is in the ideal generated by $E_1', E_2'$ and $J^{s_{k+2}}$, so  there are no extra conditions.

\item
Assume that $i_k = i_{k+1}+1$ and $j_k \geq j_{k+1}+1$.
The elements 
\begin{align*}
&y \cdot E_1=x^{k+1}y^{k+1}\cdot x^{i_k-1}+x^{k+1}y^{k+1}\cdot y \cdot h_1(x,y)\\
&x \cdot E_2=x^{k+1}y^{k+1}\cdot y^{j_k-1}+x^{k+1}y^{k+1}\cdot x \cdot h_2(x,y),
\end{align*}
are in $J^{k+1}$, and are generated by $E'$ along with elements of $J^{s_{k+2}}$.

Compute 
\begin{align*}
&E'-a'y \cdot E_1\\
&=x^{k+1}y^{k+1}\cdot (b'y^{j_{k+1}}-a'yh_1(x,y))+x^{k+2}y^{k+2}\cdot h'(x,y)\\
\end{align*}

Denote the valuation $$v_{(0,0)}(yh_1(x,y)):=(\infty,m)$$ where $m$ is either some positive integer or $\infty$.
Compute
\begin{align*}
v_{k}(E'-a'y \cdot E_1)=v_{(0,0)}(b'y^{j_{k+1}}-a'yh_1(x,y)) = (\infty, \min(j_{k+1},m)), 
\end{align*}
unless $$b'y^{j_{k+1}}-a'yh_1(x,y)=0 \text{ modulo } xy.$$

So there are elements $x^lE'+(E'-a'y \cdot E_1)$ for all positive integers $l$ in $J^{k+1}$ that valuates to  
\begin{align*}
&v_{k+1}(x^lE'+(E'-a'y \cdot E_1))\\
&=\min ( (i_{k+1}+l, \infty), (\infty, \min(j_{k+1},m)))\\
&= (i_k+l, \min(j_{k+1},m)).   
\end{align*}
This contradicts that $v_{k}(J^{k})=[\circled{1},(i_{k+1},j_{k+1})]$ and should not contain points $(i_k+l, \min(j_{k+1},m))$ for all positive integers $l$. So we must have that 
$$b'y^{j_{k+1}}-a'yh_1(x,y)=0 \text{ modulo } xy.$$
If we write the power series $$h_1(x,y)=\sum_{w_1,w_2=0}^{\infty} \limits c_{w_1,w_2}x^{w_1}y^{w_2},$$
then the coefficients $c_{w_1,w_2}$ are completely determined when $w_1=0$, and free otherwise.

By the same reasoning, if 
$j_k = j_{k+1}$, we also must have that $$a'x^{i_{k+1}}-b'xh_2(x,y)=0 \text{ modulo } xy.$$ 

If we write the power series $$h_2(x,y)=\sum_{w_1,w_2=0}^{\infty} \limits d_{w_1,w_2}x^{w_1}y^{w_2},$$
then the coefficients $d_{w_1,w_2}$ are completely determined when $w_2=0$, and free otherwise. 
\end{enumerate}
\end{proof}

\section{Stratification of the Hilbert scheme}

\subsection{Vertical stratification}

\begin{lemma}
Take the Vertical valuations $v_{(0,k)}$, $k=0,...,N-1$ of the Vertical filtration $W^k$ of the Hilbert scheme $\Hilb^{n}(\{x^uy^v=0\},0)$, and we obtain a sequence of numbers $(i_k)$ where $v_{(0,k)}(W^k)=[i_k]$, such that $i_k \geq i_{k+1}$. We have that $$\sum_{k=0}^{N-1}i_k=n.$$
\end{lemma}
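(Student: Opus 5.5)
The plan is to compute $\dim \Theta^k/\Theta^{k+1}$ layer by layer and then sum over $k$ using Lemma \ref{lemvectorfilt}, which gives $\sum_{k=0}^{N-1}\dim(\Theta^k/\Theta^{k+1})=n$. The monotonicity $i_k\ge i_{k+1}$ is Lemma \ref{condigenbig}, so all the work is the identity
\[
\dim(\Theta^k/\Theta^{k+1})=i_k \quad \text{for each } 0\le k\le N-1 .
\]

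\textbf{Step 1: rewrite the quotient.} By definition $\Theta^k=(y^k)/W^k$, and $\Theta^{k+1}=(y^{k+1})/W^{k+1}$ embeds in it because $W^{k+1}=W^k\cap(y^{k+1})$. Hence
\[
\Theta^k/\Theta^{k+1}\cong (y^k)/\bigl(W^k+(y^{k+1})\bigr).
\]

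\textbf{Step 2: identify this as a quotient of $\mathbb{C}[\![x]\!]$.} Dividing by $y^k$ and reducing modulo $y$ identifies $(y^k)/(y^{k+1})$ with $\mathbb{C}[\![x]\!]$. Under this identification the image of $W^k$ is the set of $x$-parts (first coordinates) of $v_{(0,k)}$-images of elements of $W^k$. So $\Theta^k/\Theta^{k+1}\cong \mathbb{C}[\![x]\!]/\bar W^k$, where $\bar W^k$ is the image of $W^k$ in $\mathbb{C}[\![x]\!]$.

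\textbf{Step 3: compute $\bar W^k$.} By Lemma \ref{gen_v}, every element of $W^k$ has the form $g\cdot F^k+w$ with $w\in W^{k+1}\subset (y^{k+1})$ and $F^k=y^kx^{i_k}+y^{k+1}t$. Its image in $\mathbb{C}[\![x]\!]$ is $g(x,0)\,x^{i_k}$. Therefore $\bar W^k=(x^{i_k})$, and $\mathbb{C}[\![x]\!]/(x^{i_k})$ has dimension $i_k$, with basis the classes of $y^k,xy^k,\dots,x^{i_k-1}y^k$.

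\textbf{Step 4: the main subtlety.} The argument needs that $W^k$ is generated by $W^{k+1}$ together with the single element $F^k$. This holds even in the degenerate case where $W^k$ contains no element with nonzero $y^k$-part: finiteness from Corollary \ref{cordim} rules that out. Lemma \ref{lemik} supplies the inclusion $\bar W^k\subseteq (x^{i_k})$, and $F^k$ itself supplies the reverse inclusion.

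One also has to be careful about the top layer. Since $y^N\in I$, we have $W^N=(y^N)$ and $\Theta^N=0$. For $k<N$ the generator $F^k$ is genuinely new, and the case $i_k=0$ (meaning $y^k\in I$ up to higher terms) is consistent: that layer contributes $0$. Summing $\dim(\Theta^k/\Theta^{k+1})=i_k$ over $k=0,\dots,N-1$ and applying Lemma \ref{lemvectorfilt} gives $\sum_{k=0}^{N-1} i_k=n$.
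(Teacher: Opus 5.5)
Your proposal is correct and follows the paper's route. You show $\dim(\Theta^k/\Theta^{k+1})=i_k$, with independence of $y^k,\dots,x^{i_k-1}y^k$ coming from Lemma \ref{lemik} and spanning from $F^k$, and then sum using Lemma \ref{lemvectorfilt}. Your identification $\Theta^k/\Theta^{k+1}\cong\mathbb{C}[\![x]\!]/(x^{i_k})$ is just a cleaner packaging of the paper's direct check that $B^k$ is a basis.

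One small wording fix in Step 2: the first coordinates of $v_{(0,k)}(W^k)$ are the $x$-orders of the elements of the ideal $\bar W^k$, not the elements themselves. That is all you need, since $\bar W^k$ is determined by its minimal order.
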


\begin{proof}
When $0 \leq k \leq N-1$, and $v_{k}(J^{k}) = [i_k]$, we want to show that the vector space $\frac{\Theta^{k}}{\Theta^{k+1}}$ is spanned over $\mathbb{C}$ by the basis $$B^k=\{y^{k}, xy^k, x^2y^k, ..., x^{i_k-1}y^k \}.$$  

Suppose for contradiction that $B^k$ is not linearly independent, then there exists some element $g$ in $I \cap (y^k)$ that has at least one of the basis elements as a summand, then $v_{(0,k)}(g)$ is some point whose $x$ coordinate is smaller than $i_k$, contradicting with Lemma \ref{lemik}. If $v_{k}(J^{k})=[i_k]$, then use the generator $F$ to write all monomials $m \in \Theta^{k}$ that are not in $B^k$ as a linear combination of elements in $\Theta^{k+1}$, so every element in the vector space $\frac{\Theta^k}{\Theta^{k+1}}$ can be written as a linear combination of $B^k$. So $B^k$ is a basis of $\frac{\Theta^{k}}{\Theta^{k+1}}$.

Because the vector space $\frac{\Theta^{k}}{\Theta^{k+1}}$ has dimension $i_k$, Lemma \ref{lemvectorfilt} implies that 

$\sum_{k=0}^{N-1}i_k=n.$
\end{proof}

\begin{corollary}
The Vertical valuation $v_{(0,k)}$ maps any ideal $I \in \Hilb^{n}(\{x^uy^v=0\},0)$ and its vertical filtration $W^k$ to a length $N$ partition $(i_k)$ of $n$.
\end{corollary}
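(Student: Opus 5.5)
The corollary follows from the preceding lemma plus two observations: the sequence $(i_k)$ is well defined, and it is weakly decreasing. Together these make $(i_k)_{k=0}^{N-1}$ a partition of $n$ with $N$ (possibly zero) parts.

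\textbf{Step 1: well-definedness.} Fix $I \in \Hilb^n(\{x^uy^v=0\},0)$ and choose $N$ with $y^N \in I$; such an $N$ exists since $V(I)=\{(0,0)\}$ and $\dim \mathbb{C}[\![x,y]\!]/I$ is finite. For each $0 \le k \le N-1$, Lemma \ref{gen_v} gives a generator $F^k = y^k x^{i_k} + y^{k+1} t(x,y)$ of $W^k$ over $W^{k+1}$. The integer $i_k$ depends only on $I$ and not on the choice of generator. By the uniqueness part of Lemma \ref{gen_v} together with Lemma \ref{lemik}, $i_k$ is the minimum of the first coordinates of $v_{(0,k)}(W^k)$. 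This minimum is attained by $F^k$ itself, since $v_{(0,k)}(F^k)$ has first coordinate $[x^{i_k}+y\,t,\,y]=i_k$. So $i_k$ is intrinsic to $I$, and $v_{(0,k)}(W^k)=[i_k]$ makes sense in the sense of Notation \ref{notik}.

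\textbf{Step 2: monotonicity and sum.} Lemma \ref{condigenbig} gives $i_k \ge i_{k+1}$ for all $0\le k\le N-2$, so the sequence $(i_0,\dots,i_{N-1})$ is weakly decreasing. The preceding lemma, whose proof identifies the basis $\{y^k,xy^k,\dots,x^{i_k-1}y^k\}$ of $\Theta^k/\Theta^{k+1}$ and then applies Lemma \ref{lemvectorfilt}, gives $\sum_{k=0}^{N-1} i_k = n$. Hence $(i_k)$ is a partition of $n$ of length $N$, padded with zeros.

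\textbf{Step 3: independence of $N$.} The only point needing care is that the result should not depend on $N$. If $y^N\in I$ then $y^{N'}\in I$ for every $N'\ge N$. For $k\ge N$ we have $y^k\in W^k$, so the generator is $F^k=y^k$ and $i_k=0$. Enlarging $N$ therefore only appends zero parts, and the partition is independent of the choice of $N$. I expect no real obstacle here; the substantive work is already contained in the preceding lemma.
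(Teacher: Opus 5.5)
Your proposal is correct and follows the paper's route. The paper reads the corollary off the preceding lemma, which gives $\sum_{k=0}^{N-1} i_k = n$ from the basis $\{y^k, xy^k, \dots, x^{i_k-1}y^k\}$ of $\Theta^k/\Theta^{k+1}$ and Lemma~\ref{lemvectorfilt}, and gives the monotonicity $i_k \ge i_{k+1}$ from Lemma~\ref{condigenbig}. Your Steps 1 and 3 (that $i_k$ depends only on $I$, and that enlarging $N$ only appends zero parts) are sound additions that the paper leaves implicit.
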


\begin{definition}
To define the Vertical stratification of the Hilbert scheme $\Hilb^{n}(\{x^uy^v=0\},0)$, we send any ideal $I \in \Hilb^{n}(\{x^uy^v=0\},0)$ to its Vertical valuation. Collect all ideals $I \in \Hilb^{n}(\{x^uy^v=0\},0)$ whose image is a partition $(i_k)$ of $n$ into a stratum indexed by $(i_k)$.
\end{definition}

The Vertical stratification of the Hilbert scheme $\Hilb^{n}(\{x^uy^v=0\},0)$ coincides with the Bialynicki-Birula stratification. In particular, it can be obtained from the torus action that puts much greater weight on the variable $y$ than the variable $x$. 
\subsection{Diagonal stratification. \\}

The Diagonal stratification is a new stratification of the Hilbert scheme of points, not coming directly from intersecting with any know stratification of the Hilbert scheme.

Before the definition of the Diagonal stratification, we define a new type of diagonal partition that indexes the Diagonal stratification.

Unlike the usual partition with vertical columns and horizontal rows, the \textbf{columns} of the diagonal partitions expand in the diagonal direction, marked by the \textcolor{blue}{blue} arrows in the picture, and the \textbf{rows} are expanding both in the $x$ and $y$ direction, marked by the \textcolor{red}{red} arrows. The \textbf{central column} is marked by the \textbf{\textcolor{blue}{thick blue}} arrow in the middle. Denote the horizontal part of a row as the \textbf{horizontal arm}, and denote the vertical part of a row as the \textbf{vertical arm}. The horizontal and vertical arms of a row has a common intersection, the box in the central column, denoted as \textbf{the central box}.

The rows are also denoted interchangeably as \textbf{layers}. Denote the layer with the central box $1$ as the \textbf{outmost layer}, and the layer with the central box $x^{v-1}y^{v-1}$ as the \textbf{innermost layer}. The outmost layer is also denoted as the \textbf{0th layer}, and moving up, we have the \textbf{1st layer, 2nd layer, ...,} up until the innermost layer, which is the \textbf{(v-1)th layer}.

\resizebox{6cm}{6cm}{
\begin{tikzpicture}
%begining layers

%centural column
\draw[->, blue!60, line width=0.6mm] (-1,-1) -- (5,5); 

%right side
\draw[->, blue!60, thick] (-1,0) -- (4.5,5.5);
\draw[->, blue!60, thick] (-1,1) -- (4,6);

%left side
\draw[->, blue!60, thick] (0,-1) -- (5.5,4.5);

\draw[->, blue!60, thick] (1,-1) -- (6,4);

%rows
\draw[->, red!60, thick] (-0.5,-0.5) -- (13,-0.5);
\draw[->, red!60, thick] (0.5,0.5) -- (11,0.5);
\draw[->, red!60, thick] (1.5,1.5) -- (9,1.5);

%vertical rows
\draw[->, red!60, thick] (-0.5,-0.5) -- (-0.5,13);
\draw[->, red!60, thick] (0.5,0.5) -- (0.5,11);
\draw[->, red!60, thick] (1.5,1.5) -- (1.5,9);

\node[cell] at (-1,-1) {\scalebox{.5}{$1$}};
\node[cell, minimum height=11cm] at (-1,0) {\tiny $\vdots$};
\node[cell, minimum width=11cm] at (0,-1) {\tiny $\ldots$};
\node[cell] at (11,-1) {};
\node[cell] at (-1,11) {};

%first/ k-1 th layer
\node[cell] at (0,0) {\scalebox{.5}{$xy$}};
\node[cell, minimum height=8cm] at (0,1) {\tiny $\vdots$};
\node[cell, minimum width=8cm] at (1,0) {\tiny $\ldots$};
\node[cell] at (9,0) {};
\node[cell] at (0,9) {};

%second layer

\node[cell] at (1,1) {\scalebox{.5}{$...$}};
\node[cell, minimum height=6cm] at (1,2) {\tiny $\vdots$};
\node[cell, minimum width=6cm] at (2,1) {\tiny $\ldots$};
%\node[cell, fill=lime!50] at (7,1) {};
%\node[cell, fill=lime!50] at (1,7) {};

%third layer
\node[cell] at (2,2) {};
\node[cell, minimum height=4cm] at (2,3) {\tiny $\vdots$};
\node[cell, minimum width=3cm] at (3,2) {\tiny $\ldots$};
%\node[cell, fill=lime!50] at (6,2) {};
%\node[cell, fill=lime!50] at (2,7) {};

\node[cell, draw=none] at (3,3) {\tiny $x^v y^v$};

\end{tikzpicture}
}

Just like the usual partition where the number of boxes are only allowed to weakly increase, the number of boxes in the following partition we define must also respect some weakly increasing rules.

\begin{definition}
\label{wddp}
The \textbf{weak diagonal partition} has two types of rows, type $\circled{1}$ and type $\circled{2}$, and two type of boxes, solid boxes, and shaded boxes.

A type $\circled{1}$ row has exactly two shaded boxes at each end of the horizontal and vertical arms. And all the rest of the boxes in the middle are solid boxes. %At the $k$th row of type $\circled{1}$, when the shaded box on the horizontal arm of  is $i_k$ boxes away from the central box, and the shaded box on the vertical arm is $j_k$ boxes away from the central box, denote this row as $[\circled{1}, (i_k,j_k)]$.
%or $[\circled{2},(i_k,j_k)]$, where $(i_k,j_k)$ counts total number of boxes of each row, including the shaded boxes for type $\circled{1}$, and the empty boxes for type $\circled{2}$.
A type $\circled{2}$ row has solid boxes only. %To incorporate the ideal generators into our notation, we (artificially) attach two empty boxes at the end of each horizontal and vertical arms of type $\circled{2}$, denoted by boxes with the $\emptyset$ notation. 

At the $k$th row, when the horizontal arm has $i_k$ boxes, and the vertical arm has $j_k$ boxes, (counting both the solid and shaded boxes for type $\circled{1}$ rows),  denote this row as $[\circled{1}, (i_k,j_k)]$ or $[\circled{2}, (i_k,j_k)]$ correspondingly.

\resizebox{2.2cm}{2.2cm}{
\begin{tikzpicture}
%rows
\node[cell, minimum height=5cm] at (0,0) {$\vdots$};
\node[cell, minimum width=5cm] at (0,0) {$...$};

%ending boxes
%\node[cell] at (5,0) {};
%\node[cell] at (0,5) {};
\node[cell, draw=none] at (3,3) {Type $\circled{2}$};

\end{tikzpicture}}
\resizebox{2.5cm}{2.5cm}{
\begin{tikzpicture}

%rows
\node[cell, minimum height=6cm] at (0,0) {$\vdots$};
\node[cell, minimum width=6cm] at (0,0) {$\ldots$};

%ending boxes
\node[cell, fill=gray!50] at (5,0) {};
\node[cell, fill=gray!50] at (0,5) {};

\node[cell, draw=none] at (3,3) {Type $\circled{1}$};
\end{tikzpicture}
}
\begin{comment}
\resizebox{3.5cm}{2.5cm}{
\begin{tikzpicture}
%rows
\node[cell, minimum height=6cm] at (0,0) {$\vdots$};
\node[cell, minimum width=6cm] at (0,0) {$...$};

%ending boxes
\node[cell] at (5,0) {$\emptyset$};
\node[cell] at (0,5) {$\emptyset$};
\node[cell, draw=none] at (3,3) {Type $\circled{2}$ with generators marked};

\end{tikzpicture}}
\end{comment}

Define the weak diagonal partition to be the diagonal partition whose rows are of type either $\circled{1}$ or $\circled{2}$, and that the number of boxes $(i_k,j_k)$ between two consecutive rows satisfy the rules in Proposition \ref{prop=}. 

To make it easier for the reader, we restate the rules by drawing the following picture indicating the boundary cases (the minimum number of boxes that the outter layer can have) of the rules in Proposition \ref{prop=}. 

\resizebox{2.5cm}{2.5cm}{
\begin{tikzpicture}

%outter rows
\node[cell, minimum height=6cm] at (0,0) {$\vdots$};
\node[cell, minimum width=6cm] at (0,0) {$\ldots$};

%inner rows
\node[cell, minimum height=6cm] at (1,1) {$\vdots$};
\node[cell, minimum width=6cm] at (1,1) {$\ldots$};

%ending boxes - outter
\node[cell, fill=gray!50] at (5,0) {};
\node[cell, fill=gray!50] at (0,5) {};

%ending boxes - inner
\node[cell, fill=gray!50] at (6,1) {};
\node[cell, fill=gray!50] at (1,6) {};

\node[cell, draw=none] at (3,3) {Type $\circled{1}\circled{1}$};
\end{tikzpicture}
}
\resizebox{2.5cm}{2.5cm}{
\begin{tikzpicture}

%outter rows
\node[cell, minimum height=7cm] at (0,0) {$\vdots$};
\node[cell, minimum width=7cm] at (0,0) {$\ldots$};

%inner rows
\node[cell, minimum height=5cm] at (1,1) {$\vdots$};
\node[cell, minimum width=5cm] at (1,1) {$\ldots$};

%ending boxes - outter
\node[cell, fill=gray!50] at (6,0) {};
\node[cell, fill=gray!50] at (0,6) {};

%ending boxes - inner
\node[cell, fill=gray!50] at (6,1) {};
\node[cell, fill=gray!50] at (1,6) {};

\node[cell, draw=none] at (3,3) {Type $\circled{1}\circled{1}$};
\end{tikzpicture}
}
\resizebox{2.5cm}{2.5cm}{
\begin{tikzpicture}

%outter rows
\node[cell, minimum height=7cm] at (0,0) {$\vdots$};
\node[cell, minimum width=7cm] at (0,0) {$\ldots$};

%inner rows
\node[cell, minimum height=5cm] at (1,1) {$\vdots$};
\node[cell, minimum width=5cm] at (1,1) {$\ldots$};

%ending boxes - outter
%\node[cell] at (6,0) {$\emptyset$};
%\node[cell] at (0,6) {$\emptyset$};

%ending boxes - inner
%\node[cell] at (6,1) {$\emptyset$};
%\node[cell] at (1,6) {$\emptyset$};

\node[cell, draw=none] at (3,3) {Type $\circled{2}\circled{2}$};
\end{tikzpicture}
}
\resizebox{2.5cm}{2.5cm}{
\begin{tikzpicture}

%outter rows
\node[cell, minimum height=6cm] at (0,0) {$\vdots$};
\node[cell, minimum width=6cm] at (0,0) {$\ldots$};

%inner rows
\node[cell, minimum height=5cm] at (1,1) {$\vdots$};
\node[cell, minimum width=5cm] at (1,1) {$\ldots$};

%ending boxes - outter
%\node[cell] at (6,0) {$\emptyset$};
%\node[cell] at (0,6) {$\emptyset$};

%ending boxes - inner
\node[cell, fill=gray!50] at (6,1) {};
\node[cell, fill=gray!50] at (1,6) {};

\node[cell, draw=none] at (3,3) {Type $\circled{2}\circled{1}$};
\end{tikzpicture}
}
\resizebox{2.5cm}{2.5cm}{
\begin{tikzpicture}

%outter rows
\node[cell, minimum height=7cm] at (0,0) {$\vdots$};
\node[cell, minimum width=7cm] at (0,0) {$\ldots$};

%inner rows
\node[cell, minimum height=6cm] at (1,1) {$\vdots$};
\node[cell, minimum width=6cm] at (1,1) {$\ldots$};

%ending boxes - outter
\node[cell, fill=gray!50] at (6,0) {};
\node[cell, fill=gray!50] at (0,6) {};

%ending boxes - inner
%\node[cell] at (7,1) {$\emptyset$};
%\node[cell] at (1,7) {$\emptyset$};

\node[cell, draw=none] at (3,3) {Type $\circled{1}\circled{2}$};
\end{tikzpicture}
}

Finally, a weak diagonal partition of $v$ rows is uniquely denoted by the following sequence, where $k=0,..., v-1$, and each $\circled{s}_k$, indicating the types of the rows, is either $\circled{1}$ or $\circled{2}$,  $$([\circled{s}_k, (i_k, j_k)])= [\circled{s}_0, (i_0, j_0)], [\circled{s}_1, (i_1, j_1)], ..., [\circled{s}_{v-1}, (i_{v-1}, j_{v-1})].$$
\end{definition}

\begin{definition}
Define the total number of boxes $n$ of a weak diagonal partition as the sum of all solid boxes, plus the number of shaded boxes divided by $2$. %The empty boxes do not contribute to the total number of boxes in the diagonal partition.

In other words, for a weak diagonal partition, the total number of boxes is 
$$n=\sum_{k=0}^{v-1}{\Delta_k},$$ where ${\Delta_k}=i_k+j_k$ if $\circled{s}_k=\circled{1}$, and ${\Delta_k}=i_k+j_k-1$ if $\circled{s}_k=\circled{2}$.
\end{definition}

\begin{notation}
\label{1U1}
If two consecutive rows of type $\circled{1}$, denoted by $[\circled{1}, (i_k, j_k)]$ and $[\circled{1}, (i_{k-1}, j_{k-1})]$ satisfy that $i_k= i_{k-1}, j_k=j_{k-1}$, then denote these type of transition as $\circled{1}_E\circled{1}$, $E$ as in the number of boxes remain equal. 

If two consecutive rows of type $\circled{1}$ satisfy that $i_{k-1} \geq i_k+1 , j_{k-1} \geq j_k +1$, then denote these type of transition as $\circled{1}_U\circled{1}$, $U$ as in the number of boxes going up.
\end{notation}

\begin{lemma}
\label{codimlemma}
Every ideal $I \in \Hilb^n(\{x^uy^v=0\},0)$, with its Diagonal filtration, maps by the Diagonal valuations $v_{(0,0)}, v_{(1,1)}, ..., v_{(v-1,v-1)}$ to a weak diagonal partition of a total of $n$ boxes. 
\end{lemma}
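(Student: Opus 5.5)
The plan is to mirror the argument for the Vertical stratification, now using the Diagonal filtration $J^k$ and the quotients $\Gamma^k/\Gamma^{k+1}$. There are three things to establish. First, each layer $J^k$ with $0\le k\le v-1$ determines a row $[\circled{s}_k,(i_k,j_k)]$. Second, consecutive rows obey the rules of Proposition \ref{prop=}, so the rows together form a weak diagonal partition. Third, $\dim(\Gamma^k/\Gamma^{k+1})=\Delta_k$; summing over $k$ and applying Lemma \ref{lemvectorfilt} then gives a total of $n$ boxes.

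For the first step I use Lemma \ref{lemmagen}. It shows that the generators added to pass from $J^{k+1}$ to $J^k$ are of type (1), (2) or (3). Proposition \ref{valuation} then identifies the lattice $v_{(k,k)}(J^k)$ as $[\circled{1},(i_k,j_k)]$ or $[\circled{2},(i_k,j_k)]$. The monomial case (1) is the degenerate one, where $x^ky^k\in I$. I would record it by convention as a type $\circled{2}$ row with $(i_k,j_k)=(0,0)$, or more likely $(1,1)$ so that $\Delta_k=1$ fits the generators $x^ky^k\cdot x^0$ and $y^0$. The dimension check in the third step fixes which convention is correct. Once $x^ky^k\in I$, all inner layers $J^{k'}$ for $k'>k$ equal $(x^{k'}y^{k'})$, their quotients vanish, and those rows are empty. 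The second step is then immediate from Propositions \ref{propij} and \ref{prop=}.

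The core of the proof is the third step, which computes $\dim(\Gamma^k/\Gamma^{k+1})$. This space is $(x^ky^k)/\bigl(J^k+(x^{k+1}y^{k+1})\bigr)$. Modulo $x^{k+1}y^{k+1}$, the ideal $(x^ky^k)$ is spanned by $x^ky^k$, the monomials $x^{k+a}y^k$ with $a\ge1$, and the monomials $x^ky^{k+b}$ with $b\ge1$. Equivalently, the element $x^ky^k\phi$ maps to the image of $\phi$ in $\mathbb{C}[\![x,y]\!]/(xy)$, which is the ring of pairs of power series in $x$ and in $y$ glued at the constant term.

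The candidate basis depends on the type of row. For a type $\circled{2}$ row, the image of $J^k$ is generated by $x^{i_k}$ and $y^{j_k}$, so the quotient has basis $x^ky^k\cdot\{1,x,\dots,x^{i_k-1},y,\dots,y^{j_k-1}\}$, of dimension $i_k+j_k-1$. For a type $\circled{1}$ row, the image is the $\mathbb{C}[\![x,y]\!]$-submodule generated by $ax^{i_k}+by^{j_k}$. Since $x(ax^{i_k}+by^{j_k})\equiv ax^{i_k+1}$ and $y(ax^{i_k}+by^{j_k})\equiv by^{j_k+1}$ modulo $xy$, this image is spanned by $ax^{i_k}+by^{j_k}$ together with all $x^{a}$ for $a>i_k$ and all $y^{b}$ for $b>j_k$. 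The quotient then has dimension $1+(i_k-1)+(j_k-1)+1=i_k+j_k$: the count is the constant, the $x^a$ with $0<a<i_k$, the $y^b$ with $0<b<j_k$, and one class for the pair $x^{i_k}$, $y^{j_k}$. This matches $\Delta_k$, with the shaded boxes contributing one half each.

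To justify that these sets are linearly independent and spanning, I will argue via valuations, as in the vertical lemma. Spanning uses the generators to reduce every other monomial. Independence holds because any element of $J^k$ with a nonzero component along the proposed basis would have $v_{(k,k)}$-valuation outside the lattice of Proposition \ref{valuation}. The main obstacle is making this independence argument rigorous, since the valuation records only the orders along the two branches, not the precise coefficients. Working directly in $\mathbb{C}[\![x,y]\!]/(xy)$ as above avoids the issue: there the image of $J^k$ is exactly the submodule generated by the leading forms of the new generators, because $J^{k+1}$ maps to zero, and the dimension count becomes elementary.
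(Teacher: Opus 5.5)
Your proposal is the paper's argument. It has the same three steps, uses the same bases $B_1$, $B_2$ of $\Gamma^k/\Gamma^{k+1}$, and sums the count via Lemma \ref{lemvectorfilt}.

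The one real difference is how you certify the basis. You identify $\Gamma^k/\Gamma^{k+1}$ with $\mathbb{C}[\![x,y]\!]/\bigl((xy)+(I:x^ky^k)\bigr)$ and use $J^k=J^{k+1}+(\text{new generators})$. That makes independence and spanning an elementary computation in the node ring, and the lattice of Proposition \ref{valuation} is not needed for the count at all. The paper instead argues by a valuation contradiction, which as written is loose. For instance, $E$ contains the basis monomial $x^ky^k\cdot x^{i_k}$ of $B_1$, yet its valuation is exactly $(i_k,j_k)$, not something smaller. So your route is the more rigorous one.

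One correction is needed in the monomial case, where neither convention you float is right. If $x^ky^k\in I$, then $(x^ky^k)\subseteq I$, so $\Gamma^k=0$ and $\Delta_k=0$. That row must simply be absent, exactly as you already say for the inner layers.
\begin{itemize}
\item A type $\circled{2}$ row with $(1,1)$ means $(I:x^ky^k)+(xy)=(x,y)$ and contributes one box. For example, for $I=(x,y)$ with $v\ge 2$, that convention would record $2$ boxes instead of $1$.
\item The choice $(0,0)$ fails too, since $i_k+j_k-1=-1$.
\end{itemize}
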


\begin{proof}
By construction, every ideal $I \in \Hilb^n(\{x^uy^v=0\},0)$ maps to a weak diagonal partition, given by $v_{(k,k)}(J^k)$ for all $k=0,...,v-1$.

The only nontrivial thing left show that the dimension of the vector space $\frac{\mathbb{C}[\![x,y]\!]}{I}$ is equal to the number of the boxes of the corresponding weak diagonal partition.

Let $(\Gamma^{k})$ be the Diagonal filtration of the vector space $\frac{\mathbb{C}[\![x,y]\!]}{I}$. Proving this lemma is equivalent to showing the following statement.

Let $0 \leq k \leq v-1$,
\begin{enumerate}
 
\item 
when $v_{(k,k)}(J^{k})=[\circled{1},(i_k,j_k)]$,
$$\dim(\frac{\Gamma^{k}}{\Gamma^{k+1}})=i_k + j_k.$$ 

\item when $v_{(k,k)}(J^{k})=[\circled{2},(i_k, j_k)]$, $$\dim(\frac{\Gamma^{k}}{\Gamma^{k+1}})=i_k + j_k-1.$$
\end{enumerate}

And we prove the statement above by finding the explicit basis of each vector space. 

When $0 \leq k\leq v-1$, and $v_{(k,k)}(J^{k})=[\circled{1},(i_k,j_k)]$, we claim that the vector space $\frac{\Gamma^{k}}{\Gamma^{k+1}}$ is spanned over $\mathbb{C}$ by the following basis $B_1$. Because we have the generator $E=x^ky^k(ax^{i_k}+by^{j_k})+... \in J^k$, we write either one of $x^{i_k}$ or $y^{j_k}$ in terms of the other, and pick any one of them as the basis. We pick $x^{i_k}$ in this convention.
\begin{align*}
B_1:=&\{x^ky^k,\\ &x^ky^k \cdot x, ..., x^ky^k \cdot x^{i_k},\\
&x^ky^k \cdot y, ..., x^ky^k \cdot y^{j_k-1}\}.\\
\end{align*}

When $0 \leq k \leq v-1$, and $v_{k}(J^{k}) = [\circled{2}, (i_{k},j_{k})]$, we claim that the vector space $\frac{\Gamma^{k}}{\Gamma^{k+1}}$ is spanned over $\mathbb{C}$ by the basis 
\begin{align*}
B_2:=&\{x^ky^k,\\ &x^ky^k \cdot x, ..., x^ky^k \cdot x^{i_k-1},\\
&x^ky^k \cdot y, ..., x^ky^k \cdot y^{j_k-1}\}.\\
\end{align*}

From the Diagonal valuation, we now conclude by the method of contradiction that the monomials in $B_1,B_2$ are linearly independent. Suppose not, then there must be some element $g$ in $I \cap (x^ky^k)$ that has one of the basis elements as a summand, and this element $g$ would valuates to something smaller than $(i_k, j_k)$ when $0 \leq k \leq v-1$, contradiction with the valuation of each $J^{k}$. 

Now we show that every element in the vector space $\frac{\Gamma^{k}}{\Gamma^{k+1}}$ can be written as a linear combination of the basis. 

Assume that $v_{k}(J^{k}) = [\circled{1}, (i_{k},j_{k})]$, because the ideal $J^{k}$ has the generator $$E=x^ky^k(ax^{i_k}+by^{j_k})+x^{k+1}y^{k+1}h_1(x,y),$$ then $J^{k}$ also contains $$f_1(x,y)=x^ky^k \cdot x^{i_k+1} +x^{k+1}y^{k+1}h_1'(x,y),$$
$$f_2(x,y)=x^ky^k \cdot y^{j_k+1} +x^{k+1}y^{k+1}h_1''(x,y),$$ obtained from multiplying $E$ by $y$ and $x$. 

Take an element $\gamma$ from $\frac{\Gamma^{k}}{\Gamma^{k+1}}$, suppose for the purpose of contradiction that $\gamma$ cannot be written as a linear combination of the basis $B_1$, then $\gamma$ must have the summand of some monomials $m$ of the form $x^ky^{k+l}$ where $l \geq j_k+1$, or of the form $x^{k+r}y^k$ where $r \geq i_k+1$. (Note that these are all the forms that $m$ can take: if a monomial $m$ is divisible by $x^{k+1}y^{k+1}$, then it is either in $J^{k+1}$ or $\Gamma^{k+1}$ by construction, so $\gamma$ cannot have any summand of monomials that are divisible by $x^{k+1}y^{k+1}$.)

However, we know from the elements $f_2(x,y)$ and $f_3(x,y)$ of $J^{k}$ that any monomial $x^ky^l$ where $l \geq i_k+1$, or any monomial $x^ry^k$ where $r \geq j_k
+1$ can be written as a linear combination elements in $\Gamma^{k+1}$. For the monomial $x^ky^k\cdot y^{j_k}$, we can always rewrite it as some copies of $x^ky^k\cdot x^{i_k}$ by the generator $E$. 

So indeed every element in the vector space $\frac{\Gamma^{k}}{\Gamma^{k+1}}$ can be written as a linear combination of $B_1$.

Similarly, if $v_{k}(J^{k})=[\circled{2},(i_k,j_k)]$, then use the generators $E_1, E_2$ to write all monomials $m \in \Gamma^{k}$ that are not in $B_2$ as a linear combination of elements in $\Gamma^{k+1}$, so every element in the vector space $\frac{\Gamma^k}{\Gamma^{k+1}}$ can be written as a linear combination of $B_2$.

\end{proof}

\begin{definition}
The Diagonal stratification of the Hilbert scheme $\Hilb^n(\{x^uy^v=0\},0)$ is given as follows.

For any ideal $I \in \Hilb^n(\{x^uy^v=0\},0)$, under the non-reduced valuation $v_{(k,k)}$ for $k=0,...,v-1$, we obtain a sequence of $v$ lattice of type either $[\circled{1},(i_k,j_k)]$ or $[\circled{2},(i_k,j_k)]$. Collect all ideals $I$ which valuates to the same weak diagonal valuation $([\circled{s}_k, (i_k,j_k)])$ of $n$ boxes into a stratum, denoted by $$\Sigma_{([\circled{s}_k, (i_k,j_k)])}.$$
\end{definition}

\section{Free variables of generators}

Recall that the generators of the Diagonal filtration $(J^{k})$ has $3$ types, and the generators of the Vertical filtration has $1$ type. We define some notations for the generators $F$ of $(W^k)$, and $E, E_1, E_2$ of $(J^{k})$.

\begin{definition}
For each additional generator of one of the types, $F$ of $W^k$, or $E, E_1,E_2$ of $J^{k}$, we perform the following elimination process.

For the monomial in each generator $F, E, E_1, E_2$, denote the monomials that valuates to the minimal pair of integers (by the corresponding Vertical or Diagonal valuations) as the Head part of the generators.

For the generator of layer $W^k$, use the generators of its inner layers $W^\tau$ for all $\tau > k$ to inductively eliminate, in the  order from $\tau=N-1$ to $\tau=k+1$, the infinite series $t(x,y)$ to a finite sum, such that the summand are monomials between the Head parts of the generators of all $W^{\tau}$, or equivalently, the monomials above the $W^k$ layer inside the corresponding weak diagonal partition. We abuse the notation and use $t(x,y)$ to also denote the finite sum of monomials, as the Tail part of the generator $F$. Similarly, perform the same inductive elimination process for the infinite series $h(x,y)$ (or $ h_1(x,y), h_2(x,y)$) of the generators $E$ (or $E_1, E_2$) of $J^{k}$, in the order from $\tau=v-1$ to $\tau=k+1$, and reduce   $h(x,y)$ (or $ h_1(x,y), h_2(x,y)$) to a finite sum, such that the summand are monomials between the Head parts of the generators of $J^{\tau}$ for $\tau=k+1, ..., v-1$.

Mark the Head part by red, and the Tail part by blue.

\begin{center}
\begin{tabular}{|c|c|c|}
 \hline
 Generator & Head & Tail \\ 
 \hline
 $F$ & $\color{red!50} y^k \cdot x^i$ & $\color{cyan!60!black} y^{k+1} \cdot t(x,y)$ \\
 \hline
 $E$ & \color{red!50} $x^k y^k\cdot(ax^{i_k}+by^{j_k})$ & $\color{cyan!60!black} x^{k+1}y^{k+1}\cdot h(x,y)$ \\ 
 \hline
 $E_1$ & $\color{red!50} x^ky^k\cdot x^{i_k}$ & $\color{cyan!60!black} x^{k+1}y^{k+1}\cdot h_1(x,y)$ \\ 
 \hline
 $E_2$ & $\color{red!50} x^ky^k\cdot y^{j_k}$ & $\color{cyan!60!black} x^{k+1}y^{k+1}\cdot h_2(x,y)$ \\
 \hline 
\end{tabular}
\end{center}
   
\end{definition}

\resizebox{6cm}{6cm}{
\begin{tikzpicture}
%begining layers
\node[cell] at (-1,-1) {\scalebox{.5}{$...$}};
\node[cell, minimum height=11cm] at (-1,0) {\tiny $\vdots$};
\node[cell, minimum width=11cm] at (0,-1) {\tiny $\ldots$};
\node[cell] at (11,-1) {};
\node[cell] at (-1,11) {};

%first/ k-1 th layer
\node[cell] at (0,0) {\scalebox{.5}{$x^{k}y^{k}$}};
\node[cell, minimum height=8cm] at (0,1) {\tiny $\vdots$};
\node[cell, minimum width=8cm] at (1,0) {\tiny $\ldots$};
\node[cell, fill=red!50] at (9,0) {};
\node[cell, fill=red!50] at (0,9) {};

%second layer

\node[cell, fill=cyan!50] at (1,1) {\scalebox{.5}{$x^{k+1}y^{k+1}$}};
\node[cell, minimum height=6cm, fill=cyan!50] at (1,2) {\tiny $\vdots$};
\node[cell, minimum width=6cm, fill=cyan!50] at (2,1) {\tiny $\ldots$};
%\node[cell, fill=lime!50] at (7,1) {};
%\node[cell, fill=lime!50] at (1,7) {};

%third layer
\node[cell, fill=cyan!50] at (2,2) {};
\node[cell, minimum height=4cm, fill=cyan!50] at (2,3) {\tiny $\vdots$};
\node[cell, minimum width=3cm, fill=cyan!50] at (3,2) {\tiny $\ldots$};
%\node[cell, fill=lime!50] at (6,2) {};
%\node[cell, fill=lime!50] at (2,7) {};

%fourth layer
\node[cell, fill=cyan!50] at (3,3) {};
\node[cell, minimum height=1cm, fill=cyan!50] at (3,4) {\tiny $\vdots$};
\node[cell, minimum width=1cm, fill=cyan!50] at (4,3) {\tiny $\ldots$};
%\node[cell, fill=lime!50] at (5,3) {};
%\node[cell, fill=lime!50] at (3,5) {};
%last layer
\node[cell, draw=none] at (4,4) {\tiny $x^k y^k$};
\node at (7,7) {$E$, $E_1$, or $E_2$};

\end{tikzpicture}
}
\hspace{1cm}
\resizebox{5.8cm}{4cm}{
\begin{tikzpicture}
\node at (6.8,6.8) {$F$};
%begining layers
\node[cell] at (0,-1) {\scalebox{.5}{$...$}};
%\node[cell, minimum height=7cm] at (-1,0) {\tiny $\vdots$};
\node[cell, minimum width=11cm] at (0,-1) {\tiny $\ldots$};

%\node[cell] at (0,-2) {\scalebox{.5}{$1$}};
%\node[cell, minimum width=11cm] at (0,-2) {\tiny $\ldots$};

%first/ k-1 th layer
\node[cell] at (0,0) {\scalebox{.5}{$y^{k}$}};

\node[cell, minimum width=8cm] at (1,0) {\tiny $\ldots$};
\node[cell, fill=red!50] at (9,0) {};

%second layer

\node[cell, fill=cyan!50] at (0,1) {\scalebox{.5}{$y^{k+1}$}};
\node[cell, minimum width=7cm, fill=cyan!50] at (1,1) {\tiny $\ldots$};

%third layer
\node[cell, fill=cyan!50] at (0,2) {\scalebox{.8}{$...$}};
\node[cell, minimum width=3cm, fill=cyan!50] at (1,2) {\tiny $\ldots$};

%fourth layer
\node[cell, fill=cyan!50] at (0,3) {\scalebox{.8}{$...$}};
\node[cell, minimum width=2.5cm, fill=cyan!50] at (1,3) {\tiny $\ldots$};

%last layer
\node[cell, draw=none] at (0,4) {\tiny $ y^N$};

\end{tikzpicture}
}

\begin{remark}
\begin{enumerate}
\item 
The monomials in the blue Tail regions are exactly the monomials in the vector space $\bigoplus_{k=0}^{v-1} \Gamma^k$, or $\bigoplus_{k=0}^{N-1} \Theta^k$, correspondingly.

\item
The generators $E$, or $E_1,E_2$ of the innermost layer $J^{v-1}$ has no Tail part. The generator $F$ of the innermost layer $W^{N-1}$ has no Tail part.
\end{enumerate}
\end{remark}

\begin{definition}
For the Diagonal filtration, set ${\Delta_k} = \dim (\frac{\Gamma^{k}}{\Gamma^{k+1}})$. For the Vertical filtration, set ${\Delta_k} = \dim (\frac{\Theta^{k}}{\Theta^{k+1}})$. 

(For simplicity, we use the same notation ${\Delta_k}$ in both the Vertical and Diagonal filtration settings.)

\end{definition}

\begin{remark}
The number ${\Delta_k}$ is the number of boxes in the $k-1$th layer of the blue Tail part of each generator of the $k$th layer.
\end{remark}

\begin{proposition}

\label{familyofideals}

For the additional generator of the $k$-th layer of the filtration: 

\begin{enumerate}
\item Assume $0 \leq k \leq N-1$. The family of the generator $F$ of $W^k$ is isomorphic to $$\mathbb{A}^{{\Delta_k}}.$$

\item Assume $0 \leq k \leq v-1$, if $v_{k}(J^{k})=[\circled{1}, (i_k,j_k)]$, or equivalently, we have the generator $E$:

\begin{enumerate}
\item If $v_{k+1}(J^{k+1})=[\circled{1},(i_{k+1},j_{k+1})]$, and also $(i_k,j_k)= (i_{k+1},j_{k+1})$, the family of $E$ is isomorphic to  $$\mathbb{A}^{\Delta_k}.$$

\item Otherwise, when $v_{k+1}(J^{k+1})=[\circled{1},(i_{k+1},j_{k+1})]$ and $(i_k,j_k) \geq (i_{k+1}+1,j_{k+1}+1)$, or when $v_{k+1}(J^{k+1})=[\circled{2},(i_{k+1},j_{k+1})]$, the family of $E$ is isomorphic to  $$\mathbb{A}^{\Delta_k} \times (\mathbb{P}^1-2pts)$$ .

\end{enumerate}

\item Assume $0 \leq k \leq v-1$, if $v_{k}(J^{k})=[\circled{2},(i_k,j_k)]$, or equivalently, we have the generators $E_1, E_2$:

\begin{enumerate}

\item If $v_{k+1}(J^{k+1})=[\circled{1},(i_{k+1},j_{k+1})]$, 
the family of both $E_1$ and $E_2$ is isomorphic to  $$\mathbb{A}^{{\Delta_k}}.$$ 

\item If $v_{k+1}(J^{k+1})=[\circled{2},(i_{k+1},j_{k+1})]$, the family of $E_1$ and $E_2$ is isomorphic to  $$\mathbb{A}^{{\Delta_k}+1}.$$

\end{enumerate}

\end{enumerate}
\end{proposition}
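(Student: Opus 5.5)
We parametrize each new generator by its coefficients after the elimination process of the previous definition, and then read off which coefficients are free and which are constrained. The Head part carries at most the scalars $a,b$. The Tail part, after reduction modulo the generators of the inner layers $W^\tau$ or $J^\tau$ ($\tau>k$), is a finite linear combination of the monomials in the region between the Heads of the inner layers. We first check that this normal form is unique. Two normalized generators of the same layer with the same Head differ by an element of the inner layer. That element is then a combination of standard monomials of $\Gamma^{k+1}$ (resp. $\Theta^{k+1}$), hence zero by the basis statements in the proof of Lemma \ref{codimlemma} (resp. the vertical analogue). So the family of generators embeds into an affine space whose coordinates are the Tail coefficients together with the Head scalars.

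The second step is to count Tail coefficients. By the Remark preceding the statement, the number of Tail monomials that can genuinely vary is $\Delta_k$, the number of boxes in the relevant layer of the blue region. The remaining Tail monomials can be absorbed using the inner generators and products of the new generator with $x,y$. For (1) there are no further constraints: Lemma \ref{condigenbig} restricts only the Head exponents, and $yF$ lies in $W^{k+1}$ automatically once $i\ge i'$, as in Proposition \ref{prop=}(1). Hence the family is $\mathbb{A}^{\Delta_k}$.

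For the Diagonal cases we add the Head scalars and impose the constraints of Proposition \ref{prop=}.
\begin{enumerate}
\item Case (2)(b): we rescale $E$ so that $a=1$. The ratio $b/a\in\mathbb{C}^*=\mathbb{P}^1-2pts$ is then free, because Proposition \ref{prop=}(1)--(3) imposes no extra condition. This gives $\mathbb{A}^{\Delta_k}\times(\mathbb{P}^1-2pts)$.
\item Case (2)(a): Proposition \ref{prop=}(1) forces $a/b=a'/b'$, which kills the torus factor and leaves $\mathbb{A}^{\Delta_k}$.
\item Case (3)(b): there are no Head scalars, and Proposition \ref{prop=}(2) imposes no conditions. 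All Tail coefficients of $E_1,E_2$ are free. Compared with type $\circled{1}$, the extra free coefficient comes from the monomial $x^ky^k\cdot y^{j_k}$, which is no longer expressible through $x^ky^k\cdot x^{i_k}$; this gives $\mathbb{A}^{\Delta_k+1}$.
\item Case (3)(a): Proposition \ref{prop=}(4) imposes linear conditions modulo $xy$ on $h_1$ (or $h_2$). These determine exactly one coefficient (the $w_1=0$ part, resp. $w_2=0$, after normalization), cutting $\mathbb{A}^{\Delta_k+1}$ down to $\mathbb{A}^{\Delta_k}$.
\end{enumerate}

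The main obstacle is the exact bookkeeping in this last step: showing that the constraints are independent affine-linear conditions, and that after elimination they fix precisely one coordinate. The same bookkeeping is needed to match the count of free Tail monomials with $\Delta_k$ in each case. I would verify this by listing the standard monomials of $\Gamma^{k+1}$ explicitly via $B_1,B_2$ and tracking which of them survive in $h_1 \bmod xy$. Finally, the isomorphism is algebraic, since the normal-form coefficients are regular functions on the stratum.
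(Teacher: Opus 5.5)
Your Step 1 is sound, and it makes explicit something the paper leaves implicit. Your handling of the Head scalars in (2)(a) and (2)(b) also matches the paper. The gap is Step 2, which is the real content of the proposition.

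By your own Step 1, the normalized Tail has one coordinate for every box of \emph{every} inner layer, i.e.\ for every standard monomial of $\Theta^{k+1}$ (resp.\ $\Gamma^{k+1}$). None of these coordinates can be ``absorbed'': doing so would produce a second normalized generator of the same ideal, contradicting the uniqueness you just proved. The Remark you cite only counts the boxes of one layer and says nothing about which coefficients are free.

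What cuts the coordinates down to one layer's worth is exactly the condition you dismiss. With the inner layers already fixed, a candidate generator must satisfy $yF\in W^{k+1}$, $xy\,E\in J^{k+1}$, resp.\ $yE_1,\,xE_2\in J^{k+1}$, for that \emph{given} inner ideal. For an ideal $I$ this is automatic, but on the space of candidates it is a genuine system of equations. So your claims ``no further constraints'' in (1) and ``all Tail coefficients of $E_1,E_2$ are free'' in (3)(b) are false.

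For instance, let $k=0$ and $J^1=(x^2y,\,xy^3)$, which is of type $[\circled{2},(1,2)]$ with $J^2=(x^2y^2)$. Let layer $0$ be of type $\circled{2}$ with $i_0\ge 2$, $j_0\ge 3$. The normalized candidates $E_1=x^{i_0}+\alpha\,xy+\beta\,xy^2$ and $E_2=y^{j_0}+\gamma\,xy+\delta\,xy^2$ have four Tail coordinates. But $yE_1\in J^1$ forces $\alpha=0$, so the family is $\mathbb{A}^3$, one more than the two boxes of layer $1$. Similarly in (1), once two inner layers are nonempty, $yF\in W^{k+1}$ determines the coefficients of $y^{k+1}x^p$ for $p<i_{k+2}$ in terms of the Tail of $F^{k+1}$.

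The paper's proof supplies exactly this missing step. It shifts the generator ($yF$, $xy\,E$, $yE_1$, $xE_2$) and splits the shifted Tail into two parts: the part landing back in the original Tail region (Inside) and the part falling outside it (Diff). The shifted Head and Diff monomials are rewritten through the inner generators as combinations of Inside monomials. The valuations of the inner layers then force each resulting Inside coefficient to vanish. These equations are solved for the Inside coefficients, which become polynomials in the Head and Diff coefficients. The family is therefore the graph of a polynomial map over the Head and Diff coordinates; this, not merely regularity of the normal-form coordinates, is what gives an algebraic isomorphism with an affine space. The Diff region has one layer's worth of boxes.

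This also corrects your bookkeeping in (3):
\begin{itemize}
\item In (3)(b), the extra $+1$ is the central box of layer $k+1$, which lies both in the Diff region of $yE_1$ and in that of $xE_2$. It is not the Head monomial $x^ky^k\cdot y^{j_k}$, whose coefficient is normalized to $1$.
\item In (3)(a), your starting space $\mathbb{A}^{\Delta_k+1}$ is never established when layer $k+1$ is of type $\circled{1}$. Moreover, Proposition~\ref{prop=}(4) covers only the boundary case $i_k=i_{k+1}+1$ (resp.\ $j_k=j_{k+1}+1$).
\item The paper's count for (3)(a) instead removes one shaded half-box from each of $E_1$ and $E_2$. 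In the non-boundary case this comes from forcing the coefficient at the Head box of $E'$ to vanish.
\end{itemize}
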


\begin{proof}
Multiply the generators $F$ in $W^k$ by $y$, $E$ in $J^k$ by $xy$, and $E_1, E_2$ in $J^k$ by $y$ and $x$ , we obtain polynomials of $W^{k+1}$ and $J^{k+1}$ that can be compared to the generators $F'$, $E'$ and $E_1', E_2'$ of $W^{k+1}$ and $J^{k+1}$. This induces relations (equations) among the coefficients of the monomials in the Tails of the generators of two consecutive layers, $W^{k}$ and $W^{k+1}$, $J^{k}$ and $J^{k+1}$. We compute the number of free coefficients.

\textbf{For the generator $F$.}

Multiply the generator $F$ by $y$, which is shifting it up by the vector $(0,+1)$. The original $F$ is the left picture, and the shifted $F$ is the right picture. 

Define the monomials in the shifted Tail region that is not contained in the original Tail region as \textcolor{teal!90!black}{Diff}. Define the monomials in the shifted Tail region that is also contained in the original Tail region as \textcolor{orange!80!black}{Inside}. In other words, the shifted \textcolor{cyan!65!black}{Tail} is equal to the union of two regions, \textcolor{orange!80!black}{Inside} and \textcolor{teal!90!black}{Diff}.

\resizebox{5.8cm}{3.1cm}{
\begin{tikzpicture}
%begining layers
\node[cell] at (0,-1) {\scalebox{.5}{$...$}};
%\node[cell, minimum height=7cm] at (-1,0) {\tiny $\vdots$};
\node[cell, minimum width=11cm] at (0,-1) {\tiny $\ldots$};

%\node[cell] at (0,-2) {\scalebox{.5}{$1$}};
%\node[cell, minimum width=11cm] at (0,-2) {\tiny $\ldots$};

%first/ k-1 th layer
\node[cell] at (0,0) {\scalebox{.5}{$y^{k}$}};

\node[cell, minimum width=8cm] at (1,0) {\tiny $\ldots$};
\node[cell, fill=red!50] at (9,0) {};

%second layer

\node[cell, fill=cyan!50] at (0,1) {\scalebox{.5}{$y^{k+1}$}};
\node[cell, minimum width=7cm, fill=cyan!50] at (1,1) {\tiny $\ldots$};

%third layer
\node[cell, fill=cyan!50] at (0,2) {\scalebox{.8}{$...$}};
\node[cell, minimum width=3cm, fill=cyan!50] at (1,2) {\tiny $\ldots$};

%fourth layer
\node[cell, fill=cyan!50] at (0,3) {\scalebox{.8}{$...$}};
\node[cell, minimum width=2.5cm, fill=cyan!50] at (1,3) {\tiny $\ldots$};

%last layer
\node[cell, draw=none] at (0,4) {\tiny $ y^N$};

\end{tikzpicture}
}
\resizebox{5.8cm}{3.2cm}{
\begin{tikzpicture}
\node at (5,5) {\textcolor{teal!50!black}{Diff}};
\node at (3,1.5) {\textcolor{orange!50!black}{Inside}};
%begining layers
\node[cell] at (0,-1) {\scalebox{.5}{$...$}};
%\node[cell, minimum height=7cm] at (-1,0) {\tiny $\vdots$};
\node[cell, minimum width=11cm] at (0,-1) {\tiny $\ldots$};

%first/ k-1 th layer
\node[cell] at (0,0) {\scalebox{.5}{$y^{k}$}};

\node[cell, minimum width=8cm] at (1,0) {\tiny $\ldots$};
\node[cell, fill=red!50] at (9,1) {};

%second layer

\node[cell] at (0,1) {\scalebox{.5}{$y^{k+1}$}};
\node[cell, minimum width=7cm] at (1,1) {\tiny $\ldots$};
\node[cell, minimum width=7cm, fill=teal!50] at (1,2) {};

%third layer
\node[cell, fill=orange!50] at (0,2) {\scalebox{.8}{$...$}};
\node[cell, minimum width=3cm, fill=orange!50] at (1,2) {\tiny $\ldots$};
\node[cell, minimum width=3cm, fill=teal!50] at (1,3) {};

%fourth layer
\node[cell, fill=orange!50] at (0,3) {\scalebox{.8}{$...$}};
\node[cell, minimum width=2.5cm, fill=orange!50] at (1,3) {\tiny $\ldots$};
\node[cell, minimum width=2.5cm, fill=teal!50] at (1,4) {};
\node[cell, fill=teal!50] at (0,4) {\scalebox{.8}{}};
%last layer
\node[cell, draw=none] at (0,4) {\tiny $y^{N+1}$};

\end{tikzpicture}
}

Because of Lemma \ref{condigenbig}, the length of the boxes at each layer of the blue Tail are weakly decreasing, so we are always able to use the generators of $W^\tau$, inductively for $\tau=k+1, ..., N-1$, the generators of from the lower layers to the higher layers to eliminate the monomials in the shifted \textcolor{red!80!black}{Head} part (in the right picture) and \textcolor{teal!90!black}{Diff} part, by rewriting these monomials in \textcolor{red!80!black}{Head} and \textcolor{teal!90!black}{Diff} as the Tail of generators of the higher layers. After rewriting, the monomials in \textcolor{red!80!black}{Head} and \textcolor{teal!90!black}{Diff} region are rewritten as a linear combination of the monomials in the \textcolor{orange!80!black}{Inside} region. But we know from, Lemma \ref{lemik}, the valuation $v_{\tau}(W^{\tau})$ for all $\tau = k+1,...,N-1$ of higher layers that there should never be a sum of monomials only from the Tail region  (without adding monomials from the corresponding \textcolor{red!80!black}{Head} part), which valuates to points with $x$-coordinates smaller than $i_k$ in the valuation of type $[i_k]$, contradiction. So the linear combination of monomials in the Inside region must be set to be $0$, which give us equations that express the coefficients of the \textcolor{orange!80!black}{Inside} part as polynomials of the coefficients of monomials of the \textcolor{red!80!black}{Head} and \textcolor{teal!90!black}{Diff}. 

The number of free variables is the number of boxes in the \textcolor{teal!90!black}{Diff} part, which is equal to the number of boxes in the lowest layer of the blue Tail region of $F$.

So the family of $F$ of $J^k$ is isomorphic to $\mathbb{A}^{{\Delta_k}}$. This proves (1).

\textbf{For the generator $E$.}

As we have showed in Proposition \ref{prop=}, for the degrees of freedom in the \textcolor{red!80!black}{Head} part, the coefficients $a, b$ where $a,b \neq 0$ of $E$ are free if and only if $i_{k} \geq i_{k+1}+1, j_{k} \geq j_{k+1}+1$. Together, they form a family $\mathbb{P}^1-2pts$ where the two points deleted are $a=0$ and $b=0$. The coefficients $a,b$ are completely determined when $i_{k} = i_{k+1}, j_{k} = j_{k+1}$, so adding no degree of freedom. 

Now we show that the number of free coefficients in the \textcolor{cyan!65!black}{Tail} part are exactly the number of boxes in the \textcolor{teal!90!black}{Diff} region, obtained from shifting the \textcolor{cyan!65!black}{Tail} part by the vector $(+1, +1)$ to the north-east direction. 

\resizebox{6cm}{6cm}{
\begin{tikzpicture}

%first/ k-1 th layer
\node[cell] at (0,0) {\scalebox{.5}{$x^{k}y^{k}$}};
\node[cell, minimum height=8cm] at (0,1) {\tiny $\vdots$};
\node[cell, minimum width=8cm] at (1,0) {\tiny $\ldots$};
\node[cell, fill=red!50] at (9,0) {};
\node[cell, fill=red!50] at (0,9) {};

%second layer

\node[cell, fill=cyan!50] at (1,1) {\scalebox{.5}{}};
\node[cell, minimum height=6cm, fill=cyan!50] at (1,2) {\tiny $\vdots$};
\node[cell, minimum width=6cm, fill=cyan!50] at (2,1) {\tiny $\ldots$};
%\node[cell, fill=lime!50] at (7,1) {};
%\node[cell, fill=lime!50] at (1,7) {};

%third layer
\node[cell, fill=cyan!50] at (2,2) {};
\node[cell, minimum height=4cm, fill=cyan!50] at (2,3) {\tiny $\vdots$};
\node[cell, minimum width=3cm, fill=cyan!50] at (3,2) {\tiny $\ldots$};
%\node[cell, fill=lime!50] at (6,2) {};
%\node[cell, fill=lime!50] at (2,7) {};

%fourth layer
\node[cell, fill=cyan!50] at (3,3) {};
\node[cell, minimum height=1cm, fill=cyan!50] at (3,4) {\tiny $\vdots$};
\node[cell, minimum width=1cm, fill=cyan!50] at (4,3) {\tiny $\ldots$};
%\node[cell, fill=lime!50] at (5,3) {};
%\node[cell, fill=lime!50] at (3,5) {};
%last layer
\node[cell, draw=none] at (4,4) {\tiny $x^v y^v$};

\end{tikzpicture}
}
\resizebox{6cm}{6cm}{
\begin{tikzpicture}
%first/ k-1 th layer
\node[cell] at (0,0) {\scalebox{.5}{$x^{k-1}y^{k-1}$}};
\node[cell, minimum height=8cm] at (0,1) {\tiny $\ldots$};
\node[cell, minimum width=8cm] at (1,0) {\tiny $\ldots$};
\node[cell] at (9,0) {};
\node[cell] at (0,9) {};

%shift

\node[cell, fill=red!50] at (10,1) {};
\node[cell, fill=red!50] at (1,10) {};

%second layer

\node[cell] at (1,1) {\scalebox{.5}{$x^{k}y^{k}$}};
\node[cell, minimum height=6cm] at (1,2) {\tiny $\vdots$};
\node[cell, minimum width=6cm] at (2,1) {\tiny $\ldots$};
%\node[cell, fill=lime!50] at (7,1) {};
%\node[cell, fill=lime!50] at (1,7) {};

%shift

\node[cell, minimum height=6cm, fill=teal!50] at (2,3) {\tiny $\vdots$};
\node[cell, minimum width=6cm, fill=teal!50] at (3,2) {\tiny $\ldots$};

%\node[cell, fill=lime!50] at (7,1) {};
%\node[cell, fill=lime!50] at (1,7) {};

%third layer
\node[cell, fill=orange!50] at (2,2) {};
\node[cell, minimum height=4cm, fill=orange!50] at (2,3) {\tiny $\vdots$};
\node[cell, minimum width=3cm, , fill=orange!50] at (3,2) {\tiny $\ldots$};
%\node[cell, fill=lime!50] at (6,2) {};
%\node[cell, fill=lime!50] at (2,7) {};

%shift
\node[cell, fill=orange!50] at (3,3) {};
\node[cell, minimum height=4cm, fill=teal!50] at (3,4) {\tiny $\vdots$};
\node[cell, minimum width=3cm, , fill=teal!50] at (4,3) {\tiny $\ldots$};

%fourth layer
\node[cell, fill=orange!50] at (3,3) {};
\node[cell, minimum height=1cm, fill=orange!50] at (3,4) {\tiny $\vdots$};
\node[cell, minimum width=1cm, fill=orange!50] at (4,3) {\tiny $\ldots$};
\node[cell, draw=none] at (2.5,1) {\textcolor{orange!70!black}{Inside}};
%\node[cell, fill=lime!50] at (3,5) {};

%last layer

%shift
\node[cell, fill=teal!50] at (4,4){};
\node[cell, minimum height=1cm, fill=teal!50] at (4,5) {\tiny $\vdots$};
\node[cell, minimum width=1cm, , fill=teal!50] at (5,4) {\tiny $\ldots$};

\node[cell, draw=none] at (4,4) {\tiny $x^vy^v$};

\node[cell, draw=none] at (6,6) {\textcolor{teal!60!black}{Diff}};
\end{tikzpicture}
}

After shifting the polynomial $E$ by $(+1,+1)$, which corresponds to multiplying $E$ by $xy$, we obtain $xy \cdot E$ as an element of $J^{k+1}$. As before, define monomials in the difference between the shifted \textcolor{cyan!65!black}{Tail} from the original Tail by \textcolor{teal!90!black}{Diff}. Define the monomials in the shifted Tail that stay inside the original Tail region after shifting as \textcolor{orange!80!black}{Inside}. In other words, the shifted \textcolor{cyan!65!black}{Tail} is equal to the union of two regions, \textcolor{orange!80!black}{Inside} and \textcolor{teal!90!black}{Diff}. If a row in the Tail region is of type $\circled{1}$, then naturally, the Inside region has two shaded cells at the two ends, each counting as half of a dimension. In this case, each shaded cell also counts as half of a dimension in the \textcolor{teal!90!black}{Diff} region. 

Note that because of Lemma \ref{propij}, the length and height of the boxes at each layer of the Tail are weakly decreasing, so we are always able to use the generators of $J^{\tau}$, $\tau \geq k+1$, to eliminate the monomials in shifted \textcolor{red!80!black}{Head} part (in the right picture) and \textcolor{teal!90!black}{Diff} part, by rewriting these monomials as the Tail of the generators of the previous layers. So these monomials are rewritten as a linear combination of the monomials in the Inside region. But we know from the valuation $v_{\tau}(J^{\tau})$ for all $\tau \geq k+1$ that there should be not be a sum of monomials only from the Inside region  (without adding monomials the corresponding \textcolor{red!80!black}{Head} part), which will valuates to points with coordinates smaller than the valuation of the \textcolor{red!80!black}{Head} part of the generator $(i_{\tau},j_{\tau})$, contradiction. So the coefficients of these monomials in the Tail region must be set to be $0$. 

By setting the coefficients of these monomials to $0$, coefficients of the \textcolor{orange!80!black}{Inside} parts are written as polynomials of the coefficients of the \textcolor{teal!90!black}{Diff} and \textcolor{red!80!black}{Head} part. So the coefficients in the \textcolor{teal!90!black}{Diff} part is free. The total number of boxes in \textcolor{teal!90!black}{Diff} is equal to the number of boxes in the outmost layer of the blue Tail part, $\sum_{\tau=k}^{v-1}(\dim (\frac{\Gamma^\tau}{\Gamma^{\tau+1}})-\dim (\frac{\Gamma^{\tau+1}}{\Gamma^{\tau+2}}))= \dim (\frac{\Gamma^k}{\Gamma^{k+1}}) = {\Delta_k}$

%Inductively, we consider the other valuations $v_{k+2}(J^{k+2}), v_{k+3}(J^{k+3}), ..., v_{{\Delta_k} -1}(J^{{\Delta_k}-1})$ and then obtain that the coefficients for the monomials in the Tail part should all be set to $0$. These gives us the dependence relation of the coefficients of the monomials in the Inside part on the Diff part. 

We specially note that, the elimination/rewriting process of the \textcolor{teal!90!black}{Diff} part by generators of the previous layers is clear when the previous layers have generators of type $E_1, E_2$, because only solid boxes are involved. If some of the previous layers has generator $E$, then we can eliminate one of the two monomials from the \textcolor{red!80!black}{Head} part of $E$, and the other coefficient of the other monomial has to be set to $0$ after the elimination. In other words, out of $2$ coefficients, one of them is free, and the other one is determined; out of $2$ degrees of freedom, whichever one is free or determined is completely by our choice. But still, the number of free coefficients is exactly the number of boxes of each layer ${\Delta_k}$ because we took care of such situation by the construction of the shaded boxes in the type $\circled{1}$ rows, for the reason as we shall explain now. If the $k+1$th layer has generator $E$, then by construction, the two shaded boxes together counts as one degree of freedom in a weak diagonal partition. If the more inner $\tau$th layers, $\tau = k+2, ..., v-1$, has generator $E$, then the two beginnings (on both the horizontal and vertical arms) of the \textcolor{teal!90!black}{Diff} region of $\tau$th layer, and the endings of the \textcolor{teal!90!black}{Diff} region of the $\tau+1$th layer both have shaded boxes, which exactly sums up to two solid boxes. In conclusion, the \textcolor{teal!90!black}{Diff} region has exactly the number of boxes it should have, being the number of free variables.

Therefore, if $i_{k} \geq i_{k+1}+1$, and $j_{k} \geq j_{k+1}+1$, the family of choices of generator $E$ we add at the $J^{k}$ layer is isomorphic to $(\mathbb{P}^1-2pts) \times \mathbb{A}^{{\Delta_k}}$, which proves (2)(b). And if $i_{k}= i_{k+1}$, and $j_{k}= j_{k+1}$, the family of choices of generator $E$ we add at the $J^{k}$ layer is isomorphic to  $\mathbb{A}^{{\Delta_k}},$ which proves (2)(a).
%Note that $\dim (\frac{\Gamma^k}{\Gamma^{k+1}})$ is the number of boxes at the $k$-th layer (outmost layer) of the Tail part.

\textbf{For the generators $E_1, E_2$.}

Multiply $E_1$ by $y$, which correspond to shifting $E_1$ up by the vector $(0,+1)$. Multiply $E_2$ by $x$, which is shifting $E_2$ to the right by $(+1,0)$.

\begin{enumerate}
\item Similar to $E$, we define the monomials in the region of the shifted Tail of $E_1$ that are not contained in the original Tail of $E_1$ to be "\textcolor{teal!80!black}{Diff1}". Define the monomials of the shifted Tail of $E_1$ that are contained in the original Tail of $E_1$ to be "\textcolor{orange!80!black}{Inside1}". The shaded boxes follow the exact same rules as before, and is each counted as half of a box and contained in both the \textcolor{orange!80!black}{Inside1} and \textcolor{teal!90!black}{Diff} regions.
\end{enumerate}

%Now $yE1 \in J^k$ and $xE2 \in J^k$, and we can use generators from the previous layers to eliminate the monomials in Diff1 and Diff2.

We focus on the case of $E_1$ shifting to $y\cdot E_1$, and the case of $E_2$ is exactly the same.

\resizebox{6cm}{6cm}{
\begin{tikzpicture}

%first/ k-1 th layer
\node[cell] at (0,0) {\scalebox{.5}{$x^{t}y^{t}$}};
\node[cell, minimum height=8cm] at (0,1) {\tiny $\vdots$};
\node[cell, minimum width=8cm] at (1,0) {\tiny $\ldots$};
\node[cell] at (0,9)  {};
\node[cell, fill=red!50] at (9,0) {};

%second layer

\node[cell, fill=cyan!50] at (1,1) {\scalebox{.5}{$x^{k+1}y^{k+1}$}};
\node[cell, minimum height=6cm, fill=cyan!50] at (1,2) {\tiny $\vdots$};
\node[cell, minimum width=6cm, fill=cyan!50] at (2,1) {\tiny $\ldots$};
%\node[cell, fill=lime!50] at (7,1) {};
%\node[cell, fill=lime!50] at (1,7) {};

%third layer
\node[cell, fill=cyan!50] at (2,2) {};
\node[cell, minimum height=4cm, fill=cyan!50] at (2,3) {\tiny $\vdots$};
\node[cell, minimum width=3cm, fill=cyan!50] at (3,2) {\tiny $\ldots$};
%\node[cell, fill=lime!50] at (6,2) {};
%\node[cell, fill=lime!50] at (2,7) {};

%fourth layer
\node[cell, fill=cyan!50] at (3,3) {};
\node[cell, minimum height=1cm, fill=cyan!50] at (3,4) {\tiny $\vdots$};
\node[cell, minimum width=1cm, fill=cyan!50] at (4,3) {\tiny $\ldots$};
%\node[cell, fill=lime!50] at (5,3) {};
%\node[cell, fill=lime!50] at (3,5) {};
%last layer
\node[cell, draw=none] at (4,4) {\tiny $x^u y^v$};

\end{tikzpicture}
}
\resizebox{6cm}{6cm}{
\begin{tikzpicture}
%first/ k-1 th layer
\node[cell] at (0,0) {\scalebox{.5}{$x^{t}y^{t}$}};
\node[cell, minimum height=8cm] at (0,1) {\tiny $\vdots$};
\node[cell, minimum width=8cm] at (1,0) {\tiny $\ldots$};
\node[cell] at (9,0) {};
\node[cell] at (0,9) {};

%shift

\node[cell, fill=red!50] at (9,1) {};

%shift

\node[cell, minimum height=6cm, fill=teal!50] at (1,3) {\tiny $\vdots$};
\node[cell, minimum width=6cm, fill=teal!50] at (2,2) {\tiny $\ldots$};
%second layer

\node[cell] at (1,1) {\scalebox{.5}{$x^{k+1}y^{k+1}$}};
\node[cell, minimum height=6cm, fill=orange!50] at (1,2) {\tiny $\vdots$};
\node[cell, minimum width=6cm] at (2,1) {\tiny $\ldots$};

%shift
\node[cell, minimum height=4cm, fill=teal!50] at (2,4) {\tiny $\vdots$};
\node[cell, minimum width=3cm, , fill=teal!50] at (3,3) {\tiny $\ldots$};

%third layer
\node[cell, fill=orange!50] at (2,2) {};
\node[cell, minimum height=4cm, fill=orange!50] at (2,3) {\tiny $\vdots$};
\node[cell, minimum width=3cm, fill=orange!50] at (3,2) {\tiny $\ldots$};
%\node[cell, fill=lime!50] at (6,2) {};
%\node[cell, fill=lime!50] at (2,7) {};

%shift
\node[cell, fill=teal!50] at (4,4){};
\node[cell, minimum height=1cm, fill=teal!50] at (3,5) {\tiny $\vdots$};
\node[cell, minimum width=1cm, , fill=teal!50] at (4,4) {\tiny $\ldots$};

%fourth layer
\node[cell, fill=orange!50] at (3,3) {};
\node[cell, minimum height=1cm, fill=orange!50] at (3,4) {\tiny $\vdots$};
\node[cell, minimum width=1cm, fill=orange!50] at (4,3) {\tiny $\ldots$};
%\node[cell, fill=lime!50] at (5,3) {};
%\node[cell, fill=lime!50] at (3,5) {};

%last layer

\node[cell, draw=none] at (4,4) {\tiny $x^u y^v$};

\node[cell, draw=none] at (5,5) {\textcolor{teal!60!black}{Diff1}};
\node[cell, draw=none] at (2.5,0) {\textcolor{orange!70!black}{Inside1}};
\end{tikzpicture}
} 

For all monomials in the region \textcolor{teal!80!black}{Diff1}, we can use the generators from $J^{\tau}$ for all $\tau \geq k+1$, the generators from previous layers, to eliminate them and rewrite them as the monomials of the Tail of the generators of previous layers. 

Because of the valuations $v_{\tau}(J^{\tau})$ for all $\tau \geq k+1$ of the previous layers, a sum of monomials in the \textcolor{orange!80!black}{Inside1} region of $E_1$ should be set to $0$, so we have equations that write the coefficients of the monomials of the \textcolor{orange!80!black}{Inside1} region as polynomials of coefficients of monomials of the \textcolor{red!80!black}{Head} and \textcolor{teal!80!black}{Diff1} region of $E_1$.

%So the coefficients of the monomials of the Inside region is dependent on the coefficients of the Diff1 and Head region. 

If the $k+1$th layer has generators of type $\circled{2}$, the number of free variables of $E_1$ is exactly the number of boxes in \textcolor{teal!80!black}{Diff1}, which is equal to the length of the horizontal arm of the $k+1$th layer in the Tail region. By the same argument as $E_1$, the number of free variables of $E_2$ is exactly the number of boxes in the vertical arm of the $k+1$th layer in the Tail region. So if the $k$th and the $k+1$th layer both have type $\circled{2}$, then the number of free variables is equal to $\delta+1$, because the central box in the $k+1$th layer is being over-counted once. This proves (3)(b).

When the $k+1$th layer is type $\circled{1}$, we argue carefully the number of free variables as follows. 

When $J^{k+1}=[\circled{1},(i_{k+1},j_{k+1})]$, and either $i_{k} = i_{k+1}$, or $j_{k} = j_{k+1}$ holds, then there are conditions on the coefficients of $E_1,E_2$, as discussed in Lemma \ref{propij}. For example, we restate the condition when $i_{k} = i_{k+1}$, we have that $$b'y^{j_{k+1}}-a'yh_1(x,y)=0 \text{ modulo } xy,$$ which is a condition on the boxes of the Tail part $h_1(x,y)$ that is not divisible by $x$, which is the left-most column of the Tail part. All the coefficients in this column, except for the coefficient of the box $x^{k+1}y^{k+1}y^{j_{k+1}-1}$ being set to $\frac{b'}{a'}$, should be set to $0$. Because the box $x^{k+1}y^{k+1}y^{j_{k+1}}$ is the \textcolor{red!80!black}{Head} part of the generator $E'$ of $J^{k+1}$, the box right below it, $x^{k+1}y^{k+1}y^{j_{k+1}-1}$, should be the second box in the region \textcolor{teal!80!black}{Diff1} (note here that \textcolor{teal!90!black}{Diff1} refers to the region as a subset of the Tail, meaning we are shifting Diff1 in the picture on the right back by $(0,-1)$ as if it came from a box in the Tail), exactly one of the two shaded boxes in the vertical arm of the $k+1$th layer. 

Because from rewriting the monomials in the \textcolor{teal!80!black}{Diff1} part using generator of previous layer, we can only obtain monomials in the more inner layers, we cannot obtain the monomials in boxes in the left-most column of the \textcolor{orange!80!black}{Inside1} part of $E_1$, so contribution from the coefficients of the left-most column of \textcolor{orange!80!black}{Inside1} are already $0$ except for the shaded boxes in the $k+1$th layer, and setting these coefficients to $0$ exactly only puts conditions on the solid boxes in the \textcolor{orange!80!black}{Inside1} region of the $k+1$th layer. 

Together, the second one of the two shaded boxes in the vertical arm of the $k+1$th layer is removed. The total degree of freedom of $E_1$ is equal to the number of boxes in the \textcolor{teal!90!black}{Diff} region minus one shaded box, which is equal to the number of boxes in the horizontal arm of the $k+1$th layer, minus one half, coming from one shaded box on the top of the \textcolor{teal!90!black}{Diff} region on the vertical arm of the $k+1$th layer. 

The arguments are exactly the same for $E_2$ when $J^{k+1}=[\circled{1},(i_{k+1},j_{k+1})]$, and $j_k=j_{k+1}$, and the degree of freedom of $E_2$ is equal to the number of boxes in the vertical arm minus one half.

If $i_{k} \geq i_{k+1}+1$ or $j_{k} \geq j_{k+1}+1$, we have a similar situation. The second shaded box in the vertical arm of the $k+1$th layer must be set to $0$, because it correspond to the \textcolor{red!80!black}{Head} part of the $k+1$th layer, and the other box in the \textcolor{red!80!black}{Head} part on the horizontal arm automatically have coefficient $0$, because the coefficient of it comes from shifting the horizontal arm of the $k$th layer up, which because of $i_k \geq i_{k+1}+1$, is $0$. The degree of freedoms of the generator $E_1$ in this case is the number of boxes in the \textcolor{teal!90!black}{Diff} region minus a shaded box. Same for $E_2$, which has degrees of freedom equal to the number of boxes in the vertical arm of the $k+1$th layer minus half of a box.

So in both cases, going from an inner type $\circled{1}$ layer to an outer $\circled{2}$ layer, the two half boxes in the central column of the $k+1$th layer adds to one solid box. Together with $E_1$ and $E_2$, we add the degree of freedom the same as the number of boxes in the $k+1$th layer, which is equal to $\Delta$. This proves (3)(a).

Finally, for all generators $F,E,E_1,E_2$, we claim that it is enough to compare the generators by shifting one layer up or right only, because the free variables obtained from shifting $1$ layer is a proper subset of the free variables obtained from shifting $2$ or more layers. Shifting $2$ or more layers does not impose more conditions.
\end{proof}

\section{Poincar\'e polynomial of $\Hilb^n(\{xy^v=0\},0)$ and $\Hilb^n(\{x^2y^v=0\},0)$}

\subsection{Poincar\'e polynomial of $\Hilb^n(\{xy^v=0\},0)$}

\begin{theorem}
The Hilbert scheme $\Hilb^{n}(\{xy^v=0\},0)$ has an affine stratification, and each stratum is indexed by a partition $(i_0, ..., i_{N})$ such that $i_k \leq 1$ for all $k \geq v$. The dimension of the stratum is the number of boxes from the $1$st row to the $v$th row, i.e. $\sum_{k=1}^{v}i_k$.

The Poincar\'e polynomial of the Hilbert scheme $\Hilb^{n}(\{xy^v=0\},0)$ is $$\prod_{i=1}^{v}\frac{1}{1-Q^{i}T^{2(i-1)}}(1+\frac{Q^{v+1}T^{2v}}{1-Q}).$$
\end{theorem}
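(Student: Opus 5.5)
The plan is to use the Vertical filtration $(W^k)$ of Definition \ref{filtration_V}, the Vertical stratification, and the free-variable count of Proposition \ref{familyofideals}(1). The only extra input is the constraint $xy^v\in I$.

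\textbf{Step 1: the indexing partitions.} By the Vertical stratification lemma, each $I\in\Hilb^n(\{xy^v=0\},0)$ gives a weakly decreasing sequence $i_0\ge i_1\ge\dots\ge i_{N-1}$ with $\sum i_k=n$, where $i_k$ is the $x$-degree of the head of $F^k$ (Lemma \ref{gen_v}, Lemma \ref{condigenbig}). For $k\ge v$ we have $xy^k=y^{k-v}\cdot xy^v\in W^k$. By Lemma \ref{lemik}, every element of $W^k$ has first valuation coordinate at least $i_k$, and $v_{(0,k)}(xy^k)=(1,\infty)$, so $i_k\le 1$. Conversely, I would check directly that any such sequence is realized by the monomial ideal spanned by the complement of the boxes. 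That ideal contains $xy^v$ because $i_k\le1$ for $k\ge v$. Hence the strata are indexed exactly by the partitions in the statement.

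\textbf{Step 2: each stratum is an affine space of dimension $\sum_{k=1}^{v}i_k$.} An ideal in a stratum is determined by its generators $F^k=y^kx^{i_k}+y^{k+1}t_k(x,y)$, with tails reduced as in Section 8. By Proposition \ref{familyofideals}(1), the free coefficients of $F^k$ are the Diff boxes obtained by shifting the tail up by $y$. These number $\Delta_{k+1}=i_{k+1}$, the boxes in the lowest row of the tail. For $k\le v-1$ this contributes $\mathbb{A}^{i_{k+1}}$, and the rows $1,\dots,v$ each contribute once.

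For $k\ge v$ there are two cases.
\begin{itemize}
\item If $i_k=0$, then $F^k=y^k$, which has no freedom.
\item If $i_k=1$, then $xy^k\in I$, so $F^k-xy^k=y^{k+1}t_k\in W^{k+1}$. The tail is therefore redundant and $F^k=xy^k$, again with no freedom.
\end{itemize}
I also need the layers $k\ge v$ to impose no extra conditions on the free coefficients of the layers $k<v$. The monomials $x^ay^m$ with $a\ge1$ and $m\ge v$ already lie in $I$, so the elimination in Proposition \ref{familyofideals} only uses the pure powers $y^m$ there. Granting this, the stratum is the product $\prod_{k=0}^{v-1}\mathbb{A}^{i_{k+1}}\cong\mathbb{A}^{\sum_{k=1}^v i_k}$. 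This step is the main obstacle: it needs a careful check that the coefficients attached to the boxes $x^ay^{m}$ with $m\ge v$ are forced, and that the parametrization is an isomorphism onto the stratum rather than just a bijection. I would try first to verify this directly on the explicit normal form of the generators.

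\textbf{Step 3: the generating function.} Once the strata are affine cells, I would order them so they give a filterable affine paving, for example by refining via the torus action with weight on $y$ much larger than on $x$, under which the Vertical stratification is the Bialynicki-Birula one. Then $\chi=\sum_{\text{cells}}T^{2\dim}$. I would count by columns of the Young diagram. A column of height $h$ occupies rows $0,\dots,h-1$ and contributes $h$ boxes, of which those in rows $1,\dots,v$ are weighted by $T^2$.

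There are two cases according to whether row $v$ is empty.
\begin{itemize}
\item If $i_v=0$, every column has height $h\le v$ and contributes $Q^hT^{2(h-1)}$. This gives $\prod_{i=1}^v(1-Q^iT^{2(i-1)})^{-1}$.
\item If $i_v=\dots=i_{v+m-1}=1$ with $m\ge1$, the first column has height $v+m$ and contributes $Q^{v+m}T^{2v}$. The remaining columns have height at most $v$ and give the same product. Summing over $m\ge1$ produces the factor $Q^{v+1}T^{2v}/(1-Q)$.
\end{itemize}
Adding the two cases gives the stated formula $\prod_{i=1}^{v}\frac{1}{1-Q^{i}T^{2(i-1)}}\bigl(1+\frac{Q^{v+1}T^{2v}}{1-Q}\bigr)$.
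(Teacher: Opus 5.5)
Your proposal is correct and follows the paper's proof essentially step for step. It uses the Vertical filtration; it shows that $xy^v\in I$ forces $i_k\le 1$ and trivial generators $F^k=xy^k$ for $k\ge v$, where the paper sets the $\sigma^v_i$ to zero and propagates while you read it off directly from $xy^k=y^{k-v}\cdot xy^v\in I$; it then applies Proposition~\ref{familyofideals}(1) to the layers $0,\dots,v-1$ and counts columns according to whether row $v$ is empty. The step you flag as the main obstacle is exactly what the paper simply asserts (once $F^v$ is fixed, the outer generators face no further restrictions); within this inductive framework it is immediate, because $W^v$ is then a fixed monomial ideal and the reduced tails in rows $\ge v$ involve only pure powers of $y$.
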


\begin{proof}
We draw the shape following the Vertical filtration and valuation, corresponding to an arbitrary stratum of ideals $I$ in $\Hilb^{n}(\{xy^v=0\},0)$.

\begin{enumerate}
\item If $N \geq v$, then we have a collection of generators, each added at the layer $W^k$ for $v \leq k \leq N-1$, denoted by $F^k$, where each $F^k$ has the form 
$$F^k= xy^k+ \sum_{i=k+1}^{N-1} \limits \sigma^k_{i} y^{i},$$
for some $\sigma^k_{i} \in \mathbb{C}$.

\resizebox{5.8cm}{4cm}{
\begin{tikzpicture}

%begining layers
\node[cell] at (-1,-1) {\scalebox{.7}{$1$}};
\node[cell, minimum height=7cm] at (-1,0) {};
\node[cell, minimum width=11cm] at (0,-1) {\tiny $\ldots$};
\node[cell] at (10,-1) {\scalebox{.7}{$x^{i_0}$}};
\node[cell] at (-1,6) {\scalebox{.7}{$y^{N}$}};

%first/ k-1 th layer
\node[cell] at (-1,0) {\scalebox{.6}{$y$}};

\node[cell, minimum width=9cm] at (0,0) {\tiny $\ldots$};
\node[cell] at (9,0) {\scalebox{.6}{$x^{i_1+1}y$}};

%second layer

\node[cell] at (-1,1) {\scalebox{.5}{$y^{2}$}};
\node[cell, minimum width=7cm] at (0,1) {\tiny $\ldots$};
\node[cell] at (7,1) {\scalebox{.6}{$x^{i_2+1}y^2$}};

%third layer
\node[cell] at (-1,2) {\scalebox{.8}{$...$}};
\node[cell, minimum width=4cm] at (0,2) {\tiny $\ldots$};

%fourth layer
%\node[cell] at (0,3) {\scalebox{.8}{$...$}};
%\node[cell, minimum width=2.5cm] at (1,3) {\tiny $\ldots$};

%last layer
\node[cell, draw=none] at (0,3) {\tiny $x y^v$};

%vertical strip
\node[cell] at (-1,3) {\tiny $y^v$};

\node[cell] at (-1,4) {\tiny $y^{v+1}$};
\node[cell] at (-1,5) {\tiny \vdots};
\end{tikzpicture}
}

The condition that $xy^v \in I$ implies that all the coefficients $\sigma^v_{i}, i=v+1, ...,N-1$ of the generator $F^v$ are equal to $0$. 

By Proposition \ref{familyofideals} and the dependence relationships among all the coefficients $\sigma^k_{i}$ of all $W^k$ for $v \leq k \leq N-1$, $\sigma^v_{i}=0, i=v+1, ...,N-1$ implies that $\sigma^k_{i}=0$ for all $k=v,..., N-1$ and all $i$. Therefore, all the generators $F^k$ of $W^k$ for $v \leq k \leq N-1$ has no degree of freedom.

Once the generator $F^v$ of $W^v$ is fixed, there are no other restrictions on the rest of the generators $F^k$ where $0 \leq k \leq v-1$, so the degree of the freedom of them in total equals to $\sum_{i=0}^{v-1}{\Delta_k}^i$ by Proposition \ref{familyofideals}. We can further conclude that these strata are all affine, and the dimension of each stratum equals to the total number of boxes between the $1$st and $v$th rows (excluding the $0$th row).

\item When $N \leq v-1$, then $y^v \in I$, which implies that $xy^v \in I$. There are no extra conditions on the generators. The strata are all affine, and the dimension of the stratum is exactly the number of all boxes between the $1$st and $v$th rows (excluding the $0$th row).

\resizebox{5.8cm}{2.5cm}{
\begin{tikzpicture}

%begining layers
\node[cell] at (-1,-1) {\scalebox{.7}{$1$}};
%\node[cell, minimum height=3cm] at (-1,0) {};
\node[cell, minimum width=11cm] at (0,-1) {\tiny $\ldots$};
\node[cell] at (10,-1) {\scalebox{.7}{$x^{i_0}$}};
\node[cell] at (-1,1) {\scalebox{.7}{$y^{N}$}};

%first/ k-1 th layer
\node[cell] at (-1,0) {\scalebox{.6}{$\ldots$}};

\node[cell, minimum width=9cm] at (0,0) {\tiny $\ldots$};
\node[cell] at (9,0) {\scalebox{.6}{$x^{i_1+1}y$}};

%second layer

%\node[cell] at (-1,1) {\scalebox{.5}{$y^{2}$}};
\node[cell, minimum width=7cm] at (0,1) {\tiny $\ldots$};
\node[cell] at (7,1) {\scalebox{.6}{$x^{i_2+1}y^2$}};

%third layer
%\node[cell] at (-1,2) {\scalebox{.8}{$...$}};
%\node[cell, minimum width=4cm] at (0,2) {\tiny $\ldots$};

%fourth layer
%\node[cell] at (0,3) {\scalebox{.8}{$...$}};
%\node[cell, minimum width=2.5cm] at (1,3) {\tiny $\ldots$};

%last layer
\node[cell, draw=none] at (0,3) {$x y^v$};

\end{tikzpicture}
}

\end{enumerate}

Now we calculate the Poincar\'e polynomial of $\Hilb^n(\{xy^v=0\},0)$ based on combinatorial rules computed above. Use the variable $Q$ to record the number of points $n$ of the Hilbert scheme, and $T$ to record the homological degree. 
Recall that the dimension we computed for each stratum is the complex dimension and should be multiplied by $2$ in the computation of the Poincar\'e polynomial.

\begin{enumerate}
\item We first look only at boxes of the left-most column. The box $1$ at the $0$st layer does not contribute any dimension. When $N \geq v$, we must have $v$ boxes at the left-most column between the $1$st and $v$st layer which contributes to dimension $v$, and then as many boxes on top which does not contribute to any dimension. This gives the factor $$Q^{v+1}T^{2v} \frac{1}{1-Q}.$$

Starting from the second column to the right, each column, if it has $i$ boxes, then this column contributes to dimension $i-1$. The number of boxes $i$ can be any number between $1$ and $v$. This adds the factor $$\prod_{i=1}^{v}\frac{1}{1-Q^{i}T^{2(i-1)}}.$$

In total, the generating function is

$$\frac{Q^{v+1}T^{2v}}{1-Q}\prod_{i=1}^{v}\frac{1}{1-Q^{i}T^{2(i-1)}}.$$

\item When $N \leq v-1$, we have a total of at most $N$ rows. For each column in the partition, if it has $i$ boxes, then it contributes to dimension $i-1$. The number of boxes $i$ can be any number between $1$ and $v$. The generating function is therefore $$\prod_{i=1}^{v}\frac{1}{1-Q^{i}T^{2(i-1)}}.$$
\end{enumerate}

In total, the generating function is $$\prod_{i=1}^{v}\frac{1}{1-Q^{i}T^{2(i-1)}}(1+\frac{Q^{v+1}T^{2v}}{1-Q}).$$
\end{proof}

\begin{corollary}
[Colored ORS conjecture for the Hopf link]
Let $\chi$ denote the Poincar\'e polynomial of the Hilbert scheme of $n$ points on $\{xy^v=0\}$, and $P$ the Poincar\'e polynomial of the $(\Sym^{v}, \Sym^{1})$-colored homology of the Hopf link. Up to the change of variables $T$ to $(QT^2)^{-1}$ of the Poincar\'e polynomial $P$ of the Hopf link, we have

\begin{align*}
\sum_{n=0}^{\infty}\chi(\Hilb^n(\{xy^v=0\},0))Q^n=P(T(2,2),(\Sym^{u}, \Sym^{1})).
\end{align*}

\end{corollary}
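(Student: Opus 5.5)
The plan is to compare two explicit generating functions directly. The left side has already been computed in the theorem on $\Hilb^n(\{xy^v=0\},0)$ proved just above:
\[
\sum_{n\ge 0}\chi(\Hilb^n(\{xy^v=0\},0))Q^n=\prod_{i=1}^{v}\frac{1}{1-Q^{i}T^{2(i-1)}}\Bigl(1+\frac{Q^{v+1}T^{2v}}{1-Q}\Bigr).
\]
The right side is given by Corollary \ref{Poincar\'ehofp}: up to the overall factor $\prod_{i=1}^{v-1}T^{i}$, it equals
\[
\Bigl(1+\frac{QT^{-v}}{1-Q}\Bigr)\prod_{i=1}^{v}\frac{1}{1-QT^{1-i}}.
\]
So the proof reduces to checking that a monomial change of variables carries the second expression to the first. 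The colour $\Sym^{u}$ in the statement should be read as $\Sym^{v}$, matching the introduction.

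The key step is to pin down the substitution. The stated rule ``$T\mapsto (QT^2)^{-1}$'' should be interpreted as replacing the knot-homology variable $T$ by $(QT^2)^{-1}$, keeping $Q$ as the point-counting variable. Under this substitution $QT^{1-i}$ becomes
\[
Q\,(QT^2)^{i-1}=Q^{i}T^{2(i-1)},
\]
so the product transforms factor by factor into $\prod_{i=1}^{v}(1-Q^iT^{2(i-1)})^{-1}$. Likewise $QT^{-v}$ becomes $Q(QT^2)^{v}=Q^{v+1}T^{2v}$, so $1+\frac{QT^{-v}}{1-Q}$ becomes $1+\frac{Q^{v+1}T^{2v}}{1-Q}$. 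This reproduces the left side exactly.

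The normalising factor $\prod_{i=1}^{v-1}T^{i}=T^{v(v-1)/2}$ becomes a monomial in $Q$ and $T$ under the substitution. It is discarded as an overall grading shift, exactly as the statement's ``up to a factor'' permits.

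The main obstacle is bookkeeping of conventions rather than mathematics. One issue is that the displayed Corollary \ref{Poincar\'ehofp} involves some sign and normalisation choices in the Hogancamp--Mellit recursion (for example $\mathbf{p}(1^{v+1},1^{v+1})$ at $a=0$). The other is that the substitution must act on $T$ only and not rescale $Q$. I would first verify both points on the smallest case $v=1$. There the left side should reduce to the reduced nodal answer $\frac{1-Q+Q^2T^2}{(1-Q)^2}$ of the $\{xy=0\}$ example from \cite{GKS}, and checking it would confirm the convention before stating the general identity.
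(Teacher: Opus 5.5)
Your proposal is correct and is essentially the paper's proof. The paper simply compares the generating function $\prod_{i=1}^{v}(1-Q^iT^{2(i-1)})^{-1}\bigl(1+\frac{Q^{v+1}T^{2v}}{1-Q}\bigr)$ with the normalized Hopf link polynomial from the Hogancamp--Mellit recursion after substituting $T\mapsto (QT^2)^{-1}$. Your factor-by-factor check ($QT^{1-i}\mapsto Q^{i}T^{2(i-1)}$, $QT^{-v}\mapsto Q^{v+1}T^{2v}$), your reading of $\Sym^{u}$ as $\Sym^{v}$, and your treatment of $\prod_{i=1}^{v-1}T^{i}$ as a discarded normalization just make explicit what the paper's one-line proof leaves implicit.
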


\begin{proof}
This immediately follows from the equality of the Poincar\'e polynomials, after the change of variables $T \rightarrow (QT^2)^{-1}$ of the Poincar\'e polynomial

$P(T(2,2),(\Sym^{u}, \Sym^{1}))$, as computed in Corollary \ref{Poincar\'ehofp}.
\end{proof}

\subsection{Poincar\'e polynomial of $\Hilb^n(\{x^2y^v=0\},0)$}

\begin{theorem}
The Hilbert scheme $\Hilb^{n}(\{x^2y^v=0\},0)$ has an affine stratification, and each stratum is indexed by a partition $(i_0, ..., i_{N})$ such that $i_k \leq 2$ for all $k \geq v$. Denote the index of $i_k$ as $M$ where $i_k=1$ for $k=M,...,N$ and $i_k > 1$ otherwise. The dimension of the stratum is the number of boxes from the $1$st row to the $v$th row, plus the number of boxes between the $\lfloor \frac{M+N}{2}\rfloor+1$th row to the $N$th row, $$\sum_{k=1}^{v}i_k + \sum_{k=\lfloor \frac{M+N}{2}\rfloor+1}^{N}i_k.$$ If $M$ does not exist, then the dimension is only the first summand $\sum_{k=1}^{v}i_k$.

The Poincar\'e polynomial of the Hilbert scheme $\Hilb^{n}(\{x^2y^v=0\},0)$ is 
\begin{align*}
\frac{(1-Q)(1-Q^2T^2)+(1-Q)(1+QT^2)Q^{v+1}T^{2v} + Q^{2(v+1)}T^{4v}}{(1-Q)(1-Q^2T^2)\prod_{i=1}^{v}(1-Q^{i}T^{2(i-1)})}.
\end{align*}
\end{theorem}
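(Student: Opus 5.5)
I will follow the proof of the $xy^v$ case, using the Vertical filtration $(W^k)$ and the Vertical valuation. By Lemma \ref{gen_v} and Lemma \ref{condigenbig}, each ideal $I$ gives a weakly decreasing partition $(i_0,\dots,i_{N-1})$ of $n$, and each layer has one generator $F^k=x^{i_k}y^k+y^{k+1}t_k(x,y)$. The constraint is $x^2y^v\in I$. Reducing $x^2y^v$ modulo $F^v,\dots,F^{N-1}$ shows that $i_k\le 2$ for $k\ge v$; otherwise $x^2y^v$ would have a leading term below the Head part and contradict Lemma \ref{lemik}. This gives the indexing set. By Proposition \ref{familyofideals}, the free parameters of $F^k$ for $k<v$ are unaffected by the constraint. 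Each such $F^k$ contributes $\Delta_k$ free coordinates, which summed over the relevant layers gives $\sum_{k=1}^{v}i_k$. It remains to analyse the generators $F^k$ with $k\ge v$.

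For $k\ge v$ the generators are of two kinds: $F^k=x^2y^k+\dots$ for $v\le k<M$, and $F^k=xy^k+\sum_{l>k}\sigma^k_l y^l$ for $M\le k\le N-1$. The condition $x^2y^v\in I$ is an equation on the tail coefficients. For the layers with $i_k=2$, multiplying $x^2y^v$ by powers of $y$ and reducing forces their tails to vanish modulo the inner layers, exactly as in the $xy^v$ case. The new feature is the strip of width one, $k\ge M$. There $x\cdot F^k=x^2y^k+\sum\sigma^k_l xy^l$ must reduce to zero. Reducing the terms $xy^l$ by $F^l$ produces $y$-monomials with quadratic expressions in the $\sigma$'s. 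Those monomials must vanish, since $y^m\notin I$ for $m<N$.

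Concretely, write $F^M=xy^M+s(y)$ with $s=\sum_{l>M}\sigma_l y^l$ taken modulo $y^N$. The other $F^k$ are determined by $F^M$, as multiples $y^{k-M}F^M$ reduced mod $y^N$. The condition is then $x\cdot F^M\in I$. This is equivalent to $s(y)^2\equiv 0 \bmod y^{N+M}$, roughly, using $x s(y)\equiv -s(y)^2/y^M$. That amounts to $\operatorname{ord}_y s\ge \lceil (N+M)/2\rceil$. So the free coefficients are $\sigma_l$ for $l\ge\lfloor (M+N)/2\rfloor+1$, and they form an affine space of dimension $\sum_{k=\lfloor (M+N)/2\rfloor+1}^{N} i_k$ (each such $i_k=1$), up to the boundary convention on $N$. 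This step is the main obstacle. I need to check the exact index shift: whether the relevant modulus is $y^N$ or $y^{N+1}$, and the parity floor. I will verify it by hand for small $M,N$. I also need to show that the higher-order terms in the equation do not cut out anything beyond an order condition, so that each stratum is genuinely affine and not singular. I would try the substitution $s=y^{M}\tau$ and use triangularity in the lowest-order coefficient.

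Finally, I sum over strata to get the generating function. There are three cases. If $N\le v$ (no rows at or beyond $v$), the columns have heights $1..v$, giving $\prod_{i=1}^{v}(1-Q^iT^{2(i-1)})^{-1}$. If rows at $k\ge v$ exist but all have $i_k=2$, or mixed with a width-one tail, the two leftmost columns split off. Their extra parts contribute factors like $Q^{v+1}T^{2v}/(1-Q)$ and $Q^{v+1}T^{2v}\cdot QT^2/(1-Q^2T^2)$, and the tail with the half-length free strip contributes a factor $Q^{2(v+1)}T^{4v}$ over $(1-Q)(1-Q^2T^2)$. The last factor comes from pairing two rows of length one with one degree of freedom, which gives $1/(1-Q^2T^2)$, and an unpaired row gives $1/(1-Q)$. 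Putting the cases over the common denominator $(1-Q)(1-Q^2T^2)\prod_{i=1}^v(1-Q^iT^{2(i-1)})$ should yield the stated numerator. I would confirm this by comparing coefficients for $v=1$ and small $n$.
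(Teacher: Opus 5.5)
Your treatment of the strata follows the paper's own proof: the Vertical filtration, the parameters of $F^0,\dots,F^{v-1}$ left free, the tails of the $x^2y^k$-rows killed, and a symmetric quadratic system on the width-one strip. Your packaging of that system as $s(y)^2\equiv 0$ is cleaner than the paper's sequential solving, and it settles the two points you left open.

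Let $y^{N+1}$ be the first pure power of $y$ in $I$ and $s=\sum_{l=M+1}^{N}\sigma_l y^l$. Then $xy^l\equiv -y^{l-M}s$ modulo $I$ exactly, so $x^2y^M\equiv s^2/y^M$. The condition is therefore precisely $s^2\equiv 0 \bmod y^{N+M+1}$. It is $x^2y^M\in I$, not ``$xF^M\in I$'', which is automatic. There are no higher-order terms. The reduced locus is the linear space $\mathrm{ord}_y s\ge\lfloor\frac{M+N}{2}\rfloor+1$, which has $\lceil\frac{N-M}{2}\rceil$ free coordinates.

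Your convention that $M\ge v$, so $M=v$ when $i_v=1$, is the right one. With the statement's literal definition of $M$, for $v=2$ and rows $(3,1,1)$ the formula gives $3$, which exceeds the cell dimension $n-i_0=2$.

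The genuine gap is the generating function. You do not derive it, and the factors you propose for the middle case are wrong. The split should be by $i_v\in\{0,1,2\}$:

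\begin{itemize}
\item For $i_v=0$, the count is $\prod_{i=1}^{v}(1-Q^iT^{2(i-1)})^{-1}$.
\item For $i_v=1$, rows $0,\dots,v$ give $Q^{v+1}T^{2v}\prod_{i=1}^{v}(1-Q^iT^{2(i-1)})^{-1}$. A strip of $L\ge 0$ boxes above $y^v$ carries $\lceil L/2\rceil$ dimensions, so it contributes $\sum_{L\ge 0}Q^LT^{2\lceil L/2\rceil}=\frac{1+QT^2}{1-Q^2T^2}$. Your $\frac{Q^{v+1}T^{2v}}{1-Q}$ is the $xy^v$ count, where the strip carries no dimension. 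Combined with $\frac{QT^2}{1-Q^2T^2}$, it does not reproduce the stated numerator.
\item For $i_v=2$, the two columns of height $v+1$ give $Q^{2(v+1)}T^{4v}$. The width-two rows above row $v$ carry no dimension and give $\frac{1}{1-Q^2}$. The optional width-one strip starting at $y^M$ has a bottom box carrying no dimension, so it gives $1+Q\,\frac{1+QT^2}{1-Q^2T^2}=\frac{1+Q}{1-Q^2T^2}$. The factor $\frac{1}{(1-Q)(1-Q^2T^2)}$ is the product of these last two, not the effect of an ``unpaired row''.
\end{itemize}

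Adding the three cases over the common denominator $(1-Q)(1-Q^2T^2)\prod_{i=1}^{v}(1-Q^iT^{2(i-1)})$ gives the stated numerator.
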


\begin{proof}
Following Notation \ref{notik}, for an ideal $I \in \Hilb^{n}(\{x^2y^v=0\},0)$, because $x^2y^v \in I$, the Vertical valuation $v_{(0,v)}(W^v)$ can only be of type $[0],[1],[2]$. 
\begin{enumerate}
\item If the valuation $v_{(0,v)}(W^v)$ has type $[0]$, then $y^v \in I$, and the generating function is $$ \frac{1}{\prod_{i=1}^v (1-Q^{i}T^{2(i-1)})}.$$ 

\item If the valuation $v_{(0,v)}(W^v)$ has type $[1]$, then the valuation $v_{(0,k)}(W^k)$ for all $k=v+1,..., N$ also has type $[1]$ for some $N \geq v$, until at some point we have $y^{N+1} \in I$, (if $N=v$, then we don't have any other $v_{(0,k)}(W^k) =[1]$, and $y^{v+1} \in I$). The generator of $W^k$ for $k = v, ..., N$, following recursive relations in the proof of Proposition \ref{familyofideals}, can be written as  
\begin{align*}
xy^k + \sum_{i=k+1}^{N} \sigma_{i-k+v} \cdot  y^i.
\end{align*}

\resizebox{5.8cm}{4cm}{
\begin{tikzpicture}

%begining layers
\node[cell] at (-1,-1) {\scalebox{.7}{$1$}};
\node[cell, minimum height=7cm] at (-1,0) {};
\node[cell, minimum width=11cm] at (0,-1) {\tiny $\ldots$};
\node[cell] at (10,-1) {\scalebox{.7}{$x^{i_0}$}};
\node[cell] at (-1,6) {\scalebox{.7}{$y^{N}$}};

%first/ k-1 th layer
\node[cell] at (-1,0) {\scalebox{.6}{$y$}};

\node[cell, minimum width=9cm] at (0,0) {\tiny $\ldots$};
\node[cell] at (9,0) {\scalebox{.6}{$x^{i_1+1}y$}};

%second layer

\node[cell] at (-1,1) {\scalebox{.5}{$y^{2}$}};
\node[cell, minimum width=7cm] at (0,1) {\tiny $\ldots$};
\node[cell] at (7,1) {\scalebox{.6}{$x^{i_2+1}y^2$}};

%third layer
\node[cell] at (-1,2) {\scalebox{.8}{$...$}};
\node[cell, minimum width=4cm] at (0,2) {\tiny $\ldots$};

%fourth layer
%\node[cell] at (0,3) {\scalebox{.8}{$...$}};
%\node[cell, minimum width=2.5cm] at (1,3) {\tiny $\ldots$};

%last layer
\node[cell, draw=none] at (0,3) {\tiny $x y^v$};

%vertical strip
\node[cell] at (-1,3) {\tiny $y^v$};

\node[cell] at (-1,4) {\tiny $y^{v+1}$};
\node[cell] at (-1,5) {\tiny \vdots};
\end{tikzpicture}
}

In order for $x^2y^v \in I$, we compute 
\begin{align*}
& x \cdot (xy^v +\sum_{i=v+1}^{N} \sigma_{i} \cdot  y^i)= x^2y^v + \sum_{i=v+1}^{N} \sigma_{i}xy^i\\
& = x^2y^v - \sum_{i=v+1}^{N} \sigma_{i} \sum_{j=i+1}^{N} \sigma_{j-i+v} \cdot y^j,
\end{align*}
and the coefficients of $y^j$ for $j=v+2,..., N$ should be set to $0$.

The coefficient of $y^j$ is $$\sum_{k_1,k_2} \sigma_{k_1} \cdot \sigma_{k_2},$$ such that $$k_1+k_2=j+v,$$ for $j=v+2,..., N$, and $k_1, k_2 \geq v+1$.

These equations are symmetric of the form:

\begin{align*}
&\sigma_{v+1} \cdot \sigma_{v+1}=0 \\
&\sigma_{v+1} \cdot  \sigma_{v+2}+\sigma_{v+2} \cdot \sigma_{v+1}=0 \\
&\sigma_{v+1} \cdot  \sigma_{v+3}+\sigma_{v+2} \cdot  \sigma_{v+2}+\sigma_{v+3} \cdot \sigma_{v+1}=0\\
& ...\\
&\sigma_{v+1} \cdot  \sigma_{N-1}+\sigma_{v+2} \cdot  \sigma_{N-2}+...+\sigma_{N-1} \cdot \sigma_{v+1}=0
\end{align*}

Solving these equations, starting from the first one, we notice that we obtain $$\sigma_{v+1} = \sigma_{v+2} = ... =\sigma_{\lfloor \frac{v+N}{2} \rfloor}=0.$$

So all the following coefficients are free:
$$\sigma_{\lfloor \frac{v+N}{2}\rfloor+1}, \sigma_{\lfloor \frac{v+N}{2}\rfloor+2}, ..., \sigma_{N}.$$

The total number of free variables is $$N-\lfloor \frac{v+N}{2}\rfloor = \lfloor \frac{2N}{2}\rfloor - \lfloor \frac{v+N}{2}\rfloor = \lfloor \frac{N-v}{2}\rfloor.$$

In other words, the boxes from the $v$th row to the $\lfloor \frac{v+N}{2}\rfloor$th row do not contribute to the dimension of the stratum, but the boxes from the $\lfloor \frac{v+N}{2}\rfloor+1$th row to the $N$th row each contribute to $1$ dimension to the stratum.

If $N-v$ is even, then $\lfloor \frac{N-v}{2}\rfloor = \frac{N-v}{2}$. If $N-v$ is odd, then $\lfloor \frac{N-v}{2}\rfloor = \frac{N-v-1}{2}$. Note that $N-v$ is the number of boxes above the box $y^v$ (starting from the box $y^{v+1}$) at the left-most column. Note that the box $y^v$ must be contained in the partition, because we assumed that the valuation has type $[1]$ and therefore $I$ does not contain $y^v$. So there must be at least boxes $1, y, ..., y^v$ at the left-most column in the partition.

For the rest of the boxes above the box $y^v$, their existence is optional, but for some even number of boxes we add on top, half of them contributes to the dimension of the stratum, and half of them do not; for some odd number of boxes, denoted by $2j+1$, we add on top of $y^v$, the dimension contributed by these boxes to the stratum is $j+1$, and the other $j$ boxes do not contributes to any dimension of the stratum. So the generating function of these boxes are $$\frac{(1+QT^2)}{(1-Q^2T^2)}.$$

For the rest of the rows, from the $0$th to the $v$th rows, their generating function, because we have fixed all the variables of the generator $xy^v$ at the layer $W^v$, the rest of the generators at the lower layers, $W^{v-1},..., W^{0}$ follow the same rules as in Proposition \ref{familyofideals}, and that each of them adds the dimension to the stratum that is equal to the number of boxes in the row right above it. Because there must be at least boxes $1, y, ..., y^v$ at the left-most column in the partition, there is a factor $$Q^{v+1}T^{2v}$$ in the generating function. The existence of any other boxes to the right of the left-most column (to the right of $1, y, ..., y^v$) is optional, and they contribute the factor $$\frac{1}{\prod_{i=1}^v (1-Q^iT^{2(i-1)})}$$ to the generating function. 

The generating function is therefore 
$$\frac{(1+QT^2)Q^{v+1}T^{2v}}{(1-Q^2T)\prod_{i=1}^v (1-Q^iT^{2(i-1)})}$$

\item If the valuation $v_{(0,v)}(W^v)$ has type $[2]$, then the indexing partition of $I$ looks like the following.

\resizebox{5.8cm}{4cm}{
\begin{tikzpicture}

%begining layers
\node[cell] at (-2,-1) {\scalebox{.7}{$1$}};
\node[cell, minimum height=7cm] at (-2,0) {};
\node[cell, minimum width=11cm] at (-1,-1) {\tiny $\ldots$};
\node[cell] at (10,-1) {\scalebox{.7}{$x^{i_0}$}};
\node[cell] at (-2,6) {\scalebox{.7}{$y^{N}$}};

%first/ k-1 th layer
\node[cell] at (-2,0) {\scalebox{.6}{$y$}};

\node[cell, minimum width=10cm] at (-1,0) {\tiny $\ldots$};
\node[cell] at (9,0) {\scalebox{.6}{$x^{i_1+1}y$}};

%second layer

\node[cell] at (-2,1) {\scalebox{.5}{$...$}};
\node[cell, minimum width=8cm] at (-1,1) {\tiny $\ldots$};

%fourth layer
%\node[cell] at (0,3) {\scalebox{.8}{$...$}};
%\node[cell, minimum width=2.5cm] at (1,3) {\tiny $\ldots$};

%last layer
\node[cell] at (-2,2) {\tiny $y^v$};
\node[cell] at (-1,2) {\tiny $xy^v$};
\node[cell, draw=none] at (0,2) {\tiny $x^2 y^v$};

%second-vertical strip
\node[cell] at (-1,3) {\tiny $...$};

\node[cell, draw=none] at (-1,4) {\tiny $xy^{M}$};
\node[cell] at (-2,5) {\tiny \vdots};

%first-vertical strip
\node[cell] at (-2,3) {\tiny $...$};

\node[cell] at (-2,4) {\tiny $y^{M}$};
\node[cell] at (-2,5) {\tiny \vdots};

\end{tikzpicture}
}

Based on the rules in Proposition \ref{familyofideals} we have computed, either we have a nontrival generator $$xy^M+\sum_{i=M+1}^{N} \sigma_{i} y^i,$$ or $M=N$ and all the coefficients of the generator with $x^2y^v$ as the \textcolor{red!80!black}{Head} part are independent.

For the generator $xy^M+\sum_{i=M+1}^{N} \sigma_{i} y^i$, all the coefficients $\sigma_{i}$ are free. Because $x^2y^v \in I$, we also have that $x^2y^M \in I$.

For $k = M, ..., N$, the generators can be written as  
\begin{align*}
xy^k + \sum_{i=k+1}^{N} \sigma_{i-k+M} \cdot  y^i.
\end{align*}

Now we compute 
\begin{align*}
& x \cdot (xy^M+\sum_{i=M+1}^{N} \sigma_{i} y^i) \\
& = x^2y^M+\sum_{i=M+1}^{N} \sigma_{i} xy^i \\
&= x^2y^M-\sum_{i=M+1}^{N} \sigma_{i} (\sum_{j=i+1}^{N} \sigma_{j-i+M} \cdot  y^j)
\end{align*}

Notice that for the coefficient of the monomial $y^j$, we always have a sum of a product $\sum \sigma_{k_1} \cdot  \sigma_{k_2}$ of $\sigma_{k_1}$ and $\sigma_{k_2}$ such that $k_1+k_2=M+j$, where $j = M+2,...,N$.

So for the coefficients $y^j$ where $j=M+2,...,N$, we obtain the following equations.

\begin{align*}
&\sigma_{M+1} \cdot \sigma_{M+1}=0 \\
&\sigma_{M+1} \cdot  \sigma_{M+2}+\sigma_{M+2} \cdot \sigma_{M+1}=0 \\
& ...\\
\end{align*}

Generically, we have the equations for $K=2M+2,..., M+N$, $k_1, k_2 \geq M+1$: 
$$\sum_{k_1+k_2=K} \sigma_{k_1} \cdot  \sigma_{k_1}.$$

And solving these equations we get that for all $M+1 \leq i \leq \lfloor \frac{M+N}{2} \rfloor $, $\sigma_{i}=0.$ These are the independent variables we kill, which is equal to $\lfloor \frac{M+N}{2} \rfloor- M= \lfloor \frac{N-M}{2} \rfloor$. The number of free variables are the number of boxes between the $\lfloor \frac{M+N}{2} \rfloor +1$th row and the $N$th row, which is the number of half of the boxes between $y^{M+1}$ and $y^N$, depending on even and odd parity.

Once we establish the condition that $x^2y^M \in I$, the generator of $W^{M-1}$, given as $$xy^{M+1}+ \sum_{j=M}^N \lambda_j y^j,$$ has one free variable, $\lambda_N$ which must be set to $0$, and are otherwise already set to $0$: setting $\lambda_j$ to $0$ for $j=M, ..., N-1$ does not induce any additional condition on the other free variables introduced in the previous layers, because setting the coefficients of $x^2y^M$ to be $0$, as we did previously, already took care of these conditions. 

Similar to what we computed in case (2), the generating function for the boxes at $y^M$ and above is given as $$\frac{1+Q}{1-Q^2T^2}.$$

Once the generator of the $M-1$th row, whose \textcolor{red!80!black}{Head} part is $x^2y^{M-1}$, is fixed, the rest of the generators for the $k$th row where $k = v, ..., M-2$ follow the rules of Proposition \ref{familyofideals}. The $2$ degrees of freedom added by each of them must be set to $0$, due to the condition that $x^2y^v \in I$. So these rows with two boxes for the layers $W^{M-1}, ..., W^{v+1}$ does not add any degree of freedom. Their generating series is $$\frac{1}{1-Q^2}.$$

For the rest of the rows at the level of $y^v$ and below, once we establish the condition that $x^2y^v \in I$, their generators are not constrained from extra conditions, and follow only the rules as in Proposition \ref{familyofideals}, and note that we must have the box $xy^v$ in the partition because the valuation is of type $[2]$, so there must be $2(v+1)$ boxes to the left and below $xy^v$, such that the generating function of these $2(v+1)$ boxes is $Q^{2(v+1)}T^{4v}$. Accounting for the rest of the boxes, the generating function is $$\frac{Q^{2(v+1)}T^{4v}}{\prod_{i=1}^{v}(1-Q^{i}T^{2(i-1)})}.$$

The generating function of this case is $$\frac{Q^{2(v+1)}T^{4v}(1+Q)}{(1-Q^2)(1-Q^2T^2)\prod_{i=1}^{v}1-Q^{i}T^{2(i-1)}}$$

\end{enumerate}

In total, the generating function adding all three cases together is \begin{align*}
&\frac{(1-Q)(1-Q^2T^2)+(1-Q)(1+QT^2)Q^{v+1}T^{2v} + Q^{2(v+1)}T^{4v}}{(1-Q)(1-Q^2T^2)\prod_{i=1}^{v}(1-Q^{i}T^{2(i-1)}}.\\
%&=\frac{Q^{3}T-Q^{2}T-Q+1+(-Q^{2}T+Q\,T-Q+1)}{(1-Q)(1-Q^2T)\prod_{i=1}^{v}1-Q^{i}T^{2(i-1)}}\\
\end{align*}
\end{proof}
 
\begin{conjecture}
The colored knot homology is \begin{align*}
\frac{(1-Q)(1-Q^2T^2)+(1-Q)(1+QT^2)Q^{v+1}T^{2v} + Q^{2(v+1)}T^{4v}}{(1-Q)(1-Q^2T^2)\prod_{i=1}^{v}(1-Q^{i}T^{2(i-1)}}.
\end{align*}
\end{conjecture}

This stratification is still reasonably nice when $u=2$, but for $u=3$ and bigger, there soon arises many different cases to discuss, for example the number of identical columns at the left, and the author tried of course to find some pattern in general for any arbitrary $u,v$, but there does not seem to be much unified structure, but rather things separates into a large number of different cases to discuss. The idea is still simple as the cases discussed before, to write out all the coefficients of the generator $x^uy^v+...$, and set them to be $0$, but some of these coefficients are free, and some of them are dependent on the free ones of the previous layers. So setting all the coefficients of the generator $x^uy^v+...$ to $0$ is not just killing the degree of freedom that this single generator itself brings in, but also induces equations on the free variables of the previous layers. This is a rather unfortunate situation, because the construction of the generators grows from top to bottom, so this condition is in some sense backwards and unnatural to the stratification, as we need to use the bottom layer to determine the top layers. But of course, we can still brute force, and if one really wants to, they can write several pages more and compute the cases for $u=3$ and $u=4$, but the author does not find such computations atheistical or meaningful, and would rather see another method coming in for arbitrary $u,v$.

\section{Poincar\'e polynomial of $\Hilb^n(\{x^vy^v=0\},0)$}

\begin{theorem}
\label{thmdiag}
Every Diagonal stratum $\Sigma_{([\circled{s}_k, (i_k,j_k)])}$ of the Hilbert schemes, $\Hilb^n(\{x^uy^v=0\},0)$, where $u$ is equal to $v,v-1$ or $v-2$, is isomorphic to the product of an algebraic torus times an affine space,

$$\Sigma_{([\circled{s}_k, (i_k,j_k)])}= (\mathbb{A}-pt)^{m_1} \times \mathbb{A}^{m_2},$$ for some non-negative integers $m_1,m_2$. 
The non-negative integer $m_1$ is equal to the number of type $\circled{1}$ layers plus $1$,  such that if the $\circled{1}$ layer appears consecutively after another $\circled{1}$ layer, the number of boxes of the rows strictly increase. The non-negative integer $m_2$ equals to the total number of boxes from the $1$st to the $v-1$th row in this weak diagonal partition, (the total number of boxes minus the number of boxes in the $0$th row,) plus the number of consecutive type $\circled{2}$ rows, where each two consecutive type $\circled{2}$ rows adds one to $m_2$.
\end{theorem}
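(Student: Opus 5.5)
The plan is to build the stratum layer by layer along the Diagonal filtration, from $J^{v-1}$ outward to $J^0$, and show at each step that the space of admissible new generators forms a trivial fibre over the choices already made for the inner layers. Fix a weak diagonal partition $([\circled{s}_k,(i_k,j_k)])$. By Lemma \ref{lemmagen}, an ideal in $\Sigma_{([\circled{s}_k,(i_k,j_k)])}$ is determined by its generators $E$ or $(E_1,E_2)$ in each layer, together with the base generator coming from $x^uy^v$. After the elimination process of the previous section, the Tails are reduced to finite sums over the Tail boxes. This gives a map from the stratum to a finite-dimensional parameter space whose coordinates are the Head coefficients $(a:b)$ and the surviving Tail coefficients. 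The first step is to check that this map is a bijection onto its image. Injectivity follows from the uniqueness parts of Lemma \ref{lemmagen} combined with the normalization of the Tails. The quotient has the right dimension $n$ by Lemma \ref{codimlemma}, and every point in the image is a genuine ideal in the stratum.

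Next I will describe the image as an iterated product. Proposition \ref{familyofideals} gives the fibre of the $k$-th layer over fixed inner layers. In case (2)(b) it is $\mathbb{A}^{\Delta_k}\times(\mathbb{P}^1-2pts)$, which is the $\circled{1}$ layer case with strictly increasing boxes or with a $\circled{2}$ layer inside. In case (2)(a) it is $\mathbb{A}^{\Delta_k}$, which is the $\circled{1}_E\circled{1}$ case where $a/b=a'/b'$ is forced by Proposition \ref{prop=}. In case (3)(a) it is $\mathbb{A}^{\Delta_k}$, and in case (3)(b), for consecutive $\circled{2}$ rows, it is $\mathbb{A}^{\Delta_k+1}$. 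The key point is that in each case the dependent Inside coefficients are polynomial functions of the free Diff and Head coefficients and of the inner-layer parameters, because of the triangular elimination order. Hence the layer fibration is Zariski-trivial, and in fact a product, since the free coordinates can be chosen independently of the base. Multiplying over $k$ will give $(\mathbb{A}-pt)^{m_1}\times\mathbb{A}^{m_2}$. Here $m_1$ counts the $\circled{1}$ layers that are not of type $\circled{1}_E\circled{1}$, and $m_2=\sum_k\Delta_k$ summed over the layers whose Tail boxes are counted, plus one for each consecutive $\circled{2}\circled{2}$ pair. I will then reconcile this bookkeeping with the statement. The sum of the $\Delta$'s coming from Diff regions equals the number of boxes in rows $1$ through $v-1$, since row $0$ never appears in any Tail. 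The extra $+1$ in $m_1$ comes from the innermost layer or from the torus factor of the top generator, and I will verify it directly on the base case $J^{v-1}$, where the Head $(a:b)$ of the innermost $\circled{1}$ layer is unconstrained.

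The remaining step, and the main obstacle, is the global condition that $x^uy^v\in I$ when $u=v-1$ or $v-2$. For $u=v$ this holds automatically because $J^v=(x^vy^v)$. For $u<v$ the monomial $x^uy^v$ lies in $(x^uy^u)$, so it must be generated inside the layers $J^{u},\dots,J^{v-1}$. This could impose backward equations on inner free variables, which is exactly the phenomenon that broke the $u\geq 3$ vertical analysis. My plan is to show that for $u=v-1$, $v-2$ the condition only affects the one or two innermost layers. There, the row lengths are bounded by $v-u\le 2$ in one direction, so the condition amounts to vanishing of a few Head or Tail coefficients of $E$ or $E_1,E_2$ in $J^{v-1}$ and $J^{v-2}$. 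Those coefficients have no dependents in outer layers, because they sit at the top of the elimination triangle. Consequently the condition cuts each affected fibre down to a smaller product of the same form: a coordinate hyperplane in an affine factor, or the replacement of a torus factor by a point. This is consistent with the modified initial vectors $W_1, D_1, D_2$. I expect to verify this by writing out $x\cdot E_1$ and $y\cdot E_2$ (respectively $xy\cdot E$) for the innermost rows, and checking that the resulting constraints are linear in coordinates already shown to be free. Once that is done, the product decomposition survives and the theorem follows.
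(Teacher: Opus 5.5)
\emph{The $u=v$ case.} Here your plan is the paper's argument: build the stratum from $J^{v-1}$ outward, read each layer's fibre off Proposition~\ref{familyofideals}, and multiply. Your matching of the Diff counts with the boxes in rows $1,\dots,v-1$ is also right.

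The step that would fail is the search for an ``extra $+1$'' in $m_1$, because there is none. The phrase ``plus $1$'' in the statement means that each $\circled{1}$ layer which is not the outer row of a $\circled{1}_E\circled{1}$ pair contributes one. That is exactly the count you already have. The paper's proof likewise says $m_1$ is the number of independent Head parts of $E$, which matches the $(T^2-1)$ factors in $V_1$ and $M_s$. The torus of an innermost $\circled{1}$ layer is already in your count, so a further $+1$ double counts. If all rows are of type $\circled{2}$ then $m_1=0$: for example, $\Hilb^2(\{xy=0\},0)\cong\mathbb{P}^1$ decomposes as two points plus one copy of $\mathbb{A}-pt$.

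\emph{The cases $u=v-1,v-2$.} Here your route differs from the paper's and is better. The paper simply asserts that every stratum of $\Hilb^n(\{x^vy^v=0\},0)$ is either contained in or disjoint from $\Hilb^n(\{x^uy^v=0\},0)$. Your direct check works because $x^uy^v\in I$ iff $x^uy^v\in J^u$, which is a condition on the base of your tower only. Outer dependent coefficients may involve the killed coordinates, but that is harmless, so you do not need the claim that they have ``no dependents''.

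However, once carried out, the check does not give the theorem as stated. Take $u=v-2$ with innermost rows $[\circled{2},(w,1)]$ at layer $v-1$ and $[\circled{2},(l,2)]$, $l\ge w+1$, at layer $v-2$.
\begin{itemize}
\item Then $J^{v-1}=(x^{v-1+w}y^{v-1},x^{v-1}y^v)$.
\item The requirement $xE_2\in J^{v-1}$ forces the reduced generator to be $E_2=x^{v-2}y^v+c\,x^{v-2+w}y^{v-1}$ with $c$ free. This $c$ is precisely the extra coordinate of the $\circled{2}\circled{2}$ pair.
\item Since $x^{v-2+w}y^{v-1}=x^{v-1}y^{v-1}\cdot x^{w-1}\notin I$, the condition $x^{v-2}y^v\in I$ is exactly $c=0$.
\end{itemize}
So these strata are still of the form $(\mathbb{A}-pt)^{m_1}\times\mathbb{A}^{m_2}$, but $m_2$ is one less than the stated formula. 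The paper's vector $D_2$, which omits the $T^2$ of that pair, records this drop, but the proof of this theorem misses it.

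In every other case the condition is only a shape restriction. Row $v-1$, if present, must be $[\circled{2},(w,1)]$, and a $\circled{1}$ row at layer $v-2$ needs $l\ge w+1$. So no torus factor is ever replaced by a point. Your concluding ``the theorem follows'' is therefore too strong: for $u=v-2$ your method proves the product structure, with $m_2$ corrected by $-1$ on these strata, not the stated formula.
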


\begin{proof}

\begin{enumerate}
\item 
%It is clear from the construction of the Diagonal filtration that for strata of $\Hilb^n(\{x^uy^v=0\},0)$, all the free variables of all generators of all layers of the filtration remains to be free, and there are no constrains. 

The condition $x^{v}y^v \in I$ is equivalent to the total number of rows of the weak diagonal partition less than or equal to $v$. Fixing the maximum number of rows to be $v$, we compute explicitly the degree of freedom added by each generator. We are exactly translating Proposition \ref{prop=} into the language of weak diagonal partitions. 

Each pair of generators of type $E_1$ and $E_2$ in the $k$th row adds degree of freedom equal to the total number of boxes in the $k+1$th row, unless the previous row also has generator of type $E_1, E_2$, then they add one more degree of freedom. Each \textcolor{red!80!black}{Head} part of the generator $E$ of the $k$th layer adds $\mathbb{P}^1-2pts$ degree of freedom, unless the generator of the $k+1$ layer is also $E$ and the $k$th and $k+1$th rows have the same number of boxes, then the \textcolor{red!80!black}{Head} part of $k$th layer is dependent on the $k+1$th layer and does not add any degree of freedom. The Tail part of the generator $E$ adds the total degree of freedom equal to the number of boxes in the previous layer.

So we can conclude that $m_1$ is the number of independent \textcolor{red!80!black}{Head} parts of $E$ of all layers, and $m_2$ is the number of boxes of all rows except for the outmost layer, plus the number of consecutive $\circled{2}$'s.

\item
When $u=v-1$, then $x^{v-1}y^v \in I$, which implies that the $v-1$th layer is either generated by $x^{v-1}y^{v-1}$, corresponding to a weak diagonal partition of $v-1$ rows or less, or the $v-1$th layer must be of type $\circled{2}$, with $E_2=x^{v-1}y^v$. This means that the $v-1$th row of the corresponding weak diagonal partition must be of type $\circled{2}$ and the vertical arm has exactly $1$ box. So comparing to the Hilbert scheme $\Hilb^n(\{x^vy^v=0\},0)$, the Hilbert scheme $\Hilb^n(\{x^{v-1}y^v=0\},0)$ is missing a few strata where the $v$th row of the indexing partition has other type or length. But all the strata of $\Hilb^n(\{x^{v}y^v=0\},0)$ are either completed contained or completely disjoint from $\Hilb^n(\{x^{v-1}y^v=0\},0)$, and the arguments we just made for $u=v$ still applies to the case when $u=v-1$.

\item
The case when $u=v-2$ is similar but just slightly more complicated combinatorially, as we require both $x^{v-2}y^v \in I$ and $x^{v-1}y^v \in I$. As explained earlier, the $v-1$th layer must have type $\circled{2}$ or be generated as $x^{v-1}y^{v-1}$. %If the $v-1$th layer is generated as $x^{v-1}y^{v-1}$, then this question is reduced to all the strata of the Hilbert scheme $\Hilb^n(\{x^{v-1}y^{v-1}=0\},0)$, and clearly the theorem holds following from the discussion of $u=v$. If the $v-1$th layer has type $\circled{2}$, then it must be $\circled{2}(i_{v-1},1)$ where $E_2=x^{v-1}y^v$ and $E_1=x^{v-1}y^{v-1+i_{v-1}}$. 

By the combinatorial rules of Proposition \ref{prop=}, if the $v-2$th layer has type $\circled{2}$ the $v-2$th layer must be $[\circled{2},(i_{v-2},2)]$ where $i_{v-2} \geq i_{v-1}+1$. If the $v-2$th layer has type $\circled{1}$, then it must be $[\circled{1},(i_{v-2},1)]$ where $i_{v-2} \geq i_{v-1}$. This generator $E$ has the \textcolor{red!80!black}{Head} part equal to $(ax^{i_{v-2}}+by) x^{v-2}y^{v-2}$ However, in order for $x^{v-2}y^v \in I$, we need to shift the \textcolor{red!80!black}{Head} part up by $(0,+1)$, which correspond to multiplication by $y$, and require by the shape of the valuation that $$y(ax^{i_{v-2}}+by) x^{v-2}y^{v-2}= ax^{i_{v-2}+v-2}y^{v-1}+bx^{v-1}y^{v} \in I.$$
This holds true if and only if $i_{v-2}+v-1 \geq i_{v-1}+v$, which implies that $i_{v-2} \geq i_{v-1}+1$. Again, this condition just puts constrains on the location of boxes in the \textcolor{red!80!black}{Head} parts of generators, which means just ruling out some possible shapes of the diagonal partition. So a stratum of $\Hilb^n(\{x^{v}y^{v}=0\},0)$ is either fully contained in $\Hilb^n(\{x^{v-2}y^{v-2}=0\},0)$ or completely disjoint from it. The rules of the number of free variables $m_1$ and $m_2$ for each stratum applies to every stratum with any possible weak diagonal partition. So once the stratum is contained in $\Hilb^n(\{x^{v-2}y^{v-2}=0\},0)$, the rules of variables $m_1$ and $m_2$ remain the same.
\end{enumerate}
\end{proof}

\subsection{Minimal shapes of weak diagonal partitions.\\}

Recall that a weak diagonal partition of $r$ rows can be represented by a free word of $\circled{1}, \circled{2}$ of length $r$, and a length $r$ sequence of pairs of positive integers,  $$([\circled{s}_k, (i_k, j_k)])=[\circled{s}_0, (i_0, j_0)],[\circled{s}_1, (i_1, j_1)], ...,[\circled{s}_{r-1}, (i_{r-1}, j_{r-1})].$$

%where $k=r-1,...,0$, and each $\circled{s}_k$ is either $\circled{1}$ or $\circled{2}$, recording the type of the $k$th row, and $(i_k, j_k)$ is a pair of positive integers recording the location of the boxes at the end of each row.

\begin{definition}
Given a weak diagonal partition of $r$ rows, denoted by $([\circled{s}_k, (i_k, j_k)])$, there exists a unique minimal weak diagonal partition, denoted by $([\circled{s}_k, (m_k, n_k)])$, such that the relationship between the consecutive rows of type $\circled{1}$'s (either going up, denoted as $\circled{1}_U\circled{1}$, or equal, denoted as $\circled{1}_E\circled{1}$, as in Notation \ref{1U1}) are the same as $([\circled{s}_k, (i_k, j_k)])$; and the pair $(m_k,n_k)$ is the smallest among all the possible weak diagonal partitions of $r$ rows where the $k$th row has type $\circled{s}_k$ for all $k=1,..., r-1$.
\end{definition}

\begin{remark}
We can keep only the essential information, the free word of $\circled{1}, \circled{2}$, and drop the pairs of numbers $(i_k,j_k)$ in the minimal weak diagonal partition, while keeping in mind that there are two possibilities for any consecutive $\circled{1}\circled{1}$in the free word.
So a minimal weak diagonal partition of $k$ rows can be uniquely denoted as a word in the free group generated by $\circled{1}$ and $\circled{2}$ of length $k$, where between each two consecutive $\circled{1}$'s, the relationship are labeled as either $\circled{1}_E\circled{1}$ or $\circled{1}_U\circled{1}$.
\end{remark}

\begin{example}
\begin{enumerate}
\item The minimal weak diagonal partitions of $1$ row are 

$[\circled{1},(1,1)]$ of $2$ boxes, and $[\circled{2},(1,1)]$ of $1$ box.

\item The minimal weak diagonal partitions of $2$ rows are:

\begin{enumerate}
\item $[\circled{1},(1,1)], [\circled{1},(1,1)]$ of $4$ boxes, denoted as $\circled{1}_E\circled{1}$. 

\item $[\circled{1},(2,2)], [\circled{1},(1,1)]$ of $6$ boxes, denoted as $\circled{1}_U\circled{1}$.

\item $[\circled{1},(1,1)], [\circled{2},(1,1)]$ of $3$ boxes, denoted as $\circled{1}\circled{2}$.

\item 
$[\circled{2},(2,2)], [\circled{1},(1,1)]$ of $5$ boxes, denoted as $\circled{2}\circled{1}$.

\item $[\circled{2},(2,2)], [\circled{2},(1,1)]$ of $4$ boxes, denoted as $\circled{2} \circled{2}$.
\end{enumerate}

\end{enumerate}
\end{example}

\begin{lemma}
Collecting all the minimal weak diagonal partition of length $k$ into groups represented by a free word of $\circled{1}, \circled{2}$ of length $k$. Equivalently when $k \geq 2$, it can be represented by a length $k$ walk in the following weighted graph, where the adjacency matrix $M_s(s,k)$ is a $2$ by $2$ matrix,

$$M_s =
\begin{bmatrix}
\circled{1} \rightarrow \circled{1} & \circled{2} \rightarrow \circled{1}  \\ 
\circled{1} \rightarrow \circled{2} & \circled{2} \rightarrow \circled{2}
\end{bmatrix}.$$

\begin{center}

\begin{tikzcd}
\circled{1} \arrow[rr, bend left] \arrow[loop, distance=4em, in=230, out=130, "U" '] \arrow[loop, distance=2em, in=215, out=145, "E"] &  & \circled{2} \arrow[ll, bend left] \arrow[loop, distance=2em, in=325, out=35]
\end{tikzcd}

\end{center}

\end{lemma}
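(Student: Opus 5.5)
The idea is to reduce the statement to the local transition rules between consecutive rows, which are Proposition \ref{propij} and Proposition \ref{prop=}. A minimal weak diagonal partition is determined by its word in $\circled{1},\circled{2}$ together with an $E$/$U$ label on each consecutive $\circled{1}\circled{1}$. I will build such partitions from the innermost row $k=r-1$ outward to row $0$. This is the same order in which the generators are constructed, and it makes the partition a walk on the two vertices $\circled{1},\circled{2}$.

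\textbf{Step 1: the minimal partition is forced row by row.} For a fixed labeled word, I set the innermost row to $(1,1)$. Then for each $k$ I choose $(m_k,n_k)$ to be the smallest pair allowed by the boundary case of Proposition \ref{propij} for the transition type $\circled{s}_{k+1}\to\circled{s}_k$:
\begin{itemize}
\item $\circled{1}_E\circled{1}$ gives $+0$;
\item $\circled{1}_U\circled{1}$, $\circled{2}\circled{2}$ and $\circled{2}\to\circled{1}$ (inner $\circled{2}$, outer $\circled{1}$) each give $+1$ in both coordinates;
\item $\circled{1}\to\circled{2}$ (inner $\circled{1}$, outer $\circled{2}$) gives $+0$.
\end{itemize}
I will check that these componentwise minima are simultaneously realizable. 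This holds because each constraint involves only adjacent rows and is monotone: enlarging an inner row only raises the lower bounds on the outer rows. Greedy minimization from the inside out therefore yields a unique minimal element. I will also verify that Proposition \ref{prop=} imposes no further obstruction at these boundary values, so that a nonempty stratum exists. This gives a bijection between minimal weak diagonal partitions of length $k$ and labeled words of length $k$.

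\textbf{Step 2: words are walks.} A labeled word of length $k$ is a sequence of $k$ vertices in which each consecutive pair is one of the arrows $\circled{1}\to\circled{1}$ (labeled $U$ or $E$), $\circled{1}\to\circled{2}$, $\circled{2}\to\circled{1}$, or $\circled{2}\to\circled{2}$. This is exactly a walk in the drawn graph, which has two loops at $\circled{1}$, one loop at $\circled{2}$, and one edge in each direction between the two vertices. Counting such walks, or weighting them, is then given by products of the $2\times 2$ adjacency matrices $M_s$. The $(a,b)$ entry of $M_s$ records the arrow $b\to a$, matching the displayed layout. When $M_s$ carries weights, the row index $s$ only records at which layer the step happens. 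I will note that the transition rules do not depend on $s$ except through those weights, so the graph structure is the same at every step.

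\textbf{Main obstacle.} The delicate point is the uniqueness claim in Step 1, specifically the two $\circled{1}\circled{1}$ loops. Proposition \ref{propij}(1) allows both the equal case and the case with both coordinates increased. These must be kept as distinct edges, and I must also confirm that minimality never merges them, i.e.\ that a $U$ step cannot be realized with increment $0$. I will settle this by citing the dichotomy in Proposition \ref{propij}(1) directly: within a $\circled{1}\circled{1}$ transition, equality and increment $\geq 1$ are the only options, so the label is an invariant of the stratum and is preserved when passing to the minimal representative.
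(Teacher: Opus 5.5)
Your Step 2 is exactly the paper's proof. The relations $\circled{1}_U\circled{1}$ and $\circled{1}_E\circled{1}$ are the two loops at $\circled{1}$, and $\circled{1}\to\circled{2}$, $\circled{2}\to\circled{1}$, $\circled{2}\to\circled{2}$ are single edges. The paper never argues your Step 1. It builds existence and uniqueness of the minimal representative into the definition of a minimal weak diagonal partition and the Remark after it. So your inside-out greedy argument is a genuine addition: each adjacent-row constraint has a least admissible outer row, and that least row is monotone in the inner row.

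There is, however, a slip in that added step. The increments you list for the two mixed transitions are swapped relative to the indices $(m_k,n_k)$ that you say you are minimizing. In the coordinates $(i_k,j_k)$ of Proposition \ref{propij}:
\begin{itemize}
\item Part (3) (outer $\circled{1}$, inner $\circled{2}$) only requires $(i_k,j_k)\geq(i_{k+1},j_{k+1})$, so the minimal increment is $0$.
\item Part (4) (outer $\circled{2}$, inner $\circled{1}$) requires $(i_k,j_k)\geq(i_{k+1}+1,j_{k+1}+1)$, so the minimal increment is $+1$.
\end{itemize}
Compare the paper's two-row examples $[\circled{1},(1,1)],[\circled{2},(1,1)]$ with $3$ boxes and $[\circled{2},(2,2)],[\circled{1},(1,1)]$ with $5$ boxes.

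Taken literally, your rule places $[\circled{2},(i,j)]$ as the outer row over $[\circled{1},(i,j)]$. This violates part (4), so the output is not even a weak diagonal partition. It also places $[\circled{1},(i+1,j+1)]$ over $[\circled{2},(i,j)]$, which is allowed but not minimal.

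Your numbers are the correct increments of the arm lengths as drawn. A $\circled{1}$ row of index $i_k$ reaches the shaded box $x^ky^k\cdot x^{i_k}$, while a $\circled{2}$ row of index $i_k$ stops at $x^ky^k\cdot x^{i_k-1}$. So this is a bookkeeping error, not a structural one. Once the increments are restated in $(i_k,j_k)$, every transition type still has a unique least outer value, and your bijection between labeled words and minimal partitions goes through.
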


\begin{proof}
When we have two consecutive $\circled{1}$'s in the free word, we have either $\circled{1}_U\circled{1}$ or $\circled{1}_E\circled{1}$, and they are the two directed edges going from the vertex $\circled{1}$ to itself. Otherwise, going from $\circled{1}$ to $\circled{2}$, or going from $\circled{2}$ to $\circled{1}$, or going from $\circled{2}$ to $\circled{2}$ is unique and has $1$ edge between the corresponding verticies.

\end{proof}

\begin{theorem}
\label{thmv}
The virtual Poincar\'e polynomial of the Hilbert scheme $Hilb^n(\{x^vy^v=0\},0)$ is 

\begin{align*}
\sum_{n=0}^{\infty}\chi(\Hilb^n(\{x^vy^v=0\},0))Q^n=\sum_{k=1}^v \frac{F(k)}{\prod_{i=1}^{k}(1 - Q^iT^{2(i-1)})^2}.
\end{align*}

And $F(k)$ is the generating function of all the strata whose indexing partitions are minimal weak diagonal partitions of exactly $k$ layers, given as follows
$$
F(k)=\begin{cases}
Q^2T^2-Q+1, & \text{if } k=1, \\

\mathbf{1}^T \cdot V_k, & \text{if } k =2,...,v,
\end{cases}
$$  

where $\mathbf{1} =\begin{bmatrix}1 & 1 \end{bmatrix}$ is a row vector of ones, and $\mathbf{1}^T \cdot V_k$ is the sum of the two entries of the vector $V_k$.

The vectors $V_k$ is given as $$V_k= \prod_{s=2}^{k} M_s \cdot V_1.$$

The initial vector $V_1$ is $$V_1=
\begin{bmatrix}
Q^{2k}T^{4(k-1)}(T^2-1) \\ Q^kT^{2(k-1)}
\end{bmatrix}.$$

For $s=2, ..., k$, the adjacency matrix $$M_s =
\begin{bmatrix}
Q^{2(k-s+1)}T^{4(k-s)}(T^2-1)+(1-Q^{k-s+1}T^{2(k-s)})^2 & Q^{k-s+1}T^{2(k-s)}(T^2-1)  \\ 
Q^{k-s+1}T^{2(k-s)} & Q^{2(k-s+1)}T^{4(k-s)}T^2
\end{bmatrix}.$$

\end{theorem}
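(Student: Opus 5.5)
The plan is to compute the generating function by summing $\chi$ over all Diagonal strata, using Theorem \ref{thmdiag}. There, each stratum $\Sigma_{([\circled{s}_k,(i_k,j_k)])}$ is shown to be $(\mathbb{A}-pt)^{m_1}\times\mathbb{A}^{m_2}$, so it contributes $(T^2-1)^{m_1}T^{2m_2}Q^n$. I would also use Lemma \ref{codimlemma}, which says the number of boxes $n$ equals $\dim \mathbb{C}[\![x,y]\!]/I$. Additivity of the virtual Poincar\'e polynomial (Definition \ref{vpp}) then gives $\sum_n \chi(\Hilb^n)Q^n=\sum_{\lambda}(T^2-1)^{m_1(\lambda)}T^{2m_2(\lambda)}Q^{|\lambda|}$. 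Here $\lambda$ ranges over weak diagonal partitions with at most $v$ rows, since $x^vy^v\in I$ is equivalent to having at most $v$ layers.

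First I group the partitions by the number of layers $k$ and by their minimal shape. A weak diagonal partition with $k$ rows is its minimal shape plus independent enlargements. The rules of Proposition \ref{prop=} are stated as inequalities between consecutive rows, so the excess over the minimal shape splits into independent ``diagonal columns''. Each column is a hook added simultaneously to rows $0,\dots,r-1$, with $r\in\{1,\dots,k\}$, on the horizontal arm or on the vertical arm. Such a column has $r$ boxes, and it raises $m_2$ by $r-1$, because the boxes of row $0$ do not count. Since the two arms are free independently, this gives the denominator $\prod_{i=1}^k(1-Q^iT^{2(i-1)})^2$. One subtlety needs checking: enlarging must not change an $E$-transition $\circled{1}_E\circled{1}$ into a $U$-transition. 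For an $E$-block the horizontal and vertical enlargements of equal rows must be tied, and I expect the factor to absorb this correctly. It remains to show that $F(k)$ equals the sum over minimal shapes of $(T^2-1)^{m_1}T^{2m_2}Q^{n}$.

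Second, for $k=1$ I check directly that $[\circled{2},(1,1)]$ and $[\circled{1},(1,1)]$ give $Q+Q^2(T^2-1)$. Putting this over the squared denominator, together with the empty partition, gives $Q^2T^2-Q+1$ in numerator form. For $k\ge 2$ I use a transfer matrix, building the minimal shape from the innermost row outward. This is the walk on the graph of the preceding lemma, with states $\circled{1}$ and $\circled{2}$. When the layer at depth $s$ is added, its box count is determined by the type transition via the minimal rules: $+0$ for $E$, $+1$ to each arm for $U$, $\circled{2}\circled{2}$ and $\circled{2}\circled{1}$, and $+0$ for $\circled{1}\circled{2}$. Each box of a layer other than layer $0$ contributes $QT^2$. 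Each independent $\circled{1}$ head contributes $(T^2-1)$, and each $\circled{2}\circled{2}$ pair contributes an extra $T^2$. Reading off these weights should give the entries of $M_s$, and the starting vector $V_1$ encodes the innermost row of types $\circled{1}$ and $\circled{2}$. The $(1-Q^{k-s+1}T^{2(k-s)})^2$ term in the $\circled{1}\to\circled{1}$ entry should be a correction: it makes the $E$-transition consistent with the tied enlargement found in the first step.

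The main obstacle is this bookkeeping. I need to verify that the entries of $M_s$, including the $E$ correction term and the exponents $k-s+1$, exactly match the weights $m_1$ and $m_2$ from Theorem \ref{thmdiag} after dividing by the column factor. I would first test small cases, $v=2$, $k\le2$, against direct enumeration. As a consistency check, for $v$ large the formula should reproduce the generating function $\prod_i(1-Q^iT^{2(i-1)})^{-1}$ of $\Hilb^n(\mathbb{C}^2,0)$.
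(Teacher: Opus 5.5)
Your plan is essentially the paper's own four-step proof. Both arguments sum $(T^2-1)^{m_1}T^{2m_2}Q^n$ over the Diagonal strata of Theorem \ref{thmdiag}. Each weak diagonal partition with $k$ rows is written as its minimal shape plus two independent ordinary partitions with at most $k$ rows on the two arms, which gives $\prod_{i=1}^{k}(1-Q^iT^{2(i-1)})^{-2}$. Each $\circled{1}_E\circled{1}$ step is corrected by $(1-Q^{k-s+1}T^{2(k-s)})^2$, and $M_s$, $V_1$ and the $k=1$ term are read off from the minimal increments you list.

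The two points you hedge are settled directly:
\begin{enumerate}
\item The $E$-correction forbids columns of height exactly $k-s+1$, which is the number of rows from the new layer out to row $0$. It does this on each arm separately; it does not tie the horizontal arm to the vertical one.
\item Box increments must be computed from $\Delta_k=i_k+j_k$ (type $\circled{1}$) versus $i_k+j_k-1$ (type $\circled{2}$), not from arm increments alone. So the mixed transitions $\circled{1}\circled{2}$ and $\circled{2}\circled{1}$ each add one box per such row, while $U$ and $\circled{2}\circled{2}$ add two.
\end{enumerate}
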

 
\begin{proof}

For each stratum, recall in Theorem \ref{thmdiag} that we have already described the shapes of its corresponding weak diagonal partition of $n$ boxes, and described the dimension of the stratum based on the statistics of the diagonal partition. Now we only have to count the number of each weak diagonal partition of each dimension. 

The proof has four steps. 

\textbf{Step 1, reduce a weak diagonal partition to a minimal one. } Every weak diagonal partition can uniquely shrink to a minimal one, and conversely, given all minimal weak diagonal partition of $k$ rows, one can generate all the weak diagonal partition by adding two regular partitions of at most $k$ rows on both the horizontal and vertical arms. (When we say regular partitions, we mean the ones where columns are vertical and rows horizontal, the usual, non-diagonal partitions.) 

Recall in Theorem \ref{thmdiag} that the number of points in the Hilbert scheme is the total number of boxes in the weak diagonal partition, and the dimension of the stratum is the total number of boxes of all but the outmost row, plus dimension contributions from the rows $\circled{1}_U \circled{1}$, plus dimension contributions from the consecutive twos rows of type $\circled{2}$.

So for a virtual Poincar\'e polynomial of a weak diagonal partition of $k$ rows in total, namely from $k-1$st row to $0$th row, it has two factors: One of them is given as the virtual Poincar\'e polynomial of the corresponding minimal weak diagonal partition, where all the dimension contribution of $\circled{1}_U \circled{1}$, and twos consecutive rows of type $\circled{2}$ are considered. The dimension contribution from all the boxes in the minimal weak diagonal partition is also counted in the virtual Poincar\'e polynomial of the minimal weak diagonal partition itself. Another factor is the virtual Poincar\'e polynomial of the two regular partition that we each add on the vertical and horizontal arms, recording only the contribution of dimension from all the boxes involved. 

For the two regular partitions that we add, the outmost row (lowest row) contributes only to the number of points in the Hilbert scheme but not the dimension, and all the other boxes in other rows contribute to both the number of points in the Hilbert scheme and the dimensions of the stratum. This contributes the factor $$\frac{1}{\prod_{i=1}^{k}(1 - Q^iT^{2(i-1)})^2}$$ to the virtual Poincar\'e polynomial.

The only thing to be careful about is when the relation $\circled{1}_E \circled{1}$ applies to, for example the $(s+1)$th and the $s$th rows, then the total number of boxes on these two consecutive rows must be the same, so we are not allowed to add just any regular partition on the vertical or horizontal arms. We must add regular partitions whose $s+1$th and the $s$th rows have the same length. This is equivalent to multiplying the factor $$(1 - Q^{k-s+1}T^{2(k-s)})^2$$ to the virtual Poincar\'e polynomial of such minimal weak diagonal partition to cancel out with the freedom in the denominator of $\frac{1}{\prod_{i=1}^{k}(1 - Q^iT^{2(i-1)})^2}.$

\textbf{Step 2, computing the adjacency matrix for transiting from the $s+1$th row to the $s$th row.}

Denote the virtual Poincar\'e polynomial of all strata of minimal weak diagonal partitions of exactly $k$ rows by $F(k)$. Now keeping in mind of the case $\circled{1}_E \circled{1}$ and add the factor $(1 - Q^{k-s+1}T^{2(k-s)})^2$ when needed, we compute the $F(k)$.

We inductively build a minimal weak diagonal partition of $k$ rows by adding rows from the innermost row to the outermost rows. The virtual Poincar\'e polynomial of a minimal weak diagonal partition of $k$ rows has two parts, the contribution of dimension and number of points from the boxes themselves, and the additional factors contributing to the dimension we have to consider.

If we add a row of $\circled{1}$ on top of $\circled{1}$ to construct the $s$th row from the $s+1$th row, then there are two possibilities. 

If the number of boxes go up by $2$, then we add in total $2(k-s+1)$ boxes, for we need to add $2$ more boxes for all rows between the $s$th row and the $0$th row, so there are in total $k-s+1$ rows to be added $2$ boxes. The dimension of the stratum increase by the total number of boxes in all but the outmost row, so the dimension increases by $2 \cdot 2(k-s)$. Then we also need to consider that there is another contribution of $\mathbb{P}^1-2pts$ by the generators of $\circled{1}$ in the $s$th row, which is $T^2-1$. 

If the number of boxes between two consecutive $\circled{1}$ rows are equal, then there are no increase of number of boxes or the dimension of the stratum at the level of the $s$th row. But we have to be careful that the $s$th row and the $s+1$th row must have the same number of boxes. So in order to cancel out the freedom of moving the boxes to have different length, we multiply by $(1-Q^{k-s+1}T^{2(k-s)})^2$.

Adding a row of $\circled{2}$ on top of $\circled{1}$, we must fill the two shaded boxes at the end of row $\circled{1}$ to turn them into solid boxes, and therefore adding two half boxes, and in total $1$ box at the ends of every row between the $s$th row and the $0$th row, adding the factor $Q^{k-s+1}T^{2(k-s)}$.

Adding a row of $\circled{1}$ on top of $\circled{2}$, we must add two shaded boxes at the end of row $\circled{2}$, and therefore adding in total $1$ box at the ends of every row between the $s$th row and the $0$th row, adding the factor $Q^{k-s+1}T^{2(k-s)}$. The row $\circled{1}$ also adds the freedom of $\mathbb{P}^1-2pts$, so there is also a factor of $T^2-1$.

Adding a row of $\circled{2}$ on top of $\circled{2}$, we must add two solid boxes from the $s$th to the $0$th row, so there is a factor of $Q^{2(k-s+1)}T^{4(k-s)}T^2$. There is also an extra factor of $T^2$ due to the rule that the dimension of the stratum increase by $1$ when we have two consecutive $\circled{2}$.

So we have proved the entries of the transition matrix $M_s$.

\textbf{Step 3, the initial vector.}
If we start with a row of type $\circled{1}$, then there are $2$ boxes with dimension contributed by these two boxes, plus an additional factor of $T^2-1$, because it's a row of type $\circled{1}$, so the factor is $Q^{2k}T^{4(k-1)}(T^2-1)$. If we start with a row of type $\circled{2}$, then there is $1$ box with dimension contributed only by this box, so the factor is $Q^kT^{2(k-1)}$.

\textbf{Step 4, the case when $k=1$.}

When we only have $1$ row, then the matrix $M_s$ or the initial vector $V_1$ is not needed. We compute directly that type $\circled{1}$ contributes a summand $Q^2(T^2-1)$ and type $\circled{2}$ contributes a summand $Q$. In total we have 
$$\frac{Q^2T^2-Q^2+Q}{(1-Q)^2}.$$ But we need to also count the case with $0$ boxes, which contributes to the summand $1$.  

Adding them all together, we have $$1+\frac{Q^2T^2-Q^2+Q}{(1-Q)^2}= \frac{Q^2T^2-Q+1}{(1-Q)^2}.$$ 
\end{proof}

We have a Macaulay2 code that computes the virtual Poincar\'e polynomial as both rational functions and power series for any arbitrary positive integer $v$, see the Appendix Section \ref{app}, and we list the first few $v=2,3,4$ as examples.
\begin{example}
\label{ex1.2}
\begin{enumerate}
\item 
\label{1.2}
When $v=2$, the virtual Poincar\'e polynomial is 
\begin{align*}
&\chi(\Hilb^n(\{x^2y^2=0\},0))\\
&=\frac{Q^{6}T^{8}-Q^{5}T^{6}+Q^{4}T^{6}-Q^{4}T^{4}+Q^{3}T^{4}+Q^{3}T^{2}-Q^{2}T^{2}-Q+1}{Q^{6}T^{4}-2\,Q^{5}T^{4}+Q^{4}T^{4}-2\,Q^{4}T^{2}+4\,Q^{3}T^{2}-2\,Q^{2}T^{2}+Q^{2}-2\,Q+1}. 
\end{align*}

\item When $v=3$, the virtual Poincar\'e polynomial is

$\chi(\Hilb^n(\{x^3y^3=0\},0))
=(Q^{12}T^{18}-Q^{11}T^{16}+Q^{10}T^{16}-Q^{10}T^{14}+Q^{9}T^{14}+Q^{8}T^{10}-2\,Q^{7}T^{10}+Q^{6}T^{10}+Q^{7}T^{8}-2\,Q^{6}T^{8}+Q^{5}T^{8}-Q^{6}T^{6}+Q^{4}T^{6}+Q^{4}T^{4}-Q^{3}T^{4}+Q^{3}T^{2}-Q^{2}T^{2}-Q+1)/(Q^{12}T^{12}-2\,Q^{11}T^{12}+Q^{10}T^{12}-2\,Q^{10}T^{10}+4\,Q^{9}T^{10}-2\,Q^{8}T^{10}-2\,Q^{9}T^{8}+5\,Q^{8}T^{8}-4\,Q^{7}T^{8}+Q^{6}T^{8}+4\,Q^{7}T^{6}-8\,Q^{6}T^{6}+4\,Q^{5}T^{6}+Q^{6}T^{4}-4\,Q^{5}T^{4}+5\,Q^{4}T^{4}-2\,Q^{3}T^{4}-2\,Q^{4}T^{2}+4\,Q^{3}T^{2}-2\,Q^{2}T^{2}+Q^{2}-2\,Q+1).$
\medskip

\item When $v=4$, the virtual Poincar\'e polynomial is 

$\chi(\Hilb^n(\{x^4y^4=0\},0)) =(Q^{20}T^{32}-Q^{19}T^{30}+Q^{18}T^{30}-Q^{18}T^{28}+Q^{17}T^{28}-Q^{15}T^{24}+Q^{14}T^{24}+2\,Q^{15}T^{22}-3\,Q^{14}T^{22}+Q^{13}T^{22}-2\,Q^{13}T^{20}+2\,Q^{12}T^{20}-Q^{12}T^{16}+2\,Q^{11}T^{16}-Q^{10}T^{16}-Q^{11}T^{14}+3\,Q^{10}T^{14}-3\,Q^{9}T^{14}+Q^{10}T^{12}+Q^{8}T^{14}+Q^{9}T^{12}-3\,Q^{8}T^{12}+Q^{7}T^{12}-Q^{7}T^{10}+Q^{6}T^{10}-Q^{7}T^{8}+Q^{5}T^{8}-Q^{6}T^{6}+2\,Q^{5}T^{6}-Q^{4}T^{6}+Q^{4}T^{4}-Q^{3}T^{4}+Q^{3}T^{2}-Q^{2}T^{2}-Q+1)/(Q^{20}T^{24}-2\,Q^{19}T^{24}+Q^{18}T^{24}-2\,Q^{18}T^{22}+4\,Q^{17}T^{22}-2\,Q^{16}T^{22}-2\,Q^{17}T^{20}+5\,Q^{16}T^{20}-4\,Q^{15}T^{20}-2\,Q^{16}T^{18}+Q^{14}T^{20}+8\,Q^{15}T^{18}-10\,Q^{14}T^{18}+4\,Q^{13}T^{18}+5\,Q^{14}T^{16}-12\,Q^{13}T^{16}+9\,Q^{12}T^{16}+4\,Q^{13}T^{14}-2\,Q^{11}T^{16}-12\,Q^{12}T^{14}+12\,Q^{11}T^{14}+Q^{12}T^{12}-4\,Q^{10}T^{14}-10\,Q^{11}T^{12}+18\,Q^{10}T^{12}-10\,Q^{9}T^{12}-4\,Q^{10}T^{10}+Q^{8}T^{12}+12\,Q^{9}T^{10}-12\,Q^{8}T^{10}-2\,Q^{9}T^{8}+4\,Q^{7}T^{10}+9\,Q^{8}T^{8}-12\,Q^{7}T^{8}+5\,Q^{6}T^{8}+4\,Q^{7}T^{6}-10\,Q^{6}T^{6}+8\,Q^{5}T^{6}+Q^{6}T^{4}-2\,Q^{4}T^{6}-4\,Q^{5}T^{4}+5\,Q^{4}T^{4}-2\,Q^{3}T^{4}-2\,Q^{4}T^{2}+4\,Q^{3}T^{2}-2\,Q^{2}T^{2}+Q^{2}-2\,Q+1).$
\end{enumerate}
\end{example}

\begin{remark}
The numerators in the examples above are all irreducible and cannot be factored, but the highest degree is always $Q^{v(v+1)}T^{2v^2}$. The denominators are $\prod_{i=1}^{v}(1 - Q^iT^{2(i-1)})^2$.
\end{remark}

\section{Hilbert scheme of points on $\mathbb{C}^2$.}

\begin{lemma}
When $2v \geq n$, we have the isomorphism between the Hilbert scheme of points on a non-reduced nodal singularity, and the Hilbert scheme of points on the plane $\mathbb{C}^2$, 

$$\Hilb^n(\{x^{v}y^v=0\},0) \cong \Hilb^n(\mathbb{C}^2,0).$$
\end{lemma}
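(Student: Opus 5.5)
The plan is to identify both sides as the same set of ideals, by showing that the defining condition $x^vy^v \in I$ is automatic once $\dim \mathbb{C}[\![x,y]\!]/I = n$ and $2v \geq n$. By definition, $\Hilb^n(\{x^vy^v=0\},0)$ is the locus of $\Hilb^n(\mathbb{C}^2,0)$ consisting of ideals containing $x^vy^v$. This locus is closed, and as sets it is a subset of $\Hilb^n(\mathbb{C}^2,0)$. So it suffices to show that every $I \in \Hilb^n(\mathbb{C}^2,0)$ contains $x^vy^v$ when $2v \geq n$. Since the virtual Poincar\'e polynomial only sees the underlying reduced variety (Definition \ref{vpp}), a bijection of closed points that is a closed immersion is enough. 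Scheme-theoretically, the two moduli functors agree as well: for any flat family of ideals of colength $n$ supported at the origin, the same argument applies fiberwise, and one would get containment via Nakayama, or via the power of the maximal ideal, as below.

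The key step is the standard fact that if $I \subset \mathbb{C}[\![x,y]\!]$ has colength $n$ and is supported at $0$, then $\mathfrak{m}^n \subset I$, where $\mathfrak{m}=(x,y)$. To prove it, consider the chain of subspaces $(I+\mathfrak{m}^k)/I$ in $\mathbb{C}[\![x,y]\!]/I$ for $k=0,1,\dots,n$. If two consecutive terms coincide, $I+\mathfrak{m}^k = I+\mathfrak{m}^{k+1}$, then Nakayama's lemma applied to $\mathfrak{m}^k$ modulo $I$ gives $\mathfrak{m}^k \subset I$. Otherwise the dimensions strictly decrease at each step. Since the total dimension is $n$ and the $k=0$ term has dimension $n$, the chain must stabilize at zero by $k=n$. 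Hence $\mathfrak{m}^n \subset I$. The monomial $x^vy^v$ has degree $2v \geq n$, so $x^vy^v \in \mathfrak{m}^{2v} \subset \mathfrak{m}^n \subset I$.

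For families, the same argument over a base $S$ shows that $\mathfrak{m}^n \mathcal{O}_S[\![x,y]\!] \subset \mathcal{I}$. Equivalently, both Hilbert schemes are realized as closed subschemes of the Grassmannian of codimension-$n$ subspaces of $\mathbb{C}[x,y]/\mathfrak{m}^n$. Under this realization the extra equation $x^vy^v \in I$ is identically satisfied, so the two closed subschemes coincide. I expect no real obstacle. The only point needing care is the scheme-theoretic identification, which should follow formally from this Grassmannian description; for the purposes of the paper, the statement on the reduced varieties already suffices.
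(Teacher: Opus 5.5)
Your argument is correct, but it does not follow the paper's route. The paper stays inside its own machinery. It applies the Diagonal valuation to an arbitrary $I\in\Hilb^n(\mathbb{C}^2,0)$ and uses that the resulting weak diagonal partition has exactly $n$ boxes (Lemma \ref{codimlemma}). By the growth rules between consecutive rows, every row except the innermost has at least two boxes, and the innermost has at least one. So for $2v\ge n$ the partition has at most $v$ rows, which is exactly the statement $x^vy^v\in I$.

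You instead use the standard bound $\mathfrak{m}^n\subset I$ for a colength-$n$ ideal of $\mathbb{C}[\![x,y]\!]$, proved via the chain $(I+\mathfrak{m}^k)/I$ and Nakayama, and then note $x^vy^v\in\mathfrak{m}^{2v}\subset\mathfrak{m}^n$. Your route has three advantages:
\begin{itemize}
\item It is shorter, self-contained, and independent of the box count in Lemma \ref{codimlemma} and the rules of Proposition \ref{propij}.
\item It avoids a point the paper glosses over. The Diagonal filtration was built using $x^vy^v\in I$ as its base case, so applying it to an arbitrary $I\in\Hilb^n(\mathbb{C}^2,0)$ really requires rerunning the construction with a longer filtration ending at some $x^Ny^N\in I$.
\item It proves more: every element of order at least $n$ lies in $I$.
\end{itemize}
The paper's version has the advantage of being phrased in terms of its strata. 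It shows directly that weak diagonal partitions of $n$ boxes have at most $\lceil n/2\rceil$ rows, and that is what indexes the subsequent stratification of $\Hilb^n(\mathbb{C}^2,0)$.

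One caution about your scheme-theoretic aside. Suppose families are taken to be ideals $\mathcal{I}\subset\mathcal{O}_S[\![x,y]\!]$ with locally free quotient of rank $n$. Then the claim that ``the same argument over a base $S$'' gives $\mathfrak{m}^n\mathcal{O}_S[\![x,y]\!]\subset\mathcal{I}$ is false, because the dimension count does not survive over a non-reduced base. For example, take $S=\mathrm{Spec}\,\mathbb{C}[\epsilon]/(\epsilon^2)$ and $\mathcal{I}=(x-\epsilon,y^2)$. The quotient is free of rank $2$ with basis $1,y$, and the closed fiber is $(x,y^2)$. But $xy\equiv\epsilon y\neq 0$ modulo $\mathcal{I}$, so this family (with $v=1$, $n=2$) does not contain $xy$. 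Your identification is correct if $\Hilb^n(\mathbb{C}^2,0)$ is given the Grassmannian structure inside $\mathrm{Gr}$ of $\mathbb{C}[x,y]/\mathfrak{m}^n$, and also on underlying reduced varieties. As you note, the reduced statement is all the paper needs.
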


\begin{proof}
The inclusion $\Hilb^n(\{x^{v}y^v=0\},0) \subset \Hilb^n(\mathbb{C}^2,0)$ is immediate, now we show the other inclusion $\Hilb^n(\mathbb{C}^2,0) \subset \Hilb^n(\{x^{v}y^v=0\},0)$ as follows.
For an ideal $I \subset \mathbb{C}[\![x,y]\!]$, if its Diagonal valuation is a weak diagonal partition of $n$ boxes, then $\frac{\mathbb{C}[\![x,y]\!]}{I}$ has dimension $n$, so $I \in \Hilb^n(\mathbb{C}^2,0)$. We want to show that when $2v \geq n$, every ideal $I$ which maps to a weak diagonal filtration of $n$ boxes satisfy that $x^vy^v \in I$.  

The least number of boxes we must have for each layer of the weak diagonal partition is either $1$ box of type $\circled{2}$ for the innermost layer, or $2$ boxes of type $\circled{1}$ for the rest of the layers. So when $2v \geq n$, all possible weak diagonal partitions of $n$ boxes have $v$ rows or less, which implies that $x^vy^v \in I$ for all $I \in \mathbb{C}[\![x,y]\!]$. So when $2v \geq n$, any $I \in \Hilb^n(\mathbb{C}^2,0)$ is also in $\Hilb^n(\{x^{v}y^v=0\},0)$.
\end{proof}

\begin{theorem}

The Hilbert scheme $\Hilb^n(\mathbb{C}^2,0)$ has a stratification where each stratum is indexed by a weak diagonal partition of $n$ boxes, denoted as $\Sigma_{([\circled{s}_k, (i_k,j_k)])}$ where $ 0 \leq k \leq \lceil \frac{n}{2} \rceil -1$, and each stratum is an algebraic torus times an affine space,

$$\Sigma_{([\circled{s}_k, (i_k,j_k)])} \cong (\mathbb{A}-pt)^{m_1} \times \mathbb{A}^{m_2},$$ for some non-negative integers $m_1,m_2$. 

The non-negative integer $m_1$ is equal to the number of type $\circled{1}$ layers plus $1$,  such that if the $\circled{1}$ layer appears consecutively after another $\circled{1}$ layer, the number of boxes of the rows strictly increase. The non-negative integer $m_2$ equals to the total number of boxes from the $1$st to the $v-1$th row in this weak diagonal partition, (the total number of boxes minus the number of boxes in the $0$th row,) plus the number of consecutive type $\circled{2}$ rows, where each two consecutive type $\circled{2}$ rows adds one to $m_2$.
    
\end{theorem}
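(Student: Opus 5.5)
The plan is to reduce the statement to Theorem \ref{thmdiag} in the case $u=v$, using the preceding lemma to identify $\Hilb^n(\mathbb{C}^2,0)$ with $\Hilb^n(\{x^vy^v=0\},0)$ for a suitable $v$.

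First fix $n$ and choose $v=\lceil \frac{n}{2}\rceil$, so that $2v \geq n$. By the lemma just proved, $\Hilb^n(\mathbb{C}^2,0)\cong \Hilb^n(\{x^vy^v=0\},0)$. Every ideal in this space therefore carries the Diagonal filtration $J^k=I\cap(x^ky^k)$ for $k=0,\dots,v$. By Lemma \ref{lemmagen}, Proposition \ref{valuation} and Proposition \ref{propij}, together with Lemma \ref{codimlemma}, each ideal maps to a weak diagonal partition of $n$ boxes. Collecting ideals by their image gives the Diagonal stratification $\Sigma_{([\circled{s}_k,(i_k,j_k)])}$.

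Next I will pin down the range of the row index $k$. Every non-innermost row contributes at least $\Delta_k\geq 2$ boxes, and the innermost row contributes at least $1$. Hence a weak diagonal partition with $r$ nonzero rows has at least $2r-1$ boxes, which forces $r\leq \lceil \frac{n}{2}\rceil$ and so $0\leq k\leq \lceil\frac{n}{2}\rceil-1$. Rows beyond this are generated by the monomial $x^ky^k$ (case (1) of Lemma \ref{lemmagen}) and contribute nothing, consistent with $x^vy^v\in I$.

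Then I apply Theorem \ref{thmdiag} with $u=v$. It gives $\Sigma\cong(\mathbb{A}-pt)^{m_1}\times\mathbb{A}^{m_2}$, with $m_1$ and $m_2$ read off exactly as in the statement. Proposition \ref{familyofideals} supplies the layer-by-layer descriptions:
\begin{enumerate}
\item a factor $\mathbb{P}^1-2pts$ for each independent Head of a generator $E$, meaning a $\circled{1}$ row not of the form $\circled{1}_E\circled{1}$;
\item $\mathbb{A}^{\Delta_k}$ from the Tails;
\item one extra affine line for each consecutive $\circled{2}\circled{2}$ pair.
\end{enumerate}
The stratum is the iterated family of generators built from the innermost layer outward. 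Each layer's family depends only on already-fixed data, and its fibre type is constant on the stratum, so the total space is a product.

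The main obstacle is that the choice of $v$ depends on $n$. The stratification and the integers $m_1,m_2$ must not depend on the auxiliary $v$. I would handle this by checking that if $v'\geq v$, the rows $k\geq v$ are all trivial. Then $\Hilb^n(\{x^{v}y^{v}=0\},0)$ and $\Hilb^n(\{x^{v'}y^{v'}=0\},0)$ have the same strata with the same $(m_1,m_2)$, since the counts in Theorem \ref{thmdiag} only see the nonempty rows. A secondary subtlety is the product claim, namely that the iterated fibration is globally trivial. For this I would point out that the free coefficients (the Diff boxes and the ratio $a/b$) serve as global coordinates, because the dependent Inside coefficients are polynomial in them by the elimination in the proof of Proposition \ref{familyofideals}.
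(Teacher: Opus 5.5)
Your proposal is correct and is essentially the paper's proof: choose $v$ with $2v\ge n$, identify $\Hilb^n(\mathbb{C}^2,0)$ with $\Hilb^n(\{x^vy^v=0\},0)$, and carry over the Diagonal stratification and the torus-times-affine description of Theorem \ref{thmdiag} (case $u=v$); your row bound $2r-1\le n$ and your check that the strata and $(m_1,m_2)$ do not depend on the auxiliary $v$ are details the paper leaves implicit. One small point: you count one $\mathbb{A}-pt$ factor per $\circled{1}$ row not of the form $\circled{1}_E\circled{1}$, which is what the proof of Theorem \ref{thmdiag} actually establishes (for example, the $[\circled{1},(1,1)]$ stratum of $\Hilb^2(\mathbb{C}^2,0)$ is a single copy of $\mathbb{A}-pt$), and this does not match the literal ``plus $1$'' in the statement, which looks like a slip in the statement rather than something you should reproduce.
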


\begin{proof}
Because $\Hilb^n(\{x^{v}y^v=0\},0) \cong \Hilb^n(\mathbb{C}^2,0)$ when $2v \geq n$, we take some arbitrary $v$ large enough that $2v \geq n$, so $\Hilb^n(\{x^{v}y^v=0\},0)$ has the stratification into strata indexed by weak diagonal partitions of $n$ boxes, which exactly becomes the the stratification of $\Hilb^n(\mathbb{C}^2,0)$.
\end{proof}

\begin{theorem}
The punctual Hilbert scheme of points on the plane $\mathbb{C}^2$ $\Hilb^n(\mathbb{C}^2,0)$ has the following virtual Poincar\'e polynomial
\begin{align*}
&\sum_{n=0}^{\infty}\chi(\Hilb^n(\mathbb{C}^2,0))Q^n  = \sum_{k=1}^\infty \frac{F(k)}{\prod_{i=1}^{k}(1 - Q^iT^{2(i-1)})^2}.
\end{align*}

\end{theorem}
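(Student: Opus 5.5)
The plan is to deduce the formula from Theorem \ref{thmv} together with the isomorphism $\Hilb^n(\{x^vy^v=0\},0)\cong\Hilb^n(\mathbb{C}^2,0)$ for $2v\geq n$ proved in the preceding lemma, by a stabilization argument in $v$. Fix $n$. For any $v$ with $2v\geq n$, the coefficient of $Q^n$ in $\sum_{m}\chi(\Hilb^m(\mathbb{C}^2,0))Q^m$ equals the coefficient of $Q^n$ in $\sum_m\chi(\Hilb^m(\{x^vy^v=0\},0))Q^m$. Here we use that virtual Poincar\'e polynomials depend only on the underlying reduced variety up to isomorphism. By Theorem \ref{thmv}, this coefficient is the $Q^n$-coefficient of $\sum_{k=1}^{v}F(k)/\prod_{i=1}^k(1-Q^iT^{2(i-1)})^2$.

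It then remains to show two things. First, the terms with $k>v$ do not contribute to the coefficient of $Q^n$. Second, the summands $F(k)$ do not themselves depend on $v$. The second point is visible from the statement of Theorem \ref{thmv}: $V_1$ and $M_s$ are expressed only in terms of $k$ and $s$, never $v$. Conceptually, $F(k)$ is the generating function of strata indexed by minimal weak diagonal partitions with exactly $k$ rows, and $v$ only enters as the cap $k\leq v$.

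For the first point, I would bound the $Q$-adic order of the $k$-th summand. The denominator is a power series with constant term $1$, so it suffices to show that $F(k)$ is divisible by $Q^{k}$, or better by $Q^{2k-1}$, for $k\geq2$. Combinatorially, a weak diagonal partition with exactly $k$ rows has at least $2k-1$ boxes: the innermost row has at least one box, and each further row has at least two, by the minimal shapes and the rules of Proposition \ref{prop=}. This is the same count used in the proof of the preceding lemma. Algebraically, this can be checked from the recursion. Every entry of $V_1$ carries a factor $Q^{k}$ or higher, and every entry of $M_s$ except the term $(1-Q^{k-s+1}T^{2(k-s)})^2$ carries a positive power of $Q$. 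So rather than tracking the matrix, I would argue that every term of the expanded $F(k)$ corresponds to a stratum of $\Hilb^m(\mathbb{C}^2,0)$ indexed by a $k$-row partition, and hence has $m\geq 2k-1$. The multiplying factors $(1-\cdots)^2$ only cancel freedoms in the numerator and never lower the minimal box count. Consequently, for $k>v\geq n/2$ we get $2k-1\geq n+1$, and the $k$-th summand contributes nothing to the coefficient of $Q^n$.

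Combining the two points, for every $n$ the coefficients of $Q^n$ on both sides agree, with the infinite sum converging $Q$-adically, and the theorem follows. The main obstacle is making the order bound $\operatorname{ord}_Q F(k)/\prod_{i}(\cdots)^2\geq 2k-1$ rigorous. The danger is the $\circled{1}_E\circled{1}$ correction factors $(1-Q^{k-s+1}T^{2(k-s)})^2$, which introduce signed terms. My first attempt would be to argue that the full quotient for fixed $k$ is, by Step 1 of the proof of Theorem \ref{thmv}, a genuine generating function over weak diagonal partitions with exactly $k$ rows. Its lowest $Q$-degree is then the minimal number of boxes, which is at least $2k-1$, and no cancellation at lower order is possible since no partitions exist there.
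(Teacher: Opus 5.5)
Your proposal is correct and is essentially the paper's own proof. The paper simply sums the stratum contributions over weak diagonal partitions with any number of rows $k$, relying on the $x^vy^v$ stratification with $2v\geq n$; you make the $Q$-adic stabilization explicit. The obstacle you flag is not a real one. The $Q$-adic order of a sum is at least the minimum of the orders, so the signed factors $(1-Q^{k-s+1}T^{2(k-s)})^2$ cannot lower it. Moreover, $Q^k\mid F(k)$, which is immediate from $V_1$ and the polynomial entries of $M_s$, already suffices if you take $v\geq n$ instead of the minimal $v$ with $2v\geq n$.
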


\begin{proof}
Take the sum of the virtual Poincar\'e polynomials of of all possible weak diagonal partitions of all $n$. This is exactly equivalent to considering all possible weak diagonal partitions of all numbers of rows $k$, where $k$ goes from $1$ to infinity. 
\end{proof}

\begin{corollary}
\label{durfee}
We have the combinatorial identity coming from the virtual Poincar\'e polynomial of $\Hilb^n(\mathbb{C}^2,0)$. 

\begin{align}
\label{eqpar}
\sum_{k=1}^\infty \frac{F(k)}{\prod_{i=1}^{k}(1 - Q^iT^{2(i-1)})^2}= \frac{1}{\prod_{i=1}^{\infty}(1 - Q^iT^{2(i-1)})}.
\end{align}
\end{corollary}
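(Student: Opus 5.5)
The identity will follow by computing the virtual Poincar\'e polynomial of $\Hilb^n(\mathbb{C}^2,0)$ in two independent ways and equating them coefficient by coefficient in $Q$. The left-hand side is exactly the expression in the preceding theorem, obtained from the Diagonal stratification. Each stratum is $(\mathbb{A}-pt)^{m_1}\times\mathbb{A}^{m_2}$, and additivity of $\chi$ (Definition \ref{vpp}) reduces the sum over strata to the sum over minimal weak diagonal partitions with $k$ layers, as in Theorem \ref{thmv}, now with $k$ unbounded. This side therefore needs no new work. The one point to check is that the sum over $k$ converges $Q$-adically. It does, because a minimal weak diagonal partition with $k$ layers has at least $2(k-1)+1$ boxes, so $F(k)$ is divisible by $Q^{2k-1}$.

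For the right-hand side I will compute $\chi(\Hilb^n(\mathbb{C}^2,0))$ from a different stratification. I will use the Vertical filtration of Definition \ref{filtration_V}, with $u$ and $v$ taken large enough that $x^uy^v$ imposes no condition (for instance $u,v\ge n$). By the Vertical stratification lemma and Proposition \ref{familyofideals}(1), each stratum is indexed by a partition $(i_0\ge i_1\ge\cdots)$ of $n$ and is an affine space. Its dimension is $\sum_{k\ge1}\Delta_k=\sum_{k\ge1}i_k$, the number of boxes outside the bottom row. This matches Example \ref{egy^k} with $k\to\infty$, and it is the classical Ellingsrud--Str\o mme count. Reading the partition by columns, a column of height $i$ contributes $Q^iT^{2(i-1)}$. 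Summing over all partitions then gives $\prod_{i\ge1}(1-Q^iT^{2(i-1)})^{-1}$.

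Since $\chi$ of a variety is well defined, independent of the stratification chosen, the two expressions agree for every $n$, which proves the identity. As a consistency check, setting $T=1$ should reduce the left side to a Durfee-square style decomposition of partitions, and the right side to $\prod_{i\ge1}(1-Q^i)^{-1}$.

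The main obstacle is not in this deduction. It lies in trusting the inputs: that each Diagonal stratum really has class $(T^2-1)^{m_1}T^{2m_2}$ with the stated $m_1,m_2$, and that the transfer-matrix bookkeeping in Theorem \ref{thmv} counts each weak diagonal partition exactly once. The $\circled{1}_E\circled{1}$ correction factor is the delicate part. If I wanted a self-contained check, I would verify the identity directly for small $n$ ($n\le4$) by expanding $F(1)$, $F(2)$ and $F(3)$. For example, the coefficient of $Q^2$ on the left should be $1+T^2$, matching $\Hilb^2(\mathbb{C}^2,0)\cong\mathbb{P}^1$.
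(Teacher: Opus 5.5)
Your proposal is correct and follows essentially the paper's own (implicit) argument: the left side is $\sum_n\chi(\Hilb^n(\mathbb{C}^2,0))Q^n$ as computed from the Diagonal stratification in the preceding theorem, and the right side is the classical Ellingsrud--Str\o mme/Bialynicki-Birula count, which you rederive from the Vertical stratification that the paper identifies with the Bialynicki-Birula one. One harmless slip: $F(1)$ contains the constant term $1$ from the empty partition, so $Q^{2k-1}\mid F(k)$ holds only for $k\ge 2$, which is all the $Q$-adic convergence needs.
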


\begin{remark}
Setting $T=1$ in the  the above Equation (\ref{eqpar}), we recover the following classical identity of the generating function of integer partitions, where the left hand side is computed using the Durfee square method.

\begin{align}
\label{durfeeclassical}
1+\sum_{k=1}^\infty \frac{Q^{k^2}}{\prod_{i=1}^{k}(1 - Q^i))^2}=\frac{1}{\prod_{i=1}^{\infty}(1 - Q^i)}.
\end{align}
We briefly show the computation of setting $T=1$ as follows. 

The initial vector becomes $$V_1=
\begin{bmatrix}
0 \\ Q^k
\end{bmatrix}.$$

For $s=2, ..., k$, the adjacency matrix becomes $$M_s =
\begin{bmatrix}
(1-Q^{k-s+1})^2 & 0  \\ 
Q^{k-s+1} & Q^{2(k-s+1)}
\end{bmatrix}.$$
So the vectors $V_k$ for $s=2, ..., k$ becomes $$V_k= \begin{bmatrix}
0 \\ Q^k \prod_{s=2}^k Q^{2(k-s+1)}
\end{bmatrix}
= \begin{bmatrix}
0 \\ Q^{k^2} 
\end{bmatrix}.$$

And the polynomial $F(k)$ is therefore $$
F(k)=\begin{cases}
Q^2-Q+1, & \text{if } k=1, \\

Q^{k^2} , & \text{if } k =2,...,v,
\end{cases}
$$
so we indeed have $$\frac{Q^2-Q+1}{(1 - Q))^2}= 1+ \frac{Q^{1}}{(1 - Q))^2}$$ as the initial cases for $k=1$, which matches the left hand side, the Durfee square formula.

This should not be surprising, because by setting $T=1$, we get rid of all the type $\circled{1}$ rows, and only construct the weak diagonal partition with type $\circled{2}$ rows, which is exactly constructing the regular partitions, so we get back the formula for the regular partitions. In this sense, the identity in Corollary \ref{durfee} is a deformation of the classical Durfee square decomposition of the generating function of integer partitions, and the factor $F(k)$ is a deformation of the term $Q^{k^2}$. Combinatorially, the minimal weak diagonal partitions are analogs of the Durfee squares, serving as the "cores" of the weak diagonal partitions.
\end{remark}

\section{Poincar\'e polynomial of $\Hilb^n(\{x^{v-1}y^v=0\},0)$ and $\Hilb^n(\{x^{v-2}y^v=0\},0)$}
\begin{theorem}
Define 
$$
F_{v-1}(k)=\begin{cases}
F(k)
 & \text{if } k \leq v-1, \\

\mathbf{1}^T \cdot (\prod_{s=2}^{k} M_s \cdot W_1),  & \text{if } k = v,
\end{cases}
$$

$$
F_{v-2}(k)=\begin{cases}
F(k)
 & \text{if } k \leq v-2, \\

\mathbf{1}^T \cdot (\prod_{s=3}^{k} M_s \cdot D_1),  & \text{if } k = v-1,\\

\mathbf{1}^T \cdot (\prod_{s=2}^{k} M_s \cdot D_2),  & \text{if } k = v.
\end{cases}
$$

And the vectors $$W_1=
\begin{bmatrix}
0 \\ Q^kT^{k-1}(1-Q^kT^{2(k-1)})
\end{bmatrix}.$$

$$D_1=
\begin{bmatrix}
Q^{2(k-1)}T^{4(k-2)}(T^2-1)(1-Q^{k-1}T^{2(k-2)}) \\
(Q^{k-1}T^{2(k-2)}+Q^{2(k-1)}T^{4(k-2)})(1-Q^{k-1}T^{2(k-2)}) 
\end{bmatrix}, $$

$$D_2=
\begin{bmatrix}
Q^{k+2(k-1)}T^{2(k-1+2(k-2))}(T^2-1)(1-Q^kT^{2(k-1)})(1-Q^{k-1}T^{2(k-2)}) \\ Q^{k+2(k-1)}T^{2(k-1+2(k-2))}(1-Q^kT^{2(k-1)})(1-Q^{k-1}T^{2(k-2)})
\end{bmatrix}.$$

Then the virtual Poincar\'e polynomial of $\Hilb^n(\{x^{v-1}y^v=0\},0)$ is given as 
\begin{align*}
\sum_{n=0}^{\infty}\chi(\Hilb^n(\{x^{v-1}y^v=0\},0))Q^n =\sum_{k=1}^v \frac{F_{v-1}(k)}{\prod_{i=1}^{k}(1 - Q^iT^{2(i-1)})^2}.
\end{align*}

And the virtual Poincar\'e polynomial of $\Hilb^n(\{x^{v-2}y^v=0\},0)$ is \begin{align*}
\sum_{n=0}^{\infty}\chi(\Hilb^n(\{x^{v-2}y^v=0\},0))Q^n = \sum_{k=1}^v \frac{F_{v-2}(k)}{\prod_{i=1}^{k}(1 - Q^iT^{2(i-1)})^2}.
\end{align*}

\end{theorem}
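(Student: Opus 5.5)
The plan is to reuse the proof of Theorem \ref{thmv} essentially verbatim, changing only the rows that are affected by the stronger membership conditions $x^{v-1}y^v\in I$ and $x^{v-2}y^v\in I$. By Theorem \ref{thmdiag}, every diagonal stratum of $\Hilb^n(\{x^{v}y^v=0\},0)$ is either entirely contained in, or disjoint from, $\Hilb^n(\{x^{u}y^v=0\},0)$ for $u=v-1,v-2$. The rules for $m_1,m_2$ are the same in both cases. So the generating function is again obtained by summing over minimal weak diagonal partitions with $k$ rows, weighting each by its transfer-matrix contribution, and then adding two ordinary partitions with at most $k$ rows on the two arms. This last step produces the common denominator $\prod_{i=1}^k(1-Q^iT^{2(i-1)})^2$. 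The only task is to identify which weak diagonal partitions survive and to compute their modified initial vectors.

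\textbf{Case $u=v-1$.} When $k\le v-1$ rows, we have $x^{v-1}y^{v-1}\in I$, so the constraint is automatic and $F_{v-1}(k)=F(k)$. When $k=v$, the proof of Theorem \ref{thmdiag}(2) shows that the innermost row must be $[\circled{2},(i_{v-1},1)]$, i.e.\ of type $\circled{2}$ with vertical arm of length exactly one. In the initial vector this forces the $\circled{1}$ entry to $0$. It also removes the freedom of the vertical arm's added ordinary partition in its innermost ($k$-th) row. I will account for this by multiplying the $\circled{2}$ entry $Q^kT^{2(k-1)}$ by $(1-Q^kT^{2(k-1)})$, which cancels the corresponding denominator factor, exactly as the $\circled{1}_E\circled{1}$ correction does in Step 1 of Theorem \ref{thmv}. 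The outer rows then follow the unconstrained transfer matrices $M_s$, $s=2,\dots,k$, which gives $W_1$.

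\textbf{Case $u=v-2$.} Here $F_{v-2}(k)=F(k)$ for $k\le v-2$. For $k=v-1$ rows, the requirement $x^{v-2}y^v\in I$ forces the innermost row (row $v-2$) to have vertical arm of length at most $2$. I will build the first two rows together, split into cases by the type of the innermost row:
\begin{itemize}
\item[] type $\circled{1}$ with $j=1$: the weight is $Q^{2(k-1)}T^{4(k-2)}(T^2-1)$;
\item[] type $\circled{2}$ with $j=1$ or $j=2$: the weight is $Q^{k-1}T^{2(k-2)}+Q^{2(k-1)}T^{4(k-2)}$.
\end{itemize}
In each case I multiply by $(1-Q^{k-1}T^{2(k-2)})$ to forbid enlarging that vertical arm, which produces $D_1$, and then I apply $M_s$ for $s\ge3$. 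For $k=v$ rows, I combine the two forced rows described in Theorem \ref{thmdiag}(3): row $v-1$ is $[\circled{2},(i,1)]$, and row $v-2$ is either $[\circled{2},(\cdot,2)]$ or $[\circled{1},(\cdot,1)]$ with strict increase in $i$. I then compute the weight of this two-row core together with the two cancellation factors for the frozen vertical arms, which gives $D_2$, followed by $M_s$ for $s\ge2$.

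The main obstacle is bookkeeping at the core. Three things need care there: the half-box counting of shaded boxes when a $\circled{1}$ sits over a $\circled{2}$, the extra $T^2$ from consecutive $\circled{2}$ rows, and the fact that the $s$-indexing of the products (starting at $s=2$ versus $s=3$) must match how many rows are absorbed into the initial vector. I would verify $D_1$ and $D_2$ by directly enumerating the minimal cores for small $v$ (say $v=3,4$) and comparing against a brute-force count of strata, for example with the Macaulay2 code of the Appendix.
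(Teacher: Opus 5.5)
Your overall plan is the paper's. You keep $F(k)$ when $x^{u}y^{u}\in I$; otherwise you freeze the forced vertical arms with the factors $(1-Q^{k}T^{2(k-1)})$ and $(1-Q^{k-1}T^{2(k-2)})$, and run a modified initial vector through the $M_s$. Your $W_1$ is right: it uses $Q^kT^{2(k-1)}$, as the appendix code does, and the $T^{k-1}$ in the statement is a misprint. Your $D_1$ is also right, and for those cores the membership condition really is only a restriction on shapes.

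\textbf{The gap is in $D_2$.} It comes from the premise you take from Theorem \ref{thmdiag}(3): that each stratum of $\Hilb^n(\{x^vy^v=0\},0)$ lies entirely inside or entirely outside $\Hilb^n(\{x^{v-2}y^v=0\},0)$, with $m_1,m_2$ unchanged. This fails for the core in which row $v-1$ is $[\circled{2},(w,1)]$ and row $v-2$ is $[\circled{2},(l,2)]$ with $l\ge w+1$.
\begin{enumerate}
\item Reduce the Tail of $E_2$ modulo $J^{v-1}=(x^{v-1+w}y^{v-1},x^{v-1}y^{v})$ and impose $xE_2\in J^{v-1}$. This leaves $E_2=x^{v-2}y^{v}+c\,x^{v-2+w}y^{v-1}$ with $c$ free. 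This $c$ is the single parameter of $E_2$ in Proposition \ref{familyofideals}(3)(b), i.e. the source of the extra dimension for consecutive $\circled{2}$ rows.
\item Since $x^{v-2+w}y^{v-1}=x^{v-1}y^{v-1}x^{w-1}\notin I$, the requirement $x^{v-2}y^{v}\in I$ forces $c=0$. So the stratum is cut by a hyperplane rather than kept or discarded whole.
\item Consequently the $\circled{2}$-entry of $D_2$ is $Q^{k+2(k-1)}T^{2(k-1)+4(k-2)}$ times the two freezing factors, without the consecutive-$\circled{2}$ factor $T^2$.
\end{enumerate}
This is exactly the step in the paper's proof (``the Tail part has $1$ free variable, the dimension is reduced by $1$'').

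You do list this $T^2$ as a point needing care, but nothing in your plan removes it, and under your stated premise it stays. The result is then wrong. Take $v=3$, where $x^{v-2}y^{v}=xy^3$ and the paper's theorem on $\Hilb^n(\{xy^v=0\},0)$ gives an independent check. Keeping the $T^2$ puts a $T^{12}$ term into the coefficient of $Q^8$, although all strata of $\Hilb^8(\{xy^3=0\},0)$ have dimension at most $5$. With $c=0$ imposed, both computations give $1+T^2+2T^4+7T^6+6T^8+4T^{10}$.

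\textbf{Two smaller points.}
\begin{itemize}
\item $D_2$ already absorbs rows $v-1$ and $v-2$, so only rows $v-3,\dots,0$ remain. It must therefore be followed by $M_s$ for $s=3,\dots,v$, as in the appendix code, not $s\ge 2$ as you wrote following the statement; otherwise you build $v+1$ rows.
\item Validating $D_1$ and $D_2$ against the Macaulay2 code of the Appendix is circular, since that code implements the formulas being proved. A comparison like the $v=3$ one above is the meaningful test.
\end{itemize}
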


\begin{proof}
\begin{enumerate}
\item 
\textbf{Virtual Poincar\'e polynomial of } $\mathbf{\Hilb^n(\{x^{v-1}y^v=0\},0)}.$
For the Hilbert scheme $\Hilb^n(\{x^{v-1}y^v=0\},0)$, we require that the monomial $x^{v-1}y^{v}$ is contained in any ideal $I \in \Hilb^n(\{x^{v-1}y^v=0\},0)$. So for the innermost layer of the Diagonal filtration, $J^{v-1}$, either contains the monomial $x^{v-1}y^{v-1}$ or not. If $J^{v-1}$ contains the monomial $x^{v-1}y^{v-1}$, then the situation reduces to finding virtual Poincar\'e polynomial of all the strata of $\Hilb^n(\{x^{v-1}y^{v-1}=0\},0)$, and the number of rows of the weak diagonal partition is $v-1$ or less, which we already know what to do from Theorem \ref{thmv}, and the virtual Poincar\'e polynomial of the strata of the corresponding minimal diagonal partitions are given as $F(k)$ for $k \leq v-1$.

When we require that $x^{v-1}y^{v-1} \notin J^{v-1}$, then because $J^{v-1}$ contains $x^{v-1}y^{v}$, the Diagonal valuation $v_{(v-1,v-1)}$ of $J^{v-1}$ must contain the point $(\infty,1)$, so the valuation must be of type $[\circled{2},(w,1)]$ for some $w \geq 1$, with one of the generator being $x^{v-1}y^{v}$, and the other generator $x^{v-1+w}y^{v-1}$.

In this case, when we have $k=v$ rows in total, the initial vector, denoted by $W_1$, is given as $$W_1=
\begin{bmatrix}
0 \\ Q^kT^{k-1}(1-Q^kT^{2(k-1)})
\end{bmatrix},$$ 
because we start with the minimal type $\circled{2}$ row with one box, which contributes to the factor $Q^kT^{k-1}$, and note that because $x^{v-1}y^{v}$ is a fixed generator of the $v-1$th row, it cannot move, so we must multiply by a factor of $(1-Q^kT^{2(k-1)})$ to prevent the freedom of moving it. 
\medskip

\item
\textbf{Virtual Poincar\'e polynomial of } $\mathbf{\Hilb^n(\{x^{v-2}y^v=0\},0)}.$

For the Hilbert scheme $\Hilb^n(\{x^{v-2}y^v=0\},0)$, we require that the monomial $x^{v-2}y^{v}$ is contained in any ideal $I \in \Hilb^n(\{x^{v-2}y^v=0\},0)$, so the monomial $x^{v-1}y^{v}$ is also contained in any ideal $I$. 

Again, for the $v-2$th layer of the Diagonal filtration, $J^{v-2}$, either it contains the monomial $x^{v-2}y^{v-2}$ or not. If $J^{v-2}$ contains the monomial $x^{v-2}y^{v-2}$, then the situation reduces to finding virtual Poincar\'e polynomial of all the strata of $\Hilb^n(\{x^{v-2}y^{v-2}=0\},0)$, and the number of rows of the weak diagonal partition is $v-2$ or less, which we already know what to do from Theorem \ref{thmv}, and the virtual Poincar\'e polynomial of the strata of the corresponding minimal diagonal partitions are given as $F(k)$ for $k \leq v-2$.

When we require that $x^{v-2}y^{v-2} \notin J^{v-2}$, then because $J^{v-2}$ contains $x^{v-2}y^{v}$, the Diagonal valuation $v_{(v-2,v-2)}$ of $J^{v-2}$ must contain the point $(\infty,2)$, so this puts constrains on the valuation we can have. If the valuation is type $[\circled{2},(l_1, l_2)]$, then $l_2 \leq 2$. If the valuation is type $[\circled{1},(d_1, d_2)]$, then $d_2 = 1$.

Additionally, same as the case before, $x^{v-1}y^{v} \in I$ implies that either $x^{v-1}y^{v-1} \in I$, so that the weak diagonal partitions only have $v-1$ rows in total, or that the layer $J^{v-1}$ is $[\circled{2},(w, 1)]$ for $w \geq 1$.

Using the rules of Proposition \ref{prop=}, or equivalently, the definition of a weak diagonal partition, we classify all possible minimal weak diagonal partitions:

\begin{enumerate}
\item Suppose $x^{v-1}y^{v-1} \in I$.
\begin{enumerate}
\item When $v_{(v-2,v-2)}(J^{v-2})=[\circled{2},(l_1, 1)]$ for $l_1 \geq 1$, which minimizes to $[\circled{2},(1, 1)]$. Because the $v-1$th layer does not have any box, so the generators of type $\circled{2}$ of the $v-2$th layer do not have the Tail part, so $x^{v-2}y^{v-1} \in I$, and therefore $x^{v-2}y^{v} \in I$ for all ideals $I$ in this stratum.

We cannot shift the generator on the vertical arm freely, so we need to multiply by a factor $1-Q^{k-1}T^{2(k-2)}$.

The initial vector is $Q^{k-1}T^{2(k-2)}(1-Q^{k-1}T^{2(k-2)})$.
    
\item When $v_{(v-2,v-2)}(J^{v-2})=[\circled{2},(l_1, 2)]$ for $l_1 \geq 1$, which minimizes to $[\circled{2},(1, 2)]$. Because the $v-1$th layer does not have any box, so the generators of type $\circled{2}$ of the $v-2$th layer do not have the Tail part, so $x^{v-2}y^{v} \in I$ for all ideals $I$ in this stratum.

We cannot shift the generator on the vertical arm freely, so we need to multiply by a factor $1-Q^{k-1}T^{2(k-2)}$.

The initial vector is $Q^{2(k-1)}T^{4(k-2)}(1-Q^{k-1}T^{2(k-2)})$.

\item When $v_{(v-2,v-2)}(J^{v-2})=[\circled{1},(l_1, 1)]$ for $l_1 \geq 1$, which minimizes to $[\circled{1},(1, 1)]$. Because the $v-1$th layer does not have any box, so the generators of type $\circled{1}$ of the $v-2$th layer do not have the Tail part, so $x^{v-2}y^{v-2}(ax^{l_1}+by) \in I$, and therefore $y \cdot x^{v-2}y^{v-2}(ax^{l_1}+by) = ax^{v-2+l_1}y^{v-1} +bx^{v-2}y^{v-2+2}  \in I,$
and we know that $ax^{v-2+l_1}y^{v-1} \in I$ because $x^{v-1}y^{v-1} \in I$. So $x^{v-2}y^{v} \in I$ for all ideals $I$ in this stratum.

We cannot shift the generator on the vertical arm freely, so we need to multiply by a factor $1-Q^{k-1}T^{2(k-2)}$.

The initial vector is $Q^{2(k-1)}T^{4(k-2)}(T^2-1)(1-Q^{k-1}T^{2(k-2)})$.

\end{enumerate}
\item Suppose $x^{v-1}y^{v-1} \notin I$.
\begin{enumerate}
\item When $v_{(v-1,v-1)}(J^{v-1})=[\circled{2},(w, 1)]$ and $v_{(v-2,v-2)}(J^{v-2})=[\circled{2},(l_1, 2)]$ such that $w \geq 1$ and $l_1 \geq w+1$ which minimize to $ [\circled{2},(1, 1)][\circled{2},(2, 2)]$. Because we require that $x^{v-2}y^v \in I$, the Tail part of the generator $x^{v-2}y^{v}+...$ should be set to $0$, and because the Tail part has $1$ free variable, the dimension is reduced by $1$.

We cannot shift the generators for both the $v-1$th and $v-2$th layers on the vertical arm freely, so we need to multiply by a factor $(1-Q^kT^{2(k-1)})(1-Q^{k-1}T^{2(k-2)})$.

So the initial vector is $Q^{k+2(k-1)}T^{2(k-1+2(k-2))}(1-Q^kT^{2(k-1)})(1-Q^{k-1}T^{2(k-2)})$.
    
\item When $v_{(v-1,v-1)}(J^{v-1})=[\circled{2},(w, 1)]$ and $v_{(v-2,v-2)}(J^{v-2})=[\circled{1},(l_1, 1)]$ such that $w \geq 1$ and $l_1 \geq w$. 

Because we require that $x^{v-2}y^v \in I$, we must have $l_1 \geq w+1$, and free variables of the generator of $J^{v-2}$ remains as usual. 

This minimizes to 
$[\circled{2},(1, 1)], [\circled{1},(1, 2)]$.

We cannot shift the generators for both the $v-1$th and $v-2$th layers on the vertical arm freely, so we need to multiply by a factor $(1-Q^kT^{2(k-1)})(1-Q^{k-1}T^{2(k-2)})$.

The initial vector is $Q^{k+2(k-1)}T^{2(k-1+2(k-2))}(T^2-1)$.
\end{enumerate}

\end{enumerate}

\end{enumerate}
\end{proof}

\section{Appendix}
\label{app}
Here is a Macaulay 2 computer program that computes the virtual Poincar\'e polynomial of $\Hilb^n(\{x^uy^v=0\},0)$ for $u=1,2,v-2,v-1,v$ and any arbitrary $v$ such that $u, v \geq 1$. Here are the things you can compute with it, with examples:

\begin{enumerate}
\item The function fracForCurve(v) computes the virtual Poincar\'e polynomial of $\Hilb^n(\{x^vy^v=0\},0)$ as a rational function, where the input, the integer $v$, is the multiplicity of the curve $x^vy^v=0$.

For example, try the following command for $x^2y^2=0$, you would get the polynomial in Example \ref{ex1.2} (1):
\begin{verbatim}
fracForCurve(2)
\end{verbatim}

Similarly, the functions fracForCurveVMinus1(v) and fracForCurveVMinus1fracForCurveVMinus2(v) compute the virtual Poincar\'e polynomial of $\Hilb^n(\{x^{v-1}y^v=0\},0)$ and $\Hilb^n(\{x^{v-2}y^v=0\},0)$ as rational functions, where the input, the integer $v$, is the multiplicity of the curve $x^{v-1}y^v=0$ or curve $x^{v-2}y^v=0$ correspondingly.

\item The function seriesForCurve(v,m) computes the virtual Poincar\'e polynomial of $\Hilb^n(\{x^vy^v=0\},0)$ as a power series, where the first entry $v$ is the multiplicity of the curve $x^vy^v=0$, and the second entry $m$ is the maximum degree of the power series where we cut off. (This number $m$ intuitively cuts the power series around, but not exactly at, the number $n$, the total number of points $n$ of the Hilbert scheme $\Hilb^n(\{x^vy^v=0\},0)$, because $n$ is the degree of the variable $Q$, and $m$ is the total degree of both the variable $Q$ and $T$.)

For example, try the following command for $x^3y^3=0$,
\begin{verbatim}
seriesForCurve(3,10)
\end{verbatim}
you would get, up to degree $10$,

$\texttt{FormalSeries} \{Q^{10}+Q^{8}T^{2}+2\,Q^{6}T^{4}+Q^{4}T^{6}+Q^{9}+Q^{7}T^{2}+2\,Q^{5}T^{4}+Q^{8}+Q^{6}T^{2}+2\,Q^{4}T^{4}+Q^{7}+Q^{5}T^{2}+Q^{3}T^{4}+Q^{6}+Q^{4}T^{2}+Q^{5}+Q^{3}T^{2}+Q^{4}+Q^{2}T^{2}+Q^{3}+Q^{2}+Q+1,\,10\}$ 

\medskip

\item The function punctualHilb(r,m) is built for a sanity check. It computes the Poincar\'e polynomial of the Hilbert scheme of $n$ points on the complex plane $\mathbb{C}^2$, $\Hilb^n(\mathbb{C}^2,0)$, as a power series.

Because for $\Hilb^n(\{x^vy^v=0\},0)$, when $v$ is large enough comparing to $n$, say $2v \geq n$, then  
$$\Hilb^n(\{x^vy^v=0\},0) \cong \Hilb^n(\mathbb{C}^2,0).$$

The first input $r$ is the maximum number of rows of the partition that indexes the Bialynicki-Birula cells of $\Hilb^n(\mathbb{C}^2,0)$, and the second entry $m$ is the degree of the power series where we cut off.

For example, try the following command for $\Hilb^n(\{x^{10}y^{10}=0\},0)$
and $\Hilb^n(\mathbb{C}^2,0)$ for approximately $n=20$.
\begin{verbatim}
seriesForCurve(10,20)-punctualHilb(20,20)
\end{verbatim}
We get the output: 

$\texttt{FormalSeries}\left\{0,\,20\right\}$. 

This means that the difference between the two power series is $0$, the two Poincar\'e series of the two Hilbert schemes indeed match.

\end{enumerate}

\begin{verbatim}
loadPackage "FormalGroupLaws";

R = QQ[Q,T];

--- define matrix M_s 
Ms= (k,s)-> (
    M= matrix{{Q^(2*(k-s+1))*T^(4*(k-s))*(T^2-1)+(1-Q^(k-s+1)*T^(2*(k-s)))^2,
    Q^(k-s+1)*T^(2*(k-s))*(T^2-1)},
        {Q^(k-s+1)*T^(2*(k-s)), 
        Q^(2*(k-s+1))*T^(4*(k-s))*T^2}};
    return M;
    );
 


-- compute Vk
Vk = (k) -> (
    V =matrix{{Q^(2*k)*T^(4*(k-1))*(T^2-1)}, {Q^k*T^(2*(k-1))}};
    for s from 2 to k do (
        V = Ms(k,s) * V;
        );
    return V;
    );

-- compute the sum of the Poincar\'e series up to r rows
fracForCurve=(r)->(
    seriesSum := (Q^2*T^2 - Q + 1)/(1-Q)^2;
    if r > 1 then(
       for k from 2 to r do (
           V = Vk(k);
           denom = product(for j from 1 to k list (1 - Q^j*T^(2*(j-1)))^2);
           term = (V_(0,0) + V_(1,0))/ denom;
           seriesSum = seriesSum + term;
           );
    );
    return seriesSum;
    );

seriesForCurve=(r,m)->(
    D= denominator(fracForCurve(r));
    N= numerator(fracForCurve(r));
    ANS1=truncate(series(N,m)*inverse(series(D,m)),m);
    return ANS1;
    );

punctualHilb=(r,m)->(
    P=1;
    for i from 1 to r do (
        P = P / (1-Q^i*T^(2*(i-1))); 
        );
    ANS2=truncate(series(numerator(P),m)*inverse(series(denominator(P),m)),m);
    return ANS2;
    );


-- compute W1
W1 = (k) -> (
    V =matrix{{0}, {Q^k*T^(2*(k-1))*(1-Q^k*T^(2*(k-1)))}};
    for s from 2 to k do (
        V = Ms(k,s) * V;
        );
    return V;
    );

--compute the generating function for curve x^{v-1}y^v=0
fracForCurveVMinus1=(r)->(
    seriesSum := (Q^2*T^2 - Q + 1)/(1-Q)^2;
    if r > 2 then(
       for k from 2 to r-1 do (
           V = Vk(k);
           denom = product(for j from 1 to k list (1 - Q^j*T^(2*(j-1)))^2);
           term = (V_(0,0) + V_(1,0))/ denom;
           seriesSum = seriesSum + term;
           );
       );
   V=W1(r);
   denom = product(for j from 1 to r list (1 - Q^j*T^(2*(j-1)))^2);
   term = (V_(0,0) + V_(1,0))/ denom;
   seriesSum = seriesSum + term;
   return seriesSum;
   );

-- compute D1
D1 = (k) -> (
    V =matrix{{Q^(2*(k-1))*T^(4*(k-2))*(T^2-1)*(1-Q^(k-1)*T^(2*(k-2)))}, 
    {(Q^(k-1)*T^(2*(k-2))+Q^(2*(k-1))*T^(4*(k-2)))*(1-Q^(k-1)*T^(2*(k-2)))}};
    for s from 3 to k do (
        V = Ms(k,s) * V;
        );
    return V;
    );

-- compute D2
D2 = (k) -> (
    V =matrix{{Q^(k+2*(k-1))*T^(2*(k-1+2*(k-2)))*(T^2-1)
    *(1-Q^k*T^(2*(k-1)))*(1-Q^(k-1)*T^(2*(k-2)))}, 
    {Q^(k+2*(k-1))*T^(2*(k-1+2*(k-2)))*(1-Q^k*T^(2*(k-1)))*(1-Q^(k-1)*T^(2*(k-2)))}};
    for s from 3 to k do (
        V = Ms(k,s) * V;
        );
    return V;
    );

--compute the generating function for curve x^{v-2}y^v=0
fracForCurveVMinus2=(r)->(
    seriesSum := (Q^2*T^2 - Q + 1)/(1-Q)^2;
    if r > 2 then(
       for k from 2 to r-2 do (
           V = Vk(k);
           denom = product(for j from 1 to k list (1 - Q^j*T^(2*(j-1)))^2);
           term = (V_(0,0) + V_(1,0))/ denom;
           seriesSum = seriesSum + term;
           );
       );
   V=D1(r);
   denom = product(for j from 1 to r-1 list (1 - Q^j*T^(2*(j-1)))^2);
   term = (V_(0,0) + V_(1,0))/ denom;
   seriesSum = seriesSum + term;
   V=D2(r);
   denom = product(for j from 1 to r list (1 - Q^j*T^(2*(j-1)))^2);
   term = (V_(0,0) + V_(1,0))/ denom;
   seriesSum = seriesSum + term;
   return seriesSum;
   );

\end{verbatim}


\begin{thebibliography}{9}
\bibitem{GS} I.M. Gessel and R.P. Stanley, \emph{‘Algebraic Enumeration’, in Handbook of Combinatorics Vol. 2.} 1995, pp.1021–1061.

\bibitem{ORS}A. Oblomkov, J. Rasmussen, V. Shende. \emph{The Hilbert scheme of a plane curve singularity and the HOMFLY homology of its link. With an appendix by Eugene Gorsky.} Geom. Topol. 22 (2018), no. 2, 645–691.
\bibitem{Mau} D. Maulik. \emph{Stable pairs and the HOMFLY polynomial.} Invent. Math. 204 (2016), no. 3, 787–831.

\bibitem{Cau} S. Cautis. \emph{Remarks on coloured triply graded link invariants.}  Algebr. Geom. Topol. 17.6 (2017), pp. 3811–3836.

\bibitem{GKS}O. Kivinen, E. Gorskiy, J. Simental. \emph{Algebra and geometry of link homology.} 
Bulletin of the London Mathematical Society 55 (2023), no. 2, 537-591.

\bibitem{Bej}
D. Bejleri. \emph{The motivic Hilbert zeta function of a planar n-fold thickening of a smooth curve.} \url{https://math.umd.edu/~dbejleri/non_reduced_hilbert_zeta.pdf}

\bibitem{HM} M. Hogancamp, A. Mellit. \emph{Torus Link Homology.}
ArXiv:1909.00418[math.GT]

\bibitem{BP}M. Brion, E. Peyre. \emph{The virtual Poincar\'e polynomials of homogeneous spaces.} Compositio Mathematica 134, 319–335 (2002).

\bibitem{Pol} D. Pol. \emph{Singularités libres, formes et résidus logarithmiques. Mathématiques générales} [math.GM]. Université d’Angers, 2016. Français. NNT : 2016ANGE0021 . tel-01441450
\bibitem{Con} L. Conners. \emph{Row-Column mirror symmetry for colored torus knot homology.} Selecta Mathematica, 30 (2024), no. 97.

\bibitem{ES} G. Ellingsrud, S.A. Stromme. \emph{On the homology of the Hubert scheme of points in the plane.} Inventiones mathematicae 87 (1987): 343-352. 

\bibitem{LS} G. Lusztig, J. Smelt. \emph{Fixed point varieties on the space of lattices.} Bull. London Math. Soc. 23 (1991), no.
3, 213–218

\bibitem{GMO} E. Gorsky, M. Mazin, A. Oblomkov. \emph{Generic curves and non-coprime Catalans.} Journal of Algebraic Geometry.

\end{thebibliography}
\end{document}